\newtheorem{thm}{Theorem}[section]
\newtheorem{lem}[thm]{Lemma}
\newtheorem{prop}[thm]{Proposition}
\newtheorem{cor}[thm]{Corollary}
\newtheorem{conj}[thm]{Conjecture}
\theoremstyle{definition}
\newtheorem{defn}[thm]{Definition}
\theoremstyle{remark}
\newtheorem{rem}[thm]{Remark}
\newcommand{\smat}[1]{\left(\begin{smallmatrix} #1 \end{smallmatrix}\right)}
\newcommand{\id}{\mathbf{1}}
\newcommand{\oo}{\mathcal{O}} 
\newcommand{\eo}{\EuScript{O}}
\newcommand{\fF}{\mathbb{F}} 
\newcommand{\Q}{{\mathbf{Q}}}
\newcommand{\Z}{{\mathbf{Z}}}
\newcommand{\Qp}{\mathbf{Q}_p}
\newcommand{\Zp}{\mathbf{Z}_p}
\newcommand{\C}{\mathbf C}
\newcommand{\A}{\mathbf A}
\newcommand{\dd}{\mathfrak{d}} 
\newcommand{\arch}{\mathbf{a}}
\newcommand{\finite}{\mathbf{h}}
\DeclareMathOperator{\Nr}{N}
\DeclareMathOperator{\Tr}{Tr}
\DeclareMathOperator{\End}{End}
\DeclareMathOperator{\Hom}{Hom}
\DeclareMathOperator{\Ext}{Ext}
\DeclareMathOperator{\Ind}{Ind}
\DeclareMathOperator{\cInd}{c-Ind}
\DeclareMathOperator{\Res}{Res}
\DeclareMathOperator{\Cor}{Cor}
\DeclareMathOperator{\Image}{Im}
\DeclareMathOperator{\coker}{coker}
\DeclareMathOperator{\rank}{rank}
\DeclareMathOperator{\Sym}{Sym}
\DeclareMathOperator{\GL}{GL}
\DeclareMathOperator{\UU}{U}
\DeclareMathOperator{\mtr}{tr}
\DeclareMathOperator{\val}{val} 
\DeclareMathOperator{\ind}{ind} 
\DeclareMathOperator{\Spec}{Spec}
\DeclareMathOperator{\length}{length}
\DeclareMathOperator{\Gal}{\mathcal{G}}
\DeclareMathOperator{\WD}{WD}
\DeclareMathOperator{\Rec}{Rec}
\DeclareMathOperator{\Art}{Art}
\newcommand{\Fr}{\textnormal{Fr}} 
\newcommand{\dR}{\textnormal{dR}}
\newcommand{\pst}{\textnormal{pst}}
\newcommand{\cts}{\textnormal{cts}}
\newcommand{\fa}{\mathfrak{a}}
\newcommand{\fc}{\mathfrak{c}}
\newcommand{\ff}{\mathfrak{f}}
\newcommand{\fl}{\mathfrak{l}}
\newcommand{\fm}{\mathfrak{m}}
\newcommand{\fn}{\mathfrak{n}}
\newcommand{\fp}{\mathfrak{p}}
\newcommand{\fq}{\mathfrak{q}}
\newcommand{\fs}{\mathfrak{s}}
\DeclareMathOperator{\Mod}{\textnormal{Mod}}
\DeclareMathOperator{\fC}{\mathfrak{C}} 
\DeclareMathOperator{\Ban}{\textnormal{Ban}_{G,\zeta}^{\adm}}
\DeclareMathOperator{\Rep}{Rep}
\DeclareMathOperator{\V}{\check{\mathbf{V}}} 
\DeclareMathOperator{\Ord}{Ord} 
\DeclareMathOperator{\Irr}{Irr}
\DeclareMathOperator{\soc}{soc}
\newcommand{\Gp}{\mathcal{G}_{\Qp}} 
\newcommand{\B}{\mathfrak B} 
\newcommand{\sm}{\textnormal{sm}}
\newcommand{\adm}{\textnormal{adm}}
\newcommand{\ladm}{\textnormal{ladm}}
\newcommand{\lfin}{\textnormal{lfin}}
\newcommand{\red}{\textnormal{red}}
\newcommand{\xx}{x_\textnormal{red}}
\newcommand{\F}{{\mathcal{F}}} 
\newcommand{\K}{{\mathcal{K}}} 
\newcommand{\qch}{\epsilon} 
\newcommand{\bw}{{\overline{w}}}
\newcommand{\bl}{{\bar{\fl}}}
\newcommand{\cG}{\mathcal{G}}
\newcommand{\fG}{\mathfrak{G}}
\newcommand{\fX}{\mathfrak{X}}
\newcommand{\wt}[1]{\underline{ #1 }}
\newcommand{\Iw}{\textnormal{Iw}} 
\newcommand{\TT}{\mathbb{T}} 
\newcommand{\euF}{\EuScript{F}} 
\newcommand{\I}{\mathcal{I}} 
\newcommand{\ord}{\textnormal{ord}} 
\DeclareMathOperator{\loc}{loc} 
\DeclareMathOperator{\Sel}{Sel} 
\DeclareMathOperator{\car}{char} 
\newcommand{\lin}[1]{\mathcal{L}^{(#1)}} 
\newcommand{\Lda}[1]{\Lambda^{(#1)}} 
\begin{document}
\title{Anticyclotomic Euler systems for CM fields}

\begin{abstract}
    Let $\K/\F$ be a CM extension satisfying the ordinary assumption for an odd prime $p$
    and let $\psi$ be a finite order anticyclotomic Hecke character
    of $\K$.
    When $\K$ has a place above $p$ of degree one,
    we apply the technique from \cite{urban}
    and the results from \cite{lee} to construct 
    an anticyclotomic Euler system for $\psi$
    under minor assumptions
    and prove one side o the divisibility of
    the anticyclotomic Iwasawa main conjecture for $\psi$
    when $p$ is inverted.
\end{abstract}

\author[Y-S.~Lee]{Yu-Sheng Lee}
\address{Department of Mathematics, University  of Michigan, Ann Arbor, MI 48109, USA}
\email{yushglee@umich.edu}
\date{\today}

\maketitle
\setcounter{tocdepth}{1}
\tableofcontents

\section*{Introduction}

Let $\K$ be a CM field with maximal totally real subfield $\F$.
Fix an odd prime $p>0$ which is unramified in $\F$
such that every prime of $\F$ above $p$ splits in $\K$.
Then there exists a subset $\Sigma\subset \Hom(\K,\C)$
such that if $S_p^\K$ denote the set of primes of $\K$ above $p$
and $\Sigma_p\subset S_p^\K$ is the subset of primes
induced by embeddings in $\Sigma$,
through a fixed choice of embeddings
$\iota_\infty\colon \K\to \C$ and
$\iota_p\colon \K\to \C_p$, then we have
\[
    \Hom(\K,\C)=\Sigma\sqcup \Sigma^c,\quad
    S_p^\K=\Sigma_p\sqcup \Sigma_p^c
\]
where $\Sigma^c=c\circ \Sigma$ and $\Sigma_p^c=c\circ \Sigma_p^c$
for the complex multiplication $c\in Gal(\K/\F)$.

Given a finite order character $\psi$
of the absolute Galois group $\Gal_\K$ of $\K$.
The Iwasawa main conjecture for $\K$,
under Greenberg's formulation,
relates the following two objects associated to $\psi$.
On the one hand, 
by class field theory the Galois group of
the maximal pro-$p$ abelian extension which is unramified 
away $p$ has a free part $W$ of $\Zp$-rank $d+1-\delta$,
where $d=[\F:\Q]$ and $\delta$ is the Leopoldt defect.
We let $\K_\infty/\K$ denote the extension such that 
$Gal(\K_\infty/\K)\cong W$.
Define $\Lambda_W=\eo\llbracket W\rrbracket$
for the ring of integers $\eo$ of a sufficiently large
field extension $E$ over $\Qp$
and let $\Gal_\K$ acts on 
the Pontryagin dual $\Lambda_W^*$ of which through
$\psi\langle*\rangle$, where 
$\langle*\rangle\colon \Gal_\K\to W\to \Lambda^\times$
is the tautological character. 
Consider the Selmer group
\[
    \Sel(\psi,\Sigma_p)=\ker
    \big\{
    H^1(\K, \Lambda_W^*)\to \prod_{w\notin \Sigma_p}
    H^1(I_w, \Lambda_W^*)
    \big\}
\]
where $w$ ranges through all finite primes of $\K$
outside $\Sigma_p$
and $I_w$ the inertia group at $w$.
Then the Pontryagin dual 
$X(\Lambda_W)\coloneqq \Sel(\psi,\Sigma_p)^\vee$
is a finitely generated torsion $\Lambda$-module.
And to such modules we can consider the characteristic ideal
$F(\Lambda_W)\coloneqq \car_{\Lambda_W}(X(\Lambda_W))$.

On the other hand, fix a CM type $\Sigma$ as above.
By the work Katz \cite{Katz1978} 
and Hida-Tilouine \cite{HT93} there exists 
a $p$-adic L-function
$L_p(\psi,\Sigma_p)\in \Lambda_W$ 
that interpolates the algebraic part
of the $L$-value $L(0,\psi\alpha)$,
when $\alpha$ are among characters of $W$
are critical in the sense of Deligne's conjecture.

\begin{conj}
The characteristic ideal $F(\Lambda_W)$
is generated by $L(\psi,\Sigma_p)$ in $\Lambda_W$.
\end{conj}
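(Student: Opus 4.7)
The plan is to combine the Euler system constructed in the body of this paper with a congruence-based lower bound on the Selmer group, and then to control $\mu$-invariants separately so that the resulting equality of characteristic ideals holds in $\Lambda_W$ itself rather than only in $\Lambda_W[1/p]$. For the divisibility $F(\Lambda_W) \supseteq (L_p(\psi,\Sigma_p))$, I would use the anticyclotomic Euler system advertised in the abstract: following the technique of \cite{urban} and the author's previous paper \cite{lee}, one produces a compatible family of cohomology classes in $H^1$ of $\K$ with coefficients in appropriate twists of the lattice underlying $\psi^{-1}$, satisfying the usual norm relations over abelian extensions of $\K$ unramified outside $\Sigma_p$. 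Feeding the bottom class into Rubin's Euler system machinery bounds $X(\Lambda_W)$ in the sense that $F(\Lambda_W) \supseteq (L_p(\psi,\Sigma_p))$, at least after inverting $p$.

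For the reverse divisibility $F(\Lambda_W) \subseteq (L_p(\psi,\Sigma_p))$, I would invoke the Eisenstein-congruence circle of ideas going back to Mazur--Wiles and adapted to the CM setting by \cite{HT93} and their successors. The approach is to construct congruences between a CM Hida family specialising to $\psi$ and cuspidal Hida families on a suitable definite unitary (or Hilbert modular) group attached to $\F$. The reducible-but-indecomposable Galois representations that emerge produce lattices whose Fitting ideals are controlled by the congruence ideal; identifying the latter with $L_p(\psi,\Sigma_p)$ and matching lattices with the Selmer group then yields $F(\Lambda_W) \subseteq (L_p(\psi,\Sigma_p))$ integrally in $\Lambda_W$. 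The hypothesis that $\K$ admits a prime above $p$ of degree one is expected to be essential here, so that the relevant nearly ordinary Hida theory is non-degenerate.

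To promote the equality from $\Lambda_W[1/p]$ to $\Lambda_W$, it remains to compare $\mu$-invariants. On the analytic side, one invokes the theorem of Hida (extended by others in the CM setting) on the vanishing of the $\mu$-invariant of the Katz--Hida--Tilouine $p$-adic $L$-function in the anticyclotomic direction, giving $\mu(L_p(\psi,\Sigma_p))=0$ under standard non-vanishing hypotheses on $\psi$. On the algebraic side, the Euler system divisibility forces $\mu(X(\Lambda_W)) \leq \mu(L_p(\psi,\Sigma_p)) = 0$, and since $X(\Lambda_W)$ is finitely generated torsion this algebraic $\mu$-invariant must vanish as well. Combined with the opposite divisibility proved integrally by the Eisenstein method, the vanishing of both $\mu$-invariants promotes the $[1/p]$-equality to an equality of principal ideals in $\Lambda_W$ on the nose.

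The main obstacle is the reverse divisibility: the Euler system alone only gives a one-sided bound, so genuinely automorphic input is needed to produce the required lower bound on $X(\Lambda_W)$ in terms of $L_p$, and for the multi-variable setting considered here (where $W$ has rank $d+1-\delta$, combining anticyclotomic and cyclotomic directions) this is substantially more delicate than the anticyclotomic-only case directly addressed by the paper. A secondary difficulty is the bookkeeping required to match normalisations across the analytic, Galois-theoretic, and automorphic constructions of $L_p(\psi,\Sigma_p)$ so that the final comparison is genuinely an equality of ideals in $\Lambda_W$, and not merely up to a $\Lambda_W[1/p]$-unit; particular care will be needed at primes where the Selmer local conditions dictated by the Euler system differ from those implicit on the automorphic side.
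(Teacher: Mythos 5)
The statement you are proving is stated in the paper as a \emph{conjecture}: the paper never proves it, and its actual theorems are much weaker — torsionness of the anticyclotomic Selmer group and the single divisibility $F(\Lambda_W)\supseteq (L_p)$, only for the anticyclotomic quotient (rank $d$, not the full rank $d+1-\delta$ group $W$ of the conjecture), only after inverting $p$ (i.e.\ in $E\llbracket W\rrbracket$), and only under the extra hypotheses ($\K$1), ($\K$2), ($\psi$1), ($\psi$2) and the existence of a degree-one prime $w_0\mid p$. So there is no proof in the paper to compare yours against, and your proposal claims far more than the paper does. Measured on its own terms, the plan has concrete gaps. First, an algebra mismatch: the Euler system constructed here lives over $\I_\fs=\eo\llbracket\fG^a_\fs\rrbracket$ and its specializations, so Rubin-type machinery bounds only the anticyclotomic Selmer group; nothing in this construction controls the $(d+1-\delta)$-variable Selmer group of the conjecture, which includes the cyclotomic direction. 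Second, your $\mu$-invariant step is unfounded: you write that ``the Euler system divisibility forces $\mu(X(\Lambda_W))\le \mu(L_p)=0$,'' but the Euler-system bound is only available after inverting $p$ — precisely because the classes $\mathcal{Z}_\fs$ are built from $L_\fs=\Omega_p^{-2\Sigma}\mathbf{B}(\euF_\fs,U_\fs^{-1}\euF_\fs)$, which the paper itself notes carries uncontrolled fudge factors from local volumes, and additionally because the argument multiplies the classes by $m$ and by $\prod_{w'}(\Psi^{-1}(\sigma_{w'})-1)$ to repair the local conditions at $w'\in\Sigma_p\setminus\{w_0\}$. An inclusion of ideals in $\Lambda_W[1/p]$ gives no information about $\mu(X)$, so the promotion to an integral equality collapses at exactly the point where it is needed.

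Third, the reverse divisibility you import from the Eisenstein-congruence literature is not available in the setting of the conjecture as stated: the results of \cite{HT93}, \cite{HT94}, \cite{Hida06b}, \cite{Hida06} concern the anticyclotomic variable and require their own hypotheses ($\psi$ of order prime to $p$, prime-to-$p$ conductor a product of split primes, $\psi\vert_{D_w}$ nontrivial at all $w\in\Sigma_p$, nontriviality over $\K(\sqrt{(-1)^{(p-1)/2}p})$), which neither coincide with nor are implied by ($\psi$1)--($\psi$2); you would need to prove the congruence-module-to-Selmer-group comparison for the full $(d+1-\delta)$-variable algebra, which is a separate and substantial project, not a citation. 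Finally, even in the anticyclotomic direction the local conditions of the two sides do not automatically match: the Euler-system classes here are only annihilated by $\Psi_\fs^{-1}(\sigma)-1$ at $w'\in\Sigma_p\setminus\{w_0\}$ rather than being locally trivial (this is why the paper imposes ($\psi$2) and why its final inclusion acquires the factor coming from $\prod_{w'}(\Psi^{-1}(\sigma_{w'})-1)$), while the congruence method naturally produces classes with the complementary $\Sigma_p$/$\Sigma_p^c$ conditions; your last paragraph gestures at this but does not resolve it, and without resolving it the two divisibilities are statements about different Selmer groups and cannot be combined into the claimed equality of ideals in $\Lambda_W$.
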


The same conjecture can also be formulated if we replace
$W$ by the Galois group over $\K$
of subextensions in $\K_\infty/\K$.
But unless the subextension contains the 
cyclotomic $\Zp$-extension,
the Pontryagin dual of the Selmer group defined above
could fail to be torsion.

In particular, the case when 
$\psi$ is anticyclotmic and
$W=\Gal(\K_\infty^a/\K)$ is the Galois group of the maximal 
anticyclotomic subextension $\K_\infty^a$ is considered
in \cite{HT93} and \cite{HT94}.
Here $\K_\infty^a$ is anticyclotomic in the sense that 
$c\in\Gal(\K/\F)$ acts by the inversion on $W$.
In this case $W$ has the $\Zp$-rank $d=[\F:\Q]$.
And we say $\psi$ is anticyclotomic if 
$\psi(\gamma^c)=\psi^{-1}(\gamma)$ for $\gamma\in\Gal_\K$.
Using the congruences between modulars forms
with CM by $\K$ and those without,
it is shown in \textit{loc.cit.}
that the characteristic ideal $F(\Lambda_W)$
belongs to the ideal generated by $L_p(\psi,\Sigma_p)$,
the projection of the Katz-Hida-Tilouine $p$-adic $L$-function
to the anticyclotomic space $\eo\llbracket W\rrbracket$.
Furthermore, under the following  assumptions on $\psi$:
\begin{itemize}
    \item The order of $\psi$ is coprime to $p$.
    \item The prime-to-$p$ part of the conductor of $\psi$
    is a product of primes split in $\K$.
    \item The restriction $\psi\vert_{D_w}$ is nontrivial
    at the decomposition group $D_w$ of each $w\in\Sigma_p$.
    \item The restriction of $\psi$ to 
    the absolute Galois group of $\K(\sqrt{(-1)^{(p-1)/2}p})$
    is nontrivial.
\end{itemize}
the crucial torsion property is obtained in
\cite[Thm.5.33]{Hida06b} by relating the Selmer group to 
Galois deformation problems
and the full main conjecture 
is proved in \cite{Hida06}.

In the current article, we consider the same
anticyclotomic main conjecture under
a different set of assumptions on $\psi$.
One of our main result is as follows.

\begin{thm}

Assume there exists a place $w_0\in\Sigma_p$ of degree one
and the following conditions
\begin{enumerate}[label=($\K$\arabic*)]
\item The extension $\K/\F$ is generic 
in the sense of \cite{Rohrlich}.
\label{cond:K2in}
\item Every prime of $\F$ above $2$ splits in $\K$.
\label{cond:K3in}
\end{enumerate}
Identify the decomposition group $D_{w_0}$
with the absolute Galois group $\Gp$ of $\Qp$
and let $\omega\colon \Gp\to (\Z/p\Z)^\times$ be the Teichmuller character.
Then $X(\Lambda)$ is torsion and the $p$-adic $L$-function belongs to 
the characteristic ideal $F(\Lambda_W)$ in 
$E\llbracket W\rrbracket$ under the following conditions.
\begin{enumerate}[label=($\psi$\arabic*)]
\item  The restriction $\psi\vert_{\Gp}$ is not congruent
to the trivial character $\id$ or $\omega^{\pm1}$.
\label{cond:psi1in}
\item There exists a finite order character $\mu$ of $\Gal_\K$
which is unramified at all inert places and 
$\mu\vert_{\oo_w^\times}$ is the unique nontrivial
quadratic character at all ramified places such that
$\psi(\gamma)=\mu^{c-1}(\gamma)\coloneqq\mu(\gamma^c)/\mu(\gamma)$.
\label{cond:psi2in}
\end{enumerate}
\end{thm}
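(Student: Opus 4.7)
The plan is to construct a $\Lambda_W$-adic anticyclotomic Euler system for $\psi$ via Urban's congruence method, run the Kolyvagin/Rubin derivative machine to bound $X(\Lambda_W)$ from above, and identify the output with $L_p(\psi, \Sigma_p)$ via an explicit reciprocity law at the degree-one prime $w_0$. Hypothesis \ref{cond:psi2in}, which writes $\psi = \mu^{c-1}$, is the essential self-duality input: it realises $\psi$ as the ratio $\mu^c/\mu$ of two finite order Hecke characters of $\K$, so that $\psi$-twisted arithmetic can be read off congruences between Eisenstein series built from $(\mu, \mu^c)$ and cuspidal automorphic representations on a unitary group of type $\UU(2,1)$. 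Such congruences, in the style of \cite{urban}, deform the reducible Galois representation $\mu \oplus \mu^c$ to an irreducible lift whose extension class is a nontrivial element of $H^1(\K, \psi)$; varying the tame level along the anticyclotomic tower yields the full Euler system.

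First I would pin down the set of auxiliary primes for the Kolyvagin norm relations: primes $\ell$ of $\F$ that are split in $\K$, unramified for $\psi$, and cut out by the right congruence conditions on the automorphic side. Assumption \ref{cond:K3in} is used to handle $2$-adic places without obstruction, and the genericity \ref{cond:K2in} guarantees enough Hecke characters $\mu$ with the prescribed quadratic ramification so that \ref{cond:psi2in} can be realised nontrivially. Verification of the Euler system axioms (norm compatibility, correct local Euler factors at split primes, controlled local behaviour at places above $p$) then reduces to identifying Frobenius traces with Hecke eigenvalues on the automorphic side. At the distinguished place $w_0$, the degree-one hypothesis identifies $D_{w_0}$ with $\Gp$, and \ref{cond:psi1in} places us within the range of Pasku\={u}nas' classification of admissible mod-$p$ representations of $\GL_2(\Qp)$; the local deformation-theoretic results of \cite{lee} then pin down the ramification behaviour of our classes at $w_0$.

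With the Euler system in hand, the anticyclotomic Kolyvagin machine produces the divisibility
\[
\car_{\Lambda_W}\bigl(X(\Lambda_W)\bigr) \supseteq (\mathcal{L}_\infty)
\]
where $\mathcal{L}_\infty \in \Lambda_W$ is obtained from the bottom class $\kappa_\infty$ of the system by a suitable local dual-exponential map at $w_0$. In particular $X(\Lambda_W)$ is $\Lambda_W$-torsion. An explicit reciprocity law at $w_0$, computed through the $p$-adic local Langlands correspondence for $\GL_2(\Qp)$, then identifies $\mathcal{L}_\infty$ with $L_p(\psi, \Sigma_p)$ up to a unit in $E\llbracket W\rrbracket$, giving the desired containment $L_p(\psi, \Sigma_p) \in F(\Lambda_W) \cdot E\llbracket W\rrbracket$ after inverting $p$.

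The principal obstacle is the explicit reciprocity law at $w_0$: matching the image of the Euler system class under the local map to the Katz-Hida-Tilouine $p$-adic $L$-function requires a delicate computation with Colmez's functor and its variants for unitary $\GL_2(\Qp)$-representations, which is exactly the ingredient that \ref{cond:psi1in} and the degree-one condition at $w_0$ are designed to accommodate. The excluded residual classes $\id$ and $\omega^{\pm 1}$ are precisely those at which Pasku\={u}nas' equivalence fails to be exact and the local deformation ring acquires extra components that would spoil the comparison. A secondary difficulty is that Urban-style Eisenstein cohomology classes are constructed integrally only up to $p$-torsion supported on the Eisenstein ideal, which is the technical reason the final divisibility is stated over $E\llbracket W\rrbracket$ rather than over $\Lambda_W$ itself.
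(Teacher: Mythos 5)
Your high-level outline (Euler system from Urban-style congruences, then Kolyvagin machinery, then identification of the output with the $p$-adic $L$-function) is the right shape, but several of the concrete ingredients do not match what actually makes this work, and a few of the misidentifications would sink the argument as written.

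First, the automorphic input is wrong. The paper works on the \emph{definite} unitary group $G = \UU(2)$, not $\UU(2,1)$, and the congruences are not between Eisenstein series and cusp forms: they are congruences among \emph{CM cusp forms} on $\UU(2)$ obtained as theta lifts of Hecke characters of $\K$ from $\UU(1)$. The automorphic representation $\pi_\eta$ is cuspidal, but its associated Galois representation $r_{\pi_\eta}$ is reducible, of shape $\hat\eta \oplus \hat\eta^c$. That is what produces the residually-reducible pseudo-representation fed into Urban's lattice construction. On a rank-one definite group there is no boundary and no Eisenstein cohomology; if you tried to set up the $\UU(2,1)$ Eisenstein congruence picture you would have an entirely different local structure at $p$ and you would lose the identification of completed homology with admissible $\GL_2(\Qp)$-representations that drives the whole Paskūnas side.

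Second, the $p$-adic $L$-function does not appear through an explicit reciprocity law or a dual exponential at $w_0$. It is built in at the level of the construction: $L_\fs = \Omega_p^{-2\Sigma}\mathbf{B}(\euF_\fs, U_\fs^{-1}\euF_\fs)$ is essentially the Petersson pairing of the Hida family with itself, and the relation $\loc_{w_0}(\mathcal{Z}_\fs) = L_\fs \cdot \mathcal{Z}_{p,\fs}$ holds because the cohomology class is defined by ``multiplying the natural map by $L_\fs$'' so that it lands back in $\I_\fs$ after the congruence module is annihilated. There is no Colmez-functor or Bloch--Kato reciprocity computation at $w_0$; the $p$-adic local Langlands input (Paskūnas' block theory, Pan's local-global compatibility) is used instead to establish the \emph{fundamental exact sequence}
\[
0 \to M^{\ord}(U^p)_{\fm_2} \to \Hom(\tilde P_{1,\fm}, M(U^p)_\fm')^{\red} \to M^{\ord}(U^p)_{\fm_1}' \to 0
\]
which splits the reduction-mod-$\xx$ of the completed homology into two ordinary pieces and is what makes the congruence module argument run. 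This step is missing from your proposal entirely, and it is not optional.

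Third, you misidentify the roles of the hypotheses. \ref{cond:K2in} and \ref{cond:K3in} are there to guarantee the existence of an auxiliary self-dual Hecke character $\chi$ with $L(1/2,\chi) \neq 0$ (via Rohrlich's canonical characters and Hsieh's nonvanishing theorem) --- a prerequisite for the theta-lift construction in \cite{lee}, not ``enough $\mu$''. The decomposition $\psi = \mu^{c-1}$ for anticyclotomic $\psi$ is automatic; the real content of \ref{cond:psi2in} is the ramification constraints on $\mu$ (unramified at inert places, quadratic at ramified places) so that $\chi_\circ\mu$ is ramified only at split primes and the local computations in \cite{lee} apply. \ref{cond:psi2in} also secures the vanishing of the local classes at $w'\in\Sigma_p\setminus\{w_0\}$ needed for the final divisibility, which you do not discuss at all. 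Finally, the divisibility in $E\llbracket W\rrbracket$ rather than $\Lambda_W$ arises from local-volume fudge factors built into $L_\fs$, not from $p$-torsion supported on the Eisenstein ideal; and the proof of torsionness uses Ochiai's specialization principle and Hsieh's algebraic functional equation to pass between $\Psi^D$ and $\Psi$ --- neither of which appears in your outline.
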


We remark that our proof for the torsion property
does not rely on global Galois deformation techniques.
Instead, we construct 
an anticyclotomic Euler system associated to $\psi$.
Then the general machinery from \cite{Rubin}
allows us to bound the size
of the Selmer group from above and obtain the theorem.
This is the reason that the inclusion relation
we obtained is opposite to that of \cite{HT94},
which directly constructs elements in the Selmer 
group and gives a lower bound to the Selmer group
in terms of the $p$-adic $L$-function.

To state the precise result 
we introduce the following notations.
Let $\oo_\K$ be the ring of integers of $\K$
and let $\ff\subset \oo_\K$ be a square-free ideal
consists only of split primes
with $\ff+\ff^c=\oo_\K$. 
We write $\fs=\ff\ff^c$
and let $\fG_\fs^a$ be the Galois group of the maximal 
pro-$p$ abelian extension over $\K$ that is unramified 
away primes dividing $\fs p$.
Let $\I_\fs=\eo\llbracket \fG_\fs^a\rrbracket$
and consider the character $\Psi_\fs=\psi^{-1}\langle*\rangle_\fs$,
where $\langle*\rangle_\fs\colon \Gal_\K\to \fG_\fs\to \I_\fs^\times$
is the tautological character.

\begin{thm}

Under the same assumptions on $\K$ and $\psi$ as above,
there exists a collection of cohomology classes
$\{\mathcal{Z}_\fs\in H^1(\K, \I_\fs(\Psi_\fs^{-1}))\}_{\fs\in\mathcal{R}}$
that has the following properties
for ideals $\fs$ as above varying 
in a certain infinite set $\mathcal{R}$.
\begin{enumerate}
    \item There exists a finite set $S$ of primes of $\K$
    that contains all the places above $p$ and all the 
    places where $\psi$ is ramified, such that each
    $\mathcal{Z}_{\fs}$ is unramified at places $w\notin S$
    that are coprime to $\fs$.
    \item For $w'\in \Sigma_p\setminus\{w_0\}$
    and $\sigma\in D_{w'}$, the restriction
    $\loc_{w'}(\mathcal{Z}_\fs)\in H^1(\K_{w'}, \I_\fs(\Psi_\fs^{-1}))$
    satisfies 
    \[
        (\Psi_{\fs}^{-1}(\sigma)-1)\cdot \loc_{w'}(\mathcal{Z}_\fs)=0
    \]
    \item For $w=w_0$, the restriction 
    $\loc_{w}(\mathcal{Z}_\fs)\in H^1(\Qp, \I_\fs(\Psi_\fs^{-1}))$
    is equal to $L_\fs\cdot \mathcal{Z}_{p,\fs}$ for
    a certain $p$-adic $L$-function $L_\fs\in \I_\fs$ 
    and a special class
    $\mathcal{Z}_{p,\fs}\in H^1(\Qp, \I_\fs(\Psi_\fs^{-1}))$.
    \item If $\fl$ is a split prime 
    and $\ell\fs,\fs\in \mathcal{R}$ for $\ell=\fl\fl^c$, then
    \[
        \phi^{\ell\fs}_\fs(\mathcal{Z}_{\ell\fs})=
        (1-\epsilon\Psi_{\fs}(\varpi_{\bar{\fl}}))
        (1-\Psi_\fs(\varpi_{\bar{\fl}}))\cdot 
        \mathcal{Z}_{\fs}
    \]
    where $\phi^{\ell\fs}_\fs\colon \I_{\ell\fs}\to \I_\fs$
    is associated to the quotient $\fG_{\ell\fs}^a\to \fG_\fs^a$
    and $\epsilon$ is the $p$-th cyclotomic character.
\end{enumerate}
\end{thm}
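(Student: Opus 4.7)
The plan is to follow the congruence-based strategy of \cite{urban}, with the automorphic and Galois-theoretic input supplied by \cite{lee}. The objective is to realize each class $\mathcal{Z}_\fs$ as the extension class arising from a residually reducible Galois representation $\rho_\fs\colon \Gal_\K \to \GL_2(\I_\fs)$, whose failure to split is measured by an ideal containing the $p$-adic $L$-function $L_\fs$. Concretely, I would start from a Hida-type family of ordinary automorphic forms over $\F$---engineered so that the degree-one hypothesis at $w_0$ allows a $\GL_2/\Qp$-style local analysis there---that is congruent to an Eisenstein series whose constant term captures $L_\fs$. Choosing a $\Gal_\K$-stable $\I_\fs$-lattice in the associated Galois representation and extracting the cocycle attached to the extension of $\Psi_\fs$ by $\psi^{-1}$ (after an appropriate twist) yields a class in $H^1(\K, \I_\fs(\Psi_\fs^{-1}))$.

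With the classes constructed, properties (1)--(3) follow from structural features of $\rho_\fs$. Property (1) reflects the controlled ramification of the automorphic input: taking $S$ to contain $\Sigma_p \cup \Sigma_p^c$ together with the primes at which $\psi$ is ramified, the only additional ramification of $\rho_\fs$ occurs at primes dividing $\fs$. Property (2) at a place $w' \in \Sigma_p \setminus \{w_0\}$ uses the nearly ordinary structure at $w'$: locally $\rho_\fs|_{D_{w'}}$ admits a filtration whose graded pieces recover $\psi^{-1}|_{D_{w'}}$ and $\Psi_\fs|_{D_{w'}}$, and a direct computation of $\Ext^1$ of $D_{w'}$-modules yields the annihilation of $\loc_{w'}(\mathcal{Z}_\fs)$ by $\Psi_\fs^{-1}(\sigma) - 1$. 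Property (3) is the heart of the Urban strategy: the identification $D_{w_0} \cong \Gp$ afforded by the degree-one hypothesis allows the Eisenstein congruence at $w_0$ to be analyzed through $p$-adic local techniques, producing the factorization $\loc_{w_0}(\mathcal{Z}_\fs) = L_\fs \cdot \mathcal{Z}_{p,\fs}$ with $\mathcal{Z}_{p,\fs}$ a universal class built from the ordinary filtration at $w_0$.

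The principal obstacle is the Euler system relation (4), which requires running the construction coherently across $\fs \in \mathcal{R}$ and tracking the effect of the projection $\fG_{\ell\fs}^a \to \fG_\fs^a$ on the cohomology classes. The map $\phi^{\ell\fs}_\fs$ is induced by a trace-type operator on the relevant spaces of automorphic forms at the split prime $\fl$, and the predicted factor $(1 - \epsilon\Psi_\fs(\varpi_{\bar{\fl}}))(1 - \Psi_\fs(\varpi_{\bar{\fl}}))$ should emerge as the product of local Euler factors at $\bar{\fl}$ for the two characters $\psi^{-1}$ and $\Psi_\fs$ appearing in the residual semisimplification of $\rho_\fs$. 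The delicate point is to arrange the normalization of the classes $\mathcal{Z}_{\ell\fs}$ and of the auxiliary automorphic data at $\fl$ and $\bar{\fl}$ so that precisely this Euler factor---and no spurious twist---appears; this is a Hecke-algebra computation that must be matched against the Galois-theoretic prediction, and it is here that the full force of \cite{urban} and \cite{lee} is required.
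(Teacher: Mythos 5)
Your proposal starts from a plausible-looking but fatally flawed premise: you ask for a residually reducible Galois representation $\rho_\fs\colon \Gal_\K\to \GL_2(\I_\fs)$ whose lattice structure produces the class. However, the eigensystem $\lambda_\fs$ attached to the CM family $\euF_\fs$ pushes the big Galois pseudo-representation to the \emph{reducible} pseudo-character $\epsilon^{-1}\hat\chi_\circ + \epsilon^{-1}\hat\chi_\circ\Psi_\fs$ (Corollary \ref{cor:reducible}); the associated two-dimensional representation over $\I_\fs$ is simply the direct sum of two characters, and any lattice in it yields the zero extension class. The paper avoids this by \emph{never} constructing a representation over $\I_\fs$. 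Instead it realizes a generalized-matrix-algebra presentation valued in $\TT(U^p_\fs,\eo)_\fm[1/\xx]$, where $\TT(U^p_\fs,\eo)_\fm$ is the big Hecke algebra (much larger than $\I_\fs$) and $\xx$ is the single generator of the reducibility ideal. The class $\mathcal{Z}_\fs$ then arises from a homomorphism $f_\fs\in\Hom_{\TT}(B,\I_\fs)$ on the off-diagonal module $B$ of the GMA, fed into the Bellaïche--Chenevier Ribet lemma (Proposition \ref{prop:BC}). The existence of such an $f_\fs$ is the crux, and it is not a formal consequence of a congruence: it comes from the fundamental exact sequence (Corollary \ref{cor:fund}), whose left-exactness is what lets one invert $\xx$ and land in $\I_\fs$. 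Establishing that sequence is where the degree-one hypothesis at $w_0$ really enters, but through Pa\v{s}k\={u}nas' theory of blocks for $\GL_2(\Qp)$ and Pan's local-global compatibility—not through an ``Eisenstein constant term'' analysis as you describe. Also note the $L$-function $L_\fs$ appears as the self-pairing $\Omega_p^{-2\Sigma}\mathbf{B}(\euF_\fs, U_\fs^{-1}\euF_\fs)$ of a genuine automorphic form on a definite unitary group, not as the constant term of an Eisenstein series (compact-at-infinity groups have none).

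Your discussion of properties (2)--(4) is correct in spirit but cannot close without this machinery. For (3), the factorization $\loc_{w_0}(\mathcal{Z}_\fs)=L_\fs\cdot\mathcal{Z}_{p,\fs}$ is an immediate consequence of the relation $f_\fs\big|_{B_0}=L_\fs\cdot f_0$ (Proposition \ref{prop:res_0}), which in turn holds because the image of $B_0=R^{\zeta\epsilon}$ in the Hecke algebra lies inside $\TT$, not merely $\TT[1/\xx]$; in a bare lattice construction there is no visible route to this exact factorization. For (4), your heuristic that the Euler factor arises as the product of local factors for the two characters in the residual semisimplification is right, but what actually produces $(1-\epsilon\Psi_\fs(\varpi_{\bar\fl}))(1-\Psi_\fs(\varpi_{\bar\fl}))$ is the concrete comparison of families across tame levels, $\overline{\euF}_{\ell\fs}=(1-v^{-1}V_\fl)\euF_\fs$, pushed through the Hecke-equivariant pairing $\mathbf{B}$ and then through the injectivity of the fundamental exact sequence (Proposition \ref{prop:cores}). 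In short: the congruence-and-lattice outline you give is the right intuition, but the proof requires replacing the lattice by the GMA module $B$ over the big Hecke algebra and supplying the fundamental exact sequence (hence the Pa\v{s}k\={u}nas/Pan input), neither of which appears in your proposal.
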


\subsection*{Congruences and Euler systems}

Our construction of the classes $\{\mathcal{Z}_\fs\}_{\fs\in\mathcal{R}}$
follows from Urban's approach in \cite{urban},
which use the Eisenstein congruences 
to reproduce the Euler system of cyclotomic units.
The idea of using congruences relations to construct cohomology classes
dates back to the proof of the converse of
Herbrand-Ribet theorem in \cite{Ribet1976}
and is also the main ingredient of \cite{HT93}.

To be specific,
given $\psi=\mu^{c-1}$ we may 
consider the Hida family of Hilbert modular forms
with CM in $\K$ associated to $\mu$,
which induces a Hecke eigensystem
$\lambda\coloneqq \TT^{\ord}\to \I$
of the big ordinary Hecke algebra $\TT^{\ord}$
to a certain finite extension $\I$ over $\Lambda_W$.
Let $T\colon \Gal_\K\to \I$ be the associated 
Galois pseudo-representatiaon.
A generalization of Ribet's lemma in \cite{Ribet1976}
then produces cohomology classes
with coefficients in quotients of $\I$ instead of $\I$.
The insight of Urban's method is to consider instead
the big Hecke algebra of a larger space of $p$-adic modular forms
on which the reducible ideal of 
the Galois pseudo-representation is better controlled.

Before further explanation,
we note that in the current article 
we have chosen to work with 
modular forms on a definite unitary group $G=\UU(2)$
rather than the classical Hilbert modular forms.
Using the author's previous work \cite{lee}
we can construct $\I_\fs$-adic Hida families $\euF_\fs$
of CM forms on $G$ associated to $\mu$ for $\fs\in\mathcal{R}$.
Then $L_\fs\in \I_\fs$ is essentially the inner product 
of $\euF_\fs$ with itself.
One can think of $\euF_\fs$ as interpolating the Jacquet-Langlands
transfers of the Hilbert modular forms above.
We remark that our construction in \cite{lee}
relies on the existence of an auxiliary self-dual 
Hecke character with non-vanishing central $L$-value.
To the knowledge of the author such existence is generally hard
and the available results mostly uses the canonical characters
from \cite{Rohrlich}, which is the main reason
we impose the condition \ref{cond:K2in} and \ref{cond:K3in}.
On the other hand, the condition \ref{cond:psi2in} 
can be relieved if the local computations in \cite{lee}
at the non-split places can be improved,
which the author would hope to come back to in the future.

If $U^p\subset G(\A_{\F,f}^{(p)})$ is an open compact subgroup,
for any open compact subgroup $U_p\subset G(\F\otimes_\Q\Qp)$
we let $S(U^pU_p,E/\eo)$ denote the space
of trivial weight $p$-adic modular forms 
of level $U^pU_p$ with coefficients in $E/\eo$.
We can then consider the inverse limit
\[
    S(U^p,E/\eo)\coloneqq \varinjlim_{U_p}
    S(U^pU_p, E/\eo)
\]
and the Pontryagin dual of which,
where $U_p$ ranges through all the compact open subgroup
in $G_p\coloneqq G(\F\otimes_\Q\Qp)$.
Then $S(U^p,E/\eo)$ is a smooth $G_p$-representation
in the sense of \cite{emeI} and we can apply
Emerton's functor of ordinary parts $\Ord_P$ on which.
Since $G_p\cong \prod_{w\in\Sigma_p}\GL_2(\K_w)$,
if we pick 
\[
    P=\GL_2(\K_{w_0})\times\prod_{w'\neq w_0}B(\K_{w'}),\quad
    B=\prod_{w\in \Sigma_p}B(\K_{w'}),
\]
where $B\subset \GL_2$ is the subgroup of upper-triangular matrices,
then $\Ord_P$ and $\Ord_B$
gives the space of modular forms 
that are ordinary at all $w'\in \Sigma_p\setminus\{w_0\}$ 
and at all $w\in \Sigma_p$ respectively.
We write $M(U^p)$ and $M^{\ord}(U^p)$
for the respective Pontryagin duals,
which we refer to as the spaces of (ordinary)-completed homology
and let $\TT(U^p,\eo)$ and $\TT^{\ord}(U^p,\eo)$
be the Hecke algebras acting on which. 
It can be shown that the families $\euF_\fs$
we constructed give rise to eigensystems
$\lambda_\fs\colon \TT(U^p_\fs,\eo)\to \I_\fs$
and elements $F_\fs\in M^{\ord}(U^p_\fs)\otimes \I_\fs$,
where $\{U^p_\fs\}_{\fs\in\mathcal{R}}$
is a collection of open compact subgroups
that varies systematically when $\fs\in\mathcal{R}$.

\subsection*{Completed homology and $p$-adic local Langlands}

The spaces $M(U^p_\fs)$ is naturally 
a representation of $\GL_2(\K_{w_0})=\GL_2(\Qp)$
since $w_0$ has degree one.
Let $\fm\subset \TT(U^p_\fs)$ be the maximal ideal
associated to the eigensystems $\lambda_\fs$.
If $T(U^p_\fs)\colon \Gal_\K\to \TT(U^p_\fs)$
is the big Galois pseudo-representation,
then $T(U^p_\fs)\vert_{\Gp}\bmod \fm$
is independent of $\fs$ 
and the universal deformation ring $R$ of which
is isomorphic to a power series ring
$\eo\llbracket x_1,x_2,x_3\rrbracket$ of three variables over $\eo$
under \ref{cond:psi1in}.
Furthermore we may take one of the variable, say $\xx\coloneqq x_1$,
to be the generator of the reducibility ideal.

Let $\Lambda_T$ be a the Iwasawa-algebra
associated to the maximal torus $T$ of $G$.
By the control theorem in \cite{ger}
the spaces $M^{\ord}(U^p_\fs)$ are finite free over $\Lambda_T$.
And since $B$-ordinary forms are automatically $P$-ordinary,
we have the natural homomorphisms 
$\TT(U^p_\fs,\eo)\to \TT^{\ord}(U^p_\fs,\eo)$ and
$M(U^p_\fs)\to M^{\ord}(U^p_\fs)$.
Let $R\to \TT(U^p_\fs,\eo)$ be the homomorphism
induced by $T(U^p_\fs)$, then 
\[
    \xx\in \ker\big(R\to \TT(U^p_\fs,\eo)\to \TT^{\ord}(U^p_\fs,\eo)\big)
\]
since Galois representations associated to ordinary modular forms
are reducible at $w_0$.
We then prove the theorem below following \cite[\S 5]{urban},
which combines Pask\={u}nas' theory of blocks from \cite{pask} and
Pan's result on the local-global compatibility from \cite{pan}.

\begin{thm}
    Let $\tilde{P}_{1,\fm}$ be the projective envelope
    of a member in the block associated to $\fm$.
    \begin{enumerate}
    \item There are only two maximal ideals $\fm_1$ and $\fm_2$
    in the localization $\TT^{\ord}(U^p_\fs,\eo)_\fm$ 
    as a $\TT(U^p_\fs,\eo)$-algebra.
    \item Both $\Hom(\tilde{P}_{1,\fm},M(U^p_\fs)_\fm')$
    and $\TT(U^p_\fs,\eo)_\fm$ 
    are finite free over 
    $\Lambda_T\llbracket\xx\rrbracket$.
    \item Define
    $\Hom(\tilde{P}_{1,\fm},M(U^p)_{\fm}')^\red\coloneqq
	\Hom(\tilde{P}_{1,\fm},M(U^p)_{\fm}')/\xx
	\Hom(\tilde{P}_{1,\fm},M(U^p)_{\fm}')$,
    then there exists an exact sequence 
    \[
    0\to M^{\ord}(U^p)_{\fm_2}\to
    \Hom(\tilde{P}_{1,\fm},M(U^p)_{\fm}')^\red
    \to M^{\ord}(U^p)_{\fm_1}'\to0 
    \]
    \end{enumerate}
\end{thm}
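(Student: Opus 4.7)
The approach follows the strategy of \cite[\S 5]{urban}, combining Pask\={u}nas' block decomposition \cite{pask} of the category of smooth $\GL_2(\Qp)$-representations with Pan's local-global compatibility theorem \cite{pan} for the completed homology of the unitary group $G$ at the place $w_0$. Under condition \ref{cond:psi1in}, the residual local pseudo-representation $T(U^p_\fs)\vert_{\Gp}\bmod\fm$ is a generic semisimple sum $\bar\chi_1\oplus\bar\chi_2$ with $\bar\chi_1\bar\chi_2^{-1}\notin\{\id,\omega,\omega^{-1}\}$. Its block $\mathfrak{B}_\fm$ in the Pask\={u}nas category contains exactly two irreducible objects, the principal series $\pi_1,\pi_2$ corresponding to the two orderings of $\bar\chi_1,\bar\chi_2$, and each projective envelope $\tilde P_{i,\fm}$ satisfies $\End_{\GL_2(\Qp)}(\tilde P_{i,\fm})\cong R$. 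By Pan's theorem $M(U^p_\fs)_\fm'$ lies in $\mathfrak{B}_\fm$, so $\Hom_{\GL_2(\Qp)}(\tilde P_{1,\fm},M(U^p_\fs)_\fm')$ is naturally an $R$-module compatible with the $\TT(U^p_\fs,\eo)_\fm$-action.

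For part (1), ordinary Galois representations at $w_0$ are reducible, so the composite $R\to\TT(U^p_\fs,\eo)_\fm\to\TT^{\ord}(U^p_\fs,\eo)_\fm$ factors through $R/\xx R$. Conversely, a $p$-adic ordinary form carries an ordered filtration, and the two choices of unramified quotient ($\bar\chi_1$ or $\bar\chi_2$) produce two distinct components with maximal ideals $\fm_1$ and $\fm_2$. For part (2), the Hida-Geraghty control theorem \cite{ger} implies each $M^{\ord}(U^p_\fs)_{\fm_i}$ is finite free over $\Lambda_T$. By Pask\={u}nas, $\tilde P_{1,\fm}/\xx\tilde P_{1,\fm}$ admits a two-step filtration whose successive quotients correspond, under Emerton's ordinary-parts adjunction, to the two ordering components. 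Applying $\Hom(-,M(U^p_\fs)_\fm')$ therefore identifies $\Hom(\tilde P_{1,\fm},M(U^p_\fs)_\fm')/\xx$ with a successive extension of the $M^{\ord}(U^p_\fs)_{\fm_i}$, which is finite free over $\Lambda_T$. A graded Nakayama argument then lifts this to freeness of the unreduced Hom-module over $\Lambda_T\llbracket\xx\rrbracket$. Since $\TT(U^p_\fs,\eo)_\fm$ acts faithfully on $M(U^p_\fs)_\fm'$ and embeds into the endomorphism algebra of the Hom-module, it inherits the same freeness.

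Part (3) follows by applying $\Hom(-,M(U^p)_\fm')$ to the two-step exact sequence on $\tilde P_{1,\fm}/\xx\tilde P_{1,\fm}$ provided by Pask\={u}nas' structure theorem, and identifying the outer Hom's with $M^{\ord}(U^p)_{\fm_2}$ and $M^{\ord}(U^p)_{\fm_1}'$ via Pan's compatibility together with Emerton's adjunction. The main obstacle will be precisely this identification: one must check that Pask\={u}nas' filtration on $\tilde P_{1,\fm}/\xx\tilde P_{1,\fm}$ corresponds, under Pan's local-global compatibility, to the splitting of $\TT^{\ord}(U^p_\fs,\eo)_\fm$ into $\fm_1$ and $\fm_2$ in the correct order, and that the relevant $\Ext^1$-obstructions vanish so the displayed short exact sequence remains exact. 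Keeping track of the two possible orderings of the ordinary filtration at $w_0$ is the essentially new input beyond the cyclotomic setting of \cite{urban}.
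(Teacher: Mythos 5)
Your overall strategy is the right one — blocks \`a la Pask\={u}nas, Pan's local--global compatibility, freeness of the ordinary parts from Geraghty, and the generic hypothesis \eqref{cond:red_gen} to single out the two principal series — and you correctly identify where the work lies. However, the central mechanism you propose for parts (2) and (3) has a genuine error that cannot be repaired by ``checking $\Ext^1$-obstructions.'' You write that applying $\Hom(-,M(U^p_\fs)_\fm')$ to the two-step filtration of $\tilde P_{1,\fm}/\xx\tilde P_{1,\fm}$ ``identifies $\Hom(\tilde P_{1,\fm},M(U^p_\fs)_\fm')/\xx$ with a successive extension.'' This conflates two different functors: since $\tilde P_{1,\fm}$ is projective, the exact sequence $0\to\tilde P_{1,\fm}\xrightarrow{\xx}\tilde P_{1,\fm}\to\tilde P_{1,\fm}/\xx\tilde P_{1,\fm}\to 0$ yields
\[
\Hom(\tilde P_{1,\fm}/\xx\tilde P_{1,\fm},M)\cong\Hom(\tilde P_{1,\fm},M)[\xx],\qquad
\Ext^1(\tilde P_{1,\fm}/\xx\tilde P_{1,\fm},M)\cong\Hom(\tilde P_{1,\fm},M)/\xx\Hom(\tilde P_{1,\fm},M).
\]
Your route lands on the $\xx$-torsion, not the $\xx$-cotorsion; and since $M(U^p_\fs)_\fm'$ has no $\xx$-torsion (which also needs to be proved), that space is zero. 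The quantity you actually need is the $\Ext^1$, and extracting the filtration on an $\Ext^1$ from the filtration of $\tilde P_{1,\fm}/\xx\tilde P_{1,\fm}$ runs into a nontrivial long exact sequence rather than the short one you assert.

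The paper instead works on the $M$ side of the Hom. It first constructs a length-one projective resolution $0\to\tilde P_{\B,\fm}^{\oplus r}\xrightarrow{A}\tilde P_{\B,\fm}^{\oplus r}\to M(U^p)_\fm'\to 0$ in the block, with $A$ a $2\times 2$ block matrix written in terms of $A_{ij}\in M_r(R_\fm)$ and the intertwiners $\Phi_{ij}$ with $\Phi_{12}\Phi_{21}=\xx$. Applying $\Ord$ kills the off-diagonal entries and gives a presentation of $M^{\ord}(U^p)_\fm'$ by the block-diagonal $\bar A_{11}\oplus\bar A_{22}$, and this is already what proves part (1): the two summands of $\coker(\bar A_{11}\oplus\bar A_{22})$ are exactly $M^{\ord}(U^p)_{\fm_1}$ and $M^{\ord}(U^p)_{\fm_2}$. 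A Krull-dimension count (comparing $\Lambda$ and $R_\fm^{\red}$) shows the $\Ord$-presentation is also left-exact, hence the $\bar A_{ii}$ are injective, and a snake lemma applied to multiplication by $\xx$ on the resolution then establishes that $M(U^p)_\fm'$ has no $\xx$-torsion. With that in hand one applies $\Hom_{Q'}(\tilde P_{i,\fm},-)$ to the resolution reduced mod $\xx$: the upper-triangular shape of $\bar A$ produces exactly the two-step filtration in part (3), with the submodule coming from the $\Hom(\tilde P_{i,\fm},\tilde P_{j,\fm}^{\oplus r})^{\red}$ term and the quotient from $\End(\tilde P_{\chi_i^\vee,\fm})^r$. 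Part (2) then follows by topological Nakayama from the freeness of the reductions and the absence of $\xx$-torsion. None of these intermediate steps — the resolution, the dimension-count lemma, the torsion-freeness — appear in your proposal, and the Hom/Ext mix-up means the shortcut you propose through the filtration of $\tilde P_{1,\fm}/\xx\tilde P_{1,\fm}$ does not reach the target object.
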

We refer the readers to
Corollary \ref{cor:fil_by_ord},
Proposition \ref{prop:Hecke_finite}, and
Corollary \ref{cor:Hecke_ff} for the precise meaning
of the notations. 
Here we only note the following two important 
consequences of the theorem.
\begin{itemize}
\item Write $\TT_\fs=\TT(U^p_\fs,\eo)$.
The pseudo-representation $T_\fs=T(U^p_\fs)$
can be represented by a generalized matrix algebra
\[
    \rho_\fs(\gamma)=
    \begin{pmatrix}
        A(\gamma) & B(\gamma)\\
        C(\gamma) & D(\gamma)
    \end{pmatrix}\in \GL_2(\TT_\fs[1/\xx]),\quad
    T_\fs(\gamma)=A(\gamma)+D(\gamma).
\]
\item There exists a fundamental exact sequence 
which is essentially the extension of the congruence module
of $\euF_\fs$ by $\I_\fs$.
\end{itemize}

Let $B_\fs\subset \TT_\fs[1/\xx]$
be the submodule generated by all $B(\gamma)$.
The generalization of Ribet's lemma asserts that
each homomorphism in $\Hom_{\TT_\fs}(B_\fs,\I_\fs)$
would product a cohomology class 
in $H^1(\K,\I_\fs(\Psi_\fs^{-1}))$.
By our construction of the fundamental exact sequence
there exists natural maps from $B_\fs$
to the middle term of which that are compatible
among $\fs\in\mathcal{R}$.
The desired classes $\mathcal{Z}_\fs$
are then obtained by ``mutiplying'' the natural maps
by $L_\fs$, which annihilates the congruence modules
and gives a map valued in $\I_\fs$.
And the extra Euler factors 
in $\phi^{\ell\fs}_\fs(L_{\ell\fs})$
compared to $L_\fs$ contribute to the 
norm relation between classes of different levels.

\subsection*{Iwasawa main conjectures}

We make a final remark on the deduction  of the main conjecture.
Since the classes we constructed does not fit the general
formulation in \cite{Rubin},
we need to modify the argument in which 
based on the theory of Jetchev-Nekovar-Skinner
and combine it with the specialization principle from \cite{Och05}
and the algebraic functional equation from \cite{Hsieh2010}.
The classes we constructed assuming \ref{cond:psi1in}
is sufficient to show that the Selmer group
$\Sel(\psi,\Sigma_p)$ is torsion.
But in order to show the main conjecture 
we have to introduce \ref{cond:psi2in}
so that $\loc_{w'}(\mathcal{Z}_\fs)=0$
at all $w'\in \Sigma\setminus\{w_0\}$.
However, given that 
$\loc_{w'}(\mathcal{Z}_\fs)$
is annihilated by $\Psi_\fs^{-1}(\sigma)-1$
for any $\sigma\in D_{w'}$, 
which generated an ideal in $\I_\fs$ of height at least two,
it seems reasonable to expect that the current
obstruction only contribute to a pseudo-null module.

On the other hand, it is unlikely that 
our method can be improved to gives
the inclusion in $\eo\llbracket W\rrbracket$
since the constructions of $L_\fs$ in \cite{lee}
would always carry some fudge factors from local volumes
of $G(\A_{\F,f})$ that are hard to control. 

\subsection*{Contents of the paper}
After introducing the notations, we recall the theory
of $p$-adic modular forms and their associated
Galois representation following \cite{ger}.
We incorporate the language of 
smooth representations in the discussion of which
to prepare for the application to $p$-adic local Langlands.

In \S 3 we first review Pask\={u}nas'theory
of blocks on the category of locally-finite representaitons
of $\GL_2(\Q_p)$ following \cite{pask} and \cite{urban}.
We then apply the theory
to the completed homology and 
obtain the local-global compatibility result following \cite{pan}.
We then deduce the fundamental exact sequence
that allows the construction of our Euler systems.

In \S 4,
after reviewing the construction from \cite{lee}
of the Hida families $\euF_\fs$
and the $p$-adic 
we apply the fundamental exact sequence obtained
in previous section and show that 
the produced cohomology classes
satisfied the desired properties
and form an Euler system.
We leave the deduction of the main conjecture
from the Euler systems to \S 5.

\subsection*{Acknowledgement}

The results of this work is the extension of parts of the the author's Ph.D. thesis in Columbia University.
The author would like to thank his advisor, Eric Urban, for introducing him to the subject and his constant encouragement.
The author would also like to thank Professor Ming-Lun Hsieh for his 
interest in the work and very helpful discussions during the preparation 
of the article.
The results here are first announced in the 2025
RIMS conference on
"Arithmetic aspects of automorphic forms and automorphic representations",
and the author would like to thank the organizer and the Institute
for their hospitality and the financial support.

\section{Notations}

Throughout the article, $\F$ is a totally real field 
with $[\F,\Q]=d$
and $\K$ is a totally imaginary quadratic extension over $\F$.
Let $\arch=\Hom(\F, \C)$ 
denote the set of archimedean places of $\F$,
and $\finite$ the set of finite places of $\F$.
We fix an odd prime $p$ throughout the article
that satisfies the following conditions.
\begin{align}\label{cond:ord}\tag{ord}
\text{Every finite place of $\F$ above $p$ is split in $\K$}.\\
\label{cond:deg}\tag{deg}
\text{There exists a place $w_0$ of $\K$ above $p$ of degree one}.
\end{align}
We fix an embedding $\iota_\infty:\bar{\Q}\to \C$
and an isomorphism $\iota:\C\cong \C_p$,
and write $\iota_p=\iota\circ\iota_\infty:\bar{\Q}\to \C_p$.

Given a place $v$ of $\F$, archimedean or finite,
let $w\mid v$ denote a place $w$ of $\K$ above $v$.
Then $\K_w$ and $\F_v$ are respectively
the completions of the fields $\K$ and $\F$ at $w$ and $v$.
When $v\in \finite$ we denote by $\oo_w$ and $\oo_v$ 
the rings of integers of $\K_w$ and $\F_v$, and
$\varpi_w$ and $\varpi_v$
are uniformizers in $\oo_w$ and $\oo_v$ respectively.
We normalize the norm $|\cdot|_v$ on $\F_v$,
so that it is the usual absolute value when $v\in \arch$
and $q_v=|\varpi_v|_v^{-1}$
is the cardinality of the residue field $\oo_v/(\varpi_v)$
when $v\in \finite$.
For $w\mid v$, define $|a|_w=|\Nr_{\K_w/\F_v}(a)|_v$.
Then $|\cdot|_w$ is the square of the usual absolute value
when $v\in\arch$ and
$q_w=|\varpi_w|_w^{-1}$
is the cardinality of the residue field $\oo_w/(\varpi_w)$
when $v\in\finite$.

Denote by $\A=\A_{\F}$ the ring of adeles over $\F$,
by $\A_{\infty}$ and $\A_{f}$ respectively
the archimedean and the finite components of $\A$.
Let $\qch_{\K/\F}$ denote 
the quadratic character on $\A_\F^\times/\F^\times$
associated to $\K/\F$ by the global class field theory,
$\qch_v$ denote the component on $\F_v^\times$ 
when $v\in \finite$.
Let $c\in \Gal(\K/\F)$ be the unique complex conjugation.
When it should be clear from the context
we write $cz$, $z^c$ and $\bar{z}$ interchangeably 
for the action of $c$ on $z\in \K\otimes_\F R$, where $R$ is an $\F$-algebra,
induced by the action on the first factor.
We then define 
$\A_\K^1=\{z\in \A^\times_\K=\K\otimes_\F \A_\F \mid z\bar{z}=\Nr_{\K/\F}z=1\}$ and
$\K_v^1=\{z\in \K_v\coloneqq \K\otimes_\F\F_v\mid z\bar{z}=1\}$.
If $\eta$ is a character of $\A_\K^1/\K^1$, 
we denote
by $\tilde{\eta}(\alpha)\coloneqq \eta(\alpha/\alpha^c)$
the Hecke character which is the base change of $\eta$ 
to $\A_\K^\times/\K^\times$.

\subsection{CM types}

Denote respectively by $S_p$ and $S_p^\K$ the set of places above $p$
of $\F$ and $\K$.
Identify $I_\K=\Hom(\K,\bar{\Q})$ with
$\Hom(\K,\C)$ and $\Hom(\K,\C_p)$ by compositions with $\iota_\infty$ and $\iota_p$.
Given $\sigma\in I_\K$,
let $w_\sigma\in S_p^\K$ be the place induced by
$\sigma_p\coloneqq \iota_p\circ \sigma\in\Hom(\K,\C_p)$.
For $w\in S_p^\K$, define
\[
    I_w=\{\sigma\in I_\K\mid w=w_\sigma \}=\Hom(\K_w,\C_p)
\]
and decompose $I_\K=\sqcup_{w\mid p}I_w$.
For a subset $\Sigma\subset I_\K$
define $\Sigma_p=\{w_\sigma\mid \sigma\in \Sigma\}$.
We write
$\Sigma^c=\{\sigma c\mid \sigma\in \Sigma\}$ and 
$\Sigma_p^c=\{cw\mid w\in \Sigma_p\}$.
We fix throughout the article a $p$-ordinary CM type,
which is a subset $\Sigma\subset I_\K$ such that
\[
    \Sigma\sqcup \Sigma^c=I_\K,\quad
    \Sigma_p\sqcup \Sigma_p^c=S_p^\K.
\]
The $p$-ordinary CM type $\Sigma$
always exists by the assumption \eqref{cond:ord},
and is identified with $\arch=\Hom(\F,\C)$ by restrictions.
When $v\in S_p$ decomposes into $v=w\bw$,
we understand always that $w\in \Sigma_p$.

\subsection{Artin reciprocity}

Let $\K^{ab}$ be the maximal abelian extension.
We normalize the Artin reciprocity map
$\Art\colon \A_{\K,f}^\times \to Gal(\K^{ab}/\K)$
so that $\Art_w(\varpi_w)=\Fr_w$
is the geometric Frobenius for each finite place $w$.
More generally,
let $U\subset \A_{\K,f}^\times$ be 
a closed subgroup that is stabilized 
by the complex conjugation
and let $L/\K$ be the extension such that 
$Gal(L/\K)\cong \K^\times\backslash \A_{\K,f}^\times/U$
under the reciprocity map.
Then $Gal(\K/\F)$ acts on $Gal(L/\K)$
and we define
\[
    Gal(L/\K)^{\pm}=\{\gamma\in \Gal(L/\K)\mid 
    \gamma^c=\gamma^{\pm1}\}.
\]
We call $Gal(L/\K)^a$, the quotient of 
$Gal(L/\K)$ by $Gal(L/\K)^+$,
the anticyclotomic quotient
of $Gal(L/\K)$.
And we say a character $\psi$ of $Gal(L/\K)$
is anticyclotomic if it factors through $Gal(L/\K)^a$.

Write $Gal(L/\K)=\cG$ for short.
We note that $\cG^a$ can be
identified with a subgroup of $\cG^-$ via the homomorphism
\[
    1-c\colon \cG\to \cG\quad \gamma\to \gamma/\gamma^c
\]
since $\ker(1-c)=\cG^+$ by definition.
Under the identification
$\cG^-/\cG^a$ is an abelian group of type $(2,\cdots,2)$.

On the adelic side we also define
\[
1-c\colon \K^\times\backslash\A_{\K,f}^\times\to
\K^\times\backslash\A_{\K,f}^\times\quad
z\mapsto z/\bar{z}
\]
which is compatible with the map on the Galois side
under the reciprocity map.
Since $1-c$ maps $\K^\times\backslash \A_{\K,f}^\times$
surjectively onto  $\K^1\backslash \A_{\K,f}^1$,
we can then define an anticyclotomic
reciprocity map $\Art^a\colon \K^1\backslash\A_{\K,f}^1\to \cG^a$
by the commutative diagram
\begin{equation}\label{eq:anticyc_rec}
\begin{tikzcd}
    K^\times\backslash\A_{\K,f}^\times
    \arrow[r,twoheadrightarrow]\arrow[d,"\Art"]&
    K^1\backslash\A_{\K,f}^1 \arrow[r]\arrow[d,"\Art^a",dashed]&
    K^\times\backslash\A_{\K,f}^\times\arrow[d,"\Art"]\\
    \cG \arrow[r,twoheadrightarrow]&
    \cG^a\arrow[r]&
    \cG
\end{tikzcd}
\end{equation}
In particular, suppose $\psi$ is a finite order 
anticyclotomic character of $\cG$.
Identify $\cG^a$ with a subgroup of $\cG^-$,
we may extend $\psi$ to a finite order character $\mu$ of $\cG$.
Then, when viewed as Hecke characters via the reciprocity map,
the characters satisfy the relation $\psi=\mu^{1-c}$.

\subsection{Characters}

Let $\kappa=\sum_{\sigma\in \Sigma} 
a_\sigma\sigma+b_\sigma\sigma c\in \Z[I_\K]$. We define 
\[
    \kappa(\alpha_\infty)=\alpha_\infty^\kappa=
    \prod_{\sigma\in \Sigma} 
    (\alpha_\sigma)^{a_\sigma}(\bar{\alpha}_\sigma)^{b_\sigma}\in \C^\times,\quad
    \kappa(\alpha_p)=\alpha_p^\kappa=
    \prod_{w\in \Sigma_p}
    \prod_{\sigma\in I_w}
    \sigma_p(\alpha_w)^{a_\sigma}\sigma_p(\alpha_{\bw})^{b_\sigma}
    \in \C_p^\times.
\]
for $\alpha_\infty=(\alpha_\sigma)_{\sigma\in\Sigma}
\in \A_{\K,\infty}^\times$
and $\alpha_p=(\alpha_w,\alpha_{\bw})_{w\in\Sigma_p}\in 
\prod_{w\mid p}\K_w^\times$.
When the meaning is clear from the context, we also 
let $\Sigma$ denote the formal sum $\sum_{\sigma\in\Sigma}\sigma$
and similarly for $\Sigma^c$.
If $\chi\colon \A_\K^\times/\K^\times\to \C^\times$ 
is a Hecke character of $\K$,
we let $\chi_w$ be the component of $\chi$ at a place $w$ of $\K$.
We also write  $\chi_\infty=(\chi_w)_{w\in\Sigma}$,
$\chi_p=(\chi_w)_{w\in S_p^\K}$, and
$\chi_v=(\chi_w)_{w\mid v}$ when $v$ is a place of $\F$.
We say $\chi$ is an algebraic Hecke character of
the infinity type $\kappa$ if
$\chi_\infty(\alpha)=\alpha^\kappa$.
When this is the case we define the $p$-adic avatar 
$\hat{\chi}\colon \A_\K^\times\to \bar{\Z}_p^\times$  of $\chi$ by
\[
    \hat{\chi}(\alpha)=
    \iota(\chi(\alpha)\alpha_\infty^{-\kappa})\alpha_p^{\kappa}.
\]

\subsection{Matrices}
When $R$ is an $\F$-algebra and 
$m=(m_{ij})\in \text{M}_{r,s}(\K\otimes_\F R)$,
we denote by 
$m^\intercal=(m_{ji}), 
m^c=(m^c_{ij})$, and
$m^*=(m^c_{ji})$
respectively the transpose, conjugate, and conjugate-transpose of $m$.

When $r=s$ and $g\in \GL_r(\K\otimes_\F R)$ is invertible, we write
$g^{-\intercal}=(g^{-1})^\intercal$ and $g^{-*}=(g^{-1})^*$.
We write $\mtr(m)$ for the trace of a square matrix $m$,
and reserve $\Tr$ for the traces between fields extensions.

When $v=w\bw$ is a place that is split in $\K$,
identify $\K_w=\F_v=\K_{\bw}$ and 
write $\K_v=\F_v^2$, 
where the first component corresponds to $\K_w$.
Then $m=(m_w,m_{\bw})\in M_n(\K\otimes_\F\F_v)=M_n(\F_v)\times M_n(\F_v)$ 
denotes an element in $m\in M_n(\K\otimes_\F\F_v)$ and its components.

\subsection{Representations of $p$-adic groups}

Let $\oo$ be the ring of integers of a finite extension $E$
over  $\Qp$ with a uniformizer $\varpi$.
When $G$ is a $p$-adic analytic group,
we let $\Mod_G(\oo)$ be the category
of all $\oo[G]$-modules.
Then $\Mod^{\sm}_{G}(\oo)$ is the full subcategory 
of $\oo[G]$-modules $V$ such that 
\begin{itemize}
    \item Each $v\in V$ is fixed by some open compact subgroup of $G$.
    \item Each $v\in V$ is annihilated by $\varpi^h$ for some power 
    $h\geq 0$.
\end{itemize}
And $\Mod^{\adm}_{G}(\oo)$ is the full subcategory 
in $\Mod^{\sm}_{G}(\oo)$ of $\oo[G]$-modules $V$ such that 
$V^H[\varpi^h]$ is finite over $\oo$
for any compact open subgroup $H$ of $G$ and any power $\varpi^h$.

We also define $\Mod^{\ladm}_{G}(\oo)$ to be the full subcategory 
in $\Mod^{\sm}_{G}(\oo)$ of $\oo[G]$-modules $V$ 
such that for each $v\in V$ 
the submodule $\oo[G]v$ belongs to $\Mod^{\adm}_G(\oo)$.
And $\Mod^{\lfin}_G(\oo)$ is the full subcategory 
in $\Mod^{\sm}_{G}(\oo)$ of $\oo[G]$-modules $V$ 
such that for each $v\in V$ 
the submodule $\oo[G]v$ has finite length as a $\oo[G]$-module.
We refer the readers to \cite[\S 2]{emeI} and \cite[\S 2]{pask}
for more properties regarding the above categories.
At last, if $\zeta$ is a character of the center of $G$,
then $\Mod^{\bullet}_{G,\zeta}(\oo)$
denote the subcategory in $\Mod^{\bullet}_{G}(\oo)$ of 
$\oo[G]$-modules with central character $\zeta$.

\section{Modular forms on definite unitary groups}

Let $G$ be the definite unitary group over $\F$,
such that for any $\F$-algebra $R$
\begin{equation}\label{def:def_unitary}
    G(R)=\{g\in \GL_{n}(\K\otimes_\F R) \mid gg^*=\id_n\}.
\end{equation}
In this section we introduce
the space of algebraic modular forms on $G$
and their associated Galois representations
following \cite{ger}.
Moreover, for a general parabolic subgroup $P$,
we apply Emerton's functor of $P$-ordinary parts from \cite{emeI}
systematically to define 
the subspace of $P$-ordinary modular forms
and the big Galois pseudo-representation of $\Gal_\F$
which takes values in the big $P$-ordinary Hecke algebra.
We also show the density of crystalline points 
in such big Hecke algebra
after incorporating the techniques developed in \cite{pan}.
The density result will be crucial for checking
the local-global compatibility in the next section.

\subsection{Algebraic modular forms}

Let $B_n=T_nN_n\subset \GL_n$ be the subgroup of
upper triangular matrices and its Levi decomposition,
where $T_n$ is the diagonal torus.
An element $w$ in the Weyl group $W_n$ of $\GL_n$
acts on $k\in X^*(T_n)$, the set of algebraic characters,
by $(wk)(t)=k(w^{-1}tw)$.
Following \cite[Def 2.3]{ger},
we identify $X^*(T_n)$ with $\Z^n$
and $k=(k_1,\cdots,k_n)\in X^*(T_n)$
is said to be dominant if $k_1\geq \cdots\geq k_n$.
Let $w_0\in W_n$ denote the longest element
and $k\in X^*(T_n)$ be dominant.
We define $\xi_k\coloneqq \Ind_{B_n}^{\GL_n}(w_0k)$,
which is an algebraic representation 
of highest weight $k$.

For each $v\in\finite$ that splits in $\K$
we fix a prime $w$ above $v$
and write $v=w\bw$ in $\K$.
Then we can write $g_v=(g_w,g_{\bw})$
for $g_v\in \GL_n(\F_v\otimes_\F\K)
\cong \GL_n(\K_w)\times\GL_n(\K_{\bw})$.
In particular, the map
\begin{equation}
\iota_w\colon G(\F_v)\to \GL_n(\K_w)\quad
\iota_w(g_v)=g_w
\end{equation}
is an isomorphism and satisfies
$\iota_w(g_v)=\iota_{\bw}(g_v)^{-\intercal}$.
We will identify $G_w\coloneqq\GL_n(\K_w)$
with $G(\F_v)$ via $\iota_w$
and similarly identify 
$B_w=B_n(\K_w), N_w=N_n(\K_w)$ and
$T_w=T_n(\K_w)$
with subgroups of $G(\F_v)$.
Moreover, we define the open 
compact subgroups
$K_w=\GL_n(\oo_w)$,
$\Iw(w)=\{k\in K_w\mid k\bmod \varpi_w\in B_n(\oo_w)\}$, and
$\Iw_1(w)=\{k\in \Iw(w)\mid k\bmod \varpi_w\in N_n(\oo_w)\}$.
In particular, we pick $w\in \Sigma_p$ when $v\in S_p$ and define
\[
	G_p\coloneqq\prod_{w\in \Sigma_p}G_w,\quad
	K_p\coloneqq\prod_{w\in \Sigma_p}K_w.
\]

Throughout the section
we fix a finite extension $E$ over $\Qp$
that contains $\iota_p(\sigma(\K))$
for all $\sigma\in I_\K$ and
let $\oo=\oo_E$ be the ring of integers in $E$.
When $\wt{k}=(k_\sigma)\in (\Z^n)^{\Sigma}$
is dominant in the sense that
$k_\sigma=(k_{\sigma,1},\cdots,k_{\sigma,n})$
is dominant for each $\sigma\in \Sigma$,
let $\xi_{\wt{k}}$ be
the algebraic $K_p$-representation over $\oo$ given by
\begin{equation}\label{def:algrep}
	\xi_{\wt{k}}=\bigotimes_{\sigma\in \Sigma}
	\Ind_{B_n}^{\GL_n}(w_0k_{\sigma}),\quad
	\xi_{\wt{k}}(g)=
	\otimes_{w\in \Sigma_p}
	\otimes_{\sigma\in I_w}\xi_{k_\sigma}(g_w)\,
	\text{ for } g=(g_w)\in K_p.
\end{equation}
Note that over $E$, the representation $\xi_{\wt{k}}$ 
is an algebraic $G_p$-representation.

\begin{defn}\label{def:algform}
When $A$ is an $\oo$-module and  
$\wt{k}\in (\Z^n)^{\Sigma}$ is dominant,
we let $g=(g^p,g_p)\in G(\A_f^p)\times K_p$ acts on 
the space of functions
$f\colon G(\F)\backslash G(\A_f)\to A\otimes_{\oo}\xi_{\wt{k}}(\oo)$
by $(g\cdot f)(g_0)=\xi_{\wt{k}}(g_p)\cdot f(g_0g)$.
Note that the action can be extended to the whole $G(\A_f)$
when either $A$ is an $E$-module or $\xi_{\wt{k}}$
is the trivial representation.
We say such a function $f$ is an algebraic modular form of
weight $\wt{k}$ with coefficients in $A$
if $f$ is invariant under the above action 
by some open compact subgroup
$U\subset G(\A_f^p)\times K_p$.
Let $S_{\wt{k}}(A)$
denote the space of all algebraic modular forms
of weight $\wt{k}$ and coefficients in $A$.
And for $U$ as above we define
\begin{equation}
S_{\wt{k}}(U,A)=
S_{\wt{k}}(A)^U=
\left\{ f: G(\F)\backslash G(\A_f)/U^p 
\rightarrow A\otimes_{\oo}\xi_{\wt{k}}(\oo)
\mid f(gu)=\xi_{\wt{k}}(u_p)^{-1}\cdot f(g), u\in U\right\} 
\end{equation}
We will write $S(A)=S_{\wt{k}}(A)$ and
$S(A,U)=S_{\wt{k}}(A,U)$
when $\xi_{\wt{k}}$ is the trivial representation.
\end{defn}

Since $G(\F)\backslash G(\A_f)/U$ is a finite set
for any open compact subgroup $U\subset G(\A_f)$,
an algebraic modular form $f\in S_{\wt{k}}(U,A)$ 
is determined by its values on a finite set of points.
Moreover, throughout the section
we fix a finite subset $S\subset \finite$
of prime-to-$p$ places $v=w\bw$ that splits in $\K$
and an open compact subgroup $U^p=\prod_{v\nmid p}U_v\subset G(\A_f^p)$ 
that satisfies the following conditions.
\begin{align}
    \label{cond:s-ram}\tag{$S$-$\Iw$}
    &\Iw_1(w)\subset U_v \subset \Iw(w) \text{ for all } 
    v\in S \text{ and the fixed }w\mid v\\
    \label{cond:small}\tag{\text{small}}
	&G(\F)\cap t_i(\tilde{U})t_i^{-1}=\{1\} \text{ for all } 
    i\in I \text{ in }
    G(\A_f)=\bigsqcup_{i\in I} G(\F)t_i \tilde{U},
    \text{ where } \tilde{U}=U^pK_p\prod_{w\mid v\in S}\Iw(w).
\end{align}
Then by \cite[Lem 2.6]{ger} the space $S_{\wt{k}}(U^pU_p,A)$
is finite free over 
$A[\prod_{v\in S}(\Iw(w)/U_v)]$
whenever $A$ is an $\oo$-algebra
and $U^p\subset K_p$ is an open compact subgroup.

\subsection{Hecke operators}

Given integers $b$ and $c$
such that $c\geq b\geq 0$ and $c>0$,
we define the open compact subgroups
$\Iw(p^{b,c})=\prod_{w\in \Sigma_p}\Iw(w^{b,c})$ of $G_p$, where 
$\Iw(w^{b,c})$, for $w\in \Sigma_p$, is defined as
\begin{equation}\label{def:Iwahori}
	\Iw(w^{b,c})=\{
	k\in K_w\mid 
    k \bmod \varpi_w^c \in B_n(\oo/\varpi_w^c)
	\text{ and }
	k \bmod \varpi_w^b \in N_n(\oo/\varpi_w^b)
	\}.
\end{equation}

Let $T\subset\finite$ be a finite set containing $S_p\cup S$
such that $U_v=K_w$ if $v\notin T$ and $v$ splits in $\K$,
where $w$ is the fixed prime above $v$.
We recall from \cite{ger} 
the definitions of the following Hecke operators,
which are defined as double cosets operators
acting on $S_{\wt{k}}(U^p\Iw(p^{b,c}),A)$.
\begin{itemize}

\item 
If $v\notin T$ and $v=w\bw$ splits in $\K$,
for $1\leq j\leq n$ we define 
\begin{equation}\label{def:hecke_away_p}
	T_w^{(j)}=
	\left[
	K_w
	\begin{pmatrix}
		\varpi_w\id_{j}&\\&\id_{n-j}
	\end{pmatrix}
	K_w
	\right],
\end{equation}
which satisfies the relation
$T_{\bw}^{(j)}=(T_{w}^{{n}})^{-1}T_w^{(n-j)}$.

\item
If $w\in \Sigma_p$, let  
$\alpha_w^{(j)}=
\smat{ \varpi_w\id_{j}&\\&\id_{n-j} } $
for $1\leq j\leq n$
and $u\in T_n(\oo_w)$ we define
\begin{equation}\label{def:hecke_at_p}
	U_{\wt{k},w}^{(j)}=
	(w_0\wt{k})^{-1}(\alpha_{w}^{(j)})\cdot
	[\Iw(p^{b,c})\alpha_w^{(j)}\Iw(p^{b,c})],\,
	\langle u\rangle=
	[\Iw(p^{b,c})u\Iw(p^{b,c})],
	\text{ and }
	\langle u\rangle_{\wt{k}}= (w_0\wt{k})^{-1}(u)\cdot \langle u\rangle.
\end{equation}
Here $w_0\wt{k}$ is viewed as an algebraic character of $T_n(\F_w)$ 
by the same recipe in \eqref{def:algrep}.

\item 
If $v\in S$ and $w$ is the fixed place above $v$,
for $\alpha_w^{(j)}$ defined as above
and $u\in T_n(\oo_w)$ we define 
\begin{equation}\label{def:hecke_at_s}
	U_{w}^{(j)}=
	[U_v\alpha_w^{(j)}U_v]
	\text{ and }
	\langle u\rangle= 
	[U_vu U_v]
\end{equation}

\end{itemize}

\begin{rem}
For $u\in T_n(\oo_w)$ in either 
\eqref{def:hecke_at_p} or \eqref{def:hecke_at_s},
the operator $\langle u\rangle$
coincides with the usual action of $u$ in
Definition \ref{def:algform}.
Therefore the action of which
factors through the quotient by some 
open compact subgroup in $T_n(\oo_w)$.
On the other hand, 
we introduce $\langle u\rangle_{\wt{k}}$
in \eqref{def:hecke_at_p}
so that some Hecke-equivariant properties
are easier to state.
Also note that the two versions of diamond operators
coincide when $\xi_{\wt{k}}$ is the trivial representation,
so there is no confusion when we write
$U_{\wt{k},w}^{(j)}=U_{w}^{(j)}$ and 
$\langle u\rangle_{\wt{k}}=\langle u\rangle$ in such case.
\end{rem}

\subsection{The space of $P$-ordinary forms}

\subsubsection{Emerton's functor}

We temporarily let $G$ be an arbitrary $p$-adic reductive group
and $P=QU$ be a parabolic subgroup of $G$ and its Levi decomposition.
To recall the functor 
$\Ord_P\colon \Mod_G^{\sm}(\oo)\to \Mod_Q^{\sm}(\oo)$
defined in \cite{emeI},
we fix an open compact subgroup $P_0\subset P$
and put $Q_0=P_0\cap Q, U_0=P_0\cap U$,
then define $Z_Q^+=Z_Q\cap Q^+$,
where $Z_Q$ is the center of $Q$ and
\[
	Q^+=\{m\in Q\mid mU_0m^{-1}\subset U_0\}.
\]
If  $V$ is a $P$-representation over $\oo$
and  $m\in Q^+$,
we follow \cite[Def 3.1.3]{emeI} and define
\begin{equation}\label{def:hUm}
	 h_{U}(m)\colon V^{U_0}\to V^{U_0}\quad
	 h_{U}(m)(v)=\sum_{u\in U_0/m U_0 m^{-1}}um\cdot v
\end{equation}
Then the functor
$\Ord_P\colon \Mod_G^{\sm}(\oo)\to \Mod_Q^{\sm}(\oo)$
is defined in \cite[Def 3.1.3]{emeI} by
\begin{equation}\label{def:OrdP}
	\Ord_P(V)=\Hom_{\oo[Z_Q^+]}
    (\oo[Z_Q], V^{U_0})_{Z_Q-\textnormal{fin}}.
\end{equation}
Here $\oo[Z_Q]$ is a $\oo[Z_Q^+]$-modules by translation
and $V^{U_0}$ is a $\oo[Z_Q^+]$-modules by $h_U$.
And the action of $Q=Z_Q\cdot Q^+$ on $\Ord_P(V)$ is defined by 
having $Z_Q$ act by translation on $\oo[Z_Q]$ and 
$Q^+$ act by $h_U$ on $V^{U_0}$.

\subsubsection{P-ordinary forms and Hecke algebras}

We now resume the earlier notations and
relate the Hecke operators in \eqref{def:hecke_at_p}
with the operators from \eqref{def:hUm}.

For each  $w\in \Sigma_p$ let $P_w=Q_wU_w\supset B_w$ 
be a standard parabolic subgroup
and its Levi decomposition.
Define 
$P=\prod_{w\in \Sigma_p}P_w$, $Q=\prod_{w\in \Sigma_p}Q_w$, and
$U=\prod_{w\in \Sigma_p}U_w$.
Then $P=QU$ is a parabolic subgroup
of the $p$-adic group $G_p$ and $P_0=K_p\cap P$
is an open compact subgroup of $P$.
We then define $U_0, Z_Q, Q^+, Z_Q^+$ as above.
In particular when $P_w=B_w=T_wN_w$ for all $w\in\Sigma_p$
we have $Z_T=T$, and $T^+=Z_T^+$.

For integers $b$ and $c$ that satisfies $c\geq b\geq 0$ and $c>0$,
we define $\Iw^P(p^{b,c})=\prod_{w\in\Sigma_p}\Iw^P(w^{b,c})$ where
\begin{equation}\label{def:Iwahori_P}
	\Iw^P(w^{b,c})=\{
	k\in K_w\mid 
    k \bmod \varpi_w^c \in P_w(\oo/\varpi_w^c)
	\text{ and }
	k \bmod \varpi_w^b \in U_w(\oo/\varpi_w^b)
	\}.
\end{equation}

\begin{defn}\label{def:hecke}
Let the monoid $U_0T^+$ act on 
$f\in S_{\wt{k}}(U^p\Iw^P(p^{b,c}),A)$ by
\begin{equation}\label{def:T_act}
	(ut* f)(g)=(w_0\wt{k})^{-1}(t)\cdot (ut\cdot f)(g)=
    (w_0\wt{k})^{-1}(t)\cdot \big(\xi_{\wt{k}}(ut)f(gut)\big)\quad
    u\in U_0, t\in T^+.
\end{equation}
Then for $\alpha_w^{(j)}$ and $u\in T_n(\oo_w)$ 
as in \eqref{def:hecke_at_p}
We define the Hecke operators on
$S_{\wt{k}}(U^p\Iw^P(p^{b,c}),A)$ by
$U_{\wt{k},w}^{(j)}=h_U(\alpha_w^{(j)})$
and $\langle u\rangle_{\wt{k}}=h_U(u)$.
We also define $U_P$
as the product of all $U_{\wt{k},w}^{(j)}$
for which $\alpha_w^{(j)}\in Z_Q$.

In particular, when $P=B$
and $m\in T^+$ equals either $\alpha_{w}^{(j)}$ or $u\in T_w(\oo_w)$,
a set of representatives for $U_0/mU_0m^{-1}$
is also a set of representatives for 
$\Iw(p^{b,c})/m\Iw(p^{b,c})m^{-1}$
and consequently 
$h_N(\alpha_w^{(j)})$ and $h_N(u)$
coincide with the operators $U_{\wt{k},w}^{(j)}$ and 
$\langle u\rangle_{\wt{k}}$ in \eqref{def:hecke_at_p}.
\end{defn}

Aside from the operators defined above,
the operators from \eqref{def:hecke_at_p} and
\eqref{def:hecke_at_s} also 
acts on $S_{\wt{k}}(U^p\Iw^P(p^{b,c}),A)$.

\begin{lem}
The Hecke operators commutes with each other
and are equivariant with respect to the inclusions
$ S_{\wt{k}}(U^p\Iw^P(p^{b,c}),A)\hookrightarrow
S_{\wt{k}}(U^p\Iw^P(p^{b',c'}),A)$
if $b'\geq b$ and $c'\geq c$.
\end{lem}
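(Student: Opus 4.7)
The plan is to verify the two assertions by treating the Hecke operators prime-by-prime. Since operators attached to distinct finite primes $v \neq v'$ act on the image of $G(\F) \backslash G(\A_f)$ through double cosets supported at different local factors, they manifestly commute by a Fubini-type argument. I am therefore reduced to showing commutativity of each family at a single prime, and independently checking compatibility with level inclusions at that prime.

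At primes $v = w\bw \notin T$ where $U_v = K_w$ is hyperspecial, the operators $T_w^{(j)}$ lie in the spherical Hecke algebra of $\GL_n(\K_w)$, which is commutative by the Satake isomorphism. At primes $v \in S$ with level between $\Iw_1(w)$ and $\Iw(w)$, the double-coset representatives $\alpha_w^{(j)}$ and the diamond operators $u \in T_n(\oo_w)$ all lie in the diagonal torus; a standard decomposition $\Iw(w) m \Iw(w) = \bigsqcup u_i m \Iw(w)$ with $u_i \in N_n(\oo_w)/(m N_n(\oo_w) m^{-1})$ and the Bernstein presentation of the Iwahori–Hecke algebra give commutativity of this diagonal subalgebra. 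Finally, at primes $w \in \Sigma_p$, the operators $U_{\wt{k},w}^{(j)}$ and $\langle u\rangle_{\wt{k}}$ are each of the form $h_U(m)$ for $m$ in the commutative submonoid $T^+ \subset Q^+$. The core observation, which is a general property of Emerton's construction, is that for $m, m' \in Q^+$ we have $h_U(mm') = h_U(m) h_U(m')$ whenever the products of double cosets involved decompose compatibly; for $m, m' \in T^+$ this is automatic because both normalize $U_0$. Thus the commutativity of $T^+$ transfers directly to the Hecke operators.

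For the equivariance with the inclusions $S_{\wt{k}}(U^p \Iw^P(p^{b,c}), A) \hookrightarrow S_{\wt{k}}(U^p \Iw^P(p^{b',c'}), A)$ when $b' \geq b$ and $c' \geq c$, I would note that $\Iw^P(p^{b',c'}) \subset \Iw^P(p^{b,c})$ gives an inclusion of invariants, then verify case-by-case that each double coset $\Iw^P(p^{b,c}) \cdot m \cdot \Iw^P(p^{b,c})$ refines into a union of double cosets for $\Iw^P(p^{b',c'})$, and that the corresponding sums of left translates agree on the subspace of coarser-level invariants. For operators $h_U(m)$ with $m \in T^+$, this comes down to the fact that a set of representatives for $U_0 / (m U_0 m^{-1})$ at the coarser level can be refined to one at the finer level; the normalization $(w_0 \wt{k})^{-1}(t)$ depends only on $t$ and not on $b, c$, so it passes through the inclusion.

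The main obstacle I anticipate is handling the normalization factor $(w_0 \wt{k})^{-1}(t)$ cleanly at primes above $p$, particularly when demonstrating that $h_U$ is multiplicative on $T^+$ in the presence of this twist. Once this is unpacked by writing the twisted action in Definition 2.6 as $(t \ast f)(g) = (w_0 \wt{k})^{-1}(t)\, \xi_{\wt{k}}(t) f(gt)$ and observing that the character $(w_0 \wt{k})^{-1} \xi_{\wt{k}}\vert_{T^+}$ is multiplicative in $t$, both commutativity and the equivariance with level inclusions follow uniformly from the corresponding statements for the un-normalized $h_U$.
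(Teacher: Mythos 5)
Your proof reaches the right conclusion, and at the heart of the matter --- the primes above $p$ --- it follows the same route as the paper: multiplicativity of $h_U$ on the commutative monoid $T^+$, which is exactly the content of \cite[Lem 3.1.4]{emeI} that the paper cites. Your handling of the normalization character $(w_0\wt{k})^{-1}$ is also correct: since it is a genuine character on $T^+$, the twisted action \eqref{def:T_act} remains a monoid action and the multiplicativity survives the twist.

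There are two places where your argument departs from the paper, neither fatal but both worth flagging. At places $v\in S$ the paper does not invoke the Bernstein presentation; it observes instead that the double-coset representatives for $U_v\alpha_w^{(j)}U_v$ can be taken to coincide with those used for the operators at $p$, so commutativity transfers verbatim from the $h_U$-multiplicativity argument. Your Bernstein-presentation route works, but is heavier and requires a bit of care because the relevant level $U_v$ sits strictly between $\Iw_1(w)$ and $\Iw(w)$, which is not the setting in which the classical Bernstein subalgebra statement is usually quoted.

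Second, your framing of the equivariance step is misleading. You write that a set of representatives for $U_0/(mU_0m^{-1})$ ``at the coarser level can be refined to one at the finer level.'' But $U_0=K_p\cap U$ is fixed once and for all in Emerton's construction; it does not depend on $(b,c)$. By the Iwahori decomposition, the classical double-coset representatives for $\Iw^P(p^{b,c})\,m\,\Iw^P(p^{b,c})$ at any level with $c\geq 1$ are in canonical bijection with $U_0/mU_0m^{-1}$, so the representatives at coarser and finer levels are literally the same set, not a refinement of one another. The paper's formulation --- equivariance follows from the equivariance of the action \eqref{def:T_act} --- expresses this level-independence more directly: the operator $h_U(m)$ is defined intrinsically on $V^{U_0}$ with no reference to $(b,c)$, so equivariance under the level inclusions is automatic once one knows $h_U(m)$ agrees with the double-coset operator at each fixed level.
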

\begin{proof}
That the operators $T_w^{(j)}$ from \eqref{def:hecke_away_p}
commutes with other Hecke operators is classical,
and the equivariance is clear.
For the Hecke operators at $p$,
the commutativity follows from \cite[Lem 3.1.4]{emeI},
and the equivariance follows from 
that of the action \eqref{def:T_act}.
See also \cite[Lem 2.10]{ger} for the proof when $P=B$.
Then the result also holds for the Hecke operators from 
\eqref{def:hecke_at_s}
since the set of representatives for the double cosets
in which can be chosen as those at $p$.
\end{proof}

From this point on,
we assume that $A$ is $E$, or a finite $\oo$-module,
or the Pontryagin dual of a finite $\oo$-module.
In particular, let $\varpi\in \oo$ be a uniformizer, $A$ could be 
$\oo, \oo/\varpi^n\oo, \varpi^{-n}\oo/\oo,$ or $E/\oo$.
Except when $A=E$,
the operator $e_P\coloneqq\lim_{n\to \infty}(U_P)^{n!}$
converges to an idempotent.
We then define the space of $P$-ordinary forms 
with coefficient in $A$ by
\[
	S_{\wt{k}}^{P-\ord}(U^p\Iw^P(p^{b,c}),A)\coloneqq
	e_PS_{\wt{k}}(U^p\Iw^P(p^{b,c}),A)
\]
and $S_{\wt{k}}^{P-\ord}(U^p\Iw^P(p^{b,c}),E)\coloneqq 
S_{\wt{k}}^{P-\ord}(U^p\Iw^P(p^{b,c}),\oo)\otimes_{\oo}E$.
Alternatively,
$S_{\wt{k}}^{P-\ord}(U^p\Iw^P(p^{b,c}),A)$
is characterized as the subspace on which 
any  $U_{\wt{k},w}^{(j)}$ such that 
$\alpha_w^{(j)}\in Z_Q$ acts invertibly.
When $P=B$ we write 
$S_{\wt{k}}^{B-\ord}(U^p\Iw(p^{b,c}),A)=
S_{\wt{k}}^{\ord}(U^p\Iw(p^{b,c}),A)$,
which coincides with \cite[Def 2.13]{ger}.

\begin{defn}\label{def:ord_hecke}
	We let $\TT^P_{\wt{k}}(U^p\Iw^P(p^{b,c}),A)\subset 
    \End_{\oo}S_{\wt{k}}^{P-\ord}(U^p\Iw^P(p^{b,c}),A)$
	be the $\oo$-subalgebra
	generated by all
	$T_w^{(j)}$, for $1\leq j\leq n$,
	and $(T_w^{(n)})^{-1}$ from \eqref{def:hecke_away_p};
	all $U_{\wt{k},w}^{(j)}$ and $\langle u\rangle_{\wt{k}}$
    from Definition \ref{def:hecke}
	for $\alpha_w^{(j)}$ belonging to $Z_Q$ and $u\in T_n(\oo_w)$;
    and all $\langle u\rangle$ from \eqref{def:hecke_at_s}.
    When $P=B$
    we write $\TT^{\ord}_{\wt{k}}(U^p\Iw(p^{b,c}),A)=
    \TT^B_{\wt{k}}(U^p\Iw(p^{b,c}),A)$.
\end{defn}

\begin{lem}\label{lem:control}
	The following natural inclusions are also surjective.
    In particular we may restrict ourselves 
    to modular forms of levels $\Iw^P(p^{b,b})$ or $\Iw^P(p^{0,1})$
    at $p$ when considering $P$-ordinary forms.
	\begin{align*}
	&S_{\wt{k}}^{P-\ord}(U^p\Iw^P(p^{b,b}),A)\hookrightarrow	
	S_{\wt{k}}^{P-\ord}(U^p\Iw^P(p^{b,c}),A)\quad 
	\text{ for } c\geq b\geq 1\\
	&S_{\wt{k}}^{P-\ord}(U^p\Iw^P(p^{0,1}),A)\hookrightarrow	
	S_{\wt{k}}^{P-\ord}(U^p\Iw^P(p^{0,c}),A)\quad \text{ for } c\geq 1
	\end{align*}
\end{lem}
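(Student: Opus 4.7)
The plan is standard in Hida theory: the Hecke operator $U_P$ is invertible on the $P$-ordinary part and enjoys a level-enlarging property that, when iterated, reduces the level at $p$ to the minimal one $\Iw^P(p^{b,b})$ or $\Iw^P(p^{0,1})$. Concretely, the key claim is that for $c \geq b+1$ with $b \geq 1$ (respectively $c \geq 2$ with $b = 0$), the operator $U_P$ sends $S_{\wt{k}}(U^p \Iw^P(p^{b,c}), A)$ into $S_{\wt{k}}(U^p \Iw^P(p^{b,c-1}), A)$. Granting this, any $f$ in the $P$-ordinary space at level $\Iw^P(p^{b,c})$ can be rewritten as $f = U_P^m g$ for any $m \geq 0$ and some $g$ in the same ordinary space by invertibility of $U_P$ on $e_P$-fixed vectors; taking $m = c - b$ in the first case and $m = c - 1$ in the second puts $f$ at the desired minimal level.

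To prove the claim, I would fix $\alpha \in Z_Q^+$ realising $U_P$ up to the scalar $(w_0\wt{k})^{-1}(\alpha)$, taken concretely as the product over $w \in \Sigma_p$ of those $\alpha_w^{(j)}$ lying in $Z_{Q_w}$. By the definition of $Z_Q^+$, conjugation by $\alpha$ strictly contracts $U_0$, strictly expands the opposite unipotent radical, and centralises the Levi $Q$. An Iwahori-decomposition bookkeeping then gives the factorisation $\alpha \Iw^P(p^{b,c}) \alpha^{-1} \cdot U_0 = \Iw^P(p^{b,c-1})$, from which one deduces that the double coset $\Iw^P(p^{b,c}) \alpha \Iw^P(p^{b,c})$ inside $G_p$ is stable under left multiplication by $\Iw^P(p^{b,c-1})$.

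With this stability in hand, for $k \in \Iw^P(p^{b,c-1})$ and representatives $\{u\alpha\}_u$ of $\Iw^P(p^{b,c}) \alpha \Iw^P(p^{b,c})/\Iw^P(p^{b,c})$, the elements $\{ku\alpha\}_u$ are again representatives up to a permutation $\sigma$, with $ku\alpha = u_{\sigma(u)}\alpha h_{\sigma(u)}$ for some $h_{\sigma(u)} \in \Iw^P(p^{b,c})$. Substituting in the sum defining $(U_P f)(xk)$, using the $\Iw^P(p^{b,c})$-invariance of $f$ together with the relation $\xi_{\wt{k}}(ku\alpha) = \xi_{\wt{k}}(u_{\sigma(u)}\alpha)\cdot \xi_{\wt{k}}(h_{\sigma(u)})$ then yields $(U_P f)(xk) = \xi_{\wt{k}}(k)^{-1}(U_P f)(x)$, which is precisely the $\Iw^P(p^{b,c-1})$-invariance of $U_P f$ as a modular form.

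The principal technical obstacle is the explicit Iwahori-decomposition bookkeeping leading to the factorisation of $\Iw^P(p^{b,c-1})$ and the resulting stability of the double coset, together with the compatibility of the weight-shift factors $\xi_{\wt{k}}(u\alpha)$ and $(w_0\wt{k})^{-1}(\alpha)$ in the reindexed sum. This kind of argument is standard in Hida theory and is implicit in Emerton's construction of $\Ord_P$ in \cite{emeI}; the Borel case $P = B$ is carried out in \cite{ger}, and the general parabolic case follows by the same template using the strict contraction properties of $\alpha$.
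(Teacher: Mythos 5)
Your argument is correct and is essentially the paper's own approach: both reduce to the fact that $U_P$ (a sufficiently high power thereof, or equivalently one step at a time as you do) moves $\Iw^P(p^{b,c})$-invariant vectors into $\Iw^P(p^{b,b})$-invariant ones, combined with invertibility of $U_P$ on the ordinary part. The paper packages this by writing $f = e_P f = (U_P)^{n!} f$ and citing \cite[Lem 3.3.2]{emeI} for the level-lowering of high powers, whereas you unfold the same contraction argument one step at a time via the factorisation $\alpha\,\Iw^P(p^{b,c})\,\alpha^{-1}\cdot U_0 = \Iw^P(p^{b,c-1})$; these are the same proof at different granularity.
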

\begin{proof}
	It suffices to show that 
	$(U_P)^{n!}S_{\wt{k}}(U^p\Iw^P(p^{b,c}),A)
	\subset S_{\wt{k}}(U^p\Iw^P(p^{b,b}),A)$
	for $n$ sufficiently large, 
	which follows from \cite[Lem 3.3.2]{emeI}
	since $\Iw^P(p^{b,c})$ admits Iwahori decompositions.
	The same argument also applies to 
	$S_{\wt{k}}(U^p\Iw^P(p^{0,c}),A)$.
	See also \cite[Lem 2.19]{ger} for the proof when $P=B$.
\end{proof}

\begin{lem}\label{lem:PtoB}
	For any $b\geq 1$
    the space $S_{\wt{k}}^{\ord}(U^p\Iw(p^{b,b}),A)$
    is a subspace of 
    $S_{\wt{k}}^{P-\ord}(U^p\Iw(p^{b,b}),A)$
	  and the inclusions
	$S_{\wt{k}}^{\ord}(U^p\Iw(p^{b,b}),A)\subset
	S_{\wt{k}}^{P-\ord}(U^p\Iw^P(p^{b,b}),A)$
    are equivariant with respect to the Hecke operators.
    In particular they induce homomorphisms of $\oo$-algebras
	\[
		\TT^P_{\wt{k}}(U^p\Iw^P(p^{b,b}),A)\to
		\TT^{\ord}_{\wt{k}}(U^p\Iw(p^{b,b}),A)
	\]
\end{lem}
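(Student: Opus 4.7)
The plan is to reduce the lemma to a combinatorial identification of the two versions of the Hecke operator at $p$, and then observe that $B$-ordinarity formally implies $P$-ordinarity. First I would note that at matching Iwahori level $b=c$, the mod-$\varpi^c$ condition in the definition of $\Iw(w^{b,b})$ and $\Iw^P(w^{b,b})$ is redundant (since $N\subset B$ and $U\subset P$), so the two groups reduce to $\{k\in K_w: k\bmod\varpi_w^b\in N\}$ and $\{k\in K_w: k\bmod\varpi_w^b\in U\}$. Because $B\subset P$ forces $U=U_P\subset U_B=N$, we obtain $\Iw^P(p^{b,b})\subset\Iw(p^{b,b})$ and the tautological inclusion $S_{\wt{k}}(U^p\Iw(p^{b,b}),A)\subset S_{\wt{k}}(U^p\Iw^P(p^{b,b}),A)$.

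The key combinatorial step is to show that for $\alpha\in Z_Q^+$ and any $f\in V^{N_0}$ (in particular any $\Iw(p^{b,b})$-form), one has $h_U(\alpha)(f)=h_N(\alpha)(f)$. Here I would use the decomposition $N_0=(N_0\cap Q)\cdot U_0$ with $U_0$ normal: since $\alpha$ centralizes $Q$,
\[
\alpha N_0\alpha^{-1}=(N_0\cap Q)\cdot \alpha U_0\alpha^{-1},
\]
so the inclusion $U_0\hookrightarrow N_0$ induces a bijection $U_0/\alpha U_0\alpha^{-1}\xrightarrow{\sim}N_0/\alpha N_0\alpha^{-1}$ (injectivity uses $U_0\cap Q=\{1\}$; surjectivity absorbs the $(N_0\cap Q)$-factor). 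A common set of representatives in $U_0$ then computes both sums term-by-term, and the normalizing scalar $(w_0\wt{k})^{-1}(\alpha)$ built into the $*$-action of Definition \ref{def:hecke} is the same on both sides. The same reasoning handles the diamond operators $\langle u\rangle_{\wt{k}}$ for $u\in T_n(\oo_w)$, and the Hecke operators away from $p$ trivially match since they depend only on $U^p$.

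With this identification, I would assemble the lemma as follows. The projector $e_B=\lim_n(U_B)^{n!}$ is built from the product of $h_N(\alpha_w^{(j)})$ over all $j$, while $e_P$ uses only the sub-product over those $\alpha_w^{(j)}\in Z_Q$; since the factors commute, $U_B$ acting invertibly forces $U_P$ to act invertibly. Hence any $B$-ordinary form at level $\Iw(p^{b,b})$ is automatically $P$-ordinary at the same level, proving the first containment. Composing with the level inclusion from the first step and noting $e_P\circ e_B=e_B$ on this space, we obtain
\[
S_{\wt{k}}^{\ord}(U^p\Iw(p^{b,b}),A)=e_P S_{\wt{k}}^{\ord}(U^p\Iw(p^{b,b}),A)
\subset e_P S_{\wt{k}}(U^p\Iw^P(p^{b,b}),A)=S_{\wt{k}}^{P-\ord}(U^p\Iw^P(p^{b,b}),A),
\]
and the induced map on Hecke algebras $\TT^P\to\TT^{\ord}$ is obtained by sending each generator to its image under the operator identification above. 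The main point requiring care is the second step: one must track that the coset bijection carries the right set of representatives so that the two sums coincide as written, without introducing a multiplicity factor. Once this is verified, Hecke-equivariance and the algebra map are formal bookkeeping.
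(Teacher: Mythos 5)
Your proof is correct and follows essentially the same three-step structure as the paper: level inclusion $\Iw^P(p^{b,b})\subset\Iw(p^{b,b})$, identification of the Hecke operators $h_U(\alpha)=h_N(\alpha)$ for $\alpha\in Z_Q$ on $\Iw(p^{b,b})$-invariant forms, and the formal deduction that $B$-ordinary implies $P$-ordinary. The one place you diverge is the coset identification: the paper simply cites the explicit matrix representatives $\smat{\id_j&X\\&\id_{n-j}}$ from Geraghty and observes they lie in $U_0$, whereas you derive the bijection $U_0/\alpha U_0\alpha^{-1}\xrightarrow{\sim}N_0/\alpha N_0\alpha^{-1}$ abstractly from the decomposition $N_0=(N_0\cap Q)U_0$ with $U_0$ normal and $U_0\cap Q=\{1\}$; your version is slightly more self-contained but proves the same fact, and both require $\alpha$ to centralize $Q$, which you correctly invoke. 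You also make explicit the commuting-projector step ($U_B=U_P\cdot U'$ and $e_P e_B=e_B$), which the paper leaves implicit in the phrase "the lemma follows if the inclusions are equivariant."
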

\begin{proof}
	We have the natural inclusions
	$S_{\wt{k}}(U^p\Iw(p^{b,b}),A)\subset 
	S_{\wt{k}}(U^p\Iw^P(p^{b,b}),A)$
	since $U\subset N$ and consequently 
    $\Iw^P(p^{b,b})\subset \Iw(p^{b,b})$.
    Thus the lemma follows if the inclusions
	are equivariant with respect to the Hecke operators.
	This is clear except for the operators 
    $U_{\wt{k},w}^{(j)}=h_U(\alpha)$
    in Definition \ref{def:hecke},
    for $\alpha=\alpha_w^{(j)}\in Z_Q$.
    In this case we need the observation that
    the set of representatives
	\[
	\begin{pmatrix}
		\id_j&X\\&\id_{n-j}
	\end{pmatrix},\quad
	X \text{ runs through a set of representatives of }
	M_{j,n-j}(\oo_w/\varpi_w)
	\]
    given in \cite[Lem 2.10]{ger} for $N_0/\alpha N_0\alpha^{-1}$
	is also a set of representatives for 
	$U_0/\alpha U_0\alpha^{-1}$.
\end{proof}

\subsection{Weights independence}

For a parabolic subgroup $P$ as introduced above,
we would like to compare the space of 
$P$-ordinary modular forms
of a domiant weight 
$\wt{k}=(k_\sigma)\in (\Z^n)^{\Sigma}$
and that of the trivial weight.
For each $w\in\Sigma_p$ and $\sigma\in I_w$
let $\pi_{k_{\sigma}}$ denote 
the algebraic $Q_w$-representation
$\Ind_{B_w\cap Q_w}^{Q_w}(w_0 k_\sigma)$
and let $\pi_{\wt{k}}$ be the $Q$-representation
as defined by the recipe in \eqref{def:algrep}.
Then $\pi_{\wt{k}}(\oo)$ is an algebraic representation on
$Q_0\coloneqq \Q\cap K_p$,
which we extends  to $P_0\coloneqq P\cap K_p$
via the projection $P=QU\to Q$.

Note that $\pi_{\wt{k}}$
is simply the character $w_0\wt{k}$
of $Q=T$ when $P=B$.
Therefore the following proposition can be seen as
a generalization of \cite[Prop 2.22]{ger}.

\begin{lem}
	Let $\pi_{\wt{k}}^*$ denote the contragredient
	representation and $A=\varpi^{-r}\oo/\oo$.
	For each $b\geq r$ there exists 
    the following isomorphism 
    which is equivariant with respect to the Hecke operators
    in Definition \ref{def:ord_hecke}.
	\[
		\epsilon_{\wt{k}} \colon 
		S_{\wt{k}}^{P-\ord}(U^p\Iw^P(p^{b,b}),A)\cong 
		\Hom_{\oo}(\pi_{\wt{k}}^*(\oo),
		S^{P-\ord}(U^p\Iw^P(p^{b,b}),A)).
	\]
	Moreover, the isomorphism is $Q_0$-equivariant 
    with respect to
	the action of $u\in Q_0$ defined as follows.
	\begin{align*}
	&u\cdot F(g)=\xi_{\wt{k}}(u)\cdot F(gu),\quad
	F(g)\in S_{\wt{k}}^{P-\ord}(U^p\Iw^P(p^{b,b}),A)\\
	&u\cdot \phi(v^*)(g)=
	\phi(\pi^*_{\wt{k}}(u^{-1})\cdot v^*)(gu),\quad
	\phi\in \Hom_{\oo}(\pi_{\wt{k}}^*(\oo),
	S^{P-\ord}(U^p\Iw^P(p^{b,b}),A))
	\end{align*}
\end{lem}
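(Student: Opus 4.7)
The plan is to construct a natural $P$-equivariant surjection $\rho\colon \xi_{\wt{k}}(\oo)\twoheadrightarrow \pi_{\wt{k}}(\oo)$ on which $U$ acts trivially on the target, use it to define $\epsilon_{\wt{k}}$ via the natural pairing, verify the Hecke- and $Q_0$-equivariances, and reduce the bijectivity to showing that the $P$-ordinary projector $e_P$ annihilates $K\coloneqq\ker\rho$.

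For $\rho$ I would use the identification $\pi_{\wt{k}}=\Ind_B^P(w_0\wt{k})$ (inflated from $Q$ through $P\twoheadrightarrow Q$, so that $U$ acts trivially) and take $\rho$ to be the natural restriction of functions $f\mapsto f|_P$. This is a $P$-equivariant surjection, and hence $U_0$- and $Q_0$-equivariant, generalizing the evaluation map $f\mapsto f(1)$ in the $P=B$ case of \cite[Prop 2.22]{ger}. Setting
\[
\epsilon_{\wt{k}}(F)(v^*)(g)\coloneqq\langle v^*, \rho(F(g))\rangle,\qquad v^*\in\pi_{\wt{k}}^*(\oo),\ g\in G(\A_f),
\]
I would first check that $\epsilon_{\wt{k}}(F)(v^*)$ belongs to $S(U^p\Iw^P(p^{b,b}),A)$: for $u\in\Iw^P(p^{b,b})$ the $p$-component satisfies $u_p\bmod\varpi^b\in U(\oo/\varpi^b)$ by \eqref{def:Iwahori_P}, so with $b\geq r$ and $A=\varpi^{-r}\oo/\oo$ the operator $\xi_{\wt{k}}(u_p)$ on $\xi_{\wt{k}}(\oo)\otimes A$ coincides with that of a lift $\tilde u\in U_0$, whose action is killed by $\rho$. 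Hecke-equivariance for the operators in \eqref{def:hecke_at_p} is a direct calculation using that the normalization $(w_0\wt{k})^{-1}(\alpha)$ in $U_{\wt{k},w}^{(j)}=h_U(\alpha)$ exactly cancels the central character $\pi_{\wt{k}}(\alpha)=(w_0\wt{k})(\alpha)$ of $Z_Q$ on $\pi_{\wt{k}}$; for the operators in \eqref{def:hecke_away_p} and \eqref{def:hecke_at_s} the check is immediate, and the $Q_0$-equivariance is a formal consequence of that of $\rho$ combined with the duality $\langle \pi_{\wt{k}}^*(u^{-1})v^*,w\rangle=\langle v^*,\pi_{\wt{k}}(u)w\rangle$.

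For the bijectivity of $\epsilon_{\wt{k}}$ I would exploit the short exact sequence $0\to K\to\xi_{\wt{k}}(\oo)\to\pi_{\wt{k}}(\oo)\to 0$ of $P$-modules. Under the small condition \eqref{cond:small}, this induces an exact sequence
\[
0\to S_K(U^p\Iw^P(p^{b,b}),A)\to S_{\wt{k}}(U^p\Iw^P(p^{b,b}),A)\to S_{\pi_{\wt{k}}}(U^p\Iw^P(p^{b,b}),A)\to 0,
\]
with the last term canonically identified with $\Hom_{\oo}(\pi_{\wt{k}}^*(\oo),S(U^p\Iw^P(p^{b,b}),A))$ (again using the triviality of the $U$-action on $\pi_{\wt{k}}$ and $b\geq r$). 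The lemma then reduces to $e_P S_K=0$.

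The key computation is that $U_P$ acts by a $\varpi$-divisible operator on $K$. Over $E$, $\xi_{\wt{k}}|_Q$ decomposes into $Q$-isotypic summands; $\pi_{\wt{k}}$ is the one on which $Z_Q$ acts by $(w_0\wt{k})|_{Z_Q}$, and every other summand has central character differing from $(w_0\wt{k})|_{Z_Q}$ by a nontrivial nonnegative integral combination of roots of $T$ in $U$, hence takes a $\varpi$-divisible value at each $\alpha_w^{(j)}\in Z_Q$. Combined with the normalization $(w_0\wt{k})^{-1}(\alpha)$ in $U_{\wt{k},w}^{(j)}=h_U(\alpha)$, this forces $U_P$ to act on $K$ with $\varpi$-divisible eigenvalues, so $(U_P)^{n!}\to 0$ on $K\otimes A$ and $e_P S_K=0$. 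The main obstacle is making this decomposition valid integrally, since $\xi_{\wt{k}}(\oo)|_{Q_0}$ need not split into a direct sum of Weyl modules; I would circumvent it by working with a $P$-stable $\oo$-lattice filtration of $\xi_{\wt{k}}(\oo)$ whose graded pieces are free $Q_0$-modules realizing the above central characters, so that the $\varpi$-divisibility estimate passes through graded piece by graded piece — sufficient since $A$ is annihilated by $\varpi^r$.
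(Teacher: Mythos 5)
Your approach is correct and takes a genuinely different route from the paper. The paper's proof constructs an explicit near-inverse $\varphi$ using a choice of weight vectors $\{v_\mu,v_\mu^*\}$ and the operator $\alpha=\alpha_P^r$, then verifies $\varphi\circ\epsilon_{\wt{k}}=U_P^r=\epsilon_{\wt{k}}\circ\varphi$, so that passing to the $P$-ordinary parts (where $U_P$ is invertible) yields the isomorphism. You instead reduce to the exactness of $0\to S_K\to S_{\xi_{\wt{k}}}\to S_{\pi_{\wt{k}}}\to 0$ under \eqref{cond:small} and show that the $P$-ordinary projector $e_P$ kills $S_K$ by a $\varpi$-divisibility estimate on the normalized $U_P$ acting through the kernel $K$. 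Both arguments hinge on the same computation: that the normalized central action $(w_0\wt{k})^{-1}(\alpha)\xi_{\wt{k}}(\alpha)$ is a unit on the $\pi_{\wt{k}}$-weights and lies in $\varpi\oo$ on the weights appearing in $K$. Your version is slightly more conceptual (it cleanly separates the exactness of $e_P$ from the kernel-vanishing step) while the paper's is more computational and avoids needing to discuss $P$-stable lattices at all.

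One small remark that simplifies the obstacle you flagged at the end: you do not actually need any filtration with $Q_0$-module graded pieces. The $\varpi$-divisibility estimate uses only that $K(\oo)$ is $T_0$-stable (hence $Z_Q$-stable) and decomposes $\oo$-integrally into $T_0$-weight spaces, which is automatic for a finite free $\oo$-module with an algebraic $T_0$-action. On each such weight space the operator $(w_0\wt{k})^{-1}(\alpha)\xi_{\wt{k}}(\alpha)$ acts by the scalar $\mu'(\alpha)/(w_0\wt{k})(\alpha)\in\varpi\oo$, and the remaining ingredients of $h_U(\alpha)$ (the averaging $\sum_{u\in U_0/\alpha U_0\alpha^{-1}}$, translation on the domain, and the $\xi_{\wt{k}}(u)$-action for $u\in U_0$) are $\oo$-integral and preserve $K$. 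So $U_P\bigl(S_K(U^p\Iw^P(p^{b,b}),\oo)\bigr)\subset\varpi\,S_K(U^p\Iw^P(p^{b,b}),\oo)$ and hence $U_P^r=0$ on $S_K(U^p\Iw^P(p^{b,b}),A)$ since $\varpi^rA=0$. The concern about $\xi_{\wt{k}}(\oo)|_{Q_0}$ failing to split into Weyl modules is therefore moot, and the proof closes without the lattice filtration.
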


\begin{proof}
	By inductions in steps
	we can fix an isomorphism 
	$\xi_{\wt{k}}\cong \Ind_{P}^{G_p}\pi_{\wt{k}}$.
	Let $ev\colon \xi_{\wt{k}}\to \pi_{\wt{k}}$
	be the evaluation at the identity.
	For $F(g)\in S_{\wt{k}}(U^p\Iw^P(p^{b,b}),A)$,
	we define 
	$\epsilon_{\wt{k}}(F)$ by
	\begin{equation}\label{eq:wt_indep}
	\epsilon_{\wt{k}}(F)\colon 
	\pi^*_{\wt{k}}(\oo)\rightarrow
	S(U^p\Iw^P(p^{b,b}),A)\quad
	v^*\mapsto [g\mapsto v^*(ev(F(g)))].
	\end{equation}
    As $b\geq r$,
	the action of $\Iw^P(p^{b,b})$ on 
	$A\otimes_{\oo}\pi_{\wt{k}}(\oo)$
	is trivial.
	Thus the function defined above is indeed 
	a modular form
	of trivial weight.
	That the map is equivariant with respect
    to the $Q_0$-actions
    follows from $ev\circ \xi_{\wt{k}}(u)=\pi_{\wt{k}}(u)\circ ev$
    when $u\in Q_0$.
    It is also straightforward that the map
    is equivariant with respect to the Hecke operators away $p$.
	For the Hecke operators at $p$,
    the equivariance follows from that 
    $(w_0\wt{k})^{-1}(z)\cdot ev\circ \xi_{\wt{k}}(z)=
    (w_0\wt{k})^{-1}(z)\cdot \pi_{\wt{k}}(z)\circ ev=
    ev$ when $z\in Z_Q$.

	To construct the reversed map,
	note that if $\mu$ is a weight character of $T$ in  
	$\pi_{\wt{k}}$, then it is also a weight character 
	of $T$ in $\xi_{\wt{k}}$.
	We fix weight vectors $v_\mu\in \xi_{\wt{k}}$
	and $v^*_\mu\in \pi_{\wt{k}}^*$
	such that $v^*_{\mu}(ev(v_\mu))=1$.
	Now, let $\alpha_P\in Z_Q^+$ be the product
	of all $\alpha_w^{(j)}\in Z_Q$ and $\alpha=\alpha_P^r$,
	Let $\{x_i\}_{i\in I}$
	be a set of represntatives 
	for $U_0/\alpha U_0\alpha^{-1}$,
	we put 
	\begin{align*}
		\varphi\colon 
		\Hom_{\oo}(\pi_{\wt{k}}^*(\oo),&
		S(U^p\Iw^P(p^{b,b}),A))\longrightarrow
		S_{\wt{k}}(U^p\Iw^P(p^{b,b}),A)\\
		\phi&\mapsto 
		F_\phi(g)=\sum_{i\in I} \sum_{\mu}
		\phi(v^*_\mu)(gx_i\alpha)\otimes
		\xi_{\wt{k}}(x_i) v_\mu
	\end{align*}
	where $\mu$ runs through the weight characters in 
	$\pi_{\wt{k}}$.
	To show that the resulting function 
	defines a modular form,
	let $u\in \Iw^P(p^{b,b})$, 
	then as explained in \cite[Prop 2.22]{ger}
	there exists a bijection $i\mapsto i'$ of $I$
	such that 
	 \[
		ux_i=x_{i'}v_i,\quad
		v_i\in\alpha\Iw^P(p^{b,b})\alpha^{-1} 
		\cap \Iw^P(p^{b,b})
	\]
	Since each $v_i$ is reduced to the identity matrix 
	modulo $\varpi^r$ and thus acts trivially on 
	$A\otimes_{\oo}\xi_{\wt{k}}(\oo)$,
	\[
		\xi_{\wt{k}}(u)\cdot F_\phi(gu)=
		\sum_{i\in I}\sum_{\mu}
		\phi(v^*_\mu)(gx_i'v_i\alpha)\otimes
		\xi_{\wt{k}}(x_i'v_i) v_\mu=
		\sum_{i\in I}\sum_{\mu}
		\phi(v^*_\mu)(gx_i'\alpha)\otimes
		\xi_{\wt{k}}(x_i')
        v_\mu=F_\phi(g).
	\]

	At last, we observe that for each $\mu$ 
	the composition
	$\epsilon_{\wt{k}}(F_\phi)$ is the homomorphism
	\[
		v_\mu^*\mapsto \sum_{i\in I}\phi(v_\mu^*)
		(gx_i\alpha) =U_P^r\phi(v_\mu^*)(g)
	\]
	On the other hand 
	if we decompose $F$ with respect to a choice of 
	weight vectors
	$F(g)=\sum_\mu F_\mu(g)v_\mu+
	\sum_{\mu'}F_{\mu'}(g)v_{\mu'}$, 
	with $\mu$ goes through weight vectors 
	that also appears in $\pi_{\wt{k}}$
	and $\mu'$ goes through the complement,
	then we have
	$\mu(\alpha)=(w_0\wt{k})(\alpha)$ for all $\mu$
	and  $\varpi^r(w_0\wt{k})(\alpha)$ divides $\mu'(\alpha)$
	in $\oo$ for all $\mu'$.
	Therefore
	\begin{multline*}
	U_P^rF(g)=
	\sum_{i\in I}
	\sum_\mu \xi_{\wt{k}}(x_i)\cdot F_\mu(gx_i\alpha)v_\mu+
	\sum_{i\in I}
	\sum_{\mu'}\frac{\mu'(\alpha)}{(w_0\wt{k})(\alpha)}
	\xi_{\wt{k}}(x_i)\cdot F_{\mu'}(gx_i\alpha)v_{\mu'}\\=
	\sum_{i\in I}
	\sum_\mu \xi_{\wt{k}}(x_i)\cdot F_\mu(gx_i\alpha)v_\mu=
	\sum_{i\in I}
	\sum_\mu 
	\epsilon_{\wt{k}}(F)(v^*_\mu)(gx_i\alpha)\otimes
    \xi_{\wt{k}}(x_i)v_\mu
	=F_{\epsilon_{\wt{k}}(F)}(g).
	\end{multline*}

	We thus have the following commutative diagram,
	from which the proposition follows.
	\[
	\begin{tikzcd}
		S_{\wt{k}}(U^p\Iw^P(p^{b,b}),A)
		\arrow[r,"\epsilon_{\wt{k}}"]
		\arrow[d,"U_P^r"]
		& \Hom_\oo(\pi^*_{\wt{k}}(\oo), S(U^p\Iw^P(p^{b,b}),A))
		\arrow[d,"U_P^r"]
		\arrow[dl,"\varphi"]\\
		S_{\wt{k}}(U^p\Iw^P(p^{b,b}),A)
		\arrow[r,"\epsilon_{\wt{k}}"]
		& \Hom_\oo(\pi^*_{\wt{k}}(\oo), S(U^p\Iw^P(p^{b,b}),A))
	\end{tikzcd}	
	\]
\end{proof}

Let $S_{\wt{k}}^{P-\ord}(U^p,E/\oo)=
\varinjlim_{b}
S_{\wt{k}}^{P-\ord}(U^p\Iw^P(p^{b,b}),E/\oo)$
be the injective limit under inclusions
and define 
\[
	\TT^P_{\wt{k}}(U^p,E/\oo)=
	\varprojlim_{b}
	\TT^P_{\wt{k}}(U^p\Iw^P(p^{b,b}),E/\oo)
\]
Since $S_{\wt{k}}^{P-\ord}(U^p,E/\oo)$
is also the injective limit of 
$S_{\wt{k}}^{P-\ord}(U^p\Iw^P(p^{b,b}),\varpi^{-b}\oo/\oo)$
and $\pi_{\wt{k}}^*(\oo)$ is finite over $\oo$,
the following proposition
follows immediately from the previous lemma.

\begin{prop}\label{prop:wt_indep}
	We have the following isomorphism
	which is equivariant with respect to the 
	Hecke operators in Definition \ref{def:ord_hecke}
    and the $Q_0$-action  defined in previous lemma.
	\[
		\epsilon_{\wt{k}} \colon 
		S_{\wt{k}}^{P-\ord}(U^p,E/\oo)\cong 
		\Hom_{\oo}(\pi_{\wt{k}}^*(\oo),
		S^{P-\ord}(U^p,E/\oo)).
	\]
	In particular, this isomorphism 
	induces the following surjective homomorphism
	between the Hecke algebras
	\[
		\varphi_{\wt{k}}\colon 
		\TT^P(U^p,E/\oo)\twoheadrightarrow
		\TT^P_{\wt{k}}(U^p,E/\oo).
	\]
    which satisfies the following relations.
    \begin{itemize}
    \item $\varphi_{\wt{k}}(T_w^{(j)})=T_w^{(j)}$ for $T_w^{(j)}$
    from \eqref{def:hecke_away_p}.
    \item $\varphi_{\wt{k}}(U_{w}^{(j)})=U_{\wt{k},w}^{(j)}$ 
    and  $\varphi_{\wt{k}}(\langle u\rangle)=
    \langle u\rangle_{\wt{k}}$ 
    for $U_{\wt{k},w}^{(j)}$ and $\langle u\rangle_{\wt{k}}$
    from Definition \eqref{def:hecke}.
    \item $\varphi_{\wt{k}}(U_{w}^{(j)})=U_{w}^{(j)}$ 
    and  $\varphi_{\wt{k}}(\langle u\rangle)=
    \langle u\rangle$  
    for $U_{w}^{(j)}$ and $\langle u\rangle$
    from \eqref{def:hecke_at_s}.
    \end{itemize}
\end{prop}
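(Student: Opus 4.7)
The plan is to deduce this statement from the preceding lemma by passing to the direct limit over $b$. For each $b \geq 1$ the lemma applied with $r = b$ and $A = \varpi^{-b}\oo/\oo$ furnishes a Hecke-equivariant and $Q_0$-equivariant isomorphism
\[
\epsilon_{\wt{k}}^{(b)}\colon S_{\wt{k}}^{P-\ord}(U^p\Iw^P(p^{b,b}),\varpi^{-b}\oo/\oo) \cong \Hom_\oo(\pi_{\wt{k}}^*(\oo),\,S^{P-\ord}(U^p\Iw^P(p^{b,b}),\varpi^{-b}\oo/\oo)).
\]
The naturality of the evaluation map $ev\colon \xi_{\wt{k}}\to \pi_{\wt{k}}$ used to build $\epsilon_{\wt{k}}$ makes these isomorphisms compatible with the natural inclusions as $b$ increases, so they form a compatible system.

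Next, I would take the direct limit of both sides. Since $\pi_{\wt{k}}^*(\oo)$ is finite free over $\oo$, the functor $\Hom_\oo(\pi_{\wt{k}}^*(\oo),-)$ commutes with filtered colimits of $\oo$-modules. Combined with the definitions of $S_{\wt{k}}^{P-\ord}(U^p,E/\oo)$ and $S^{P-\ord}(U^p,E/\oo)$ as the corresponding injective limits, this yields the stated isomorphism $\epsilon_{\wt{k}}$ at the limit. Hecke-equivariance and $Q_0$-equivariance at the colimit then follow from the finite-level equivariance.

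For the surjection $\varphi_{\wt{k}}$, the idea is to transport the action of each generator of $\TT^P(U^p,E/\oo)$ on the right-hand side through $\epsilon_{\wt{k}}$ to an operator on $S_{\wt{k}}^{P-\ord}(U^p,E/\oo)$ and identify it with a generator of $\TT^P_{\wt{k}}(U^p,E/\oo)$. The operators $T_w^{(j)}$ away from $p$ and $U_w^{(j)}$, $\langle u\rangle$ at $v\in S$ act only on the argument of the modular form, hence are transported to themselves. For $\alpha = \alpha_w^{(j)}\in Z_Q$ the relation $\varphi_{\wt{k}}(U_w^{(j)}) = U_{\wt{k},w}^{(j)}$ follows from the computation
\[
ev\big(U_{\wt{k},w}^{(j)}F(g)\big) = (w_0\wt{k})^{-1}(\alpha)\sum_{u} \pi_{\wt{k}}(u\alpha)\,ev(F(gu\alpha)) = \sum_{u} ev(F(gu\alpha)),
\]
using that $\pi_{\wt{k}}(u) = 1$ for $u \in U$ since $\pi_{\wt{k}}$ is inflated from $Q$, and that $\pi_{\wt{k}}(\alpha) = (w_0\wt{k})(\alpha)$ is the central character of $Q$ on $Z_Q$. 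The analogous relation for $\langle u\rangle$ with $u \in T_n(\oo_w)$ is identical. These identifications hit every generator of $\TT^P_{\wt{k}}(U^p,E/\oo)$, giving surjectivity.

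The main obstacle is essentially nil, since the substantive weight-independence work has already been carried out at the finite-level lemma. The only delicate point is verifying the cancellation between the twist $(w_0\wt{k})^{-1}$ in Definition \ref{def:hecke} and the central character of $\pi_{\wt{k}}$ on $Z_Q$, which was precisely the purpose of introducing that twist; once this cancellation is observed, the remaining content of the proposition is formal manipulation of filtered colimits and the Hecke structure.
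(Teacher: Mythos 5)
Your proof matches the paper's: the paper deduces the proposition exactly by rewriting $S_{\wt{k}}^{P-\ord}(U^p,E/\oo)$ as the direct limit of $S_{\wt{k}}^{P-\ord}(U^p\Iw^P(p^{b,b}),\varpi^{-b}\oo/\oo)$ and using that $\pi_{\wt{k}}^*(\oo)$ is finite free over $\oo$ so $\Hom_\oo(\pi_{\wt{k}}^*(\oo),-)$ commutes with the colimit, which is precisely your argument. Your explicit verification of the Hecke relations (the cancellation of the twist $(w_0\wt{k})^{-1}(\alpha)$ against the central action of $\pi_{\wt{k}}$ on $Z_Q$) spells out what the paper leaves as "follows immediately from the previous lemma," and is correct.
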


\subsection{Completed homology and cohomology}

When $A=E/\oo, \oo/\varpi^{r}\oo$ or $\varpi^{-r}\oo/\oo$
and $\{U_p\}$ is the filtered system of
all compact open subgroups in $K_p$, 
we define the inverse limit
\begin{equation}\label{eq:complete}
	S(U^p,A)\coloneqq
	\varinjlim_{U_p}S(U^pU_p,A)\in 
	\Mod^{\adm}_{G_p}(\oo).
\end{equation}
The natural $G_p$ action on which
defined by Definition \ref{def:algform}
is then the usual right translation
on modular forms of trivial weight.
Moreover, when $A=E/\oo$ we have
\[
	S(U^p,E/\oo)^{U_0}=
	\varinjlim_{b}
	S(U^p\Iw^P(p^{b,b}),\varpi^{-b}\oo/\oo)
\]
where each object on the right is 
a finite $\oo$-module.
It then follows from \cite[Lem 3.1.5]{emeI} and \cite[Prop 3.2.4]{emeI}
that $S^{P-\ord}(U^p,E/\oo)\cong \Ord_P(S(U^p,E/\oo))$,
which belongs to $\Mod^{\adm}_Q(\oo)$
by \cite[Thm 3.3.3]{emeI}.
We define the $P$-ordinary completed homology and cohomology by
\begin{align}\label{eq:completed_coh}
	M_P(U^p)&=
	\Ord_P(S(U^p,E/\oo))^\vee
	\coloneqq \Hom_\oo(\Ord_P(S(U^p,E/\oo)),E/\oo)\\
	S_P(U^p)&=\Hom_\oo(E/\oo, \Ord_P(S(U^p,E/\oo)))
	\cong \varprojlim_r \Ord_P(S(U^p,\oo/\varpi^{r}))
\end{align}
Let $\Hom_\oo^{\cts}(M(U^p),\oo)$
be the set of
$\Phi\in \Hom_\oo(M(U^p),\oo)$ 
such that for any positive integer $r$,
there exists a sufficiently large integer $b$ so that 
the reduction of $\Phi$ modulo $\varpi^r$
factors through
the Pontryagin dual of 
$S^{P-\ord}(U^p\Iw^P(p^{b,b}),E/\oo)\subset \Ord_P(S(U^p,E/\oo))$. 
It can be verified that 
\begin{equation}\label{eq:complete_isom}
	M_P(U^p)\cong \Hom_\oo(S_P(U^p),\oo),\qquad
	S_P(U^p)\cong \Hom_\oo^{\cts}(M_P(U^p),\oo)
\end{equation}
from which we see that
$\TT^P(U^p,E/\oo)$ acts faithfully
on both $M_P(U^p)$ and  $S_P(U^p)$.
In fact, 
if we define $S_{\wt{k}}^{P-\ord}(U^p,\oo)
=\varinjlim_{b}S_{\wt{k}}^{P-\ord}(U^p\Iw^P(p^{b,b}),\oo)$ and 
$\TT^P_{\wt{k}}(U^p,\oo)=
\varprojlim_b\TT_{\wt{k}}^{P}(U^p\Iw^P(p^{b,b}),\oo)$,
the same argument in \cite[Lem 2.17]{ger}
shows that  $\TT^P_{\wt{k}}(U^p,\oo)\cong \TT^P_{\wt{k}}(U^p,E/\oo)$
as $\oo$-algebras.
We remark that this is in line with the fact that 
$S^{P-\ord}(U^p,\oo)$ is dense in $S_P(U^p)$.

\begin{defn}\label{def:big_hecke}
    We call $\TT^P(U^p,\oo)\cong \TT^P(U^p,E/\oo)$
	the big $P$-ordinary Hecke algebra,
    which, through the above identifications,
    acts faithfully on the spaces
	$S^{P-\ord}(U^p,E/\oo)=\Ord_P(S(U^p,E/\oo)), M_P(U^p), S_P(U^p)$,
	and $S^{P-\ord}(U^p,\oo)$. 
	We also define $\TT^P(U^p,E)=\TT^P(U^p,\oo)\otimes_{\oo}E$,
	which acts faithfully
	on the $E$-Banach space $G_p$-representation
    $S_P(U^p)_E\coloneqq S_P(U^p)\otimes_{\oo}E$.
    When $P=B$ we write 
    $\TT^B(U^p,A)=\TT^{\ord}(U^p,A)$.
\end{defn}

Note that by Lemma \ref{lem:PtoB}
we have the following inclusions
that are compatible among integers $b\geq 1$
\[
	S_{\wt{k}}^{\ord}(U^p\Iw(p^{b,b}),E/\oo)\subset
	S_{\wt{k}}^{P-\ord}(U^p\Iw^P(p^{b,b}),E/\oo)
\]
which are equivariant with the Hecke operators.
Consequently we have a Hecke-equivariant inclusion
$\iota_P\colon \Ord_B(S(U^p,E/\oo)\hookrightarrow\Ord_P(S(U^p,E/\oo)$
and the lemma below follows directly.

\begin{lem}\label{lem:coh_to_ord}
    The Hecke-equivariant inclusion
    induces an $\oo$-algebra homomorphism
    \[
        \TT^P(U^p,\oo)\to \TT^{\ord}(U^p,\oo)
    \]
    with respect to which 
    the surjective homomorphism 
	$(\iota_P)^\vee\colon M_P(U^p)\to M^{\ord}(U^p)$ 
    is also equivariant.
\end{lem}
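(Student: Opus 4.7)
The plan is to obtain everything as a direct formal consequence of Lemma \ref{lem:PtoB} by passing to the inverse limit on Hecke algebras and taking Pontryagin duals on the smooth representations.

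First, I would specialize Lemma \ref{lem:PtoB} to the trivial weight $\wt{k}=0$ and to coefficients $A=E/\oo$ (via the usual system $A=\oo/\varpi^r$), so that at each level $b\geq 1$ the Hecke-stable inclusion
\[
S^{\ord}(U^p\Iw(p^{b,b}),E/\oo)\subset S^{P-\ord}(U^p\Iw^P(p^{b,b}),E/\oo)
\]
yields an $\oo$-algebra homomorphism $\TT^P(U^p\Iw^P(p^{b,b}),E/\oo)\to \TT^{\ord}(U^p\Iw(p^{b,b}),E/\oo)$ sending each generator in Definition \ref{def:ord_hecke} for the source to the corresponding generator for the target (the Hecke operators $T_w^{(j)}$, $U_{w}^{(j)}$, $\langle u\rangle$ at $v\in S$ are defined by the same double cosets, while the equivariance for $U_{w}^{(j)}=h_U(\alpha_w^{(j)})$ and $\langle u\rangle=h_N(u)$ with $\alpha_w^{(j)}\in Z_Q$ and $u\in T_n(\oo_w)$ is precisely the observation made in the proof of Lemma \ref{lem:PtoB}). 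Passing to $\varprojlim_b$, which is exact on the finite $\oo$-algebras involved, produces the required $\oo$-algebra homomorphism $\TT^P(U^p,\oo)\to \TT^{\ord}(U^p,\oo)$, via the identification $\TT^\bullet(U^p,\oo)\cong \TT^\bullet(U^p,E/\oo)$ recorded after Definition \ref{def:big_hecke}.

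Next, I would take Pontryagin duals. By construction $\iota_P$ assembles from the level-wise inclusions, and since the forgetful functor from discrete $\oo$-torsion modules to abelian groups is exact and Pontryagin duality is exact on such modules, the injection $\iota_P$ dualizes to a surjection
\[
(\iota_P)^\vee\colon M_P(U^p)=\Ord_P(S(U^p,E/\oo))^\vee\twoheadrightarrow \Ord_B(S(U^p,E/\oo))^\vee=M^{\ord}(U^p).
\]
The Hecke equivariance at each finite level (Lemma \ref{lem:PtoB}) dualizes to equivariance of $(\iota_P)^\vee$ with respect to the algebra homomorphism constructed in the previous step; compatibility under the inverse limit on the Hecke side and the direct limit on the coefficient side is routine, since everything is $\varpi$-adically continuous and respects the smooth-representation structure on $G_p$.

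There is no real obstacle here: the statement is a packaging lemma, and the only point requiring care is that the generators of the big Hecke algebras $\TT^P(U^p,\oo)$ and $\TT^{\ord}(U^p,\oo)$ in Definition \ref{def:ord_hecke} match under the restriction map, which is exactly what Lemma \ref{lem:PtoB} provides. Thus both assertions of the lemma follow simultaneously from dualizing $\iota_P$ and restricting $\TT^P(U^p,\oo)$ to its action on the Hecke-stable subspace $\Ord_B(S(U^p,E/\oo))\subset \Ord_P(S(U^p,E/\oo))$.
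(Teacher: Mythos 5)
Your proposal is correct and takes essentially the same approach as the paper; the paper states that the lemma ``follows directly'' from the Hecke-equivariant level-wise inclusions established via Lemma~\ref{lem:PtoB}, and your argument simply unpacks this by passing to the inverse limit on Hecke algebras and to Pontryagin duals on the discrete $\oo$-torsion smooth modules, which is exactly what the paper intends.
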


\begin{lem}\label{lem:inj}
	The restriction of the $Q$-module
	$\Ord_P(S(U^p,E/\oo))$ to $Q_0$ 
	is an injective object
	in $\Mod^{\sm}_{Q_0}(\oo)$.
\end{lem}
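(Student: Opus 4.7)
The plan is to first realize $S(U^p,E/\oo)$ as a sum of smoothly induced $K_p$-modules using the smallness hypothesis, then to propagate injectivity through $(-)^{U_0}$ and $\Ord_P$ down to $\Mod^{\sm}_{Q_0}(\oo)$. Using the double coset decomposition $G(\A_f)=\sqcup_i G(\F)t_i\tilde{U}$ together with condition \eqref{cond:small}, the right $K_p$-action on $G(\F)\backslash G(\A_f)/U^p$ is free, and stratifying by the additional quotients $\Iw(w)/U_v$ at places in $S$ gives an isomorphism $S(U^p,E/\oo)\cong\bigoplus_{j\in J}C^{\sm}(K_p,E/\oo)$ of smooth $K_p$-representations for some finite index set $J$. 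Frobenius reciprocity identifies $\Hom_{K_p}(W,C^{\sm}(K_p,E/\oo))$ with $\Hom_{\oo}(W,E/\oo)$, which is exact in $W\in\Mod^{\sm}_{K_p}(\oo)$ because $E/\oo$ is divisible, hence injective over the DVR $\oo$. Thus $V\coloneqq S(U^p,E/\oo)$ is injective in $\Mod^{\sm}_{K_p}(\oo)$.

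The next step is to descend this injectivity to $V^{U_0}$ as a smooth $Q_0$-representation. The Levi decomposition $P=QU$ implies $Q_0\cap U_0=\{1\}$, so the right $Q_0$-action on $K_p/U_0$ is free with orbit space $K_p/P_0$. Choosing a set-theoretic section of $K_p/U_0\to K_p/P_0$ produces an isomorphism
\[
V^{U_0}\cong\bigoplus_{j\in J}\Ind_{\{1\}}^{Q_0}\!\big(C^{\sm}(K_p/P_0,E/\oo)\big)
\]
of smooth $Q_0$-representations. Since $\Ind_{\{1\}}^{Q_0}$ is right adjoint to the exact forgetful functor to $\Mod_{\oo}$ and $C^{\sm}(K_p/P_0,E/\oo)$ is $\oo$-injective (a filtered colimit of copies of the divisible module $E/\oo$ over the Noetherian ring $\oo$), each summand---and hence $V^{U_0}$ itself---is injective in $\Mod^{\sm}_{Q_0}(\oo)$.

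To conclude, I would identify $\Ord_P(V)\vert_{Q_0}$ as a direct summand of $V^{U_0}$. Fix a strictly dominant $z_0\in Z_Q^+$; because $z_0$ lies in the center of $Q$, the endomorphism $h_U(z_0)$ of $V^{U_0}$ commutes with the $Q_0$-action. Since $V$ is $\varpi$-power torsion and admissible, the sequence $\{h_U(z_0)^{n!}\}_n$ converges $\varpi$-adically on each finitely generated submodule to a $Q_0$-equivariant idempotent $e_P\in\End_{Q_0}(V^{U_0})$, and unwinding the definition \eqref{def:OrdP} shows that $\Ord_P(V)\vert_{Q_0}\cong e_P\cdot V^{U_0}$: on the image of $e_P$ the operator $h_U(z_0)$ acts invertibly, so the $Z_Q$-finiteness requirement is automatic and the $Q^+$-action extends uniquely to a $Q$-action. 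A direct summand of an injective is injective, proving the lemma. The most delicate point is precisely this final identification, essentially \cite[Prop.~3.2.4]{emeI}, for which the admissibility of $V$ guaranteed by \eqref{eq:complete} is indispensable.
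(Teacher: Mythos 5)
Your proof is correct but takes a genuinely different route from the paper. The paper follows Pan directly: it reduces injectivity of $\Ord_P(S(U^p,E/\oo))\vert_{Q_0}$ to surjectivity of $\Hom_{Q_0}(\pi,-)\to\Hom_{Q_0}(\pi_1,-)$ for $\oo$-finite admissible $\pi_1\hookrightarrow\pi$, then unwinds $\Ord_P$ via the adjunction $\Hom_{\oo[Q_0]}(\pi_1,\Hom_{\oo[Z_Q^+]}(\oo[Z_Q],-))\cong\Hom_{\oo[Z_Q^+]}(\oo[Z_Q],\Hom_{\oo[Q_0]}(\pi_1,-))$, and finally realizes $\Hom_{\oo[Q_0]}(\pi,S(U^p\Iw^P(p^{b,b}),\varpi^{-r}\oo/\oo))$ as a space of $\pi^\vee$-valued modular forms, which condition \eqref{cond:small} exhibits as a direct sum of copies of $\pi^\vee$ indexed by double cosets so that the restriction to $\pi_1$ is visibly surjective. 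Your argument instead builds injectivity in three layers: (i) $S(U^p,E/\oo)\vert_{K_p}$ is injective because \eqref{cond:small} makes $G(\F)\backslash G(\A_f)/U^p$ a free $K_p$-set, so the restriction is a finite sum of $C^{\sm}(K_p,E/\oo)\cong\Ind_{\{1\}}^{K_p}(E/\oo)$; (ii) $(-)^{U_0}\vert_{Q_0}$ then remains injective because the Levi decomposition $P_0=Q_0U_0$ with $Q_0\cap U_0=\{1\}$ makes $K_p/U_0$ a free right $Q_0$-set with profinite orbit space $K_p/P_0$; (iii) $\Ord_P(S(U^p,E/\oo))\vert_{Q_0}$ is the image of the Hida idempotent $e_P$ and hence a $Q_0$-direct summand of $S(U^p,E/\oo)^{U_0}$. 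Both arguments use \eqref{cond:small} for the same underlying freeness and both invoke Emerton's identification of $\Ord_P$ with the ordinary summand for admissible inputs, but your version factors the injectivity statement through two intermediate ones ($V\vert_{K_p}$ and $V^{U_0}\vert_{Q_0}$), whereas the paper's version certifies injectivity of the $\Ord_P$-output directly by computing its $\Hom$-sets. One minor point to tighten: in step (ii), the $Q_0$-equivariant trivialization $K_p/U_0\cong (K_p/P_0)\times Q_0$ requires a \emph{continuous} section of $K_p/U_0\to K_p/P_0$, not merely a set-theoretic one; this exists because surjections of profinite sets split continuously, but it deserves to be stated. Also, the commutation of $e_P$ with the $Q_0$-action uses not just that $z_0$ is central in $Q$ but the identity $h_U(q)=q$ on $(-)^{U_0}$ for $q\in Q_0$ (which in turn uses that $Q_0$ normalizes $U_0$), combined with \cite[Lem 3.1.4]{emeI}; worth saying explicitly. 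Your route is arguably more modular, while the paper's stays closer to the explicit spaces of modular forms, which it later reuses.
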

\begin{proof}
	Following the strategy of 
	\cite[Prop 3.2.4]{pan}, 
	it suffices to show the surjectivity of
	\[
		\Hom_{\oo[Q_0]}(\pi,\Ord_P(S(U^p,E/\oo)))\to 
		\Hom_{\oo[Q_0]}(\pi_1,\Ord_P(S(U^p,E/\oo)))
	\]
	when $\pi_{1}\hookrightarrow \pi$ 
	is an injective morphism between admissible 
    $Q_0$-representations of finite $\oo$-modules.
    Expand the definition of $\Ord_P$ from \eqref{def:OrdP}
    and note that any homomorphism  in 
    $\Hom_{\oo[Q_0]}(\pi_1,\Ord_P(S(U^p,E/\oo)))$ factors through
	\begin{multline*}
		\Hom_{\oo[Q_0]}(\pi_1,
		\Hom_{\oo[Z_Q^+]}
		(\oo[Z_Q], S(U^p\Iw^P(p^{b,b}),
		\varpi^{-r}\oo/\oo)))\\=
		\Hom_{\oo[Z_Q^+]}(\oo[Z_Q],
		\Hom_{\oo[Q_0]}(\pi_1, 
		S(U^p\Iw^P(p^{b,b}),\varpi^{-r}\oo/\oo)))
	\end{multline*}
	for some $b$ and $r$ sufficiently large
    since $\pi_1$ is admissible and $\oo$-finite,
	on which 
	the $Z_Q$-fintieness condition is automatic
	by \cite[Lem 3.1.5]{emeI}.

    Let $\Iw(p^{0,b})$ 
    act on $\pi$ through $Q_0\cap \Iw(p^{0,b})$
    by the Iwahori decomposition
    and $\pi^\vee$ denote the Pontryagin dual of $\pi$.
	Define the space of modular forms with
    coefficients in $\pi^\vee$ by
	\[
		S_{\pi^\vee}(U^p\Iw(p^{0,b}))=
		\{
			F\colon G(\F)\backslash G(\A_f)\to 
			\pi^\vee\mid 
			F(gu)=\pi^\vee(u_p)^{-1}\cdot F(g),\,
			u\in \Iw(p^{0,b})
		\}
	\]
	Enlarge $b$ if necessary, we may assume
	$Q_0\cap \Iw(p^{b,b})$ acts trivially on $\pi$.
	Then there exists an isomorphism
	\[
		\epsilon\colon 
		S_{\pi^\vee}(U^p\Iw(p^{0,b}))\cong 
		\Hom_{\oo[Q_0]}(\pi,
		S(U^p\Iw^P(p^{b,b}),\varpi^{-r}\oo/\oo)))
		\quad \epsilon(F)\colon
		v\mapsto [g\mapsto v(F(g))]
	\]
	and similarly for $\pi_1$.
    Since \eqref{cond:small} implies that
	that $S_{\pi^\vee}(U^p\Iw(p^{0,b}))$
	and $S_{\pi_1^\vee}(U^p\Iw(p^{0,b}))$
	are direct sums of 
	$\pi^\vee$ and  $\pi_1^\vee$
    indexed by
	$G(\F)\backslash G(\A_f)/U^p\Iw(p^{0,b})$,
    the natural map
	$S_{\pi^\vee}(U^p\Iw(p^{0,b}))\to 
	S_{\pi_1^\vee}(U^p\Iw(p^{0,b}))$ is then
	surjective
	as $\pi^\vee\to \pi_1^\vee$ is surjective.
	Localize to the $P$-ordinary parts
	and apply \cite[Lem 3.1.5]{emeI} again,
	we see that
	\[
		\Hom_{\oo[Z_Q^+]}(\oo[Z_Q],
		S_{\pi^\vee}(U^p\Iw(p^{0,b})))\to 
		\Hom_{\oo[Z_Q^+]}(\oo[Z_Q],
		S_{\pi^\vee_1}(U^p\Iw(p^{0,b})))
	\]
	is also surjective, from which 
	the proposition follows.
\end{proof}

\begin{prop}\label{prop:density}
The subspace $S_P(U^p)_E^{\textnormal{alg}}$ 
of $Q_0$-algebraic vectors, defined as
\[
\Image\left(ev\colon
\bigoplus_{\wt{k}}\Hom_{E[Q_0]}(\pi_{\wt{k}}^*(\oo), S_P(U^p)_E)
\otimes_E \pi_{\wt{k}}^*(E)\rightarrow S_P(U^p)_E\right)
\]
where $\wt{k}$ ranges through all dominant weights,
is dense in the $E$-Banach space $S_P(U^p)_E$.
\end{prop}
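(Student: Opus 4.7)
The plan is to approximate every element of $S_P(U^p)_E$ by algebraic vectors via the weight-independence isomorphism of Proposition \ref{prop:wt_indep}. First I would reduce to a dense integral subspace: from the identifications in \eqref{eq:complete_isom}, $S_P(U^p)=\varprojlim_r S^{P-\ord}(U^p,\oo/\varpi^r\oo)$, and at each finite level the surjection $S^{P-\ord}(U^p\Iw^P(p^{b,b}),\oo)\twoheadrightarrow S^{P-\ord}(U^p\Iw^P(p^{b,b}),\oo/\varpi^r\oo)$ (valid for $b$ large by Lemma \ref{lem:control}) shows that $S^{P-\ord}(U^p,\oo)\otimes_\oo E$ is $\varpi$-adically dense in $S_P(U^p)_E$. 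Hence it suffices to approximate integral classical forms.

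Fix $f\in S^{P-\ord}(U^p\Iw^P(p^{b,b}),\oo)$ and $N\geq 1$. The $Q_0$-orbit of $\bar f=f\bmod\varpi^N$ generates a finite-length smooth $\oo/\varpi^N$-module $W_f\subset S^{P-\ord}(U^p,\varpi^{-N}\oo/\oo)$ on which the action factors through the finite quotient $\Gamma=Q_0/(Q_0\cap\Iw^P(p^{b,b}))$. Applying Proposition \ref{prop:wt_indep} with $A=\varpi^{-N}\oo/\oo$ and taking $Q_0$-invariants on each side of $\epsilon_{\wt{k}}$, one sees that every $Q_0$-equivariant homomorphism $\phi\colon \pi_{\wt{k}}^*(\oo)\to S^{P-\ord}(U^p,A)$ corresponds to a $Q_0$-invariant weight-$\wt{k}$ ordinary form, and that its image in $S^{P-\ord}(U^p,A)$ is, by construction, an algebraic $Q_0$-subrepresentation. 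The goal is therefore to find a dominant $\wt{k}$, such a $\phi$, and $v^*\in\pi_{\wt{k}}^*(\oo)$ with $\phi(v^*)\equiv \bar f\pmod{\varpi^N}$; this realizes $\bar f$ as the reduction of an algebraic vector, which upon lifting $\phi$ to $\Hom_{E[Q_0]}(\pi_{\wt{k}}^*(\oo),S_P(U^p)_E)$ via the projective limit yields the required approximation of $f$.

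The main obstacle lies in producing such a $\wt{k}$: concretely, one must show that for every finite smooth $\oo/\varpi^N$-representation of $\Gamma$ there exists a dominant weight $\wt{k}$ such that $\pi_{\wt{k}}^*(\oo)/\varpi^N$ admits the given representation as a subquotient upon restriction to $Q_0$. The natural route is through the weight decomposition of $\pi_{\wt{k}}^*$ under the compact torus $T_0\subset Q_0$: the characters of $T_0$ appearing in $\pi_{\wt{k}}^*$ include all weights in the convex hull of the Weyl orbit of $\wt{k}$, and by Mahler's theorem every continuous character of $T_0$ (in particular every character factoring through $\Gamma$) is a $\varpi$-adic limit of algebraic characters, so every irreducible smooth $\oo/\varpi^N$-representation of $\Gamma$ appears as a subquotient of some $\pi_{\wt{k}}^*(\oo)/\varpi^N$. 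Upgrading this character-level realization to the full $Q_0$-action requires a Brauer-type argument using the Iwahori decomposition of $Q_0$. Alternatively, the whole proposition may be deduced from the density of locally algebraic vectors in admissible Banach representations of $p$-adic Lie groups, due to Schneider--Teitelbaum and Emerton, applied to the $Q_0$-admissible representation $S_P(U^p)_E$.
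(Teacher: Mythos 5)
Your proposal misses the key ingredient of the paper's proof, namely the injectivity statement of Lemma~\ref{lem:inj}, and the places where you hand-wave are precisely the places where that injectivity is indispensable.

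Two concrete gaps. First, your ``lifting'' step: you produce a $Q_0$-equivariant map $\phi\colon\pi_{\wt{k}}^*(\oo)\to S^{P-\ord}(U^p,\oo/\varpi^N)$ and assert it lifts, ``via the projective limit,'' to $\Hom_{E[Q_0]}(\pi_{\wt{k}}^*(\oo),S_P(U^p)_E)$. But the natural map
\[
\Hom_{Q_0}(\pi_{\wt{k}}^*(\oo),S_P(U^p))\longrightarrow
\Hom_{Q_0}(\pi_{\wt{k}}^*(\oo),S^{P-\ord}(U^p,\oo/\varpi^N))
\]
has no reason to be surjective for a general admissible $Q_0$-representation; to build the compatible system of lifts modulo $\varpi^{N+1}, \varpi^{N+2},\dots$ one needs exactness of $\Hom_{Q_0}(\pi_{\wt{k}}^*(\oo),-)$ applied to the target, and that exactness is exactly what the injectivity of $\Ord_P(S(U^p,E/\oo))$ in $\Mod^{\sm}_{Q_0}(\oo)$ supplies. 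Second, realizing an arbitrary finite smooth $\oo/\varpi^N$-representation of $Q_0$ as a \emph{subquotient} of some $\pi_{\wt{k}}^*(\oo)/\varpi^N$ is not enough: for your argument you need a genuine $Q_0$-equivariant \emph{map} from $\pi_{\wt{k}}^*(\oo)$ hitting $\bar f$, i.e.\ a quotient, not a subquotient, and you give no argument for this beyond ``Mahler plus a Brauer-type argument.'' For a non-abelian $Q_0$ (such as $\GL_2(\Zp)$) the claim is non-trivial, and the Mahler reduction for the torus does not directly produce a $Q_0$-equivariant presentation.

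Your closing alternative is also incorrect as stated: there is no general theorem that algebraic vectors are dense in an admissible $E$-Banach representation of a compact $p$-adic analytic group. Already for $Q_0=\Zp$ acting on a one-dimensional space through a continuous character that is not locally algebraic, the representation is admissible and has no nonzero algebraic vectors at all. Density is a special feature of representations that are, up to direct summand, a finite sum of copies of $\mathcal{C}(Q_0,E)$; that is what injectivity buys you, and what the paper's actual argument does. Namely, Lemma~\ref{lem:inj} shows $\Ord_P(S(U^p,E/\oo))|_{Q_0}$ is injective in $\Mod^{\sm}_{Q_0}(\oo)$, so (following Pan's Prop.~3.2.9 and Cor.~3.2.6) $S_P(U^p)_E$ is a direct summand of a finite sum of copies of $\mathcal{C}(Q_0,E)$, and the density then reduces to the classical Mahler-type statement for $\mathcal{C}(Q_0,E)$. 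Your first reduction step (density of $S^{P-\ord}(U^p,\oo)\otimes_\oo E$ inside $S_P(U^p)_E$) is fine and is consistent with the paper's remark following \eqref{eq:complete_isom}, but everything after it would need to be re-built around the injectivity input you left out.
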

\begin{proof}
	Since $\Ord_P(S(U^p,E/\oo))$ is an injective object
	in $\Mod_{Q_0}^{\sm}(\oo)$
	by Lemma \ref{lem:inj},
	we may follow the strategy of 
	\cite[Prop 3.2.9]{pan}
	and use \cite[Cor 3.2.6]{pan}
	to reduce the statement to that of
	$\mathcal{C}(Q_0,E)$,
	the space of continuous  $E$-valued
	functions on $Q_0$.
	The density result then follows from
	\cite[Prop 6.A.17]{Pask14}.
\end{proof}

\begin{prop}\label{prop:wt_space}
	There exists a Hecke-equivariant isomorphism
	\[
	S_{\wt{k}}^{P-\ord}(U^p\Iw^P(p^{0,1}),E)\cong 
	\Hom_{\oo[Q_0]}(\pi_{\wt{k}}^*(\oo), S_P(U^p)_E)
	\]
\end{prop}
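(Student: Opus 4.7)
The plan is to derive the statement by adapting the isomorphism $\epsilon_{\wt{k}}$ of Proposition \ref{prop:wt_indep} (which gave a $Q_0$-equivariant identification at the level of $E/\oo$-coefficients and levels $\Iw^P(p^{b,b})$ with $b\geq r$) to the present setting of $E$-coefficients and level $\Iw^P(p^{0,1})$. The forward map remains the same: send $f\in S_{\wt{k}}^{P-\ord}(U^p\Iw^P(p^{0,1}),E)$ to the homomorphism $\epsilon_{\wt{k}}(f)\colon v^*\mapsto [g\mapsto v^*(ev(f(g)))]$, where $ev\colon \xi_{\wt{k}}\to \pi_{\wt{k}}$ is evaluation at the identity coming from $\xi_{\wt{k}}\cong \Ind_P^{G_p}(\pi_{\wt{k}})$. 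I would first check that for each $v^*$, the function $\epsilon_{\wt{k}}(f)(v^*)$ is $E$-valued, invariant under a sufficiently deep $U^p\Iw^P(p^{b,b})$ (with $b$ depending on denominators), hence defines a weight-$0$ element of $S_P(U^p)_E$, and that its $P$-ordinarity is inherited from that of $f$. The $Q_0$-equivariance then rests on two ingredients: (i) $Q_0=Q(\oo)\subset P(\oo)\subseteq \Iw^P(p^{0,1})$, so that $f(gu)=\xi_{\wt{k}}(u)^{-1}f(g)$ for $u\in Q_0$; and (ii) $ev$ intertwines the $Q_0$-actions on $\xi_{\wt{k}}$ and $\pi_{\wt{k}}$, giving $u\cdot \epsilon_{\wt{k}}(f)(v^*)=\epsilon_{\wt{k}}(f)(\pi_{\wt{k}}^*(u)v^*)$.

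For the inverse, given $\phi\in \Hom_{\oo[Q_0]}(\pi_{\wt{k}}^*(\oo), S_P(U^p)_E)$, I would argue via Proposition \ref{prop:density} that the image of $\phi$ lies in the $Q_0$-algebraic vectors of $S_P(U^p)_E$, in fact in the $\pi_{\wt{k}}^*$-isotypic component, which is a finite-dimensional subspace realizable inside $S^{P-\ord}(U^p,E)$ at some level $\Iw^P(p^{b,b})$. Mimicking the explicit construction from the proof of the weight-independence lemma (where the simpler case $b=0$ eliminates the need for a $U_P^r$-twist), fix a basis $\{v_\mu\}$ of $T$-weight vectors in $\xi_{\wt{k}}$ such that those with $ev(v_\mu)\neq 0$ form a basis of $\pi_{\wt{k}}$, with dual basis $\{v_\mu^*\}$ in $\pi_{\wt{k}}^*$ (extended by zero), and set
\[
    f_\phi(g) := \sum_\mu \phi(v_\mu^*)(g)\, v_\mu.
\]
Then $f_\phi$ is a $\xi_{\wt{k}}(E)$-valued function, and the $Q_0$-equivariance of $\phi$ gives the modular transformation $f_\phi(gu)=\xi_{\wt{k}}(u)^{-1}f_\phi(g)$ for $u\in Q_0$; the level of each $\phi(v_\mu^*)$ provides invariance under a deep $\Iw^P(p^{b,b})$, and Lemma \ref{lem:control} collapses this down to level $\Iw^P(p^{0,1})$, placing $f_\phi$ inside $S_{\wt{k}}^{P-\ord}(U^p\Iw^P(p^{0,1}),E)$.

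Mutual inversion is a direct verification using $v_\nu^*(ev(v_\mu))=\delta_{\mu\nu}$, and Hecke equivariance is inherited from the corresponding statement in Proposition \ref{prop:wt_indep} by matching double-coset formulas on both sides. The main technical obstacle I anticipate is promoting the transformation rule $f_\phi(gu)=\xi_{\wt{k}}(u)^{-1}f_\phi(g)$ from $u\in Q_0$ (where it follows directly from $Q_0$-equivariance of $\phi$) to all of $u\in \Iw^P(p^{0,1})$, which contains the full unipotent radical $U(\oo)$ on which $\xi_{\wt{k}}$ acts nontrivially. Since $\xi_{\wt{k}}\cong \Ind_P^{G_p}(\pi_{\wt{k}})$ is genuinely induced rather than equal to $\pi_{\wt{k}}$, the verification must exploit the algebraic structure of the induction together with the compatibility of $\phi$ on all weight vectors, so that the values of $f_\phi$ on $U(\oo)$-cosets are produced correctly -- this is the technically delicate step of the argument.
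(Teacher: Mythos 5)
Your proposal takes a genuinely different route from the paper.  The paper does \emph{not} rebuild the explicit maps $\epsilon_{\wt{k}}$ and $\varphi$ at the level of $E$-coefficients; instead it applies the exact functor $\Hom_\oo(E/\oo,-)$ to the already-established $Q_0$-equivariant isomorphism $\epsilon_{\wt{k}}\colon S_{\wt{k}}^{P-\ord}(U^p,E/\oo)\cong\Hom_\oo(\pi_{\wt{k}}^*(\oo),S^{P-\ord}(U^p,E/\oo))$ of Proposition~\ref{prop:wt_indep}, uses the identification $S_P(U^p)\cong\Hom_\oo(E/\oo,S^{P-\ord}(U^p,E/\oo))$ from \eqref{eq:complete_isom}, and then takes $Q_0$-invariants on both sides: on the right this isolates $\Hom_{\oo[Q_0]}(\pi_{\wt{k}}^*(\oo),S_P(U^p))$, while on the left the $Q_0$-invariant forms are identified with $S_{\wt{k}}^{P-\ord}(U^p\Iw^P(p^{0,1}),\oo)$ via Lemma~\ref{lem:control}.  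Tensoring with $E$ finishes.  This is a purely formal manipulation that never has to reconstruct the inverse map over $E$.

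Your proposed route has a real gap exactly where you flag it.  The inverse $f_\phi(g)=\sum_\mu\phi(v_\mu^*)(g)\,v_\mu$ is valued only in the span of the weight vectors $v_\mu$ that appear in $\pi_{\wt{k}}$, and that span is \emph{not} $\xi_{\wt{k}}(U_0)$-stable (the unipotent radical $U_0$ mixes those weight spaces with the complementary ones, even though $ev$ is $U_0$-invariant).  Consequently the required transformation rule $f_\phi(gu)=\xi_{\wt{k}}(u)^{-1}f_\phi(g)$ cannot hold for $u\in U_0\subset\Iw^P(p^{0,1})$.  In the weight-independence lemma this is repaired precisely by the twist $\alpha=\alpha_P^r$ and the averaging over $U_0/\alpha U_0\alpha^{-1}$, but that mechanism depends essentially on the coefficients being $\varpi^{-r}\oo/\oo$ (so that the elements $v_i\in\alpha\Iw^P(p^{b,b})\alpha^{-1}\cap\Iw^P(p^{b,b})$ act trivially); there is no ``$r=0$'' simplification available over $E$.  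To complete your route you would have to prove directly that the forward map $\epsilon_{\wt{k}}$ is surjective onto $\Hom_{\oo[Q_0]}(\pi_{\wt{k}}^*(\oo),S_P(U^p)_E)$ without exhibiting an inverse formula --- say by reducing modulo $\varpi^r$ and invoking Proposition~\ref{prop:wt_indep} there --- but at that point you have essentially rediscovered the paper's argument.  I would therefore recommend adopting the paper's formal $\Hom_\oo(E/\oo,-)$-plus-$(-)^{Q_0}$ approach, which avoids the issue entirely.
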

\begin{proof}
    Apply $\Hom_\oo(E/\oo,*)$ to the isomorphism 
    in Proposition \ref{prop:wt_indep}
    and use \eqref{eq:complete_isom}, we obtain 
	\begin{multline*}
		\Hom_\oo(E/\oo, S_{\wt{k}}^{P-\ord}(U^p,E/\oo))\cong 
		\Hom_\oo(E/\oo,
		\Hom_{\oo}(\pi_{\wt{k}}^*(\oo),
		S^{P-\ord}(U^p,E/\oo)))\\=
		\Hom_{\oo}(\pi_{\wt{k}}^*(\oo),
		\Hom_\oo(E/\oo,
		S^{P-\ord}(U^p,E/\oo)))\cong
		\Hom_{\oo}(\pi_{\wt{k}}^*(\oo), S_P(U^p))
	\end{multline*}
	which is equivariant with respect to 
	the Hecke operators and the $Q_0$-actions defined there.
    Observe that 
	taking the $Q_0$-invariant subspaces
	on the right hand side gives
	$\Hom_{\oo[Q_0]}(\pi_{\wt{k}}^*(\oo), S(U^p))$.

	On the other hand, since
	$\Hom_\oo(E/\oo, S_{\wt{k}}^{P-\ord}(U^p,E/\oo))\cong
	\varprojlim_r S_{\wt{k}}^{P-\ord}(U^p,\oo/(\varpi^r))$,
	by Lemma \ref{lem:control}
	the $Q_0$-invariant subspace 
	on the left hand side
	is $S_{\wt{k}}^{P-\ord}(U^p\Iw^P(p^{0,1}),\oo)$.
	The claimed result now follows by
	tensoring both subspaces with $E$.
\end{proof}

\begin{rem}
	The above results generalize
	\cite[Prop 3.2.9]{pan} and 
	\cite[\S 3.2.10]{pan},
	which deals with the case 
	when $n=2$ and $P=G_p$,
	in which the $P$-ordinary condition is empty.
\end{rem}

\begin{cor}\label{cor:density}
    When $\wt{k}$ goes through all dominant weight, 
    there exists an injective homomorphism
    \[
        \TT^P(U^p,E)\to \prod_{\wt{k}}\TT^P(U^p\Iw^P(p^{0,1}),E)
    \]
\end{cor}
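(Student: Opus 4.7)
The plan is to combine the density result from Proposition \ref{prop:density}, the identification of Hecke modules from Proposition \ref{prop:wt_space}, and the faithfulness of $\TT^P(U^p,E)$ on $S_P(U^p)_E$ (Definition \ref{def:big_hecke}) to deduce injectivity.

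First I would construct the map in question. By Proposition \ref{prop:wt_space}, for each dominant weight $\wt{k}$ there is a Hecke-equivariant isomorphism
\[
\Hom_{\oo[Q_0]}(\pi_{\wt{k}}^*(\oo), S_P(U^p)_E)\cong S_{\wt{k}}^{P-\ord}(U^p\Iw^P(p^{0,1}),E).
\]
The left-hand side carries a natural $\TT^P(U^p,E)$-action induced from the action on $S_P(U^p)_E$, while the right-hand side is a module over $\TT^P_{\wt{k}}(U^p\Iw^P(p^{0,1}),E)$. Transporting the action along this isomorphism yields an $\oo$-algebra homomorphism $\TT^P(U^p,E)\to \TT^P(U^p\Iw^P(p^{0,1}),E)$ for each $\wt{k}$, and assembling these produces the claimed map to the product.

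Next I would verify injectivity. Suppose $T\in \TT^P(U^p,E)$ lies in the kernel, so $T$ acts trivially on $\Hom_{E[Q_0]}(\pi_{\wt{k}}^*(\oo), S_P(U^p)_E)$ for every dominant $\wt{k}$. Since the evaluation map
\[
\bigoplus_{\wt{k}}\Hom_{E[Q_0]}(\pi_{\wt{k}}^*(\oo), S_P(U^p)_E)\otimes_E \pi_{\wt{k}}^*(E)\longrightarrow S_P(U^p)_E
\]
is $G_p$-equivariant (hence Hecke-equivariant, as the Hecke operators are built from the $G_p$-action), and since $T$ acts on the tensor factor $\pi_{\wt{k}}^*(E)$ trivially, $T$ annihilates the image of this evaluation map, i.e., the subspace $S_P(U^p)_E^{\textnormal{alg}}$ of $Q_0$-algebraic vectors.

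Finally I would invoke continuity and density. The action of each $T\in \TT^P(U^p,E)$ on the $E$-Banach space $S_P(U^p)_E$ is continuous, because $\TT^P(U^p,E)$ acts through continuous endomorphisms on the admissible representation $S_P(U^p)_E$. By Proposition \ref{prop:density}, $S_P(U^p)_E^{\textnormal{alg}}$ is dense in $S_P(U^p)_E$, so $T$ acts as zero on the whole Banach space. By Definition \ref{def:big_hecke} the action of $\TT^P(U^p,E)$ on $S_P(U^p)_E$ is faithful, which forces $T=0$. The only subtlety, which I expect to be the mildest obstacle, is recording carefully that the evaluation map is Hecke-equivariant and that the Hecke action on $S_P(U^p)_E$ is continuous; both follow formally from the fact that Hecke operators are defined via compactly supported elements of the group algebra acting on an admissible $G_p$-representation.
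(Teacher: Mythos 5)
Your proof is correct and follows exactly the intended route: the paper's one-line proof ("immediate consequence of the previous two propositions") is precisely the combination of Proposition \ref{prop:wt_space} (to build the map) and Proposition \ref{prop:density} plus faithfulness (to see injectivity), with the continuity of the Hecke action supplying the last link. You have simply spelled out the formal steps the paper leaves implicit.
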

\begin{proof}
This is an immediate consequence of the previous two propositions.
\end{proof}

\subsection{Hida family}

We digress temporarily 
and give an ad-hoc definition
of Hida families of $B$-ordinary modular forms
in terms of the completed homology and cohomology.

For each $w\in \Sigma_p$ and $b\geq 0$
let $T_n(w^b)=\{u\in T_n(\oo_w)\mid u\equiv \id \mod\varpi_w^b\}$
so that $T_n(w^b)=T_n(\oo_w)$ when $b=0$
and put $T_n(p^b)=\prod_{w\in\Sigma_p}T_n(w^b)$.
By the Teichm\"{u}ller lift
there exists a finite subgroup $\Delta_p\subset T_n(p^0)$
of prime-to-$p$ order such that 
$T_n(p^0)=\Delta_p\times T_n(p^1)$.
And for $v\in S$ 
we put $\Delta_v=U_v/\Iw_1(w)$,
where $w$ is the fixed prime above $v$,
and define $\Delta_S=\prod_{v\in S}\Delta_v$.
Then we define the completed group rings
\begin{equation}\label{def:lambda_rings}
    \Lambda\coloneqq \oo\llbracket T_n(p^1)\rrbracket,\,
    \text{ and }\,
    \Lambda^+\coloneqq \Lambda[\Delta_p\times \Delta_S].
\end{equation}
The diamond operators 
defined in \eqref{def:hecke_at_p} and \eqref{def:hecke_at_s}
then induces an $\oo$-algebra homomorphism
\begin{equation}\label{eq:Lambda_hecke}
    \langle *\rangle\colon \Lambda^+\to 
    \TT^{\ord}(U^p, \oo)\quad
    u\mapsto \langle u\rangle.
\end{equation}

\begin{defn}\label{def:Hida_family}

Let $\I$ be a local complete Noetherian ring
that is finite over $\Lambda^+$ and flat over $\oo$.
Write $S_B(U^p)=S^{\ord}(U^p)$, we define
the space of $\I$-adic Hida families 
of tame level $U^p$ as 
\begin{equation}\label{eq:Hida_family}
    S^{\ord}(U^p,\I)\coloneqq 
    \{\euF\in S^{\ord}(U^p)\widehat{\otimes}_\oo\I\mid 
    (\langle u\rangle\otimes\id)\euF=
    (\id\otimes\langle u\rangle)\euF\, \text{ for }
    u\in \Lambda^+\}.
\end{equation}
Here $(\langle u\rangle\otimes\id)$ denotes
the action through the Hecke algebra and
$(\id\otimes\langle u\rangle)$ denotes the action
through $\I$.
Note that $S^{\ord}(U^p,\I)$
is stable by the usual Hecke action on 
the first factor since $\TT^{\ord}(U^p,\oo)$ is commutative.
This defines the action of $\TT^{\ord}(U^p,\oo)$ on $S^{\ord}(U^p,\I)$.
\end{defn}

\begin{rem}
Let $\Delta'$ be the prime-to-$p$ part of the 
finite abelian group $\Delta_p\times \Delta_S$.
Since $\I$ is local,
the restriction of the $\Lambda^+$-algebra to $\oo[\Delta']$,
potentially after replacing $\oo$ by a finite extension,
determines a character 
$\gamma\colon\Delta'\to \oo^\times$
such that $\oo[\Delta']\to \I$
factors through the homomorphism induced by $\gamma$.

\end{rem}

\begin{lem}
    Write $M_B(U^p)=M^{\ord}(U^p)$
    and $M^{\ord}(U^p,\I)=M^{\ord}(U^p)\otimes_{\Lambda^+}\I$.
    Then there exists a Hecke-equivariant isomorphism
    $S^{\ord}(U^p,\I)\cong\Hom_\I(M^{\ord}(U^p,\I), \I)$.
\end{lem}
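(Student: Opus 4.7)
The plan is to derive the isomorphism directly from the duality $S^{\ord}(U^p)\cong \Hom^{\cts}_\oo(M^{\ord}(U^p),\oo)$ in \eqref{eq:complete_isom} by extending the coefficient ring from $\oo$ to $\I$ and then translating the Hida condition into $\Lambda^+$-linearity.

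First I would interpret the completed tensor product $S^{\ord}(U^p)\widehat{\otimes}_\oo \I$ correctly. Writing $M^{\ord}(U^p)=\varprojlim_b M_b$ as a profinite $\oo$-module with each $M_b$ a finite $\oo$-module (which is what \eqref{eq:complete_isom} gives us, using that the $U_0$-invariants of $\Ord_B(S(U^p,E/\oo))$ are a directed union of finite $\oo$-modules), we have $S^{\ord}(U^p)=\Hom^{\cts}_\oo(M^{\ord}(U^p),\oo)=\varinjlim_b \Hom_\oo(M_b,\oo)$. Since $\I$ is $\fm_\I$-adically complete and finite over $\Lambda^+$, the natural interpretation
\[
S^{\ord}(U^p)\widehat{\otimes}_\oo \I\;\cong\; \Hom^{\cts}_\oo(M^{\ord}(U^p),\I)
\]
follows by taking the inverse limit over $n$ of $\Hom_\oo(M^{\ord}(U^p),\I/\fm_\I^n)=\Hom^{\cts}_\oo(M^{\ord}(U^p),\oo)\otimes_\oo(\I/\fm_\I^n)$ (the last identity using that $\I/\fm_\I^n$ is a finite $\oo$-module).

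Next I would translate the Hida family condition. Under the above identification, an element $\euF$ corresponds to a continuous $\oo$-linear map $\phi\colon M^{\ord}(U^p)\to \I$. The action of $\langle u\rangle$ on $S^{\ord}(U^p)$ via the Hecke action in \eqref{eq:Lambda_hecke} dualizes to an action on $M^{\ord}(U^p)$, which is precisely the $\Lambda^+$-module structure appearing in the definition of $M^{\ord}(U^p,\I)=M^{\ord}(U^p)\otimes_{\Lambda^+}\I$. Unwinding the condition $(\langle u\rangle\otimes\id)\euF=(\id\otimes\langle u\rangle)\euF$ shows that it is equivalent to $\phi(\langle u\rangle\cdot m)=\langle u\rangle\cdot\phi(m)$ for all $u\in\Lambda^+$ and $m\in M^{\ord}(U^p)$, i.e.\ to $\phi$ being $\Lambda^+$-linear. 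Hence
\[
S^{\ord}(U^p,\I)\;\cong\; \Hom_{\Lambda^+}^{\cts}(M^{\ord}(U^p),\I).
\]

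Finally, I would apply the tensor-hom adjunction: since $\I$ is an $\Lambda^+$-algebra,
\[
\Hom_{\Lambda^+}(M^{\ord}(U^p),\I)\;\cong\;\Hom_\I(M^{\ord}(U^p)\otimes_{\Lambda^+}\I,\I)=\Hom_\I(M^{\ord}(U^p,\I),\I),
\]
and the continuity hypothesis on the left is automatic once $M^{\ord}(U^p)$ is known to be finitely generated as a $\Lambda^+$-module (which is the content of the control theorem referenced after Definition \ref{def:big_hecke}). Hecke-equivariance follows because the $\TT^{\ord}(U^p,\oo)$-action commutes with the $\Lambda^+$-action on $M^{\ord}(U^p)$ and is preserved under all the identifications above. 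The main technical obstacle is Step~1: ensuring that the completed tensor product is correctly identified with the continuous Hom into $\I$ and that $\Lambda^+$-linearity of $\phi$ is automatically continuous, both of which hinge on the finite generation of $M^{\ord}(U^p)$ over $\Lambda^+$.
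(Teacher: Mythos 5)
Your proposal follows exactly the same route as the paper's proof: start from the duality $S^{\ord}(U^p)\cong\Hom^{\cts}_\oo(M^{\ord}(U^p),\oo)$ in \eqref{eq:complete_isom}, extend coefficients to obtain $S^{\ord}(U^p)\widehat{\otimes}_\oo\I\cong\Hom^{\cts}_\oo(M^{\ord}(U^p),\I)$, observe that the Hida compatibility condition cuts out precisely the $\Lambda^+$-linear maps, and finish with tensor--hom adjunction $\Hom_{\Lambda^+}(M^{\ord}(U^p),\I)\cong\Hom_\I(M^{\ord}(U^p,\I),\I)$. The only additions are the explicit inverse-limit description of $\widehat{\otimes}_\oo\I$ and the (correct) remark that finiteness of $M^{\ord}(U^p)$ over $\Lambda^+$ makes continuity automatic in the last step; these just spell out steps the paper treats as formal.
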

\begin{proof}
Recall from \eqref{eq:complete_isom} that
$S^{\ord}(U^p)\cong \Hom^{\cts}_\oo(M^{\ord}(U^p),\oo)$.
From which we have
$S^{\ord}(U^p)\widehat{\otimes}_\oo\I
\cong \Hom^{\cts}_\oo(M^{\ord}(U^p),\I)$.
Then the subspace on which the actions
$(\langle u\rangle\otimes\id)$ and
$(\id\otimes\langle u\rangle)$
coincide is precisely
\[
    \Hom^{\cts}_{\Lambda^+}(M^{\ord}(U^p),\I)\cong
    \Hom_{\I}(M^{\ord}(U^p,\I),\I).
\]
\end{proof}

\begin{prop} \label{prop:ord_to_dual}
The spaces $M^{\ord}(U^p,\I)$ and
$S^{\ord}(U^p,\I)$,
are finite free over $\I$
under the condition \eqref{cond:s-ram} and \eqref{cond:small}.
In particular there exists a Hecke-equivariant isomorphism
\[
    M^{\ord}(U^p,\I)\cong \Hom_\I(S^{\ord}(U^p,\I),\I).
\]
\end{prop}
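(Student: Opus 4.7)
The plan is to deduce both claims from the known finite freeness of $M^{\ord}(U^p)$ over the Iwasawa algebra of diamond operators (proved in \cite{ger}) together with the standard reflexivity of finite free modules over a commutative Noetherian ring.

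First I would invoke the control theorem of \cite{ger}, recalled also in the introduction: under \eqref{cond:s-ram} and \eqref{cond:small}, the completed ordinary homology $M^{\ord}(U^p)$ is finite free as a $\Lambda$-module. Since the finite abelian group $\Delta_p \times \Delta_S$ has order prime to $p$, after enlarging $\oo$ if necessary one has a decomposition of $\oo[\Delta_p \times \Delta_S]$ into a product of rank-one summands, and each character-isotypic piece of $M^{\ord}(U^p)$ is a $\Lambda$-direct summand, hence again finite free over $\Lambda$. Because $\I$ is a $\Lambda^+$-algebra whose action of $\Delta'$, the prime-to-$p$ part of $\Delta_p\times\Delta_S$, factors through the single character $\gamma$ singled out in the remark after Definition \ref{def:Hida_family}, the base change
\[
    M^{\ord}(U^p,\I) = M^{\ord}(U^p) \otimes_{\Lambda^+} \I
\]
is computed as $M^{\ord}(U^p)_\gamma \otimes_\Lambda \I$, where $M^{\ord}(U^p)_\gamma$ denotes the $\gamma$-isotypic summand. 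This is finite free over $\I$ of rank equal to the $\Lambda$-rank of $M^{\ord}(U^p)_\gamma$, and the Hecke action descends because it commutes with the $\Lambda^+$-action on the first tensor factor.

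With this in hand, the preceding lemma furnishes a Hecke-equivariant isomorphism
\[
    S^{\ord}(U^p,\I) \cong \Hom_\I(M^{\ord}(U^p,\I),\I),
\]
so $S^{\ord}(U^p,\I)$ is also finite free over $\I$ of the same rank. The final Hecke-equivariant isomorphism then follows from reflexivity of finite free $\I$-modules: for any finite free $\I$-module $N$ the canonical evaluation map $N \to \Hom_\I(\Hom_\I(N,\I),\I)$ is an isomorphism of $\I$-modules, and is automatically Hecke-equivariant whenever $N$ carries a Hecke action that commutes with the $\I$-action. I do not expect any step to require further geometric input beyond the control theorem; the main obstacle, if any, is simply the bookkeeping around the prime-to-$p$ diamond operators needed to reduce $\Lambda^+$-freeness to the $\Lambda$-freeness actually supplied by \cite{ger}, after which everything follows from standard commutative algebra.
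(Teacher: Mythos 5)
The key gap is your assertion that the finite abelian group $\Delta_p\times\Delta_S$ has order prime to $p$. Only $\Delta_p$ is guaranteed prime to $p$, being by construction the Teichm\"uller torsion inside $T_n(p^0)$. Each factor $\Delta_v$ of $\Delta_S$ is a subgroup of $\Iw(w)/\Iw_1(w)\cong T_n(\oo_w/(\varpi_w))$, whose order $(q_w-1)^n$ is divisible by $p$ whenever $q_w\equiv 1\pmod p$; neither \eqref{cond:s-ram} nor \eqref{cond:small} excludes this. Indeed the remark following Definition~\ref{def:Hida_family} introduces the prime-to-$p$ part $\Delta'$ of $\Delta_p\times\Delta_S$ precisely because the full group may have nontrivial $p$-torsion. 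Without the prime-to-$p$ hypothesis, $\oo[\Delta_p\times\Delta_S]$ does not split into rank-one character summands (even after enlarging $\oo$), the identification $M^{\ord}(U^p)\otimes_{\Lambda^+}\I\cong M^{\ord}(U^p)_\gamma\otimes_\Lambda\I$ is unavailable, and $\Lambda$-freeness of $M^{\ord}(U^p)$ by itself does not give $\I$-freeness of the base change, since the map $\Lambda^+\to\I$ need not be flat along the $p$-part of $\Delta_S$.

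The paper's proof supplies exactly what your argument is missing: it establishes that $M^{\ord}(U^p)$ is finite free over the full group ring $\Lambda^+=\Lambda[\Delta_p\times\Delta_S]$, not merely over $\Lambda$. This is where \cite[Lem 2.6]{ger} together with the control-theorem argument of \cite[Prop 2.20]{ger} actually enter: \cite[Lem 2.6]{ger} gives that each finite-level space $S(U^p\Iw(p^{b,b}),\oo)$ is free over $\oo[T_n(p^1)/T_n(p^b)][\Delta_p\times\Delta_S]$ --- over the whole group ring, including the possibly $p$-torsion part of $\Delta_S$, thanks to \eqref{cond:small} --- and a constant-rank argument over the augmentation ideals $\fa_b\subset\Lambda^+$ then promotes this to $\Lambda^+$-freeness of $M^{\ord}(U^p)$. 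Once $M^{\ord}(U^p)\cong(\Lambda^+)^{\oplus r}$, the base change $M^{\ord}(U^p,\I)=M^{\ord}(U^p)\otimes_{\Lambda^+}\I\cong\I^{\oplus r}$ is immediate, with no character decomposition required. Your remaining steps --- applying the preceding lemma to identify $S^{\ord}(U^p,\I)$ with $\Hom_\I(M^{\ord}(U^p,\I),\I)$, then invoking reflexivity of finite free $\I$-modules for the final isomorphism --- are correct and match the paper's reasoning.
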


\begin{proof}
Under the condition \eqref{cond:s-ram} and \eqref{cond:small},
\cite[Lem 2.6]{ger} implies that the spaces 
$S(U^p\Iw(p^{b,b}),\oo)$ are finite free over 
$\oo[T_n(p^1)/T_n(p^b)][\Delta]$.
Let $\fa_b\subset\Lambda^+$
be the augmented ideal generated by $T_n(p^b)$ for $b\geq1$.
We can then argue as in \cite[Prop 2.20]{ger} and show that
\[
    M^{\ord}(U^p)/\fa_b M^{\ord}(U^p)
\]
is finite-free over $\I/\fa_b\I$
of constant rank
$r=\operatorname{rank}_\oo S^{\ord}(\tilde{U}^p\Iw(p^{0,1}),\oo)$,
where $\tilde{U}^p$ is the prime-to-$p$ part
of the open compact subgroup $\tilde{U}$ in \eqref{cond:small}.
Therefore $M^{\ord}(U^p)$ is finite free over $\Lambda^+$
of rank $r$ and 
$M^{\ord}(U^p,\I)$ is finite free over $\I$ of rank $r$.
It then follows from the previous lemma that 
$M^{\ord}(U^p,\I)\cong \Hom_\I(S^{\ord}(U^p,\I),\I)$.

\end{proof}

\subsection{Hecke algebras and Galois representations}

Let $\mathcal{A}$ be
the space of automorphic forms on $G(\A)$,
on which $G(\A)$ acts by right translation.
And for dominant $\wt{k}\in (\Z^n)^{\Sigma}$
let $G(\A_f)$ act on $S_{\wt{k}}(\bar{\Q}_p)$
as in Definition \ref{def:algform}.
We let $\xi_{\wt{k}}^*(\C)$
denote the algebraic
$G(\A_\infty)$-representation
defined by the inclusions
$G(\F_\sigma)\subset \GL_n(\F_\sigma\otimes \K)=\GL_n(\C)$
for each $\sigma\in\Sigma$.
By \cite[Prop 3.3.2]{CHT},
there exists an $G(\A_f)$-equivariant isomorphism
\begin{equation}\label{eq:p_to_infty}
	\iota\colon S_{\wt{k}}(\bar{\Q}_p)\otimes_{\iota,\bar{\Q}_p}\C
	\rightarrow \Hom_{G(\A_\infty)} (\xi_{\wt{k}}^*(\C), \mathcal{A})\quad
	\iota(F)\colon v^*\mapsto 
	[g\mapsto \xi_{\wt{k}}(g_\infty)\cdot 
    \iota\left(\xi_{\wt{k}}(g_p)F(g_f)\right)].
\end{equation}

\begin{prop}\cite[Prop.2.27]{ger}\label{prop:gal_ger}
	Let $\pi$ be an irreducible constituent of the
	$G(\A_f)$-representation $S_{\wt{k}}(\bar{\Q}_p)$,
	then there exist a unique 
	continuous semi-simple Galois representation
	\[
	r_\pi: \Gal_\K \rightarrow \GL_n(\bar{\Q}_p)\quad
	\]
    satisfying $r_\pi^c \cong r_\pi^{\vee} \epsilon^{1-n}$,
	where $\epsilon$ is the $p$-th cyclotomic character,
	with the following properties.
\begin{enumerate}[label=(\alph*)]
\item If $v\in \finite$ is inert and $\pi_v$
is fixed by a hyperspecial maximal compact open subgroup 
of $G(\F_v)$ then $r_\pi\vert_{D_v}$ is unramified.
\item If $v\in \finite$ splits into $w\bw$ in $\K$
let $\pi_w$ denote the $\GL_n(\K_w)$-representation
$\pi_v\circ \iota_w^{-1}$, then
\[
\WD\left(\left.r_\pi\right|_{D_w}\right)^{\mathrm{ss}} \cong
\Rec(\pi_w|\cdot|^{\frac{1-n}{2}})^{\mathrm{ss}}.
\]
Moreover, $r_\pi\vert_{D_w}$ is unramified if $\pi_w$ is unramified.
\item 
If $w\in\Sigma_p$ the representation
$r_\pi\vert_{D_w}$ is potentially semi-stable.
Moreover $r_\pi\vert_{D_w}$ is crystalline
if the representation $\pi_w$ defined as above is unramified,
in which case the characteristic polynomial of 
the geometric Frobenius $\Fr_w$ on 
$\WD\left(D_{\mathrm{cris }}\left(\left.r_\pi\right|_{D_w}\right)\right)$
coincides with that of 
$\Rec(\pi_w|\cdot|^{\frac{1-n}{2}})^{\mathrm{ss}}$.
\item 
Define $k_{\sigma,j}=-k_{\sigma c, n-j+1}$ for $\sigma\in \Sigma^c$.
If $w\mid p$ and  $\sigma\in I_w$, then 
$\dim_{\bar{\Q}_p}\operatorname{gr}^i
\left(r_\pi \otimes_{\sigma, \K_w} B_{\dR}\right)^{D_w}=1$
exactly when $i=k_{\sigma, j}+n-j$ 
for $j=1, \ldots, n$ and is equal to 0 otherwise.
\end{enumerate}
\end{prop}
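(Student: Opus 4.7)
The strategy is to transfer $\pi$ to a classical automorphic representation, base change to $\GL_n(\A_\K)$, and apply the existence theorem for Galois representations attached to regular algebraic conjugate self-dual cuspidal automorphic representations.

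First, via the $G(\A_f)$-equivariant isomorphism $\iota$ of \eqref{eq:p_to_infty}, the irreducible constituent $\pi$ corresponds to an irreducible automorphic representation $\Pi$ of $G(\A)$ whose archimedean component is the algebraic $G(\A_\infty)$-representation $\xi_{\wt k}^*(\C)$; since $G$ is totally definite, $\Pi$ is automatically cuspidal. I would then quadratically base change $\Pi$ to an automorphic representation $\Pi_\K$ of $\GL_n(\A_\K)$ using the endoscopic results of Labesse as packaged in \cite[\S 5]{ger} and \cite{CHT}. The resulting $\Pi_\K$ is an isobaric sum of conjugate self-dual cuspidal pieces, regular algebraic, with infinitesimal character at infinity determined by $\wt k$ through the CM type $\Sigma$.

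Next, the Galois representation $r_\pi \coloneqq r_{\Pi_\K} : \Gal_\K \to \GL_n(\bar \Q_p)$ satisfying $r_\pi^c \cong r_\pi^\vee \epsilon^{1-n}$ is produced by the work of Chenevier--Harris, Shin and Caraiani, realized on the $\ell$-adic cohomology of an auxiliary compact unitary Shimura variety; uniqueness up to semisimplification follows from Chebotarev applied at unramified places. For the local-global compatibility properties: (a) at unramified inert places is immediate from unramified base change; at a split place $v = w\bw$, local base change identifies $(\Pi_\K)_w$ with $\pi_v \circ \iota_w^{-1}$, and the split-place local-global compatibility proved in \cite{CHT} (refined by Caraiani) yields (b). For (c) and (d), the ordinary assumption \eqref{cond:ord} forces every $v \mid p$ to split in $\K$, so the same split-place compatibility applies; Caraiani's $p$-adic Hodge theoretic refinement then gives potential semi-stability, crystallinity in the unramified case, and the Frobenius-characteristic polynomial matching. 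The Hodge--Tate--Sen weights are computed from the infinitesimal character of $\Pi_\K$ via the CM type, and the formula $k_{\sigma, j} + n - j$ in (d) is the standard half-sum-of-roots correction when translating between dominant weights, archimedean infinitesimal characters, and Hodge--Tate numbers.

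The deepest single input is the full $p$-adic Hodge theoretic local-global compatibility in (c)--(d); this is by now standard but depends on substantial geometry of unitary Shimura varieties. The main bookkeeping obstacle is the precise translation between the three parametrizations (dominant weights $\wt k$, archimedean infinitesimal character of $\Pi_\K$, and Hodge--Tate numbers of $r_\pi$) so that the formula in (d) comes out with the correct normalization for the chosen CM type $\Sigma$ and the definite unitary setting; the conjugate-self-dual relation $r_\pi^c \cong r_\pi^\vee \epsilon^{1-n}$ then falls out of the conjugate-self-duality of $\Pi_\K$ combined with the unitary similitude factor of the auxiliary Shimura datum.
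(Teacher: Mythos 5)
Your proof sketch is essentially the argument in the cited reference: the paper itself does not reprove this statement but simply quotes it as \cite[Prop.2.27]{ger}, whose proof is exactly the chain you describe (realize $\pi$ as a cuspidal automorphic representation of the definite unitary group via \eqref{eq:p_to_infty}, quadratic base change to $\GL_n(\A_\K)$ after Labesse, then invoke the Galois representations and local-global compatibility for RACSDC representations due to Chenevier--Harris, Shin, Caraiani, and the $p$-adic Hodge theoretic refinements, with the weight bookkeeping for (d)). Your sketch is correct and matches the approach of the source being cited.
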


In the rest of the section,
we restrict ourselves to the following case.
\begin{equation}\label{cond:parabolic}\tag{P}
	n=2,\, 
	P_{w_0}=G_{w_0} \text{ for the fixed prime }w_0\in \Sigma_p,\,
	P_{w'}=B_{w'} \text{ for } w'\in \Sigma_p\setminus\{w_0\}.
\end{equation}
Note that by definition \eqref{def:Iwahori_P}
we have $K_{w_0}\subset \Iw^P(p^{0,1})$.

\begin{lem}\label{lem:hecke_red}
Both
$\TT^P_{\wt{k}}(U^p\Iw^P(p^{b,b}),\oo)$ and
$\TT^{\ord}_{\wt{k}}(U^p\Iw(p^{b,b}),\oo)$
are finite flat reduced $\oo$-algebras.
\end{lem}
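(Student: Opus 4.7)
The plan is to establish three properties in sequence: finiteness, flatness, and reducedness. Finiteness and flatness are essentially formal. By construction $\TT^P_{\wt{k}}(U^p\Iw^P(p^{b,b}),\oo)$ embeds in $\End_\oo S_{\wt{k}}^{P-\ord}(U^p\Iw^P(p^{b,b}),\oo)$, and the analogue of \cite[Lem 2.6]{ger} for the parahoric $\Iw^P(p^{b,b})$, proved by the same argument, shows that $S_{\wt{k}}(U^p\Iw^P(p^{b,b}),\oo)$ is finite free over $\oo[\prod_{v\in S}(\Iw(w)/U_v)]$, and in particular finite free over $\oo$. Cutting by the idempotent $e_P$ leaves a finite $\oo$-torsion-free submodule, whose endomorphism ring and Hecke subalgebra are then finite and torsion-free over $\oo$, i.e., finite flat. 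The $B$-ordinary case is the specialization $P=B$.

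For reducedness, by flatness it suffices to prove $\TT^P_{\wt{k}}\otimes_\oo E$ is reduced, and by faithfully flat base change one may further extend scalars to $\bar{\Q}_p$. Using the $G(\A_f)$-equivariant isomorphism \eqref{eq:p_to_infty}, the space $S_{\wt{k}}(U^p\Iw^P(p^{b,b}),\bar{\Q}_p)\otimes_\iota\C$ is identified with the $U^p\Iw^P(p^{b,b})$-invariants of $\Hom_{G(\A_\infty)}(\xi_{\wt{k}}^*(\C),\mathcal{A})$. Since $G$ is definite, $\mathcal{A}$ decomposes as a Hilbert direct sum of irreducible admissible $G(\A)$-representations with finite multiplicities, so this finite-dimensional space breaks up as a finite direct sum indexed by the contributing $\pi=\pi_\infty\otimes\pi_f$. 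Applying $e_P$ then leaves a finite direct sum of finite-dimensional blocks on which the commutative Hecke algebra $\TT^P_{\wt{k}}\otimes\bar{\Q}_p$ acts faithfully.

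The main obstacle is verifying that this commuting family acts semisimply on each block, since commutativity alone does not imply this. Away from $p$ and $S$, each operator acts as a scalar given by the Satake parameters of $\pi_f$, so the question is concentrated at the operators $U_{\wt{k},w}^{(j)}$ and the diamond operators. After $e_P$ is applied, the operators $U_{\wt{k},w}^{(j)}$ corresponding to $\alpha_w^{(j)}\in Z_Q$ become invertible, and their joint eigenspaces on the finite-dimensional $\Iw^P(p^{b,b})$-invariants of a local constituent $\pi_w$ correspond bijectively to the $P$-ordinary lines in its $P$-Jacquet module, on which the relevant torus acts through pairwise distinct characters by the classification of smooth admissible representations of $\GL_n(\K_w)$. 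Combined with commutativity and the semisimplicity of the diamond action (which factors through a finite group), this yields a simultaneous eigenspace decomposition, presenting $\TT^P_{\wt{k}}\otimes\bar{\Q}_p$ as a subalgebra of $\prod_\pi\bar{\Q}_p$, a finite product of copies of $\bar{\Q}_p$ and hence reduced. Descending back to $\oo$ through the injections above completes the proof.
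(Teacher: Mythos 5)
Your first paragraph on finiteness and flatness matches the paper's argument: the Hecke algebra sits inside $\End_\oo$ of a finite free $\oo$-module, hence is finite and $\oo$-torsion-free. Reducing reducedness to semi-simplicity of the generators over $\bar{\Q}_p$ via the automorphic decomposition is also in the right spirit. The gap is in the crucial last step.

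You assert that the $P$-ordinary lines of the $P$-Jacquet module of $\pi_w$ carry pairwise distinct characters ``by the classification of smooth admissible representations of $\GL_n(\K_w)$.'' This is not a consequence of the classification, and for a general parabolic $P$ it is false: $J_P(\pi_w)$ is an admissible representation of the Levi $Q_w$, and distinct constituents can perfectly well share the same $Z_{Q_w}$-central character --- the $P$-ordinary condition pins down the \emph{valuation} of the $Z_{Q_w}$-eigenvalues, not the eigenvalues themselves, so nothing forces distinctness. Without distinctness there is no simultaneous eigenspace decomposition and the argument collapses. Even for $P=B$ the distinctness of the $B$-ordinary eigenvalues is not free; it is a genuine slope/Newton-polygon fact, which is exactly what [ger, Lem.~2.14] establishes and what the paper cites for this case.

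The paper avoids all of this by exploiting the standing hypothesis \eqref{cond:parabolic}, in effect throughout this part of \S2: $n=2$, $P_{w_0}=G_{w_0}$, $P_{w'}=B_{w'}$ for $w'\in\Sigma_p\setminus\{w_0\}$. At $w'\neq w_0$ one is in the pure $B$-ordinary setting and cites [ger, Lem.~2.14] directly. At $w_0$, because $P_{w_0}=\GL_2(\K_{w_0})$ one has $Z_Q\cap\GL_2(\K_{w_0})=\K_{w_0}^\times\cdot\id_2$, so the only Hecke operators at $w_0$ appearing in $\TT^P_{\wt{k}}$ are $h_U(\smat{\varpi_{w_0}&\\&\varpi_{w_0}})$ and $h_U(\smat{x&\\&x})$ for $x\in\oo_{w_0}^\times$; these are the central action of $\K_{w_0}^\times$, hence automatically semi-simple, with no Jacquet-module or eigenvalue analysis needed. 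Your proposal does not notice that the lemma is stated under \eqref{cond:parabolic} and aims at an arbitrary $P$, where the claimed distinctness can fail and the proof does not go through.
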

\begin{proof}
For $\TT^{\ord}_{\wt{k}}(U^p\Iw(p^{b,b}),\oo)$
this is \cite[Lem 2.14]{ger}
so we only prove the lemma for
$\TT^P_{\wt{k}}(U^p\Iw^P(p^{b,b}),\oo)$.

It is clear that the Hecke algebra,
which is a subalgebra of the endormorphism ring
of the finite free $\oo$-module
$S^{P-\ord}_{\wt{k}}(U^p\Iw^P(p^{b,b}),\oo)$,
is finite flat over $\oo$.
For the reducedness,
since the Hecke algebra is $\oo$-flat,
it suffices to tensor $E$ and show that 
$\TT^P_{\wt{k}}(U^p\Iw^P(p^{b,b}),E)$
is semi-simple,
or the Hecke operators
generating which act semi-simply on 
$S^{P-\ord}_{\wt{k}}(U^p\Iw^P(p^{b,b}),\oo)$.
This is clear for 
the operators $T_{w}^{(j)}$ from \eqref{def:hecke_away_p}
and the operators $\langle u\rangle $
from \eqref{def:hecke_at_s}.
For the operators at $w'\in\Sigma_p\setminus\{w_0\}$
this follows from \cite[Lem 2.14]{ger}.
For the operators at $w=w_0$,
we only need to consider the operators
$h_U(\smat{\varpi_{w}&\\&\varpi_w})$ and
$h_U(\smat{x&\\&x})$ for $x\in \oo_w^\times$.
These are essentially the central action
of $\K_w^\times\subset \GL_2(\K_w)$
and the semi-simplicity of which is also clear.

\end{proof}

Consequently $\TT^P_{\wt{k}}(U^p\Iw^P(p^{b,b}),E)$
is the a product of residue fields 
$E_{\fp}\coloneqq \TT^P_{\wt{k}}(U^p\Iw^P(p^{b,b}),E)/\fp$,
which are finite extensions over $E$,
where $\fp$ goes through minimal primes.
By abuse of notation, let $\fp$ also denote 
the height one prime ideal 
$\fp\cap \TT^P_{\wt{k}}(U^p\Iw^P(p^{b,b}),\oo)$
in $\TT^P_{\wt{k}}(U^p\Iw^P(p^{b,b}),\oo)$.

\begin{defn}\label{def:rep_prime}
	Write 
	$\lambda_\fp\colon \TT^P_{\wt{k}}(U^p\Iw^P(p^{b,b}),E)\to E_\fp$
    for the projection associated 
    to a minimal prime $\fp\subset \TT^P_{\wt{k}}(U^p\Iw^P(p^{b,b}),E)$.
    Let $\pi$ be an irreducible constituent 
    of $S_{\wt{k}}(\bar{\Q}_p)$
    and $r_\pi$ be the corresponding Galois representation.
    We say $\pi$ belongs to $\fp$ if
	$\pi\cap S_{\wt{k}}^{P-\ord}(U^p\Iw^P(p^{b,b}),E)_{\fp}\neq 0$
    and write $r_\fp=r_\pi$.
    Then part (b) in Proposition \ref{prop:gal_ger} implies that
	\begin{equation}\label{eq:Gal_hecke_away_p}
		\mtr(r_\pi(\Fr_w))=\lambda_\fp(T_w^{(1)}),\quad
		\det(r_\pi(\Fr_w))=q_w\lambda_\fp(T_w^{(2)}),\,
	\end{equation}
	for primes $v=w\bw$ in \eqref{def:hecke_away_p}.
    Since the set of such primes has density one,
	by Chebotarev's density theorem
	the representation $r_\pi$ is defined over $E_{\fp}$
	and independent of the choice of $\pi$ belonging to $\fp$.
\end{defn}

Following \cite{ger},
we say $\wt{k}$ is sufficiently regular
at $w\in \Sigma_p$
if there exists  $\sigma\in I_{w}$
such that  $k_{\sigma,1}>k_{\sigma,2}$.
\begin{lem}\label{lem:galois_at_p}
	Let $r_{\fp}$ be the Galois representation
	associated to a minimal prime
	$\fp\subset \TT^P_{\wt{k}}(U^p\Iw^P(p^{0,1}),E)$.
	\begin{enumerate}[label=(\alph*)]
	\item The representation $r_\fp$ is crystalline at
    $w'\in \Sigma_p\setminus\{ w_0\}$
	if $\wt{k}$ is sufficiently regular at $w'$.
	Moreover, $r_\fp\vert_{D_{w'}}$ 
	fits into an exact sequence
	$0\to \psi_1\to r_{\fp}\vert_{D_{w'}} \to \epsilon^{-1}\psi_2\to 0$
	for characters $\psi_i\colon D_{w'}\to E_{\fp}^{\times}$ with
	\begin{equation}\label{eq:Gal_hecke_at_p'}
	\begin{aligned}
		\psi_1\circ \Art_{w'}(\varpi_{w'})&=
		\lambda_{\fp}(U_{\wt{k},w'}^{(1)}) &
		\psi_1\circ \Art_{w'}(x)&=
		\lambda_{\fp}
		(\langle 
		(\begin{smallmatrix}
			x&\\&1
		\end{smallmatrix})
		\rangle)\, \text{ for }x\in \oo_{w'}^{\times}\\
		\psi_2\circ \Art_{w'}(\varpi_{w'})&=
		\lambda_{\fp}(U_{\wt{k},w'}^{(2)})/
		\lambda_{\fp}(U_{\wt{k},w'}^{(1)}) &
		\psi_2\circ \Art_{w'}(x)&=
		\lambda_{\fp}
		(\langle 
		(\begin{smallmatrix}
			1&\\&x
		\end{smallmatrix})
		\rangle)\, \text{ for }x\in \oo_{w'}^{\times}
	\end{aligned}
	\end{equation}
	\item The representation $r_\fp$ is 
	crystalline at $w=w_0$, with 
	\begin{equation}\label{eq:Gal_hecke_at_p}
	(\epsilon\det r_\fp)\circ \Art_w(\varpi_w)=
	\lambda_{\fp}(U_{\wt{k},w}^{(2)}),\quad
	(\epsilon\det r_\fp)\circ \Art_w(x)=
	\lambda_{\fp}
	(\langle 
	(\begin{smallmatrix}
		x&\\&x
	\end{smallmatrix})
	\rangle)\, \text{ for }x\in \oo_{w}^{\times}
	\end{equation}
	\end{enumerate}
\end{lem}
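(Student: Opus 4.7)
The plan is to reduce to Proposition \ref{prop:gal_ger} applied to the classical automorphic representations that parameterize the minimal primes of the Hecke algebra. By Lemma \ref{lem:hecke_red} the algebra $\TT^P_{\wt{k}}(U^p\Iw^P(p^{0,1}),E)$ is a product of fields, so each minimal prime $\fp$ picks out a distinct isotypic component of $S^{P-\ord}_{\wt{k}}(U^p\Iw^P(p^{0,1}),E)$. Via the isomorphism \eqref{eq:p_to_infty} this component corresponds to an irreducible constituent $\pi$ of $S_{\wt{k}}(\bar{\Q}_p)$, and I would set $r_\fp := r_\pi$ as produced by Proposition \ref{prop:gal_ger}. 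The remaining content is then a local unravelling, at the two types of places in $\Sigma_p$, of the crystallinity and local-Langlands statements in that proposition.

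For part (b), I would first observe that when $P_{w_0}=G_{w_0}$ the unipotent radical is trivial and the congruence modulo $\varpi_{w_0}$ in \eqref{def:Iwahori_P} is vacuous, so $\Iw^P(w_0^{0,1})=K_{w_0}$. Hence $\pi_{w_0}$ is unramified and Proposition \ref{prop:gal_ger}(c) immediately gives crystallinity of $r_\fp$ at $w_0$. The only Hecke data supported at $w_0$ inside $\TT^P$ are the central operators $U_{\wt{k},w_0}^{(2)}=h_U(\diag(\varpi_{w_0},\varpi_{w_0}))$ and $\langle\diag(x,x)\rangle_{\wt{k}}$ for $x\in\oo_{w_0}^\times$, since $\alpha_{w_0}^{(1)}\notin Z_{Q_{w_0}}$. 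These act on $\pi_{w_0}^{K_{w_0}}$ through the central character $\omega_{\pi_{w_0}}$ twisted by the weight-normalization $(w_0\wt{k})^{-1}$ built into Definition \ref{def:hecke}. Since Proposition \ref{prop:gal_ger}(b) identifies $\det r_\fp|_{D_{w_0}}$ with $\omega_{\pi_{w_0}}|\cdot|^{-1}$ under $\Rec$, balancing the $\epsilon$-twist against the cyclotomic character evaluated on $\Art_{w_0}(\varpi_{w_0})$ yields \eqref{eq:Gal_hecke_at_p}.

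For part (a), at $w'\in\Sigma_p\setminus\{w_0\}$ the $B$-ordinary condition on $S^{P-\ord}_{\wt{k}}(U^p\Iw^P(p^{0,1}),E)_\fp$ forces $\lambda_\fp(U_{\wt{k},w'}^{(j)})$ to be a $p$-adic unit for each $\alpha_{w'}^{(j)}\in Z_{Q_{w'}}=T_{w'}$, by the very definition of $P$-ordinariness. I would then invoke the ordinary Galois representation theorem for algebraic automorphic forms on the definite unitary group developed in \cite{ger}: this yields a filtration $0\to\psi_1\to r_\fp|_{D_{w'}}\to\epsilon^{-1}\psi_2\to 0$ whose characters are determined on Frobenius and on $\oo_{w'}^\times$ by the unit eigenvalues $\lambda_\fp(U_{\wt{k},w'}^{(j)})$ and $\lambda_\fp(\langle\diag(\cdot)\rangle_{\wt{k}})$ via local reciprocity. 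Sufficient regularity of $\wt{k}$ at $w'$ makes the Hodge--Tate weights there distinct, and combined with the potentially semi-stable conclusion of Proposition \ref{prop:gal_ger}(c) and weak admissibility this upgrades to crystallinity and pins down the explicit formulas \eqref{eq:Gal_hecke_at_p'}.

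The principal hurdle will be bookkeeping across three separate normalizations: the twist $(w_0\wt{k})^{-1}(\alpha)$ inserted into the definition of the Hecke operators at $p$, the $|\cdot|^{(1-n)/2}$-shift in local Langlands from Proposition \ref{prop:gal_ger}(b), and the $\epsilon$-twists appearing in both \eqref{eq:Gal_hecke_at_p} and \eqref{eq:Gal_hecke_at_p'}. These normalizations are arranged precisely so that the resulting character data depends only on the eigensystem and not on the weight $\wt{k}$ — a compatibility that is in turn essential for promoting these relations, via Proposition \ref{prop:wt_indep}, to a single big Galois pseudo-representation valued in $\TT^P(U^p,\oo)$.
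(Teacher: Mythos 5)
Your proposal is correct and follows essentially the same route as the paper: part (a) is reduced to the ordinary Galois representation theorem from \cite{ger} (the paper cites it directly as \cite[Cor 2.33]{ger}), and part (b) is reduced to Proposition \ref{prop:gal_ger}(c), with the observation that $\Iw^P(w_0^{0,1})=K_{w_0}$ forces $\pi_{w_0}$ to be unramified. The paper also flags the same normalization bookkeeping you flag, in the form of the identity $\lambda_\fp(\langle u\rangle)=(w_0\wt{k})^{-1}(u)$ for $u\in T_n(\oo_{w'})$.

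One small imprecision in your account of part (a): for $n=2$ with a dominant weight $k_{\sigma,1}\geq k_{\sigma,2}$, the Hodge--Tate weights $\{k_{\sigma,1}+1,\,k_{\sigma,2}\}$ from Proposition \ref{prop:gal_ger}(d) are \emph{always} distinct; what sufficient regularity ($k_{\sigma,1}>k_{\sigma,2}$) actually buys is that they differ by at least $2$, which rules out the special (Steinberg) configuration and hence forces the potentially semi-stable $r_\fp|_{D_{w'}}$ to be crystalline. This is the role sufficient regularity plays in \cite[Cor 2.33]{ger}, rather than distinctness of weights or a generic appeal to weak admissibility.
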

\begin{proof}
The first part is a restatement of \cite[Cor 2.33]{ger}.
Here we have used the fact that
$\lambda_\fp(\langle u\rangle)=(w_0\wt{k})^{-1}(u)$ 
for $u\in T_n(\oo_{w'})\subset \Iw(w'^{0,1})$. 
And the second part follows from part (c)
of Proposition \ref{prop:gal_ger}.
\end{proof}

\begin{rem}
If we consider the ordinary Hecke algebra
$\TT^{\ord}(U^p\Iw(p^{0,1}),E)$ instead,
then \cite[Cor 2.33]{ger} states that 
part (a) of the lemma above
holds for all $w\in\Sigma_p$.
\end{rem}

By Lemma \ref{lem:hecke_red},
when $\fp$ goes through all minimal primes
of $\TT^P_{\wt{k}}(U^p\Iw^P(p^{b,b}),E)$
we have an injection
\[
	\TT^P_{\wt{k}}(U^p\Iw^P(p^{b,b}),\oo)\hookrightarrow
	\TT^P_{\wt{k}}(U^p\Iw^P(p^{b,b}),E)\xrightarrow{\sim} 
	\prod_{\fp}E_{\fp},
\]
in which the image is a closed subring.
Therefor \eqref{eq:Gal_hecke_away_p}
together with Chebotarev's density theorem
implies that there exists
a pseudo-representation
$T=T_{\wt{k}}(U^p\Iw^P(p^{b,b}))$ 
such that $\lambda_\fp\circ T=\mtr(r_\fp)$ 
for all $\fp$ and
\begin{equation}\label{eq:pseudo_rep_finite}
\begin{aligned}
	T&\colon \Gal_{\K}
	\to \TT^P_{\wt{k}}(U^p\Iw^P(p^{b,b}),\oo)\quad
	\Fr_w\mapsto T_w^{(1)}\\\
	(\epsilon\det T)&\colon \Gal_{\K}
	\to \TT^P_{\wt{k}}(U^p\Iw^P(p^{b,b}),\oo)\quad
	\Fr_w\mapsto T_w^{(2)}
\end{aligned}
\end{equation}
for $v=w\bw$ in \eqref{def:hecke_away_p}.
By the formula above the pseudo-representations
are compatible among different levels.
We take the inverse limit and define
the big $P$-ordinary Galois pseudo-representation.
\[
T_{\wt{k}}(U^p)\colon \Gal_{\K} \to \TT^P_{\wt{k}}(U^p,\oo)
\]
which also satisfies \eqref{eq:pseudo_rep_finite}.
We also define 
$T_{\wt{k}}^{\ord}(U^p)\colon 
\Gal_{\K} \to \TT^{\ord}_{\wt{k}}(U^p,\oo)$
in a similar fashion.

\begin{defn}\label{def:big_Gal}
We write $T(U^p)=T_{\wt{k}}(U^p)$ and 
$T^{\ord}(U^p)=T_{\wt{k}}^{\ord}(U^p)$
when $\wt{k}$ is the trivial weight.
Note that since all pseudo-representations satisfy
\eqref{eq:pseudo_rep_finite}
for $v=w\bw$ in \eqref{def:hecke_away_p},
by Chebotarev's density theorem 
the pseudo-representations are comptatible among
the commutative diagram
\[
\begin{tikzcd}
    \TT(U^p,\oo) 
    \arrow[r] \arrow[d]&
    \TT_{\wt{k}}(U^p,\oo) 
    \arrow[d]\\
    \TT^{\ord}(U^p,\oo) 
    \arrow[r] &
    \TT^{\ord}_{\wt{k}}(U^p,\oo) 
\end{tikzcd}
\]
where the horizontal maps come from Proposition \ref{prop:wt_indep}
and the vertical maps come from Lemma \ref{lem:coh_to_ord}.
\end{defn}

\begin{prop}\label{prop:big_char_at_p}
    For each $w'\in \Sigma_p\setminus\{w_0\}$
    the restriction of $T(U^p)$ to $D_w$
    is reducible and 
    \[
        T(U^p)\vert_{D_{w'}}=\Psi_{w',1}+
        \epsilon^{-1}\Psi_{w',2},\quad
        T(U^p)^c\vert_{D_{w'}}=\epsilon^{-1}\Psi_{w',1}^{-1}+
        \Psi_{w',2}^{-1}
    \]
    where $T(U^p)^c(\gamma)\coloneqq T(U^p)(\gamma^c)$,
    for characters 
    $\Psi_{w',1},\Psi_{w',2}\colon D_{w'}\to \TT^P(U^p,\oo)$
    such that
	\begin{equation}
	\begin{aligned}\label{eq:big_char_atw'}
		\Psi_{w',1}\circ \Art_{w'}(\varpi_{w'})&=
		U_{w'}^{(1)}=h_U(\smat{\varpi_{w'}&\\&1}) &
		\Psi_{w',1}\circ \Art_{w'}(x)&=
		\langle 
		(\begin{smallmatrix}
			x&\\&1
		\end{smallmatrix})
		\rangle\, \text{ for }x\in \oo_{w'}^{\times}\\
		\Psi_{w',2}\circ \Art_{w'}(\varpi_{w'})&=
		U_{w'}^{(2)}/
		U_{w'}^{(1)} =h_U(\smat{1&\\&\varpi_{w'}}) &
		\Psi_{w',2}\circ \Art_{w'}(x)&=
		\langle 
		(\begin{smallmatrix}
			1&\\&x
		\end{smallmatrix})
		\rangle\, \text{ for }x\in \oo_{w'}^{\times}
	\end{aligned}
	\end{equation}
    Moreover, $T(U^p)$ is $\Psi_{w',1}$-ordinary 
    in the sense of \cite[Def 5.2.3]{pan}.
    In other word 
    \[
        T(U^p)(\sigma\tau\eta)
        -\Psi_{w',1}(\sigma)T(U^p)(\tau\eta)
        -\Psi_{w',2}(\tau)T(U^p)(\sigma\eta)
        +\Psi_{w',1}(\sigma)\Psi_{w',2}(\tau)T(U^p)(\eta)=0
    \]
    for all $\sigma, \tau\in D_{w'}$ and $\eta\in\Gal_\K$.
    And the statement holds for all $w\in \Sigma_p$
    if we consider $T^{\ord}(U^p)$ instead.
\end{prop}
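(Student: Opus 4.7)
The plan is to reduce every assertion to identities at classical points by the density result (Corollary \ref{cor:density}), and then to invoke Lemma \ref{lem:galois_at_p}(a) at each such point.

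First I would define the candidate characters. For each $w'\in\Sigma_p\setminus\{w_0\}$, local class field theory identifies $D_{w'}^{\ab}$ with the profinite completion of $\K_{w'}^\times$. So I declare $\Psi_{w',1},\Psi_{w',2}\colon D_{w'}\to \TT^P(U^p,\oo)$ on $\Art_{w'}(\varpi_{w'})$ and on $\Art_{w'}(x)$ for $x\in\oo_{w'}^\times$ by the formulas in \eqref{eq:big_char_atw'}, and extend by continuity and multiplicativity. The verification that these extend to genuine continuous characters is the content of the following density argument applied to the relations $\langle uu'\rangle_{\wt{k}}=\langle u\rangle_{\wt{k}}\langle u'\rangle_{\wt{k}}$, etc.

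Next I would carry out the reduction. Fix a dominant $\wt{k}$ that is sufficiently regular at $w'$ (such weights are abundant: they form a complement of finitely many hyperplanes in the dominant cone, so the analogue of Corollary \ref{cor:density} still gives an injection when one restricts to such $\wt{k}$, since the density argument of Proposition \ref{prop:density} and Proposition \ref{prop:wt_space} is unaffected by removing finitely many factors). By Lemma \ref{lem:hecke_red}, $\TT^P_{\wt{k}}(U^p\Iw^P(p^{0,1}),E)$ is a finite product $\prod_{\fp}E_\fp$. At each such $\fp$, Lemma \ref{lem:galois_at_p}(a) supplies an upper-triangular presentation of $r_\fp\vert_{D_{w'}}$ with diagonal $(\psi_1,\epsilon^{-1}\psi_2)$, where $\psi_1,\psi_2$ are given by the Hecke formulas \eqref{eq:Gal_hecke_at_p'}. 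Combined with $\varphi_{\wt{k}}$ from Proposition \ref{prop:wt_indep}, this shows that $\lambda_\fp\circ\Psi_{w',i}=\psi_i$ for $i=1,2$.

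The pseudo-representation identity $T(U^p)\vert_{D_{w'}}=\Psi_{w',1}+\epsilon^{-1}\Psi_{w',2}$ is now immediate at every $\lambda_\fp$ by taking traces, hence holds in $\TT^P(U^p,\oo)$ by density. For the ordinary identity, I would verify it directly for a generic upper-triangular two-dimensional representation $\rho=\bigl(\begin{smallmatrix}\alpha&*\\0&\beta\end{smallmatrix}\bigr)$ with $\alpha\vert_{D_{w'}}=\Psi_{w',1}$ and $\beta\vert_{D_{w'}}=\epsilon^{-1}\Psi_{w',2}$, by expanding the four terms and observing that the diagonal contributions cancel while the off-diagonal contribution of $\rho(\sigma\tau)$ equals $\Psi_{w',1}(\sigma)\cdot(\text{off-diagonal of }\rho(\tau))+\Psi_{w',2}(\tau)\cdot(\text{off-diagonal of }\rho(\sigma))$. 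Applied at each $r_\fp$ and invoking Chebotarev plus density gives the identity in $\TT^P(U^p,\oo)$. The $c$-conjugate formula follows from $r_\pi^c\cong r_\pi^\vee\otimes\epsilon^{-1}$ (Proposition \ref{prop:gal_ger}, with $n=2$), inverting the diagonal of $r_\fp\vert_{D_{w'}}$. Finally, to extend to all $w\in\Sigma_p$ for $T^{\ord}(U^p)$, one repeats the argument using the ordinary Hecke algebra in place of the $P$-ordinary one; the remark after Lemma \ref{lem:galois_at_p} ensures the classical input is available at every $w\in\Sigma_p$, including $w_0$.

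The main obstacle is technical rather than conceptual: one must ensure that the injection of Corollary \ref{cor:density} remains an injection when one further restricts to weights that are sufficiently regular at $w'$, and one must verify that the prescribed values on $\varpi_{w'}$ and $\oo_{w'}^\times$ really assemble into continuous characters of $D_{w'}$ (rather than just set maps). Both points are handled by checking the multiplicative relations $\Psi_{w',i}(gh)=\Psi_{w',i}(g)\Psi_{w',i}(h)$ on Hecke operators after projecting to each $\lambda_\fp$ at sufficiently regular classical weights, where Lemma \ref{lem:galois_at_p}(a) guarantees they hold as genuine characters.
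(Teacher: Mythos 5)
Your proposal follows essentially the same route as the paper: define the candidate characters by the Hecke formulas, use Lemma~\ref{lem:galois_at_p}(a) at classical points of sufficiently regular weight, and propagate the identities to the big $P$-ordinary Hecke algebra by density and Chebotarev. The computation for $T(U^p)^c$ via $r_\pi^c\cong r_\pi^\vee\epsilon^{-1}$ is correct and matches the paper (the paper's in-proof formula $\epsilon\psi_1^{-1}+\psi_2^{-1}$ has a sign typo that the proposition's statement and your argument both avoid). Your direct verification of the $\Psi_{w',1}$-ordinarity identity for an upper-triangular $\rho$ is a fine alternative to the paper's citation of \cite[Lem 5.3.3]{pan}; it buys you an explicit elementary argument at the cost of a few lines.

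There is one genuine, though minor, gap: the justification for restricting Corollary~\ref{cor:density} to sufficiently-regular weights. You write that such weights ``form a complement of finitely many hyperplanes'' and that ``removing finitely many factors'' does not affect the density argument of Propositions~\ref{prop:density} and~\ref{prop:wt_space}. Neither claim is quite right. The non-sufficiently-regular locus (all $\sigma\in I_{w'}$ with $k_{\sigma,1}=k_{\sigma,2}$) is a codimension-$|I_{w'}|$ linear subspace, not a finite union of hyperplanes, and its complement in the dominant cone still corresponds to removing \emph{infinitely many} weight factors from the product $\prod_{\wt{k}}\TT^P(U^p\Iw^P(p^{0,1}),E)$. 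What one actually needs is that the set of height-one primes $\fp$ admitting a classical point of sufficiently regular weight is Zariski-dense in $\Spec\TT^P(U^p,\oo)$; the paper supplies this by invoking the argument of \cite[Cor 3.4]{ger}. Your conclusion is correct, but the step requires this density statement as genuine input rather than the ``remove finitely many factors'' heuristic.
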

\begin{proof}

Since the proof is the same for $T(U^p)$ and $T^{\ord}(U^p)$,
we only consider $T(U^p)$.
By Lemma \ref{lem:galois_at_p}
we can define characters $\Psi_{w',i,\wt{k}}$ as above
with values in $\TT^P(U^p\Iw^P(p^{0,1}),E)$
when $\wt{k}$ is sufficiently regular
at $w'\in \Sigma_p\setminus{w_0}$.
The pushforward of $T(U^p)\vert_{D_{w'}}$ to which
is then the sum of $\Psi_{w',1,\wt{k}}$ and
$\epsilon^{-1}\Psi_{w',2,\wt{k}}$ and
moreover $\Psi_{w',1,\wt{k}}$-ordinary 
by \cite[Lem 5.3.3]{pan} and Lemma \ref{lem:galois_at_p}.

Now the same argument in the proof of \cite[Cor 3.4]{ger}
shows that the set of $\fp$ such that there exists 
$\pi$ of sufficiently regular weight that belongs to $\fp$
is a dense set.
This implies that Corollary \ref{cor:density} still holds
if we restricts to those $\wt{k}$
that are sufficiently regular
at all $w'\in \Sigma_p\setminus{w_0}$.
Therefore the image of $\TT^P(U^p,\oo)$
via the injection in the corollary
is a closed subring of $\prod_{\wt{k}}\TT^P(U^p\Iw^P(p^{0,1}),\oo)$
and the direct product of the characters
$\Psi_{w',i,\wt{k}}$ factors through which.
From this we conclude the existence 
of the characters $\Psi_{w',i}$.
Using the density result again,
we have that $T(U^p)\vert_{D_{w'}}$
is the sum of $\Psi_{w',1}$ and $\epsilon^{-1}\Psi_{w',2}$
and $\Psi_{w',1}$-ordinary.
The other equality regarding $T(U^p)^c$ is proved similarly
if we notice
$\mtr(r_\fp^c)\vert_{D_{w'}}=
\mtr(r_\fp^\vee\epsilon^{-1})\vert_{D_{w'}}=
\epsilon\psi_1^{-1}+\psi_2^{-1}$
for characters $\psi_i$ in Lemma \ref{lem:galois_at_p}.

\end{proof}

\section{Results on p-adic local Langlands}

We keep the notations and conditions from last subsection.
Specifically, we have
\begin{itemize}
\item The definite unitary group $G$ over $\F$
defined by \eqref{def:def_unitary} for $n=2$.
\item A finite subset $S\subset\finite$ of prime-to-$p$
places that splits in $\K$
and an open compact subgroup $U^p$
satisfying \eqref{cond:s-ram} and \eqref{cond:small}.
\item A parabolic subgroup $P\subset G_p\cong\prod_{v\in S_p}G(\F_v)$
satisfying \eqref{cond:parabolic}.
\end{itemize}
Throughout the section
we fix a sufficiently large finite extension $E/\Qp$.
Then over the ring of integers $\oo=\oo_E$ 
we can introduce the space of 
algebraic modular forms 
$S^{P-\ord}(U^p,E/\oo), S^{\ord}(U^p,E/\oo)$;
the completed homology
$M(U^p)=M_P(U^p), M^{\ord}(U^p)=M_B(U^p)$; 
the completed cohomology
$S(U^p)=S_P(U^p), S^{\ord}(U^p)=S_B(U^p)$;
and the big Hecke algebras
$\TT(U^p,\oo)=\TT^P(U^p,\oo)$ and $\TT^{\ord}(U^p,\oo)$.

For the degree one place $w_0\in \Sigma_p$,
we will write $D_{w_0}$ and $\Gp$,
the absolute Galois group of  $\Qp$, interchangeably,
where the two Galois groups are identified 
through an fixed isomorphism $\K_{w_0}\cong \Qp$.
Condition \eqref{cond:parabolic}
implies that the Levi part $Q$
of the parabolic subgroup $P$
is the product of $\GL_2(\K_{w_0})\cong \GL_2(\Qp)$
and the torus $\prod_{w'\in\Sigma_p\notin\{w_0\}}T(\K_{w'})$
(we write $T=T_n$ for $n=2$).
In particular the restriction of the $Q$-admissible representation
$S^{P-\ord}(U^p,E/\oo)\coloneqq \Ord_P(S(U^p,E/\oo))$
to $\GL_2(\Qp)$
is a locally admissible representation.

The goal of the section is as follows.
Fix a maximal ideal $\fm\subset \TT(U^p,\oo)$.
By the lemma below,
potentially after replacing $E$ by a finite extension,
we may assume that 
$\TT(U^p,\oo)/\fm$ coincides with the residue field $\fF$ of $\oo$.
Let $T_{\fm}=T(U^P)_{\fm}\colon \Gal_\K\to \TT(U^p)_{\fm}$
denote the localization of 
the big Galois pseudo-representation in 
Definition \ref{def:big_Gal}.
We further assume  $T_{\fm}$ is 
residually reducible and locally generic at $w_0$,
in the sense that there exists characters
$\bar{\delta}_1, \bar{\delta}_2\colon \Gal_{\K}\to \fF^\times$
such that 
\begin{equation}\tag{red.gen}\label{cond:red_gen}
	T_\fm\equiv \bar{\delta}_1+\bar{\delta}_2
	\mod \fm,\quad
    \text{ and }\quad
	\bar{\delta}_1\bar{\delta}_2^{-1} \vert_{D_{w_0}}
	\neq \id,\omega^{\pm}
\end{equation}
where $\omega$ is the Teichmuler character.
We apply P\v{a}sk\={u}nas' theory of blocks
of $\GL_2(\Qp)$-representations
to $M(U^p)_{\fm}$.
A modified version of the method in \cite{urban}
then shows that $\TT(U^p,\oo)_{\fm}$ is Noetherian
and the ``reducible'' part of  $M(U^p)_{\fm}$
is essentially given by $M^{\ord}(U^p)_{\fm}$,
here we take $M^{\ord}(U^p)$
as a $\TT(U^p,\oo)$-module 
through the homomorphism in Lemma \ref{lem:coh_to_ord}
and take the localization.
From these result we derive a fundamental exact sequence
in Corollary \ref{cor:fund} that will be critical
for our construction of the Euler systems in next section.

\begin{lem}\label{lem:big_red}
The big Hecke algebras 
$\TT(U^p,\oo)$ and $\TT^{\ord}(U^p,\oo)$
are reduced and semi-local.
In fact, the natural homomorphism
$\TT(U^p,\oo)\to \TT^P(U^p\Iw^P(p^{0,1}),\oo/\varpi)$ 
induces a bijection between maximal ideals of the two rings
and similarly for $\TT^{\ord}(U^p,\oo)$.
\end{lem}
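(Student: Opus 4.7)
The plan has three main parts, each building on the preceding structural results in the paper.

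First, for reducedness: the finite-level Hecke algebras $\TT^P(U^p\Iw^P(p^{b,b}),\oo)$ are reduced by Lemma \ref{lem:hecke_red}. Since $\TT(U^p,\oo) = \varprojlim_b \TT^P(U^p\Iw^P(p^{b,b}),\oo)$ with surjective transition maps, any nilpotent element would project to zero in every factor, hence vanish. The same argument handles $\TT^{\ord}(U^p,\oo)$.

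Second, for semi-locality of $\TT^{\ord}(U^p,\oo)$: I would invoke Proposition \ref{prop:ord_to_dual}, which gives that $M^{\ord}(U^p)$ is finite free over $\Lambda^+$. Hence $\TT^{\ord}(U^p,\oo)$, being a commutative subalgebra of $\End_{\Lambda^+}(M^{\ord}(U^p))$, is itself finite over $\Lambda^+$. Since $\Lambda^+$ is finite over the complete local Iwasawa algebra $\Lambda$, it is semi-local, and hence so is $\TT^{\ord}(U^p,\oo)$. Every maximal ideal contains $\varpi$ and indeed the whole maximal ideal of $\Lambda^+$ because these lie in the Jacobson radical. Reducing modulo this and using Lemma \ref{lem:control} (together with the control theorem underlying Proposition \ref{prop:ord_to_dual}) identifies $M^{\ord}(U^p)/\fm_{\Lambda^+}$ with the Pontryagin dual of $S^{\ord}(U^p\Iw(p^{0,1}),\oo/\varpi)$, yielding the bijection of maximal ideals for the ordinary case.

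For $\TT(U^p,\oo)$, semi-locality is subtler because $M(U^p)$ fails to be finite over $\Lambda^+$: the factor at $w_0$ is a genuinely infinite-dimensional $\GL_2(\Qp)$-representation. My plan here is to bootstrap from the target: the natural map $\TT(U^p,\oo)\twoheadrightarrow \TT^P(U^p\Iw^P(p^{b,b}),\oo)\twoheadrightarrow \TT^P(U^p\Iw^P(p^{0,1}),\oo/\varpi)$ is surjective (the first factor uses that the inverse system of surjections satisfies Mittag-Leffler), and the target is a finite $\fF$-algebra, hence Artinian and thus semi-local with finitely many maximal ideals. Pulling back these maximal ideals gives an injection on maximal specs; once I show this injection is also surjective, finiteness of the target set forces $\TT(U^p,\oo)$ to be semi-local.

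Third, for surjectivity on maximal ideals I would argue that every maximal ideal $\fm\subset \TT(U^p,\oo)$ contains the kernel of the surjection to $\TT^P(U^p\Iw^P(p^{0,1}),\oo/\varpi)$. First, $\varpi$ belongs to $\fm$ because it lies in the Jacobson radical of each finite-level factor, hence of the inverse limit. Second, by faithfulness the mod-$\varpi$ system of Hecke eigenvalues cut out by $\fm$ occurs on $\Ord_P(S(U^p,E/\oo))[\varpi]=\varinjlim_b S^{P-\ord}(U^p\Iw^P(p^{b,b}),\oo/\varpi)$, and by Lemma \ref{lem:control} this direct limit stabilizes. The extra Hecke-equivariant action of the finite group $\Iw^P(p^{0,1})/\Iw^P(p^{1,1})$, whose diagonal quotient $T(\fF_p)$ is of order prime to $p$, decomposes the stabilized space over characters; each component maps via a Teichmüller-twist to an eigensystem on $S^{P-\ord}(U^p\Iw^P(p^{0,1}),\oo/\varpi)$, which is absorbed into the same maximal ideal of the big Hecke algebra thanks to the compatibility of diamond operators already visible in the pseudo-representation formulas of Proposition \ref{prop:big_char_at_p}.

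The main obstacle will be the last step: precisely matching the diamond content of $\TT(U^p,\oo)$, in particular the central and wildly-ramified operators at $w_0$ where no genuine ordinary projector is available, with the Hecke data recorded at level $\Iw^P(p^{0,1})$. Here Proposition \ref{prop:density} and Corollary \ref{cor:density}, which embed $\TT(U^p,E)$ into a product of finite-level Hecke algebras over varying weights, should provide the necessary control and reduce the verification to a routine check on finite-level spaces.
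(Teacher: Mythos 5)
Your parts (1) and (2) are fine. Reducedness does follow as you say by viewing $\TT(U^p,\oo)$ as an inverse limit of the reduced finite-level algebras of Lemma \ref{lem:hecke_red}; and the route to semi-locality of $\TT^{\ord}(U^p,\oo)$ through its finiteness over $\Lambda^+$ (via Proposition \ref{prop:ord_to_dual}) is a legitimate alternative to what the paper does. The paper's own proof is terse: it derives reducedness from Lemma \ref{lem:hecke_red} and then simply cites [Pan, Prop.~3.3.6] for the maximal-ideal bijection, so there is no detailed argument to compare against, but your first two parts are in the right spirit.

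Part (3) has a genuine gap, however. You assert that
\[
  \Ord_P(S(U^p,E/\oo))[\varpi]=\varinjlim_b S^{P-\ord}(U^p\Iw^P(p^{b,b}),\oo/\varpi)
\]
``stabilizes,'' citing Lemma \ref{lem:control}. But that lemma only identifies levels $\Iw^P(p^{b,b})$ with $\Iw^P(p^{b,c})$ for $c\geq b\geq 1$, and $\Iw^P(p^{0,1})$ with $\Iw^P(p^{0,c})$; it says nothing about the transition $\Iw^P(p^{b,b})\hookrightarrow \Iw^P(p^{b',b'})$ for $b'>b$, which genuinely enlarges the space. The direct limit is the $\varpi$-torsion of an infinite-dimensional admissible $Q$-representation (the factor at $w_0$ is a genuinely infinite smooth $\GL_2(\Qp)$-representation), so it does not stabilize; this is precisely the difficulty you correctly flagged at the start of part (3), and the stabilization claim undoes that observation. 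The remainder of the paragraph, including the appeal to Proposition \ref{prop:density} and Corollary \ref{cor:density} (which concern $E$-coefficients, not the mod-$\varpi$ picture), rests on this false premise.

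The actual argument you need does not rely on any finite-dimensionality of the direct limit. First, $\TT(U^p,\oo)=\varprojlim_b\TT_b$ (with $\TT_b=\TT^P(U^p\Iw^P(p^{b,b}),\oo)$ finite flat over $\oo$ and surjective transition maps) is a profinite ring, so every maximal ideal is open and pulls back from some $\TT_b$. Second, in $\TT_b/\varpi$ the diamond operators $\langle u\rangle$ for $u\in T_n(p^1)$ have $p$-power order (their images factor through $T_n(p^1)/T_n(p^b)$, a finite $p$-group), so $(\langle u\rangle - 1)^{p^N}=\langle u\rangle^{p^N}-1=0$ in characteristic $p$; hence $\langle u\rangle - 1$ lies in every maximal ideal of $\TT_b$, which necessarily contains $\varpi$. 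This is the mechanism that identifies the maximal ideal with one of $\TT^P(U^p\Iw^P(p^{0,1}),\oo/\varpi)$ via the restriction map, without any claim about the ambient representation being finite-dimensional. This is also the content of the argument in [Pan, Prop.~3.3.6] that the paper invokes.
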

\begin{proof}
The the Hecke algebras are reduced
follows from Lemma \ref{lem:hecke_red}.
And the last part of the statement
follows from the same argument
in \cite[Prop 3.3.6]{pan}.
\end{proof}

\subsection{Generically reducible deformation}

We first recall from \cite[\S B.1]{pask}
the structure of the universal deformation ring 
$R$ of a $2$-dimensional pseudo-representation 
$\chi_1+\chi_2$,
where $\chi_1,\chi_2\colon \Gp\to \fF^\times$ 
are continuous characters that satisfies the 
the following generic assumption
\begin{equation}\label{cond:generic}\tag{gen}
	\chi_1\chi_2^{-1}\neq \id,\omega^{\pm1}.
\end{equation}

By \textit{loc.cit}, the assumption
implies the existence of non-split extensions
of Galois representations
\begin{equation*}
    0\to \chi_1\to \rho_{12}\to \chi_2\to 0\quad
    0\to \chi_2\to \rho_{21}\to \chi_1\to 0
\end{equation*}
which are unique up to isomorphisms;
and that the universal deformation rings
$R_{ij}$ of the Galois representations $\rho_{ij}$
are formally smooth of relative dimension $5$ over $\oo$.

Let $\tilde{\rho}_{ij}\colon \Gp\to\GL_2(R_{ij})$
denote the universal deformations.
Under suitable choices of bases we may assume
their reductions modulo the maximal ideals give
$\rho_{12}=\smat{\chi_1&*\\&\chi_2}$ and
$\rho_{21}=\smat{\chi_1&\\ *&\chi_2}$.
The trace then induces 
$\theta_{ij}\colon R\cong R_{ij}$
by \cite[Prop B.17]{pask}.
Let $R_{\red}$ be the quotient of $R$
that represents reducible deformations.
Then $R_{\red}$ is isomorphic
to the complete tensor of the
deformation rings for the characters $\chi_1$ and $\chi_2$.
Since the pro-$p$ completion of $\Qp^\times$
has rank 2 over $\Zp$,
we see that $R_{\red}$ is formally smooth of
relative dimension $4$ over $\oo$.
Consequently the reducibility ideal 
$\tau\coloneqq \ker(R\to R_{\red})$ 
is a principal ideal generated by 
an element $\xx\in\fm_R\setminus \fm_R^2$,
where $\fm_R\subset R$ is the maximal ideal.
By \cite[Prop B.23]{pask},
we have $\theta_{ij}(\tau)=\tau_{ij}$
for the ideal $\tau_{ij}\subset R_{ij}$
generated by the $(j,i)$-entry of $ \tilde{\rho}_{ij}(g)$
for all $g\in \Gp$.

Write $x=\xx$ and $\theta=\theta_{12}$.
By \cite[Prop B.24]{pask},
the representation 
$\tilde{\rho}_{12}^x\colon \Gp\to \GL_2(R_{12})$
defined by
\begin{equation*}
	\tilde{\rho}_{12}^x(g)\coloneqq 
	\smat{\theta(x)&\\&1}
	\tilde{\rho}_{12}(g)
	\smat{\theta(x)&\\&1}^{-1}
\end{equation*}
is a deformation of $\rho_{21}$ to $R_{12}$
and the induced map
$\alpha\colon R_{21}\to R_{12}$ is an isomorphism.
Moreover, the following diagram is commutative.
From now on we identify 
$R_{21}$ with $R_{12}$ and
$\tilde{\rho}_{21}$ with 
$\tilde{\rho}_{12}^x$.
\begin{equation*}
	\begin{tikzcd}
		R_{21} \arrow[r,"\alpha"] &
		R_{12}\\
		R\arrow[u,"\theta_{21}"] \arrow[r,equal] &
		R\arrow[u,"\theta_{12}",swap]
	\end{tikzcd}
\end{equation*}

\begin{lem}\cite[Prop B.26]{pask}
\label{lem:B26}
The modules
$\Hom_{\Gp}(\tilde{\rho}_{12}, \tilde{\rho}_{21})$ and
$\Hom_{\Gp}(\tilde{\rho}_{21}, \tilde{\rho}_{12})$
are free of rank one over $R\cong R_{12}$ and
generated respectively by
\begin{equation}\label{eq:Phi_ij}
	\Phi_{12}=\smat{\theta(x)&\\&1} \text{ and }
	\Phi_{21}=\smat{1&\\&\theta(x)}.
\end{equation}
Moreover, the algebra
$\End_{\Gp}(\tilde{\rho}_{12}\oplus \tilde{\rho}_{21})$
is isomorphic to the generalized matrix algebra
$\smat{R& R\Phi_{12}\\ R\Phi_{21}& R}$.
In particular it is free of rank $4$ over $R$ and
its center is isomorphic to $R$.
\end{lem}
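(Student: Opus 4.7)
The plan is to combine a Schur-type computation of endomorphism rings with a rank-one argument over the fraction field of $R_{12}$, then descend back to $R_{12}$ using the explicit shape of $\Phi_{12}$ and $\Phi_{21}$. Because the pair $(\chi_1,\chi_2)$ satisfies \eqref{cond:generic} and $\rho_{12},\rho_{21}$ are non-split extensions of distinct characters, Schur's lemma gives $\End_{\Gp}(\rho_{ij})=\fF$; standard deformation theory then upgrades this to $\End_{\Gp}(\tilde\rho_{ij})=R_{ij}$, and via $\theta_{12}$, $\theta_{21}$ (together with the identification $\alpha\colon R_{21}\xrightarrow{\sim}R_{12}$ already fixed in the discussion above) both are identified with the central copy $R$.

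Next I would observe that the intertwiners are built into the construction. The very identity $\tilde\rho_{21}(g)=\Phi_{12}\tilde\rho_{12}(g)\Phi_{12}^{-1}$, with the inverse taken in $M_2(\mathrm{Frac}\,R_{12})$ since $\det\Phi_{12}=\theta(x)$ is a non-unit, says exactly that $\Phi_{12}\in\Hom_{\Gp}(\tilde\rho_{12},\tilde\rho_{21})$, and the identity $\Phi_{12}\Phi_{21}=\Phi_{21}\Phi_{12}=\theta(x)\cdot I$ then exhibits $\Phi_{21}$ as an intertwiner in the opposite direction. To see these generate, pass to $K=\mathrm{Frac}\,R_{12}$. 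The base change $\tilde\rho_{12}\otimes_{R_{12}}K$ is absolutely irreducible: a reducible lift to $K$ would force $R_{12}\hookrightarrow K$ to factor through the reducibility quotient $R_{12}/\tau_{12}=R_{12}/(\theta(x))$, which is absurd since $R_{12}$ is a domain and $\theta(x)\neq 0$. Since $\Phi_{12}\otimes K$ becomes invertible over $K$, Schur's lemma over $K$ yields $\Hom_{\Gp}(\tilde\rho_{12}\otimes K,\tilde\rho_{21}\otimes K)=K\cdot\Phi_{12}$.

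For the descent, any $\Phi\in\Hom_{\Gp}(\tilde\rho_{12},\tilde\rho_{21})\subset M_2(R_{12})$ is in particular a $K$-intertwiner, hence $\Phi=f\Phi_{12}$ for some $f\in K$; reading off the $(2,2)$-entry of $\Phi_{12}=\smat{\theta(x)&\\&1}$ shows $f$ is the $(2,2)$-entry of $\Phi$, so automatically $f\in R_{12}$. This proves $\Hom_{\Gp}(\tilde\rho_{12},\tilde\rho_{21})=R\Phi_{12}$, and the symmetric argument---using the unit entry of $\Phi_{21}$ in position $(1,1)$---gives $\Hom_{\Gp}(\tilde\rho_{21},\tilde\rho_{12})=R\Phi_{21}$. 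Assembling the four summands into a $2\times 2$ block matrix indexed by the direct summands and using the composition law $\Phi_{12}\Phi_{21}=\Phi_{21}\Phi_{12}=\theta(x)$ yields the GMA description of $\End_{\Gp}(\tilde\rho_{12}\oplus\tilde\rho_{21})$; freeness of rank $4$ over $R$ is then immediate, and the center is the scalar diagonal copy of $R$.

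The main obstacle is the upgrade $\End_{\Gp}(\rho_{ij})=\fF\Rightarrow\End_{\Gp}(\tilde\rho_{ij})=R_{ij}$: because the residual representations are merely Schur rather than absolutely irreducible, one has to invoke the deformation-theoretic machinery of \cite[App.~B]{pask} (Cayley--Hamilton on the universal trace pseudo-representation together with a Nakayama argument on the Galois-invariants of $\mathrm{ad}^0\tilde\rho_{ij}$). Once that is in hand, the remaining steps are a clean combination of absolute irreducibility over $K$ with the observation that each $\Phi_{ij}$ carries a unit in exactly one matrix entry, which lets the descent from $K$ to $R_{12}$ be read off from a single coefficient.
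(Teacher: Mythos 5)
The paper gives no independent proof of this statement; it is cited verbatim from \cite[Prop.~B.26]{pask}, so strictly speaking there is no ``paper's proof'' to compare against. Your reconstruction is nonetheless correct and is a reasonable alternative to the generalized-matrix-algebra bookkeeping one expects in Pa\v{s}k\={u}nas' Appendix~B: you pass to the generic fibre $K=\mathrm{Frac}\,R_{12}$, show $\tilde\rho_{12}\otimes K$ is absolutely irreducible because a reducible specialization of the universal pseudo-deformation would kill the reducibility ideal $(\theta(x))$ in the domain $R_{12}$, identify the $K$-line of intertwiners by Schur's lemma, and then descend integrally by exploiting the unit entry of $\Phi_{12}$ (resp.\ $\Phi_{21}$) in position $(2,2)$ (resp.\ $(1,1)$). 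This generic-fibre argument is arguably more self-contained than the GMA formalism.

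One small improvement: your stated ``main obstacle'' -- upgrading $\End_{\Gp}(\rho_{ij})=\fF$ to $\End_{\Gp}(\tilde\rho_{ij})=R_{ij}$ via Cayley--Hamilton and Nakayama -- is in fact already dispatched by the generic-fibre step you set up. Once $\tilde\rho_{12}\otimes K$ is absolutely irreducible, Schur over $K$ gives $\End_{\Gp}(\tilde\rho_{12}\otimes K)=K\cdot\mathrm{Id}$, and intersecting with $M_2(R_{12})$ inside $M_2(K)$ yields $\End_{\Gp}(\tilde\rho_{12})=R_{12}\cdot\mathrm{Id}$ directly, without any appeal to the deformation-theoretic Nakayama argument; the fact that the \emph{residual} object is merely Schur is immaterial once you are working on the generic fibre. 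So your proof is not only correct but can be streamlined by dropping that detour.
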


\subsection{Generically reducible block}

We briefly recall from \cite{pask} the results on
$p$-adic local Langlands for $\GL_2(\Qp)$ 
that will be relevant.
Following the notations in \textit{loc.cit.},
we temporarily let $G$, $K$ and $B=TN$ respectively
denote $\GL_2(\Qp)$, $\GL_2(\Zp)$,
the subgroup of upper triangular matrices in $\GL_2(\Qp)$
and its Levi decomposition.
We also identify the center $Z\subset T$ with $\Qp^\times$
and write $\bar{B}$ for
the subgroup of lower triangular matrices in $\GL_2(\Qp)$.

Let $\chi_1,\chi_2$ be characters satisfying
\eqref{cond:generic} and fix a continuous character 
$\zeta\colon \Gp\to \oo^\times$
such that $\epsilon\zeta\equiv \chi_1\chi_2$ 
modulo $\varpi$.
For the deformation rings
$R, R_{\red}, R_{12}$ and $R_{21}$
introduced earlier, we let 
$R^{\zeta\epsilon}, R^{\zeta\epsilon}_{\red}, 
R^{\zeta\epsilon}_{12}$ and $R^{\zeta\epsilon}_{21}$
denote the quotients that represent
deformations with the
fixed determinant $\epsilon\zeta$.
All the results in the last subsection
still hold true for these deformation rings,
except that the dimensions are decreased by $2$.

We identify $\chi_i$ with the characters
$\chi_i\circ \Art$ on $\Q^\times$,
where $\Art\colon \Qp^\times\to \Gp^{\textnormal{ab}}$
is the normalized reciprocity map
so that $\Fr\coloneqq \Art(p)$
is a geometric Frobenius.
We define the character
$\chi=\chi_1\otimes\chi_2\omega^{-1}$
of $T\cong \Qp^\times\times\Qp^\times$
and put $\chi^s(a,b)=\chi(b,a)$, so that 
$\chi^s\alpha=\chi_2\otimes \chi_1\omega^{-1}$
for $\alpha\coloneqq \omega\otimes\omega^{-1}$.
By \cite[Thm 30]{barthel},
the inductions
\[
\pi_1\coloneqq \Ind_{B}^G\chi\cong
\Ind_{B}^G\chi_1\otimes\chi_2\omega^{-1}\quad
\pi_2\coloneqq \Ind_{B}^G\chi^s\alpha\cong 
\Ind_{B}^G\chi_2\otimes\chi_1\omega^{-1} 
\]
are irreducible representations
in $\Mod^{\sm}_{G,\zeta}(\oo)$.

\begin{defn}\label{def:block}
Let $\Mod^{\lfin}_{G,\zeta}(\oo), \Mod^{\lfin}_{T,\zeta}(\oo)$
be the subcategories
of representations that are 
locally of finite length 
and with central character $\zeta$;
and let $\fC_G(\oo), \fC_T(\oo)$
be the dual categories
of the Pontryagin duals.
Then $\B\coloneqq\{\pi_1,\pi_2\}$,
for $\pi_1,\pi_2$ as above, is a block 
in the sense of \cite[\S 5]{pask}.
We then let $\Mod^{\lfin}_{G,\zeta}(\oo)^\B$
denote the subcategory
of representations which have the property that
all irreducible subquotients belong to $\B$;
and $\fC_G(\oo)^\B$
be the category of Pontryagin duals.
\end{defn}

Let $\Ord\colon \Mod_{G,\zeta}^{\sm}(\oo)
\to \Mod_{T,\zeta}^{\sm}(\oo)$
denote Emerton's functor of $B$-ordinary parts.
When $V\in \Mod^{\sm}_G(\oo)$ and
$U\in \Mod^{\sm}_T(\oo)$,
the adjunction formula
from \cite[Thm 4.4.6]{emeI} states that
\begin{equation}\label{eq:adjunct}
	\Hom_{\oo[G]}(\Ind_{\bar{B}}^GU,V)
	\xrightarrow{\Ord}
	\Hom_{\oo[T]}(\Ord(\Ind_{\bar{B}}^GU),\Ord V)
	\cong
	\Hom_{\oo[T]}(U,\Ord V)
\end{equation}
is an isomorphism, where the last isomorphism
is induced by $\Ord(\Ind_{\bar{B}}^GU)\cong U$
from  \cite[Prop 4.3.4]{emeI}.

By \cite[Prop 7.1]{pask},
if $\iota\colon \pi_1\hookrightarrow \tilde{J}_1$
is the injective envelope of $\pi_1$
in $\Mod^{\lfin}_{G,\zeta}(\oo)$,
then we have $\Ord\pi_1=\Ord(\Ind_B^G\chi)=\chi^s$
and $\Ord(\iota)\colon \chi^s \to \Ord(\tilde{J}_1)$
is isomorphic to an injective envelope
$\tilde{J}_{\chi^s}$ of $\chi^s$
in $\Mod^{\lfin}_{T,\zeta}(\oo)$.
Furthermore, if we fix an isomorphicm 
$\tilde{J}_{\chi^s}\to \Ord_B(\tilde{J}_1)$, then the morphism 
$\iota_1\colon \Ind_{\bar{B}}^G(\tilde{J}_{\chi^s})\to
\tilde{J}_1$
induced by the adjunction formula \eqref{eq:adjunct} is injective.
To simplify notations,
we identify $\Mod^{\lfin}_{T,\zeta}(\oo)$
with $\Mod^{\lfin}_{\Qp^\times}(\oo)$ through 
the map $\Qp^\times\cong \{\smat{1&\\&*}\}\subset T$
and write $\tilde{J}_{\chi_1}=\tilde{J}_{\chi^s}$.
Then for $ \tilde{J}_{\chi_2}$
and $ \tilde{J}_2$ defined as above
we also have the injective morphism
$\iota_2\colon \Ind_{\bar{B}}^G(\tilde{J}_{\chi_2})\to
\tilde{J}_2$.
We also put
$\tilde{P}_{\chi_i^\vee}\coloneqq \tilde{J}_{\chi_i}^\vee
\in\fC_T(\oo)$ for $i=1,2$.

\begin{lem}\cite[Cor 7.7]{pask}
\label{lem:proj_enve}
Define $\tilde{M}_i\coloneqq 
\Ind_{\bar{B}}^G(\tilde{J}_{\chi_i})^\vee$ and 
$\tilde{P}_i\coloneqq \tilde{J}_i^\vee\in\fC_G(\oo)$.
Then the morphism
$p_i\colon \tilde{P}_i\twoheadrightarrow \tilde{M}_i$
which is dual to 
$\iota_i\colon 
\Ind_{\bar{B}}^G(\tilde{J}_{\chi_i})\hookrightarrow 
\tilde{J}_i$
can be extended to the exact sequences
\begin{equation}\label{eq:exact_PPM}
	0\to \tilde{P}_{2}\xrightarrow{\phi_{12}} 
	\tilde{P}_{1}\xrightarrow{p_1} \tilde{M}_1\to 0 
	\text{ and }
	0\to \tilde{P}_{1}\xrightarrow{\phi_{21}} 
	\tilde{P}_{2}\xrightarrow{p_2} \tilde{M}_2\to 0
\end{equation}
\end{lem}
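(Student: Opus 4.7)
The plan is to work on the dual side. By Pontryagin duality, the asserted exact sequences in $\fC_G(\oo)^\B$ are equivalent to exactness of
\[
0 \to \Ind_{\bar{B}}^G \tilde{J}_{\chi_i} \xrightarrow{\iota_i} \tilde{J}_i \to \tilde{J}_j \to 0
\]
in $\Mod^{\lfin}_{G,\zeta}(\oo)^\B$, where $\{i,j\} = \{1,2\}$. By the evident symmetry it suffices to treat $i=1$, $j=2$. Since $\iota_1$ is already known to be injective, the content reduces to identifying the cokernel $C_1 \coloneqq \tilde{J}_1/\iota_1(\Ind_{\bar{B}}^G \tilde{J}_{\chi_1})$ with the injective envelope $\tilde{J}_2$ of $\pi_2$.

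First I would compute $\Ord(C_1)$. Applying the left-exact functor $\Ord$ to the defining short exact sequence, the adjunction isomorphism $\Ord \circ \Ind_{\bar{B}}^G \cong \mathrm{id}$ from \cite[Prop 4.3.4]{emeI}, together with the construction of $\iota_1$ (namely that $\Ord(\iota_1)$ is the fixed isomorphism $\tilde{J}_{\chi_1} \xrightarrow{\sim} \Ord(\tilde{J}_1)$), shows that the induced map on $\Ord$ is an isomorphism. Hence $\Ord(C_1) = 0$. In the block $\B$ only $\pi_1$ has nontrivial $\Ord$, so $\soc(C_1)$ contains no copy of $\pi_1$ and is a sum of copies of $\pi_2$. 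A $K$-invariant or length comparison at the level of principal-series filtrations shows $C_1 \neq 0$, and the one-dimensionality of $\Ext^1_{G,\zeta}(\pi_1,\pi_2)$ under \eqref{cond:generic} forces the multiplicity of $\pi_2$ in $\soc(C_1)$ to be exactly one.

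With $\soc(C_1) \cong \pi_2$ in hand, injectivity of $\tilde{J}_2$ in the block produces a morphism $f \colon C_1 \to \tilde{J}_2$ extending the embedding $\pi_2 \hookrightarrow C_1$. The morphism $f$ is injective because its restriction to the socle is injective and $C_1$, being locally of finite length with simple socle $\pi_2$, is an essential extension of $\soc(C_1)$. The remaining task is surjectivity, equivalently that $C_1$ is itself injective in $\Mod^{\lfin}_{G,\zeta}(\oo)^\B$. I would establish this by invoking Pa\v{s}k\={u}nas' explicit description of the block: compare the lengths of finite-length quotients of $\tilde{J}_1$ and of $\Ind_{\bar{B}}^G \tilde{J}_{\chi_1}$ (indexed by cofinite ideals of the universal deformation ring $R^{\zeta\epsilon}$, using Lemma \ref{lem:B26} to relate the $R$-action on $\tilde{J}_i$ to the generalized matrix algebra structure) and verify that their difference matches the lengths of the corresponding quotients of $\tilde{J}_2$, forcing $f$ to be surjective.

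The main obstacle is this last surjectivity step, which genuinely requires the structure theory of \cite[\S 5--7]{pask}. One needs that within $\B$ there are only two simple objects, that the relevant $\Ext^1$-groups between them are one-dimensional (here \eqref{cond:generic} is essential, ruling out $\chi_1\chi_2^{-1} \in \{\id, \omega^{\pm 1}\}$), and that $\tilde{J}_i$ admits a precise filtration whose graded pieces are captured by principal-series inductions. The other steps are essentially formal manipulations with the $\Ord$-$\Ind_{\bar{B}}^G$ adjunction and Pontryagin duality.
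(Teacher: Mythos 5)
Your argument breaks at the step where you try to pin down $\soc(C_1)$ via the vanishing of $\Ord$, and it breaks for two independent reasons.

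First, the assertion that ``in the block $\B$ only $\pi_1$ has nontrivial $\Ord$'' is false. In the generic residually reducible block both simple objects $\pi_1=\Ind_B^G\chi$ and $\pi_2=\Ind_B^G\chi^s\alpha$ are \emph{irreducible principal series} (this is exactly why \cite[Thm 30]{barthel} is invoked under \eqref{cond:generic}); there is no supersingular constituent here. Both therefore have nonzero $B$-ordinary parts: $\Ord\pi_1=\chi^s$ (as recorded in \cite[Prop 7.1]{pask} and in the discussion preceding the lemma) and, by the same Jacquet-module computation, $\Ord\pi_2=\chi\alpha^s\neq 0$. So even granting $\Ord(C_1)=0$, the deduction that $\soc(C_1)$ consists of copies of $\pi_2$ does not follow; on the contrary, if both simples had to be killed by $\Ord$ you would be forced to conclude $C_1=0$, which contradicts the statement you are trying to prove.

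Second, the claim $\Ord(C_1)=0$ is itself unjustified. Emerton's $\Ord$ is only \emph{left} exact. From $0\to\Ind_{\bar B}^G\tilde J_{\chi_1}\xrightarrow{\iota_1}\tilde J_1\to C_1\to 0$ and the fact that $\Ord(\iota_1)$ is an isomorphism, left-exactness yields only that the connecting map $\Ord(\tilde J_1)\to\Ord(C_1)$ is zero and that $\Ord(C_1)$ injects into $R^1\Ord(\Ind_{\bar B}^G\tilde J_{\chi_1})$. The latter is not zero in general, and indeed the truth of the lemma forces $\Ord(C_1)\cong\Ord(\tilde J_2)\cong\tilde J_{\chi_2}\neq 0$. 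What you actually get from this computation is information about $\Ord(C_1)$ relative to $R^1\Ord$, not vanishing.

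The correct identification of $C_1$ with $\tilde J_2$ in \cite[Cor 7.7]{pask} does not go through an $\Ord$-vanishing argument. It rests on the finer structure theory of the block established in \cite[\S\S 7--8]{pask}: the computation of the $\Ext^1$-groups between $\pi_1,\pi_2$ under \eqref{cond:generic}, the identification of $\tilde{M}_i^\vee=\Ind_{\bar B}^G\tilde J_{\chi_i}$ inside $\tilde J_i$ via the adjunction \eqref{eq:adjunct}, and a Hom/$\Ext$ comparison that shows the cokernel has the right socle \emph{and} the right injectivity property, essentially by matching the length filtration of $\tilde J_1$ against that of the principal-series envelope. Your closing paragraph gestures in this direction, but the route you take to reach it --- via $\Ord(C_1)=0$ and the asserted asymmetry between $\Ord\pi_1$ and $\Ord\pi_2$ --- is not available.
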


Let $\Rep_{\Gp}(\oo)$
be the category of compact $\oo$-modules with
continuous actions of $\Gp$,
and $\V\colon \fC_G(\oo)\to \Rep_{\Gp}(\oo)$
be the Colmez functor introduced 
in \cite[\S 5.7]{pask},
which is exact and covariant.
By \cite[Cor 8.7]{pask},
for $(i,j)=(1,2)$ or  $(2,1)$,
there exists unique non-split extensions
in $\Mod^{\sm}_{G,\zeta}(\oo)$ 
\[
	0\to \pi_2\to \kappa_{12}\to \pi_1\to 0,\quad
	0\to \pi_1\to \kappa_{21}\to \pi_2\to 0
\]
such that
$\V(\pi_i^\vee)=\chi_i$, $\V(\kappa_{ij}^\vee)=\rho_{ij}$,
and $\V(\tilde{P}_j)=\tilde{\rho}^{\zeta\epsilon}_{ij}$
are the universal deformations
with determinant $\zeta\varepsilon$.

It then follows from \cite[Lem 8.10]{pask} that 
taking the Colmez functor 
$\V$ induces the isomorphisms below.
\begin{equation}\label{eq:end_deform}
\begin{split}
	\End_{\fC_{G}(\oo)}(\tilde{P_2})\cong 
    R^{\zeta\epsilon}_{12}\cong R^{\zeta\epsilon},\quad
	\Hom_{\fC_G(\oo)}(\tilde{P}_2, \tilde{P}_1)\cong
    R^{\zeta\epsilon}\Phi_{12}\\
	\Hom_{\fC_G(\oo)}(\tilde{P}_1, \tilde{P}_2)\cong
    R^{\zeta\epsilon}\Phi_{21},\quad
	\End_{\fC_{G}(\oo)}(\tilde{P_1})\cong 
    R^{\zeta\epsilon}_{21}\cong R^{\zeta\epsilon}
\end{split}
\end{equation}
Now we define $ \tilde{P}_\B=\tilde{P}_1\oplus \tilde{P}_2$,
so that by the isomorphisms in \eqref{eq:end_deform} the algebra
$\tilde{E}_\B\coloneqq
\End_{\fC_G(\oo)}(\tilde{P}_\B)$
is isomorphic to 
$\End_{\Gp}(\tilde{\rho}^{\zeta\epsilon}_{12}\oplus
\tilde{\rho}^{\zeta\epsilon}_{21})$.

By \cite[Prop 7.1]{pask},
every morphism from $\tilde{P}_i$ to $\tilde{M}_i$
factors through $p_i$ and
$\Hom_{\fC_G(\oo)}(\tilde{P}_i, \tilde{M}_i)=
\End_{\fC_G(\oo)}(\tilde{M}_i)$.
Applying the adjunction formula \eqref{eq:adjunct} to which 
shows that the endomorphism algebra is isomorphic to 
$\End_{\fC_T(\oo)}(\tilde{P}_{\chi_i^\vee})$,
which is isomorphic by \cite[Prop 3.34]{pask} to
a formal power series ring
$ \oo\llbracket x,y\rrbracket$
of two variables.

\begin{lem}\label{lem:ker_red}
	The kernel of the homomorphism
    $p_{i*}\colon R^{\zeta\epsilon}\cong
    \End_{\fC_G(\oo)}(\tilde{P}_i)\twoheadrightarrow
	\End_{\fC_G(\oo)}(\tilde{P}_i, \tilde{M}_i)$
	is the reducibility ideal $\tau\subset R^{\zeta\epsilon}$.
    In other word the morphism $p_i$ and the functor $\Ord$ induces
    the isomorphisms
	\begin{equation}
	R^{\zeta\epsilon}_\red\cong 
	\Hom_{\fC_G(\oo)}(\tilde{P}_i, \tilde{M}_i)\cong
	\End_{\fC_G(\oo)}(\tilde{P}_{\chi_i^\vee})\cong
	\oo\llbracket x,y\rrbracket
	\end{equation}
\end{lem}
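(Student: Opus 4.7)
The plan is to combine the exact sequence \eqref{eq:exact_PPM} with the explicit structure of the generalized matrix algebra from Lemma \ref{lem:B26}, and then derive the chain of isomorphisms using \cite[Prop 7.1]{pask} and the adjunction \eqref{eq:adjunct}.

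For the kernel computation, note that from $0\to \tilde{P}_j \xrightarrow{\phi_{ij}} \tilde{P}_i \xrightarrow{p_i} \tilde{M}_i\to 0$ and the injectivity of $\phi_{ij}$, an endomorphism $\phi$ of $\tilde{P}_i$ satisfies $p_i\circ \phi = 0$ precisely when $\phi = \phi_{ij}\circ\psi$ for a unique $\psi\in \Hom_{\fC_G(\oo)}(\tilde{P}_i,\tilde{P}_j)$; hence $\ker(p_{i*})$ equals the image in $\End_{\fC_G(\oo)}(\tilde{P}_i)$ of post-composition with $\phi_{ij}$. Applying the Colmez functor $\V$ and invoking \eqref{eq:end_deform} together with Lemma \ref{lem:B26}, this image sits inside the diagonal block of the generalized matrix algebra $\smat{R^{\zeta\epsilon} & R^{\zeta\epsilon}\Phi_{12}\\ R^{\zeta\epsilon}\Phi_{21} & R^{\zeta\epsilon}}$ and equals $\V(\phi_{ij})\cdot R^{\zeta\epsilon}\Phi_{ji}$. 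Exactness of $\V$ applied to the sequence above forces $\V(\phi_{ij})$ to be a unit multiple of $\Phi_{ij}$, since any non-unit coefficient would enlarge the cokernel beyond $\V(\tilde{M}_i)$. The explicit formulas \eqref{eq:Phi_ij} then give $\Phi_{ij}\Phi_{ji} = \theta(\xx)\cdot \id$, so $\ker(p_{i*})$ corresponds to $\theta(\xx)\cdot R^{\zeta\epsilon}$, which is the reducibility ideal $\tau$ under $\End_{\fC_G(\oo)}(\tilde{P}_i)\cong R^{\zeta\epsilon}$.

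The remaining isomorphisms are formal: by \cite[Prop 7.1]{pask} every morphism $\tilde{P}_i\to \tilde{M}_i$ factors uniquely through $p_i$, yielding $\Hom_{\fC_G(\oo)}(\tilde{P}_i,\tilde{M}_i)\cong \End_{\fC_G(\oo)}(\tilde{M}_i)$; dualizing and applying the adjunction \eqref{eq:adjunct} to $U = \tilde{J}_{\chi_i}$, combined with $\Ord(\Ind_{\bar{B}}^G\tilde{J}_{\chi_i}) \cong \tilde{J}_{\chi_i}$ from \cite[Prop 4.3.4]{emeI}, produces $\End_{\fC_G(\oo)}(\tilde{M}_i)\cong \End_{\fC_T(\oo)}(\tilde{P}_{\chi_i^\vee})$; and \cite[Prop 3.34]{pask} provides the last identification with $\oo\llbracket x,y\rrbracket$. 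I expect the main obstacle to be justifying that $\V(\phi_{ij})$ generates its Hom module as a unit, since otherwise the matrix computation only yields the inclusion $\ker(p_{i*})\subseteq \tau$. A safer alternative is to instead verify $\tau\subseteq \ker(p_{i*})$ directly, by noting that $\V(\tilde{M}_i)$ is built from the reducible deformation and hence killed by $\tau$, and then conclude equality from the independently established isomorphism $\End_{\fC_G(\oo)}(\tilde{M}_i)\cong R^{\zeta\epsilon}_{\red}$, using that any surjective self-endomorphism of the Noetherian local ring $R^{\zeta\epsilon}_{\red}\cong \oo\llbracket x,y\rrbracket$ is an isomorphism.
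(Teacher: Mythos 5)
Your first approach captures the right structure, and you correctly flag the gap: the matrix computation identifies $\ker(p_{i*})$ with the ideal $\V(\phi_{ij})\cdot\Phi_{ji}R^{\zeta\epsilon}$, but without knowing that $\V(\phi_{ij})$ is a unit multiple of $\Phi_{ij}$ you only get a containment. The paper closes exactly this gap, and the missing ingredient is a vanishing statement you don't invoke: applying $\Hom(\tilde{P}_j,-)$ to both sequences in \eqref{eq:exact_PPM} and using the adjunction \eqref{eq:adjunct} gives $\Hom_{\fC_G(\oo)}(\tilde{P}_j,\tilde{M}_i)\cong\Hom_{\fC_T(\oo)}(\tilde{P}_{\chi_j^\vee},\tilde{P}_{\chi_i^\vee})=0$ for $i\neq j$, because blocks in $\fC_T(\oo)$ are singletons. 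This forces $\phi_{ij*}$ to be an isomorphism, hence $\phi_{ij}$ to be a free generator of $\Hom(\tilde{P}_j,\tilde{P}_i)$; comparing with \eqref{eq:end_deform} one may then normalize $\V(\phi_{ij})=\Phi_{ij}$, and the rest runs as you describe. So the issue is not that your approach is wrong but that it is incomplete without this lemma.

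Your ``safer alternative'' does not actually sidestep the problem. To show $\tau\subseteq\ker(p_{i*})$ you appeal to the heuristic that $\V(\tilde{M}_i)$ is ``built from the reducible deformation and hence killed by $\tau$,'' but unpacking this requires knowing what $\V(\tilde{M}_i)$ is as a module over the center $R^{\zeta\epsilon}$, and the cleanest access to that information is precisely the identity $\V(\phi_{ij}\circ\phi_{ji})=\theta(\xx)\cdot\mathrm{id}$, which again presupposes the generator claim. (Your Noetherian surjective-endomorphism argument to upgrade $\tau\subseteq\ker$ to equality is fine, and essentially the same content as the paper's ``formal smoothness of relative dimension $3$'' remark.)

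That said, your matrix computation can be completed without the unit claim by a short commutative-algebra argument you did not write: it shows unconditionally that $\ker(p_{i*})=(a\cdot\theta(\xx))\subseteq(\theta(\xx))=\tau$ for some $a\in R^{\zeta\epsilon}$; the chain of isomorphisms (which you do establish correctly and independently) gives $R^{\zeta\epsilon}/\ker(p_{i*})\cong\oo\llbracket x,y\rrbracket$, a regular domain of Krull dimension $3$, so $\ker(p_{i*})$ is a height-one prime; the same is true of $\tau$ since $R^{\zeta\epsilon}/\tau\cong\oo\llbracket x,y\rrbracket$; and a height-one prime contained in another height-one prime in a Noetherian ring is equal to it. This dodges the unit claim entirely, but since it runs the inclusion in the opposite direction from the paper's argument, it is worth noting explicitly if you want to present it as an alternative proof.
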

\begin{proof}
It suffices to show that 
the image of $\tau$ consists of 
$\phi\in \End_{\fC_G(\oo)}(\tilde{P}_i)$
such that $p_i\circ \phi$ is trivial.
Apply $\Hom(\tilde{P}_j,*)$
to the following exact sequences
from \eqref{eq:exact_PPM}.
\[
\begin{tikzcd}
	0 \arrow[r]&
	\tilde{P}_1  \arrow[r,"\phi_{21}"]  &
	\tilde{P}_2 \arrow[r,"p_2"] \arrow[dr,equal] &
	\tilde{M}_2  \arrow[r] & 0 \\
	0 & 
	\tilde{M}_1 \arrow[l]&
	\tilde{P}_1 \arrow[l,"p_1",swap]  &
	\tilde{P}_2  \arrow[l,"\phi_{12}",swap]  & 
	0  \arrow[l] 
\end{tikzcd}
\]
The adjunction formula implies that
$\Hom_{\fC_G(\oo)}(\tilde{P}_j,\tilde{M}_i)\cong
\Hom_{\fC_T(\oo)}
(\tilde{P}_{\chi_j^\vee},\tilde{P}_{\chi_i^\vee})$,
which are zeros for $i\neq j$
since blocks in $\fC_T(\oo)$ are singletons
by the discussion in \cite[\S 7.2]{pask}.
Consequently there are isomorphisms
\[
	\phi_{12*}\colon
	\End_{\fC_G(\oo)}(\tilde{P}_2)\cong
	\Hom_{\fC_G(\oo)}(\tilde{P}_2, \tilde{P}_1)\quad
	\phi_{21*}\colon
	\End_{\fC_G(\oo)}(\tilde{P}_1)\cong
	\Hom_{\fC_G(\oo)}(\tilde{P}_1, \tilde{P}_2)
\]
Therefore $\Hom_{\fC_G(\oo)}(\tilde{P}_i, \tilde{P}_j)$
is a free module over $\End_{\fC_G(\oo)}(\tilde{P}_i)$
of rank one generated by $\phi_{ji}$ when $i\neq j$.
Compare with the isomorphisms in \eqref{eq:end_deform},
we may assume that $\V(\phi_{ij})$ agree with 
$\Phi_{ij}$ in \eqref{eq:Phi_ij}.
Thus the image of 
$\phi_{ij}\circ\phi_{ji}\in \End_{\fC_G(\oo)}(\tilde{P}_i)$,
which belongs to the kernel in question,
corresponds to the generator 
$\xx=\Phi_{ij}\circ\Phi_{ji}$ of 
$\tau\subset R^{\zeta\epsilon}$.
This implies that the kernel in question
contains $\tau$, and they are actually equal
since $R^{\zeta\epsilon}$ is formally smooth of relative dimension  $3$.
\end{proof}

\subsubsection{Univeral unitary completion}

Let $\Ban(E)$
denote the category of admissible $E$-Banach space
representations of $G$ with central character $\zeta$
as defined in \cite{pask}.
Identify the center of $\fC_G(\oo)^\B$ with $R^{\zeta\epsilon}$,
then $R^{\zeta\epsilon}[\frac{1}{p}]$ acts on objects of $\Ban(E)^{\B}$.
If $\fn\subset R^{\zeta\epsilon}[\frac{1}{p}]$ 
is a maximal ideal,
let $\Irr(\fn)$ denote the set of
irreducible representations in  $\Ban(E)^{\B}$
on which the action of $R^{\zeta\epsilon}[\frac{1}{p}]$ 
factors through $\fn$.

On the other hand,
let $T\colon \Gp\to R^{\zeta\epsilon}$ 
and $T_\fn\colon \Gp\to R^{\zeta\epsilon}[\frac{1}{p}]/\fn$ 
denote the universal deformation and 
the reduction.
Enlarge $E$ if necessary,
we assume $R^{\zeta\epsilon}[\frac{1}{p}]/\fn\cong E$.
Then there exists 
the unique semi-simple Galois representation
$r_{\fn}\colon \Gp\to \GL_2(E)$
such that $\mtr r_{\fn}=T_{\fn}$.
When $r_\fn$ is potentially semi-stable,
we let $\Delta_{\fn}$ denote
the associated Weil-Deligne representation 
on $D_{\pst}(r_\fn)$
and $\pi_{\fn}$ denote the smooth irreducible
$\GL_2(\Qp)$-representation attached to $\Delta_{\fn}$
by local Langlands correspondence.
We normalize the correspondence so that 
when $r_{\fn}$ is crystalline, 
thus $\Delta_{\fn}$
is the sum of unramified characters $\mu_1$ and $\mu_2$,
the representation $\pi_\fn$ 
is the smooth un-normalized induction
$\Ind_B^G(\mu_1\otimes\mu_2|\cdot|^{-1})_{\sm}$.
This is the Hecke correspondence used in \cite{pan}.

\begin{rem}
	The representation 
	$\Ind_B^G(\mu_1\otimes\mu_2|\cdot|^{-1})_{\sm}$
	is irreducible since otherwise
	$T_\fn$ would be of the form
	$\eta+\eta\epsilon$ for some character  $\eta$,
    which contradicts 
    the assumption \eqref{cond:generic}.
\end{rem}

\begin{lem}\label{lem:uni_completion}
	Suppose $r_\fn$ is crystalline of
	Hodge-Tate weights $\{-l,-l-k\}$
    up to twisting by a finite character.
    Let $\pi_{-l,1-l-k}^*\cong W_{l,k}\coloneqq
    \Sym^{k-1}\otimes\det^l$, which is the contragredient of
    the algebraic $\GL_2$-representation
    with highest weight $(-l,1-l-k)$.
	Then the universal unitary completion of 
	$\pi_\fn\otimes \pi_{-l,1-l-k}^*(E)$ belongs to 
	$\Irr(\fn)$.
\end{lem}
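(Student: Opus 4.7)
The plan is to realize $\widehat{\Pi}$, the universal unitary completion of $\pi_\fn\otimes \pi_{-l,1-l-k}^*(E)$, as the object attached to $r_\fn$ under the $p$-adic local Langlands correspondence for $\GL_2(\Qp)$. By construction, $\pi_\fn = \Ind_B^G(\mu_1 \otimes \mu_2 |\cdot|^{-1})_{\sm}$ is the smooth unramified principal series associated to the Frobenius eigenvalues $\mu_1,\mu_2$ of the crystalline representation $r_\fn$, and tensoring with $W_{l,k}(E)$ builds in the Hodge-Tate weights $\{-l,-l-k\}$. Under the genericity assumption \eqref{cond:generic}, the smooth principal series is irreducible, and the locally algebraic representation $\pi_\fn\otimes W_{l,k}(E)$ is precisely the candidate for the locally algebraic vectors attached to $r_\fn$.

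First, I would invoke the results of Berger--Breuil and Colmez on unitary completions of locally algebraic principal series to produce a non-zero, admissible, topologically irreducible Banach space $\widehat{\Pi}$ with central character $\zeta$ whose space of locally algebraic vectors is $\pi_\fn\otimes W_{l,k}(E)$ densely. Second, I would verify that $\widehat{\Pi}\in \Ban(E)^{\B}$: this amounts to identifying the Jordan--H\"older constituents of the mod $\varpi$ reduction of an invariant lattice in $\widehat{\Pi}$ with elements of the block $\B=\{\pi_1,\pi_2\}$. This in turn follows from the hypothesis $\varepsilon\zeta\equiv \chi_1\chi_2$ modulo $\varpi$ together with the classification of blocks recalled in Definition \ref{def:block}, since the semisimplification of $r_\fn$ mod $\varpi$ is $\chi_1\oplus\chi_2$.

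Third, I would apply Colmez's functor $\V$ to the Schikhof dual $\widehat{\Pi}^\vee$ and invoke the compatibility of $p$-adic local Langlands with classical local Langlands at crystalline points to obtain $\V(\widehat{\Pi}^\vee)\cong r_\fn$. Given the identification of the center of $\fC_G(\oo)^{\B}$ with $R^{\zeta\epsilon}$ from \eqref{eq:end_deform}, the action of $R^{\zeta\epsilon}[\tfrac{1}{p}]$ on $\widehat{\Pi}^\vee$ is then described by the trace $\mtr r_\fn = T_\fn$, hence factors through the maximal ideal $\fn$. Combined with the topological irreducibility of $\widehat{\Pi}$, this places $\widehat{\Pi}$ in $\Irr(\fn)$.

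The main obstacle is the bookkeeping of normalizations: the $\det^l$ twist built into $W_{l,k}$, the shift $|\cdot|^{-1}$ in the un-normalized induction convention used here for the Hecke correspondence, the cyclotomic twist by $\zeta\varepsilon$ in the universal deformation, and the normalization between Colmez's original functor and the variant used in \cite{pask}. Matching all these precisely so that $\V(\widehat{\Pi}^\vee)$ equals $r_\fn$ on the nose (rather than a twist) is what makes the Hodge-Tate weights $\{-l,-l-k\}$ pair with the highest weight $(-l,1-l-k)$ in the statement; once this alignment is checked, the lemma follows from the cited compatibility results.
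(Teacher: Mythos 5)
Your plan follows the same skeleton as the paper for the \emph{irreducible} case: identify the universal unitary completion with the Banach representation attached to $r_\fn$ by invoking the compatibility with $D_{\pst}$ (the paper cites \cite{CDP} Theorem 1.3, which is exactly the Colmez--Doussopoulos--Pa\v{s}k\={u}nas statement you allude to), and then conclude by uniqueness of the element of $\Irr(\fn)$. In that case your argument is sound modulo the normalization bookkeeping you already flagged.

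The gap is in the \emph{reducible} case, which you never treat separately. When $r_\fn$ is reducible, two things go wrong with your chain of reasoning. First, the phrase ``the object attached to $r_\fn$'' is ill-posed: $\Irr(\fn)$ has two members, the continuous inductions $(\Ind_B^G\psi_1\otimes\psi_2\varepsilon^{-1})_{\mathrm{cont}}$ and $(\Ind_B^G\psi_2\otimes\psi_1\varepsilon^{-1})_{\mathrm{cont}}$, and the correspondence sends $r_\fn$ to this unordered pair, not to a single Banach representation. Second, and more seriously, the identity $\V(\widehat{\Pi}^\vee)\cong r_\fn$ simply fails here: the Schikhof dual of a unitary continuous induction is not absolutely irreducible in the relevant sense, and the Colmez functor applied to it produces a rank-one $(\varphi,\Gamma)$-module (one of the two characters $\psi_i$), not the two-dimensional $r_\fn$. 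This is consistent with the rank statements used throughout \S 3 ($\V(\pi_i^\vee)=\chi_i$ is rank one, while $\V(\kappa_{ij}^\vee)=\rho_{ij}$ is rank two). So the third step of your plan, which derives the $R^{\zeta\epsilon}[\tfrac1p]$-action through $\fn$ from the alleged isomorphism $\V(\widehat{\Pi}^\vee)\cong r_\fn$, does not go through when $r_\fn$ splits. The paper sidesteps this by computing the universal unitary completion explicitly in the reducible case (via \cite[Thm 12.3]{pask}, getting a concrete continuous induction) and then matching it verbatim against the two-element list for $\Irr(\fn)$ provided by \cite[Cor 8.15]{pask}. To repair your argument you would need to either adopt this case split or replace the appeal to $\V$ with a direct computation of the Bernstein-center action on $(\Ind_B^G\psi)_{\mathrm{cont}}^\vee$, which is essentially the same computation packaged differently.
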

\begin{proof}
    By assumptions $r_{\fn}$ is potentially semi-stable and
    $\pi_{\fn}$ is a smooth irreducible principal series.
    When $r_\fn$ is irreducible, $\Irr(\fn)$
    has only one object by \cite[Cor 8.14]{pask},
    which we denote by $\Pi_{\fn}$.
    On the other hand,
    the universal unitary completion $\Pi$ in question
    is a non-ordinary admissible absolutely 
    irreducible $E$-Banach space representation
    by \cite[Thm 12.3]{pask},
    and the $(\varphi,N)$-module
    associated to which
    coincides with $D_{\pst}(r_\fn)$
    by \cite[Thm. 1.3]{CDP}.
    Since the same is true for $\Pi_{\fn}$
    by definition,
    it follows from the same theorem
    that $\Pi=\Pi_{\fn}\in \Irr(\fn)$.

	When  $r_\fn$ is reducible
	and $T_\fn=\psi_1+\psi_2$
	for characters
	$\psi_i\colon \Gp\to E^\times$,
	say of Hodge-Tate weights
	$-l-k$ and  $-l$ respectively,
	then $\pi_\fn=\Ind_B^G(\mu_1\otimes\mu_2|\cdot|^{-1})$
	for the charaters
	\[
	\mu_1=\psi_1(\epsilon|\cdot|^{-1})^{-l-k},\quad
	\mu_2=\psi_2(\epsilon|\cdot|^{-1})^{-l}	
	\]
	By \cite[Thm 12.3]{pask} the universal unitary completion
	of $\pi_\fn\otimes W_{l,k}$
	is $\Ind_B^G(\psi)_{cont}$ for
	\[
		\psi( (\begin{smallmatrix}
			a&b\\&d
		\end{smallmatrix}))
		=\mu_2(a)a^l\mu_1(d)|d|^{-1}d^{l+k-1}
		=\psi_2(a)\psi_1\epsilon^{-1}(d)
	\]
	since $\val_p(\mu_2(p))=-l$ and $\varepsilon(a)=a|a|$.
    And the lemma follows since
	by \cite[Cor 8.15]{pask}
	\[
	\Irr(\fn)=\{(\Ind_B^G\psi_1\otimes\psi_2\varepsilon^{-1})_{cont},
	(\Ind_B^G\psi_2\otimes\psi_1\varepsilon^{-1})_{cont}\}.
	\]
\end{proof}

\subsection{Local-global compatibility}
\label{sub:compatible}

We now resume to previous notations and settings.
Recall that $\fm\subset \TT(U^p,\oo)$ 
is a maximal ideal such that
$T_\fm\bmod\fm =\bar{\delta}_1+\bar{\delta}_2$
for characters $\bar{\delta}_i\colon \Gal_\K\to \fF^\times$
satisfying \eqref{cond:red_gen}.
For the same reaseon in \cite[Rem 3.5.3]{pan},
we normalize $T_\fm$ by the cyclotomic character so that
\[
    \epsilon T_\fm=\omega\delta_1+\omega\delta_2\mod\fm.
\]
Therefore the pair fo characters
$\chi_i\coloneqq \omega\bar{\delta}_i\vert_{\Gp}$
satisfies \eqref{cond:generic} because of 
\eqref{cond:red_gen}.
Let $T\colon \Gp\to R$
denote the universal deformation 
of $\chi_1+\chi_2$
and $\B=\{\pi_1,\pi_2\}$
be the associated block of irreducible of $\GL_2(\Qp)$-representations.
By the universal property
there exists homomorphisms of $R$
to the Noetherian finite level
Hecke operators $\TT(U^p\Iw^P(p^{b,b}),\oo)_\fm$
that push the $T$ to 
$\epsilon\cdot T(U^p\Iw^P(p^{b,b}))_\fm\vert_{\Gp}$.
Then the inverse limit of which
gives the homomorphism
\begin{equation}\label{eq:RtoT}
    R\to \TT(U^p,\oo)_{\fm}=
    \varprojlim_b\TT^P(U^p,\Iw^P(p^{b,b}),\oo)_{\fm}
\end{equation}
that pushes $T$ to $\epsilon T_\fm\vert_{\Gp}$.
Note that the homomorphism has to be defined this way
since we do not know a priori whether
$\TT(U^p,\oo)_{\fm}$ is Noetherian.

In order to apply the results recalled earlier,
we need to follow the approach in \cite[\S 5]{urban}
and twist the $\GL_2(\Qp)$-action on 
$\Ord_P(S(U^p,E/\oo))_\fm$.
To this end we fix a crystalline character
$\zeta\colon \Gp\to \oo^\times$
such that $\epsilon\zeta\equiv\chi_1\chi_2$
and let 
$T^{\zeta\epsilon}\colon \Gp\to R^{\zeta\epsilon}$
be the universal deformation
with determinant $\zeta\epsilon$.

Let $\fm_R\subset R$ be the maximal ideal.
Since $\det T\equiv \epsilon\zeta\bmod \fm_R$ and $p$ is odd,
there exists a character
\begin{equation}\label{eq:root_char}
	\xi^{1/2}
    \colon \Gp\to 1+
    \fm_{R},
    \, \text{ such that }
	\xi\coloneqq(\xi^{1/2})^2=\epsilon\zeta(\det T)^{-1}.
\end{equation}
Consequently there exists a homomorphism 
$R^{\epsilon\zeta}\to R$ that pushes 
$T^{\epsilon\zeta}$ to $T'\coloneqq \xi^{1/2}T$.
We write $\xi_\fm^{1/2}\colon \Gp\to \TT(U^p,\oo)_\fm$
for the composition of $\xi^{1/2}$ with \eqref{eq:RtoT}.
Then as $T$ is sent to $\epsilon T_\fm$ by \eqref{eq:RtoT} we have
\begin{equation}\label{eq:root_charm}
	\xi_\fm\coloneqq (\xi_\fm^{1/2})^2=
	\epsilon\zeta(\det \epsilon T_\fm)^{-1}\vert_{\Gp}
	=\zeta(\epsilon\det T_\fm)^{-1}\vert_{\Gp}.
\end{equation}

\begin{defn}\label{def:twist}

Let $\Qp^\times$ be the central torus in $\GL_2(\Qp)$, and 
\begin{align*}
\text{ for }
Q&=\GL_2(\Qp)\times \prod_{w'\in\Sigma_p\setminus{w_0}}T(\K_{w'}) & 
\text{define }\,
Q'&=\GL_2(\Qp)\times Z_Q=\GL_2(\Qp)\times 
\big(\Q^\times\prod_{w'\in\Sigma_p\setminus{w_0}}T(\K_{w'})\big),\\
T&=\GL_2(\Qp)\times \prod_{w'\in\Sigma_p\setminus{w_0}}T(\K_{w'}) & 
T'&=T(\Qp)\times Z_Q=T(\Qp)\times 
\big(\Q^\times\prod_{w'\in\Sigma_p\setminus{w_0}}T(\K_{w'})\big),\\
\end{align*}
We define $\Ord_P(S(U^p,E/\oo))_{\fm}'$
to be the $Q'$-representation on
$\Ord_P(S(U^p,E/\oo))_{\fm}$,
where the action of $Z_Q$ is as usual
but the $\GL_2(\Qp)$-action is twisted by 
$(\xi_{\fm}^{1/2}\circ\Art)$.
The $T'$-representation
$S^{\ord}(U^p,\oo)_\fm'$
is then defined by a similar twist on $T(\Qp)$.
We also write $M(U^p)'_\fm$ and $M^{\ord}(U^p)'_\fm$
for the Pontryagin duals.
Note that we did not twist the Hecke actions, 
so $\TT(U^p,\oo)_\fm$ and $\TT^{\ord}(U^p,\oo)_\fm$
still acts on $\Ord_P(S(U^p,E/\oo))_\fm'$
and $S^\ord(U^p,E/\oo)_\fm'$ as usual.
\end{defn}

\begin{prop}\label{lem:twist}
The twist $\Ord_P(S(U^p,E/\oo))_\fm'$
has the following properties.
\begin{enumerate}
\item There exists a homomorphism 
$R^{\zeta\epsilon}\to \TT(U^p,\oo)_\fm$
that pushes $T^{\zeta\epsilon}$ to 
$T_\fm'\coloneqq \xi_{\fm}^{1/2}\epsilon T_\fm\vert_{\Gp}$.
\item For $w'\in \Sigma_p\setminus{w_0}$,
the action of $T(\K_w)=\K_w^2$ on
$\Ord_P(S(U^p,E/\oo))_\fm=\Ord_P(S(U^p,E/\oo))_\fm'$
coincides with the action induced by
the characters $\Psi_{w',1}\otimes\Psi_{w',2}$ from
Proposition \ref{prop:big_char_at_p}.
\item For $w=w_0$,
the action of $\Qp^\times$ on
$\Ord_P(S(U^p,E/\oo))_\fm=\Ord_P(S(U^p,E/\oo))_\fm'$
coincides with the action induced by
$(\epsilon\det T_\fm)\vert_{\Gp}\circ \Art$.
\item
The twisted $\GL_2(\Qp)$-action on $\Ord_P(S(U^p,E/\oo))_\fm'$.
has central character $\zeta$.
\item 
As a $Q'$-representation
$\Ord_P(S(U^p,E/\oo))_\fm'$ is admissible.
\end{enumerate}
\end{prop}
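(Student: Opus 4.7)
The plan is to verify the five assertions in order, with (1)--(3) essentially unwinding the construction of $T_\fm'$ and invoking Proposition~\ref{prop:big_char_at_p}, while (4) reduces to a short calculation with the defining identity \eqref{eq:root_charm} of $\xi_\fm$ and (5) follows from Emerton's admissibility theorem combined with invariance of admissibility under character twists. For (1), observe that by \eqref{eq:root_char} we have $\det T' = \xi\cdot \det T = \epsilon\zeta$, so $T' = \xi^{1/2}T\colon \Gp\to R$ is a deformation of $\chi_1+\chi_2$ with determinant $\epsilon\zeta$. This yields a homomorphism $R^{\zeta\epsilon}\to R$ pushing $T^{\zeta\epsilon}$ to $T'$; composing with \eqref{eq:RtoT} and tracking images gives the desired map sending $T^{\zeta\epsilon}$ to $\xi_\fm^{1/2}\epsilon T_\fm|_{\Gp} = T_\fm'$.

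For (2), the Levi torus $T(\K_{w'})$ for $w'\in\Sigma_p\setminus\{w_0\}$ acts on $\Ord_P(S(U^p,E/\oo))_\fm$ through the Hecke operators $h_U(\alpha_{w'}^{(j)})$ and $h_U(u)$ of Definition~\ref{def:hecke}, and by the formulae \eqref{eq:big_char_atw'} these are precisely the values of $\Psi_{w',1}\otimes\Psi_{w',2}$ along $\Art_{w'}$. Part (3) is obtained similarly: for $w=w_0$ the center $\Qp^\times\subset \GL_2(\Qp)$ acts through $h_U(\mathrm{diag}(\varpi_w,\varpi_w)) = U_{w}^{(2)}$ and $h_U(\mathrm{diag}(x,x))=\langle\mathrm{diag}(x,x)\rangle$ for $x\in\oo_w^\times$. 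Passing to a classical point $\fp$ and invoking Lemma~\ref{lem:galois_at_p}(b) identifies these operators with $(\epsilon\det r_\fp)\circ\Art_w$, and the density of such points (Corollary~\ref{cor:density}) promotes this to the identification with $(\epsilon\det T_\fm)|_{\Gp}\circ\Art$ on the full Hecke algebra.

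For (4), interpret the twist ``by $\xi_\fm^{1/2}\circ\Art$'' as multiplication by the character $\xi_\fm^{1/2}\circ\Art\circ\det$ of $\GL_2(\Qp)$. On $z=\mathrm{diag}(x,x)$ we then compute $z\cdot_{\mathrm{new}} v = (\xi_\fm^{1/2}\circ\Art)(x^2)\cdot(z\cdot v) = \xi_\fm(\Art(x))\cdot(\epsilon\det T_\fm)|_{\Gp}(\Art(x))\cdot v = \zeta(\Art(x))\cdot v$, using part (3) and the identity $\xi_\fm=\zeta(\epsilon\det T_\fm)^{-1}|_{\Gp}$ from \eqref{eq:root_charm}; hence the twisted $\GL_2(\Qp)$-action has central character $\zeta$. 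Assertion (5) holds because $\Ord_P(S(U^p,E/\oo))_\fm$ is $Q$-admissible by \cite[Thm 3.3.3]{emeI}, a continuous character twist of the $\GL_2(\Qp)$-factor preserves invariants under every compact open subgroup, and the additional $\Qp^\times$-factor appearing in $Q'$ compared to $Q$ merely records the untwisted central action, which trivially inherits admissibility. The main obstacle is purely bookkeeping: one must check that $\xi^{1/2}$ is well-defined (using $p$ odd so that $1+\fm_R$ is uniquely $2$-divisible) and carefully separate the twisted $\Qp^\times$-action from the untwisted one in $Z_Q$; once these conventions are pinned down, each item follows directly from results already proved in the preceding sections.
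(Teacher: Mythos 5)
Parts (1)--(4) are essentially the same as the paper's argument; in particular, your explicit computation for (4) (interpret the twist as multiplication by $\xi_\fm^{1/2}\circ\Art\circ\det$, evaluate on $\operatorname{diag}(x,x)$, use part (3) and \eqref{eq:root_charm}) is correct and makes explicit what the paper leaves compressed.

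Part (5) has a genuine gap. You assert that ``a continuous character twist of the $\GL_2(\Qp)$-factor preserves invariants under every compact open subgroup,'' and this is false in general. A character of $\GL_2(\Qp)$ factoring through $\det$ need not be trivial on a given compact open subgroup, so the twisted and untwisted $U'$-invariants can genuinely differ; moreover $\xi_\fm^{1/2}$ takes values in $1+\fm\TT(U^p,\oo)_\fm$, not in $\oo^\times$, so the statement is not even well-posed as a claim about ordinary character twists. The correct argument (as in the paper) is to fix $\varpi^m$ and shrink $U'$: one may assume $\det(U'\cap\GL_2(\Qp))$ is pro-$p$, contained in $U'\cap\Qp^\times$, and that $\zeta\bmod\varpi^m$ is trivial on it. Then on $(\,\cdot\,)^{U'}[\varpi^m]$ the action of $(\epsilon\det T_\fm)\vert_{\Gp}\circ\Art$ through $\det(U'\cap\GL_2(\Qp))\subset U'\cap\Qp^\times$ is trivial by part (3), so by \eqref{eq:root_charm} the character $\xi_\fm$, hence $\xi_\fm^{1/2}$ (using $p$ odd), acts trivially on this space. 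Only after this reduction do the $U'$-invariants in $\varpi^m$-torsion agree between the twisted and untwisted representations, and finiteness follows from the admissibility of the untwisted $Q'$-representation. Since admissibility is a condition over all pairs $(U',\varpi^m)$ and shrinking $U'$ only enlarges the invariants, this suffices — but the shrinking step is essential and is exactly what your phrasing elides.
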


\begin{proof}

The homomorphism in the first statement 
is the composition of \eqref{eq:RtoT},
which pushes $T$ to $\epsilon T_\fm\vert_{\Gp}$, and
the homomorphism $R^{\epsilon\zeta}\to R$,
that pushes $T^{\epsilon\zeta}$ to $\xi^{1/2}T$.
It is then clear that it pushes $T^{\epsilon\zeta}$ to
$\xi_\fm^{1/2}\epsilon T_\fm\vert_{\Gp}$.

For $w'\in\Sigma_p\setminus w_0$, by definition
$T(\K_w)$ acts by $h_U$ on $\Ord_P(S(U^p,E/\oo))_\fm$.
Therefore the second statement follows from
\eqref{eq:big_char_atw'}.
For the third one, the same argument in the proof of
Proposition \ref{prop:big_char_at_p} shows that 
\[
    (\epsilon \det T_\fm)\vert_{\Gp}\circ \Art(p)=
    U_{p}^{(2)}\coloneqq h_U(\smat{p&\\&p})\quad
    (\epsilon \det T_\fm)\vert_{\Gp}\circ \Art(x)=
    \langle \smat{x&\\&x}\rangle
    \coloneqq h_U(\smat{x&\\&x})\text{ for }x\in\Zp^\times,
\]
which coincides with the central action of $\Qp^\times$ on
$\Ord_P(S(U^p,E/\oo))_\fm=\Ord_P(S(U^p,E/\oo))_\fm'$.

The fourth statement follows from the first one and 
\eqref{eq:root_charm}.
For the last one, we would like to show that 
for any compact open subgroup $U'\subset Q'$
the submodule $(\Ord_P(S(U^p,E/\oo))'_\fm)^{U'}[\varpi^m]$ is finite.
Shrinking $U'$ if necessary, we may assume that 
$\det(U'\cap \GL_2(\Qp))$ is pro-$p$ and contained in
$U'\cap \Qp^\times$, and that
$(\zeta\circ\det)\bmod\varpi^m$ is trivial on 
$\det(U'\cap \GL_2(\Qp))$.
But then by \eqref{eq:root_charm} and the third statement
$\xi_\fm$ is trivial on $\det(U'\cap \GL_2(\Qp))$,
thus so is $\xi_\fm^{1/2}$ since $p$ is odd.
Consequently we have 
\[
    (\Ord_P(S(U^p,E/\oo))'_\fm)^{U'}[\varpi^m]=
    (\Ord_P(S(U^p,E/\oo))_\fm)^{U'}[\varpi^m],
\]
and the latter is finite since the untwisted $Q'$-representation
$\Ord_P(S(U^p,E/\oo))_\fm$ is admissible.

\end{proof}

\begin{prop}\label{prop:twist_ord}
The twist $S^{\ord}(U^p,E/\oo)_\fm'$
has the following properties.
\begin{enumerate}
\item The action of $Z_Q$ on
$S^{\ord}(U^p,E/\oo)_\fm=S^{\ord}(U^p,E/\oo)_\fm'$
is as described in last proposition.

\item For $w=w_0$,
the action of $T(\Qp)=\Qp^2$ on $S^{\ord}(U^p,E/\oo)_\fm'$
coincides with the action induced by the character
$\xi_\fm^{1/2}\Psi_{w,1}\otimes \xi_\fm^{1/2}\Psi_{w,2}$
from Proposition \ref{prop:big_char_at_p}
and has central character $\zeta$.

\item 
Let $\Ord\colon \Mod^{\sm}_{\GL_2(\Qp)}(\oo)
\to \Mod^{\sm}_{T(\Qp)}(\oo)$ denote the Emerton's
functor of $B(\Qp)$-ordinary parts.
Then $\Ord(\Ord_P(S(U^p,E/\oo))_\fm')=S^{\ord}(U^p,E/\oo)_\fm'$.

\end{enumerate}
\end{prop}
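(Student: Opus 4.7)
The three assertions can be handled separately; Parts 1 and 2 reduce to direct bookkeeping from earlier results, while Part 3 contains the essential content. For Part 1, Definition~\ref{def:twist} specifies that only the $\GL_2(\Qp)$-action is twisted, so the $Z_Q$-action on $S^{\ord}(U^p,E/\oo)_\fm'$ coincides with that on $S^{\ord}(U^p,E/\oo)_\fm$, which in turn is inherited from $\Ord_P(S(U^p,E/\oo))_\fm$ through the inclusion of Lemma~\ref{lem:coh_to_ord} and described in parts (2) and (3) of Proposition~\ref{lem:twist}.

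For Part 2, Proposition~\ref{prop:big_char_at_p} applied to the ordinary pseudo-representation $T^{\ord}(U^p)$ identifies the untwisted $T(\Qp)$-action at $w_0$ on $S^{\ord}(U^p,E/\oo)_\fm$ with $\Psi_{w_0,1}\otimes\Psi_{w_0,2}$ via Artin reciprocity, and Definition~\ref{def:twist} multiplies each factor by $\xi_\fm^{1/2}\circ\Art$ to produce the stated character. Restricting to the central diagonal $z\mapsto(z,z)$ yields $\xi_\fm(\Art(z))\cdot\Psi_{w_0,1}\Psi_{w_0,2}(\Art(z))$; the second factor equals $(\epsilon\det T^{\ord}_\fm)\vert_{\Gp}\circ\Art$ by Proposition~\ref{prop:big_char_at_p}, and using $\xi_\fm=\zeta(\epsilon\det T_\fm)^{-1}\vert_{\Gp}$ from \eqref{eq:root_charm} the product collapses to $\zeta\circ\Art$.

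For Part 3 I would proceed in two steps. First, invoke the transitivity of Emerton's ordinary functor under parabolic inclusions from \cite{emeI}: since the full Borel $B$ of $G_p$ satisfies $B\subset P$ with $B\cap Q=B(\Qp)\times\prod_{w'\neq w_0}T(\K_{w'})$, the identity $\Ord_B=\Ord\circ\Ord_P$ holds, where $\Ord$ acts only on the $\GL_2(\Qp)$-factor of $Q$. Applied to $S(U^p,E/\oo)$ and then localized at $\fm$, this gives $\Ord(\Ord_P(S(U^p,E/\oo))_\fm)=S^{\ord}(U^p,E/\oo)_\fm$. Second, verify that $\Ord$ intertwines the twists: the twist character $\xi_\fm^{1/2}\circ\Art\circ\det$ factors through $\det$ and hence is trivial on $N(\Qp)$, so for $m\in T(\Qp)^+$ the operator $h_{N(\Qp)}(m)$ on the $N_0$-invariants of the twisted module $\Ord_P(S)_\fm'$ differs from that on $\Ord_P(S)_\fm$ only by the scalar $\xi_\fm^{1/2}(\Art(\det m))$. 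Consequently $\Ord$ carries the $\GL_2(\Qp)$-twist to the corresponding $T(\Qp)$-twist, and the result coincides with $S^{\ord}(U^p,E/\oo)_\fm'$ as in Definition~\ref{def:twist}. The main technical obstacle is justifying the transitivity identity at the level of the completed admissible representation rather than its finite-level approximations, which may require careful passage to the limit via the $h_U$-description and the Iwahori-type decompositions already used in Lemmas~\ref{lem:control} and \ref{lem:PtoB}.
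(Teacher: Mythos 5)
Your proposal is correct and, since the paper itself disposes of this proposition in one line (``follows directly from the definition or by the same argument as the previous proposition''), it supplies genuinely useful detail. Parts 1 and 2 are handled exactly as they should be: the $Z_Q$-action is untouched by the twist, and the $T(\Qp)$-action matches $\xi_\fm^{1/2}\Psi_{w_0,1}\otimes\xi_\fm^{1/2}\Psi_{w_0,2}$ because the ``similar twist'' in Definition~\ref{def:twist} factors through $\det$, which on the diagonal torus gives $\xi_\fm^{1/2}\circ\Art$ on each $\Qp^\times$-factor; the computation of the central character via \eqref{eq:root_charm} and the compatibility of $T_\fm$ with $T^{\ord}_\fm$ from Definition~\ref{def:big_Gal} is right.

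For Part 3 you correctly identify the two ingredients: transitivity $\Ord_B = \Ord_{B\cap Q}\circ\Ord_P$ for nested parabolics, and compatibility of $\Ord$ with a $\det$-twist, which works because the twist character is trivial on $N(\Qp)$ and pulls out of $h_{N(\Qp)}(m)$ as the scalar $\xi_\fm^{1/2}(\Art(\det m))$. The closing hedge is unwarranted, though: Emerton's transitivity (\cite[Prop.~3.3.6]{emeI}, or its variants) is stated at the level of smooth representations, and $S(U^p,E/\oo)$ already lies in $\Mod^{\adm}_{G_p}(\oo)$ by \eqref{eq:complete}; furthermore localization at $\fm$ commutes with $\Ord$ since the Hecke operators commute with the ordinary idempotent. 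There is no passage-to-the-limit issue to worry about — the paper's own Lemma~\ref{lem:PtoB}/Lemma~\ref{lem:coh_to_ord} already encode this inclusion at finite level and its compatibility with the big Hecke algebras, so you can drop the final caveat entirely.
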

\begin{proof}
The above statements either follows directly from 
the definition or can be proved by the same way
as the previous proposition.
\end{proof}

Let $\fC(\oo)$ denote the category of Pontryagin duals of objects in
$\Mod^{\lfin}_{\GL_2(\Qp),\zeta}(\oo)$.
By the proposition above we see that 
$M(U^p)_{\fm}'$ is an object in $\fC(\oo)$
as a $\GL_2(\Qp)$-representation.
In particular it admits an action of the center
$R^{\zeta\epsilon}\subset\tilde{E}_\B$.

\begin{prop}\label{prop:compatibility}
    The restriction of  $M(U^p)_{\fm}'$ to $\GL_2(\Qp)$
    belongs to $\fC(\oo)^{\B}$
	and the $R^{\epsilon\zeta}$-action on which
    as the center of the category
	factors through the homomorphism to $\TT(U^p,\oo)_\fm$
    given in Proposition \ref{lem:twist}.
\end{prop}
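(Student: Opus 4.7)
The plan is to adapt the local–global compatibility argument of \cite{pan} to the $P$-ordinary setting and combine it with the twist of \cite[\S 5]{urban}. Two assertions require verification: that $M(U^p)_\fm'$ restricted to $\GL_2(\Qp)$ is an object of the block category $\fC(\oo)^\B$, and that the tautological action of the center $R^{\epsilon\zeta}$ on objects of $\fC(\oo)^\B$ agrees, when specialized to $M(U^p)_\fm'$, with the action induced by the homomorphism $R^{\epsilon\zeta}\to \TT(U^p,\oo)_\fm$ of Proposition \ref{lem:twist}.

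For membership in $\fC(\oo)^\B$: by Proposition \ref{lem:twist}(4)--(5), the twisted representation $\Ord_P(S(U^p,E/\oo))_\fm'$ is $Q'$-admissible with central character $\zeta$, so its restriction to $\GL_2(\Qp)$ lies in $\Mod^{\lfin}_{\GL_2(\Qp),\zeta}(\oo)$ and $M(U^p)_\fm'$ is an object of $\fC(\oo)$. To place it in the block $\B$, I would reduce modulo the maximal ideal: by \eqref{eq:root_char} the auxiliary character $\xi^{1/2}$ lies in $1+\fm_R$, hence reduces trivially, so the residual trace of the $\Gp$-action is $T_\fm'|_{\Gp}\bmod \fm=\omega(\bar{\delta}_1+\bar{\delta}_2)|_{\Gp}=\chi_1+\chi_2$. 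Since by Pask\={u}nas' block decomposition the block $\B$ is precisely characterized by the fact that its members have residual Colmez-functor semi-simplification $\chi_1+\chi_2$, any irreducible subquotient of $M(U^p)_\fm'$ must lie in $\B$.

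For the compatibility of the two $R^{\epsilon\zeta}$-actions: each produces a $\Gp$-pseudo-representation via $T^{\epsilon\zeta}$; the canonical one is $\mtr\circ \V$ applied to $M(U^p)_\fm'$, while the other is $T_\fm'|_{\Gp}$ composed with the Hecke algebra action. Since $\TT(U^p,\oo)_\fm$ is reduced by Lemma \ref{lem:big_red} and by Corollary \ref{cor:density} injects into $\prod_{\wt{k}}\TT^P(U^p\Iw^P(p^{0,1}),E)$ after inverting $p$, it suffices to verify equality of the two $\Gp$-pseudo-representations after specialization at each classical point $\fp$ of sufficiently regular weight (the set of such points remains dense, by the argument of \cite[Cor 3.4]{ger} already used in Proposition \ref{prop:big_char_at_p}). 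At such a point, Proposition \ref{prop:wt_space} identifies the $\lambda_\fp$-eigenspace in $S_P(U^p)_E$ with a $\GL_2(\Qp)$-algebraic representation of the form $\pi_{w_0}\otimes W_{l,k}$, whose universal unitary completion is the object $\Pi_\fn\in\Irr(\fn)$ of Lemma \ref{lem:uni_completion}. Combining Proposition \ref{prop:gal_ger}(b)--(c) with the definition of $\Pi_\fn$, the Colmez functor of $\Pi_\fn$ recovers $r_\fp|_{\Gp}$ twisted by $\xi^{1/2}$, which matches $T_\fm'|_{\Gp}$ at $\fp$.

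The hard part will be this last local–global compatibility in the reducible case at classical points: when $r_\fp|_{\Gp}$ is a sum of characters, Lemma \ref{lem:uni_completion} produces two objects in $\Irr(\fn)$, one ordinary and one non-ordinary, and we must check that the non-ordinary completion is the one that actually embeds into $\Ord_P(S(U^p,E/\oo))_\fm'$. The twist by $\xi^{1/2}$ in Definition \ref{def:twist}, following \cite[\S 5]{urban}, is designed precisely to make this identification uniform; verifying that it consistently selects the correct direction of the principal-series completion is the technical core of the argument.
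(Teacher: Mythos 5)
Your high-level plan — reduce to classical points via density and invoke Lemma \ref{lem:uni_completion} — is the same route as the paper, but there are two genuine problems with the execution.

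First, the argument for membership in $\fC(\oo)^\B$ is not valid as stated. You compute that the Hecke-theoretic pseudo-representation $T_\fm'\vert_{\Gp}$ reduces to $\chi_1+\chi_2$ modulo $\fm$ and conclude that "any irreducible subquotient of $M(U^p)_\fm'$ must lie in $\B$." But the block of a $\GL_2(\Qp)$-representation is determined by the representation-theoretic structure of its irreducible subquotients, not by the Hecke pseudo-representation, which is built from Galois representations attached to classical automorphic forms. The assertion that the Colmez functor of the irreducible $\GL_2(\Qp)$-subquotients of $\Ord_P(S(U^p,E/\oo))_\fm'$ has residual semi-simplification $\chi_1+\chi_2$ is precisely a local--global compatibility statement — it is what the proposition is being set up to prove, not something one may assume. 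The paper handles both block membership and the identification of the $R^{\epsilon\zeta}$-action in a single step: by Proposition \ref{prop:density} and Proposition \ref{prop:wt_space} the locally algebraic vectors are dense, so (following the strategy of \cite[Thm 3.5.5]{pan}) it suffices to check that the universal unitary completion of each classical twisted locally algebraic piece $(\pi_\fp\otimes\pi_{\wt{k}}^*(E))(\xi_\fp^{1/2})$ lands in $\Irr(\fn)$; this simultaneously pins down the block and the action of the center.

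Second, the actual technical substance of this verification — the Hodge--Tate weight bookkeeping — is missing. You cite Proposition \ref{prop:gal_ger}(b)--(c) and Lemma \ref{lem:uni_completion} but do not verify that the hypotheses of Lemma \ref{lem:uni_completion} are met for the twisted representation. Concretely, one must compute that $\xi_\fp^{1/2}$ is crystalline (up to a quadratic twist) of Hodge--Tate weight $w_\fp=\tfrac{1}{2}(w_0-k_1-k_2)$, that $r_\fp\vert_{\Gp}(\xi_\fp^{1/2}\epsilon)$ then has Hodge--Tate weights $\{k_1',k_2'-1\}$ with $k_i'=k_i+w_\fp$, and that the algebraic factor transforms as $\pi_{\wt{k}}^*(E)(\epsilon|\cdot|^{-1})^{-w_\fp}\cong\pi_{k_1',k_2'}^*(E)$, so that the twisted locally algebraic representation is of the shape $\pi_\fn\otimes W_{l,k}$ required by Lemma \ref{lem:uni_completion}. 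Without this computation there is no link between the abstract twist and the hypotheses of the completion lemma.

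Finally, what you label "the technical core" — determining which of the two objects of $\Irr(\fn)$ is the universal unitary completion in the reducible case — is not actually needed. The universal unitary completion is a single $E$-Banach representation, and Lemma \ref{lem:uni_completion} already shows it belongs to $\Irr(\fn)$; since the $R^{\epsilon\zeta}[1/p]$-action on either member of $\Irr(\fn)$ factors through $\fn$, the compatibility of the central action holds regardless of which member occurs. The genuine delicacy lies in the Hodge--Tate weight computation that makes the twist match the normalization of \cite{pan}, not in disambiguating the two principal-series completions.
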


\begin{proof}

Let $\pi$ be an irreducible constituent of
the $G(\A_f)$-representation $S_{\wt{k}}(\bar{\Q}_p)$
that belongs to a minimal prime
$\fp\subset \TT_{\wt{k}}^P(U^p\Iw^P(p^{0,1}),E)_{\fm}$.
Then $\pi_{\fp}\coloneqq\pi_{w_0}$
is an unramified $\GL_2(\Qp)$-representation
and corresponds to the crystalline $\Gp$-representation
$r_\fp\vert_{\Gp}$ by Lemma \ref{lem:galois_at_p}.
The beginning of the proof of
Proposition \ref{prop:wt_space} shows that 
$S_{\wt{k}}^{P-\ord}(U^p,\oo)\cong
\Hom_{\oo}(\pi_{\wt{k}}^*(\oo), S(U^p))$,
which give a homomorphism
\begin{equation}\label{eq:hom_wt}
    \pi_\fp\otimes \pi_{\wt{k}}^*(E)\to S(U^p)_\fm\otimes_{\oo}E
\end{equation}
between $\GL_2(\Qp)$-representations
after tensoring with $E$ since
$\pi_\fp\subset S^{P-\ord}_{\wt{k}}(U^p,E)$.

Let $\fn\subset R^{\zeta\epsilon}[\frac{1}{p}]$ be the kernel of 
$R^{\zeta\epsilon}[\frac{1}{p}]\to \TT(U^p,E)_{\fm}\to E_\fp$,
the composition of the homomorphism in Proposition \ref{lem:twist}
and $\lambda_\fp$.
Combine the strategy of the proof of \cite[Thm 3.5.5]{pan}
with Corollary \ref{cor:density},
it suffices to show that
the universal unitary completions of
the twist of $\pi_{\fp}\otimes\pi_{\wt{k}}^*(E)$
by $\xi_\fm^{1/2}$ belongs to $\Irr(\fn)$.

Since $w_0\in \Sigma_p$ is of degree one
there is a unique embedding $\sigma$ in $I_{w_0}$.
Write $k_1=k_{\sigma,1}$ and $k_2=k_{\sigma,2}$,
then $r_\fp\vert_{\Gp}$ is crystalline
of Hodge-Tate weights  $k_1+1,k_2$
and $\epsilon\det r_{\fp}$
is of Hodge-Tate weight $k_1+k_2$.
Define $\xi_\fp^{1/2}=\lambda_\fp\circ \xi_\fm^{1/2}$
so that $\xi_\fp\coloneqq(\xi_\fp^{1/2})^2=
\zeta(\epsilon\det r_\fp)^{-1}$.
Suppose $\zeta$ has Hodge-Tate weight $w_0$,
then the Hodge-Tate weight $w_0-(k_1+k_2)$ of
$\xi_\fp$ is even as $\xi_\fp$
is congruence to the trivial character.
Therefore the character $\xi_{\fp}^{1/2}$
is crystalline of Hodge-Tate weight 
$w_{\fp}\coloneqq \frac{1}{2}(w_0-k_1-k_2)$ 
up to a quadratic twist.

Now the representation
$r_{\fp}\vert_{\Gp}(\xi_{\fp}^{1/2}\epsilon)$
is crystalline up to a quadratic twist
and has Hodge-Tate weights  $\{k_1',k_2'-1\}$ 
for $k_1'=k_1+w_\fp, k_2'=k_2+w_\fp$.
And $\pi_\fp(\xi_\fp^{1/2}(\epsilon|\cdot|^{-1})^{w_\fp})$
is the smooth irreducible
representation associated to 
$D_{\pst}(r_{\fp}(\xi_\fp^{1/2}\epsilon))$
since $\pi_\fp$ is associated to 
$D_{\pst}(r_{\fp}(\epsilon))$.
Now twist \eqref{eq:hom_wt} by $\xi_\fm^{1/2}$ gives
\[
    (\pi_{\fp}\otimes \pi_{\wt{k}}^*(E))(\xi_\fp^{1/2})=
    \pi_\fp(\xi_\fp^{1/2}(\epsilon|\cdot|^{-1})^{w_\fp})
    \otimes \pi_{\wt{k}}^*(E)(\epsilon|\cdot|^{-1})^{-w_{\fp}} \to 
    S(U^p)_\fm'\otimes_{\oo}E
\]
Since  $T_\fn=\mtr(r_{\fn}\vert_{\Gp}(\xi_\fm^{1/2}\epsilon))$
and $\pi_{\wt{k}}^*(E)(\epsilon|\cdot|^{-1})^{-w_{\fp}}\cong 
\pi_{k_1',k_2'}^*(E)$,
it follows from Lemma \ref{lem:uni_completion}
that the universal completion of 
$(\pi_{\fp}\otimes \pi_{\wt{k}}^*(E))(\xi_\fp^{1/2})$
does belong to $\Irr(\fn)$.
\end{proof}

The proposition implies that
all the irreducible $\GL_2(\Qp)$-subquotients
of $\Ord_P(S(U^p,E/\oo))_\fm'$ are 
isomorphic to either $\pi_1$ or $\pi_2$.
On the other hand, the action of $Z_Q$
on $\Ord_P(S(U^p,E/\oo))_\fm'$ coincides 
with the character 
$Z_Q\to \TT(U^p,\oo)_{\fm}^{\times}$
described in Proposition \ref{lem:twist}.
Let $\upsilon\colon Z_Q\to \TT^P(U^p,\oo)/\fm=\fF^\times$
denote the residual character.
By abuse of notation,
we let $\pi_1,\pi_2$ denote
the $Q'$-representations on which $\GL_2(\Qp)$ acts as usual
and $Z_Q$ acts by $\upsilon$.
Then all the irreducible $Q'$-subquotients
of $\Ord_P(S(U^p,E/\oo))_\fm'$ are 
isomorphic to either $\pi_1$ or $\pi_2$.
We similarly let $\chi^s$ and $\chi\alpha^s$ denote
the $T'$-representations defined in the same way,
with $Z_Q$ acting by $\upsilon$.

\begin{defn}

We let $\Mod^{\lfin}_{Q',\zeta}(\oo)$
and $\Mod^{\lfin}_{T',\zeta}(\oo)$
denote the categories of smooth representations of 
$Q'$ and $T'$ that are locally of finite type
whose restriction to $\GL_2(\Qp)$ or $T(\Qp)$
has central character $\zeta$.
We also let $\fC_{Q'}(\oo), \fC_{T'}(\oo)$
denote the category of Pontryagin duals.
When the context is clear,
we will simply write $\Hom_{Q'}$ and $\Hom_{T'}$
for the space of homomorphisms.
\end{defn}

\begin{lem}\label{lem:end_comp}
Let $\tilde{P}$ be the projective envelope of 
$\upsilon^\vee\in \fC_{Z_Q}(\oo)$, the category
of Pontryagin duals of $\Mod_{Z_Q}^{\lfin}(\oo)$,
and $\tilde{E}\coloneqq\End_{\fC_{Z_Q}(\oo)}(\tilde{P})$.
Then $\tilde{P}$ is isomorphic to 
the universal deformation of $\upsilon^{-1}$
and free of rank one over $\tilde{E}$.
More precisely,
let $Z_Q^\wedge$ denote the pro-$p$ completion of $Z_Q$, then
\[
    \tilde{P}=\tilde{E}
    =\oo\llbracket Z_Q^\wedge\rrbracket
\]
where $Z_Q$ acts on $\tilde{P}$
by the product of the tautological character
and a lift of $\upsilon$.
\end{lem}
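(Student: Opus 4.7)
The plan is to reduce to the case when $\upsilon$ is trivial via a twist, identify the resulting block with compact modules over a completed group algebra, and then compute the projective envelope and endomorphism ring directly. The main subtlety will be verifying that the block of the trivial character genuinely identifies with Iwasawa modules; everything else is then formal.

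First, I would fix a continuous character lift $\tilde{\upsilon}\colon Z_Q\to \oo^\times$ of $\upsilon$. This exists because $\upsilon$ takes values in $\fF^\times$, which has order prime to $p$: use the Teichm\"uller section on the prime-to-$p$ torsion of the maximal compact subgroup of $Z_Q$ and send the remaining (pro-$p$ and free) factors to $1$. Twisting by $\tilde{\upsilon}^{-1}$ gives an autoequivalence of $\Mod_{Z_Q}^{\lfin}(\oo)$ sending the block of $\upsilon$ to that of the trivial character, and dually for $\fC_{Z_Q}(\oo)$. So I may assume $\upsilon=\id$.

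Next, I would show that the block of $\id^\vee$ in $\fC_{Z_Q}(\oo)$ is equivalent to the category of compact $\oo\llbracket Z_Q^\wedge\rrbracket$-modules. For any $M\in \fC_{Z_Q}(\oo)$ whose only Jordan--H\"older constituent is $\id^\vee$, each finite quotient of $M$ is an iterated extension of $\fF$ with trivial action; the prime-to-$p$ torsion subgroup of $Z_Q$ acts semisimply on such a quotient by Maschke and hence trivially, while any continuous character from a (topologically) finitely generated abelian group into $1+\fm_A$ of a local pro-Artinian $\oo$-algebra automatically factors through a pro-$p$ quotient. Passing to the inverse limit, the $Z_Q$-action on $M$ descends to $Z_Q^\wedge$. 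This is the step where one must check carefully that the assumption on the Jordan--H\"older factors genuinely forces the factorization, which I expect to be the main obstacle.

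Once the equivalence is in place, the final computations are immediate. The ring $\oo\llbracket Z_Q^\wedge\rrbracket$ is free of rank one over itself, and its augmentation to $\oo$ (representing $\id^\vee$) is an essential surjection since the augmentation ideal lies in the Jacobson radical of the local ring. Hence $\tilde{P}\cong \oo\llbracket Z_Q^\wedge\rrbracket$, and $\tilde{E}=\End_{\fC_{Z_Q}(\oo)}(\tilde{P})$ is scalar multiplication on a rank-one free module, giving $\tilde{E}\cong \oo\llbracket Z_Q^\wedge\rrbracket$ with $Z_Q$ acting on $\tilde{P}$ by the tautological inclusion $Z_Q\to Z_Q^\wedge\hookrightarrow \oo\llbracket Z_Q^\wedge\rrbracket^\times$. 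Untwisting by $\tilde{\upsilon}$ restores the announced $Z_Q$-action as the product of a lift of $\upsilon$ and the tautological character. Finally, Yoneda identifies $\tilde{P}$ with the functor sending $A$ to the set of continuous characters $Z_Q\to A^\times$ lifting $\upsilon^{-1}$, namely the universal deformation of $\upsilon^{-1}$, since any such deformation factors uniquely as $\tilde{\upsilon}^{-1}$ times a character into $1+\fm_A$ which descends to $Z_Q^\wedge$.
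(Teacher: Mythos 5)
Your argument is correct and follows essentially the same route as the paper, which simply invokes \cite[Prop.~3.34, Cor.~3.27, Lem.~3.32]{pask} for the torus case and notes the argument carries over to $Z_Q$; you have unpacked those citations (twist to the trivial character, show the block identifies with compact $\oo\llbracket Z_Q^\wedge\rrbracket$-modules, compute the projective envelope and its endomorphism ring directly). Two small slips worth tidying: the essential surjection out of $\tilde{P}=\oo\llbracket Z_Q^\wedge\rrbracket$ onto $\id^\vee$ is the reduction modulo the maximal ideal onto $\fF$, not the augmentation onto $\oo$; and $Z_Q\to Z_Q^\wedge$ is the canonical map with dense image rather than an inclusion.
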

\begin{proof}
The same argument in \cite[Prop. 3.34]{pask} 
shows that $\tilde{P}$
is free of rank one over $\tilde{E}$
and we may assume $\upsilon$ is the trivial character.
The description of $\tilde{P}$ then follows from 
\cite[Cor. 3.27]{pask} and the proof of
\cite[Lem. 3.32]{pask}.
\end{proof}

\begin{prop}\label{prop:envelope}
    Write $\tilde{P}_{*,\fm}=
    \tilde{P}_*\hat{\otimes}_{\oo}\tilde{P}$
    for $\tilde{P}_*\in\{\tilde{P}_\B, \tilde{P}_1, \tilde{P}_2,
    \tilde{P}_{\chi_1^\vee}, \tilde{P}_{\chi_2^\vee}\}$
    introduced in Lemma \ref{lem:proj_enve}
	and write 
    $\tilde{E}_{\fm}=\tilde{E}_\B
	\hat{\otimes}_{\oo}\tilde{E},
    R_{\fm}=R^{\epsilon\zeta}
	\hat{\otimes}_{\oo}\tilde{E},
    R_{\fm}^\red=R^{\epsilon\zeta}_\red
	\hat{\otimes}_{\oo}\tilde{E},$.
	Then $\tilde{P}_{i,\fm}\in \fC_{Q'}(\oo)$
	is the projective envelope of $\pi_i^\vee$ and 
	$\tilde{P}_{\chi_1^\vee,\fm}, \tilde{P}_{\chi_1^\vee,\fm}
    \in \fC_{T'}(\oo)$ are the projective envelopes
	of $(\chi^s)^\vee$ and
    $(\chi\alpha^s)^\vee$ respectively.
    Moreover
    \begin{align}\label{eq:proj_0}
    &\Hom_{Q'}(\tilde{P}_{\B,\fm}, \tilde{P}_{\B,\fm}) \cong 
    \Hom_{\GL_2(\Qp)}(\tilde{P}_\B, \tilde{P}_\B)
    \hat{\otimes}_\oo \End_{\fC_{\Z_Q}(\oo)}(\tilde{P})\cong 
    \tilde{E}_\B \hat{\otimes}_\oo \tilde{E}
    \eqqcolon \tilde{E}_\fm\\\label{eq:proj_1}
    &\Hom_{Q'}(\tilde{P}_{i,\fm}, \tilde{P}_{j,\fm}) \cong 
    \Hom_{\GL_2(\Qp)}(\tilde{P}_{i}, \tilde{P}_{j})
    \hat{\otimes}_\oo \End_{\fC_{\Z_Q}(\oo)}(\tilde{P})\cong 
    R^{\zeta\epsilon} \hat{\otimes}_\oo \tilde{E}
    \eqqcolon R_\fm\\\label{eq:proj_2}
    &\End_{T'}(\tilde{P}_{\chi_i^\vee,\fm}) \cong 
    \End_{(\Qp^\times)^2}(\tilde{P}_{\chi_i^\vee})
    \hat{\otimes}_\oo \End_{\fC_{\Z_Q}(\oo)}(\tilde{P})\cong 
    R^{\zeta\epsilon}_\red \hat{\otimes}_\oo \tilde{E}
    \eqqcolon R_\fm^{\red}
    \end{align}
\end{prop}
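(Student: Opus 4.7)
The plan is to exploit the direct product structure $Q' = \GL_2(\Qp) \times Z_Q$ (and $T' = T(\Qp) \times Z_Q$) so that each assertion factors as a tensor product of statements about the two factors, all of which have already been established. Concretely, I would work in the full subcategory $\fC_{Q'}(\oo)^{\B}_{\upsilon} \subset \fC_{Q'}(\oo)$ of objects whose $\GL_2(\Qp)$-irreducible constituents lie in $\B$ and whose $Z_Q$-action has residual character $\upsilon$. By Proposition \ref{prop:compatibility} together with Proposition \ref{prop:twist_ord}, $M(U^p)'_\fm$ and $M^{\ord}(U^p)'_\fm$ belong to this subcategory and its $T'$-analogue, so computing Hom out of $\tilde{P}_{i,\fm}$ and $\tilde{P}_{\chi_i^\vee,\fm}$ in $\fC_{Q'}(\oo)$ and $\fC_{T'}(\oo)$ amounts to doing so in these smaller subcategories.

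First, I would establish that $\fC_{Q'}(\oo)^{\B}_{\upsilon}$ is equivalent to the completed tensor product of $\fC_{\GL_2(\Qp),\zeta}(\oo)^\B$ and the component of $\fC_{Z_Q}(\oo)$ with residual character $\upsilon^\vee$. Since the two group actions commute inside $Q'$, a Pontryagin dual of a smooth $Q'$-representation locally of finite length decomposes canonically, and every object acquires a module structure over the completed tensor product of the two endomorphism algebras. Combined with the equivalence $\fC_{\GL_2(\Qp),\zeta}(\oo)^\B \cong \tilde{E}_\B\text{-mod}^{\textnormal{cpt}}$ underlying \cite{pask}, this reduces the required decomposition to the classical fact that modules over a completed tensor product of rings factor as tensor products of modules over each factor.

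Next, for compact projective $A \in \fC_{\GL_2(\Qp),\zeta}(\oo)^\B$ and $B \in \fC_{Z_Q}(\oo)$ with residual character $\upsilon^\vee$, I would verify that $A\hat{\otimes}_\oo B$ is projective in $\fC_{Q'}(\oo)^\B_\upsilon$. Indeed, given any surjection $X \twoheadrightarrow Y$ in the target category, tensor-hom adjunction identifies morphisms $A\hat{\otimes}_\oo B \to Y$ with continuous $Z_Q$-equivariant morphisms $B \to \Hom_{\GL_2(\Qp)}(A,Y)$; projectivity of $A$ makes $\Hom_{\GL_2(\Qp)}(A,-)$ exact, whereupon projectivity of $B$ provides the lift. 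For essentialness of the cover, the cosocle is compatible with completed tensor products of projective envelopes, so
\[
\textnormal{cosoc}(\tilde{P}_i\hat{\otimes}_\oo \tilde{P}) = \textnormal{cosoc}(\tilde{P}_i)\otimes_\fF \textnormal{cosoc}(\tilde{P}) = \pi_i^\vee \otimes \upsilon^\vee,
\]
and analogously for $\tilde{P}_{\chi_i^\vee}\hat{\otimes}_\oo \tilde{P}$, proving that these are genuine projective envelopes of $\pi_i^\vee$ and $(\chi^s)^\vee$, $(\chi\alpha^s)^\vee$ respectively.

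The three Hom computations \eqref{eq:proj_0}, \eqref{eq:proj_1}, \eqref{eq:proj_2} then follow from the identity
\[
\Hom_{Q'}(A_1\hat{\otimes}_\oo B_1, A_2\hat{\otimes}_\oo B_2) \cong \Hom_{\GL_2(\Qp)}(A_1,A_2) \hat{\otimes}_\oo \Hom_{Z_Q}(B_1,B_2),
\]
valid for compact projective objects in each factor. The $\GL_2(\Qp)$-factors equal $\tilde{E}_\B$, $R^{\zeta\epsilon}$ and $R^{\zeta\epsilon}_\red$ by \eqref{eq:end_deform} and Lemma \ref{lem:ker_red}, while the $Z_Q$-factor equals $\tilde{E} \cong \oo\llbracket Z_Q^\wedge \rrbracket$ by Lemma \ref{lem:end_comp}. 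Assembling these yields the three stated isomorphisms. The main obstacle will be making the completed tensor product of profinite categories precise and verifying the Hom-compatibility above at the correct level of rigor; the cleanest route is to transport everything to module categories over $\tilde{E}_\B$ and $\oo\llbracket Z_Q^\wedge\rrbracket$ via \cite{pask}, where the tensor-product formalism is standard.
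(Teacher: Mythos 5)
Your strategy—exploit the direct‑product structure $Q' = \GL_2(\Qp)\times Z_Q$, show that $\tilde{P}_i\hat{\otimes}_\oo\tilde{P}$ is a projective cover with cosocle $\pi_i^\vee\otimes\upsilon^\vee$, and compute Hom spaces by the Künneth‑type identity for compact projectives—is exactly the approach the paper takes. The paper outsources the core steps you derive by hand to \cite[Lem B.6, Lem B.8]{GN}, and then reads off \eqref{eq:proj_1}, \eqref{eq:proj_2} from \eqref{eq:end_deform} and Lemma \ref{lem:ker_red}, precisely as you do. So the route is the same.

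There is, however, one genuine gap you should not gloss over. The paper opens its proof with the observation that any $Q'$-admissible vector is $Z_Q$-finite (by \cite[Lem 2.3.5]{emeI}), hence generates an admissible $Q'$-representation that is finitely generated over $\GL_2(\Qp)$, hence of finite length (by \cite[Thm 2.3.8]{emeI}); in other words, locally admissible $Q'$-representations are automatically locally of finite length. This is what guarantees that the Pontryagin duals of $\tilde{P}_i\hat{\otimes}_\oo\tilde{P}$ and $\tilde{P}_{\chi_i^\vee}\hat{\otimes}_\oo\tilde{P}$ are actually objects of $\Mod^{\lfin}_{Q',\zeta}(\oo)$ and $\Mod^{\lfin}_{T',\zeta}(\oo)$, so that the claim ``$\tilde{P}_{i,\fm}\in\fC_{Q'}(\oo)$ is the projective envelope of $\pi_i^\vee$'' is even well posed. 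Your appeal to Propositions \ref{prop:compatibility} and \ref{prop:twist_ord} only places $M(U^p)'_\fm$ in the block as a $\GL_2(\Qp)$-representation; it does not address the locally-finite-length condition over the full group $Q'$. Also, the phrase ``a Pontryagin dual of a smooth $Q'$-representation locally of finite length decomposes canonically'' is too strong as stated—not every object of $\fC_{Q'}(\oo)^\B_\upsilon$ is an external tensor product; what holds is the identification of the endomorphism algebra of the projective generator with the completed tensor $\tilde E_\B\hat{\otimes}_\oo\tilde E$, which is the statement you should formulate and use.
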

\begin{proof}
    We first observe that any $Q'$-admissible vector 
	is $Z_Q$-finite by \cite[Lem 2.3.5]{emeI}
    and thus generates an admissible $Q'$-representation
    that is finitely-generated over $\GL_2(\Qp)$,
	which is also of finite length by \cite[Thm 2.3.8]{emeI}.
	In other word,
	any locally admissible $Q'$-representation 
	is also locally of finite length.
    That $\tilde{P}_{*,\fm}$ gives
    the corresponding projective envelops now follows from 
    \cite[Lem B.6]{GN}
	and the argument in \cite[Lem B.8]{GN}.
    And the isomorphisms 
    \eqref{eq:proj_1} and
    \eqref{eq:proj_2} follows respectively from  
    \eqref{eq:end_deform} and 
    Lemma \eqref{lem:ker_red}.
\end{proof}

\begin{cor}
Let $\Mod^{\lfin}_{Q',\zeta}(\oo)^{\B}$
denote the subcategory of $Q'$-representations
with all irreducible subquotients isomorphic 
to either $\pi_1$ or $\pi_2$
and $\fC_{\Q'}(\oo)^\B$ be the corresponding 
subcategory of $\fC_{\Q'}(\oo)$. Then
\begin{equation}\label{eq:anti_equiv}
        M  \mapsto 
        \mathbf{m}(M)\coloneqq 
        \Hom_{Q'}(\tilde{P}_{\B,\fm}, M)
\end{equation}
gives an equivalence between $\fC_{Q'}(\oo)^\B$
and the category of compact modules over 
$\tilde{E}_{\fm}$.
\end{cor}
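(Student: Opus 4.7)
The plan is to reduce the equivalence to Pask\={u}nas' analogous equivalence for $\fC_{\GL_2(\Qp),\zeta}(\oo)^{\B}$ (see \cite[Cor.~9.25, Prop.~6.3]{pask}), and then incorporate the commuting $Z_Q$-action using the product structure $Q' = \GL_2(\Qp) \times Z_Q$. First I would verify the two formal ingredients of a Morita-type equivalence: that $\tilde{P}_{\B,\fm}$ is projective in $\fC_{Q'}(\oo)^{\B}$, which is immediate from Proposition~\ref{prop:envelope} once one knows it is a projective envelope; and that it is a generator, i.e.\ any nonzero quotient of an object in $\fC_{Q'}(\oo)^{\B}$ admits a nonzero morphism from $\tilde{P}_{\B,\fm}$. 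The generator property is inherited from the corresponding statement for $\tilde{P}_{\B}$ in $\fC_{\GL_2(\Qp),\zeta}(\oo)^{\B}$, because any $M \in \fC_{Q'}(\oo)^{\B}$ restricted to $\GL_2(\Qp)$ lies in $\fC_{\GL_2(\Qp),\zeta}(\oo)^{\B}$ (the list of irreducible constituents is unchanged), and any $\GL_2(\Qp)$-morphism $\tilde{P}_{\B} \to M$ can be promoted to a $Q'$-morphism after twisting by a suitable character of $Z_Q$ absorbed by $\tilde{P}$.

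Next I would unwind $\mathbf{m}(M)$ using the product structure. Since $\GL_2(\Qp)$ and $Z_Q$ commute inside $Q'$ and $\tilde{P}_{\B,\fm} = \tilde{P}_{\B}\hat\otimes_{\oo}\tilde{P}$, one obtains a canonical identification
\[
\Hom_{Q'}(\tilde{P}_{\B,\fm}, M) \;\cong\; \Hom_{Z_Q}\bigl(\tilde{P},\,\Hom_{\GL_2(\Qp)}(\tilde{P}_{\B}, M)\bigr),
\]
where the inner Hom is given the $Z_Q$-action coming from the $Z_Q$-action on $M$. By Lemma~\ref{lem:end_comp} the outer Hom computes the $\tilde{E}$-module structure on $\Hom_{\GL_2(\Qp)}(\tilde{P}_{\B}, M)$ provided by the $Z_Q$-action, and by Pask\={u}nas the inner Hom is a compact $\tilde{E}_{\B}$-module that realizes the equivalence at the level of $\GL_2(\Qp)$-representations. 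Combining these and invoking \eqref{eq:proj_0} identifies the combined $(\tilde{E}_{\B}\hat\otimes_{\oo}\tilde{E})$-action with the $\tilde{E}_{\fm}$-action, showing $\mathbf{m}$ is well-defined and fully faithful.

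To construct the quasi-inverse, I would set $N \mapsto N \hat\otimes_{\tilde{E}_{\fm}}\tilde{P}_{\B,\fm}$ (in the sense of compact tensor product, compatible with the pseudo-compact topology), and verify that the unit and counit of the adjunction are isomorphisms. Fully faithfulness on the subcategory generated by $\tilde{P}_{\B,\fm}$ is tautological; to extend it to all of $\fC_{Q'}(\oo)^{\B}$ I would invoke the fact, already used in Proposition~\ref{prop:envelope}, that any object in $\Mod^{\lfin}_{Q',\zeta}(\oo)^{\B}$ is locally of finite length, so every $M \in \fC_{Q'}(\oo)^{\B}$ admits a two-step projective resolution by (possibly infinite) direct sums of copies of $\tilde{P}_{1,\fm}$ and $\tilde{P}_{2,\fm}$; the five-lemma then transports the equivalence from the projective generators to arbitrary objects.

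The main obstacle I expect is checking that the extra $Z_Q$-finiteness built into the categories $\Mod^{\lfin}_{Q',\zeta}(\oo)^{\B}$ is precisely what guarantees the inner $Z_Q$-action on $\Hom_{\GL_2(\Qp)}(\tilde{P}_{\B}, M)$ extends continuously to an $\tilde{E} = \oo\llbracket Z_Q^{\wedge}\rrbracket$-module structure, and conversely that a compact $\tilde{E}_{\fm}$-module gives back a $Q'$-representation whose underlying $\GL_2(\Qp)$-module still belongs to $\fC_{\GL_2(\Qp),\zeta}(\oo)^{\B}$. This amounts to tracking pseudo-compact topologies through the completed tensor product, and matches the argument used in \cite[Lem.~B.8]{GN} cited in the proof of Proposition~\ref{prop:envelope}.
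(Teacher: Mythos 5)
Your proposal is essentially the same Morita/Gabriel--Popescu argument for a projective generator that the paper's one-line proof invokes via ``the general formalism of abelian categories'' and the citation to Pask\={u}nas: you verify that $\tilde{P}_{\B,\fm}$ is a projective generator and then identify $\Hom_{Q'}(\tilde{P}_{\B,\fm},-)$ with modules over its endomorphism ring, reducing the hard parts to Proposition~\ref{prop:envelope}, which is exactly what the paper relies on. One small imprecision: the sentence claiming any $\GL_2(\Qp)$-morphism $\tilde{P}_{\B}\to M$ can be ``promoted to a $Q'$-morphism after twisting by a suitable character of $Z_Q$'' is not literally true and is not what you want; the generator property follows more directly from the fact that $\tilde{P}_{i,\fm}=\tilde{P}_i\hat\otimes_{\oo}\tilde{P}$ is (by Proposition~\ref{prop:envelope}) the projective envelope of the simple object $\pi_i^\vee\in\fC_{Q'}(\oo)^{\B}$, so it surjects onto every simple object in the block and hence maps nontrivially to any $M$ with a simple quotient. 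Apart from that phrasing, the argument is sound and matches the paper's intent.
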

\begin{proof}
This follows from the general formalism of abelian categories.
See also the proof of \cite[Prop.5.45]{pask}.
\end{proof}

\begin{cor}\label{cor:Hecke_Noetherian}
    The $\TT(U^p,\oo)_{\fm}$-module
	$\mathbf{m}\coloneqq \mathbf{m}(M(U^p)_{\fm}')$
	is finite faithful and $\TT(U^p,\oo)_{\fm}$
    is Noetherian.
\end{cor}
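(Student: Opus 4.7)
The plan is to use the equivalence of categories from the preceding corollary to transfer finiteness from the well-understood algebra $\tilde{E}_\fm$ to $\TT(U^p,\oo)_\fm$, and then to deduce Noetherianness by exhibiting $\TT(U^p,\oo)_\fm$ as a module-finite extension of a completed tensor product of power series rings.

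First I would show that $\mathbf{m}$ is finitely generated over $\tilde{E}_\fm$. Since $\tilde{E}_\fm$ is complete local with residue field $\fF$ and $\mathbf{m}$ is compact in the natural topology inherited from $M(U^p)_\fm'$, topological Nakayama reduces the problem to the finite-dimensionality of $\mathbf{m}/\fm_{\tilde{E}_\fm}\mathbf{m}$ over $\fF$. Under the equivalence of the preceding corollary this quotient is Pontryagin dual to the multiplicity space of $\pi_1\oplus\pi_2$ in the $\fF$-socle of $\Ord_P(S(U^p,E/\oo))_\fm'[\varpi]$, which is finite by the admissibility established in Proposition~\ref{lem:twist}(5).

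Next I would transfer finiteness to the Hecke algebra. By Proposition~\ref{prop:envelope}, $\tilde{E}_\fm$ is finite over its center $R_\fm\cong R^{\epsilon\zeta}\hat{\otimes}_\oo\tilde{E}$, so $\mathbf{m}$ is also finite over $R_\fm$. Proposition~\ref{prop:compatibility} shows the $R^{\epsilon\zeta}$-action on $\mathbf{m}$ factors through the homomorphism $R^{\epsilon\zeta}\to\TT(U^p,\oo)_\fm$ of Proposition~\ref{lem:twist}(1), while Proposition~\ref{lem:twist}(2)(3) identifies the $\tilde{E}$-action with the action of $Z_Q$ by the Hecke operators $h_U$. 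Hence $R_\fm$ acts through $\TT(U^p,\oo)_\fm$, making $\mathbf{m}$ finitely generated over $\TT(U^p,\oo)_\fm$. Faithfulness is immediate: the twist in Definition~\ref{def:twist} does not alter the Hecke action, so $\TT(U^p,\oo)_\fm$ already acts faithfully on $M(U^p)_\fm'$, and any element annihilating $\mathbf{m}$ would annihilate $M(U^p)_\fm'$ under the equivalence.

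Finally, for Noetherianness, $R_\fm$ is itself Noetherian as a completed tensor product of power series rings over $\oo$. Since $\mathbf{m}$ is finitely generated over $R_\fm$, the endomorphism ring $\End_{R_\fm}(\mathbf{m})$ embeds into $\mathbf{m}^k$ via evaluation on $k$ generators and so is finitely generated over $R_\fm$. The faithful Hecke action realises $\TT(U^p,\oo)_\fm$ as a commutative $R_\fm$-subalgebra of $\End_{R_\fm}(\mathbf{m})$; hence $\TT(U^p,\oo)_\fm$ is finite over the Noetherian ring $R_\fm$ and thereby Noetherian. The principal obstacle is the first step: matching the abstract block-theoretic residue fields of $\tilde{E}_\fm$ with the concrete $Q'$-representations $\pi_1,\pi_2$ so that the cosocle computation yields finite $\fF$-dimension. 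This relies on the compatibility of the twist with admissibility (Proposition~\ref{lem:twist}) and the block-level description of projective envelopes in Proposition~\ref{prop:envelope}.
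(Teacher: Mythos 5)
Your argument matches the paper's proof in every structural respect: finiteness of $\mathbf{m}$ over $\tilde E_\fm$, then over $R_\fm$ via Lemma \ref{lem:B26}, then over $\TT(U^p,\oo)_\fm$ via Proposition \ref{prop:compatibility}; faithfulness from the anti-equivalence \eqref{eq:anti_equiv}; and Noetherianness by embedding $\TT(U^p,\oo)_\fm$ into the finite $R_\fm$-module $\End_{R_\fm}(\mathbf{m})$. The only divergence is that you argue the first finiteness directly by a cosocle/Nakayama computation rather than citing \cite[Prop.~4.17]{pask}; this works, but note a small inaccuracy: $\tilde E_\fm$ is not local (as a generalized matrix algebra over $R_\fm$ it has two maximal ideals, one for each of $\pi_1,\pi_2$), so the topological Nakayama step must be run against its Jacobson radical rather than ``the maximal ideal,'' which does not affect the conclusion.
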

\begin{proof}
    Since $\TT(U^p,\oo)_\fm$ acts faithfully on 
    $M(U^p)_\fm'\in \fC_{Q'}(\oo)^\B$,
    it also acts faithfully on 
	$\mathbf{m}$
    by the previous corollary.
    Moreover, $\mathbf{m}$
    is finitely-generated  over $\tilde{E}_{\fm}$ 
    by \cite[Prop 4.17]{pask} since 
    $\Ord_P(S(U^p,E/\oo))'_{\fm}$ is $Q'$-admissible.
    Consequently it is also finite over  $R_{\fm}$
    by Lemma \ref{lem:B26}.
    It then follows from Proposition
    \ref{prop:compatibility}
    that  $\mathbf{m}(M(U^p)_{\fm}')$ is finite over 
    $\TT(U^p,\oo)_\fm$.

    We now have an injective $R_\fm$-algebra homomorphism 
	$\TT(U^p,\oo)_{\fm}\to\End_{R_{\fm}}(\mathbf{m})$.
    Since $\mathbf{m}$ is finite over $R_\fm$,
    so is $\End_{R_\fm}(\mathbf{m})$.
    Therefore $\TT(U^p,\oo)_\fm$
	is finite over $R_{\fm}$ and thus Noetherian.
\end{proof}

\subsection{Reducible part of completed homology}

We fix a generator $\xx$ of the reducibility ideal
$\tau=\ker(R^{\epsilon\zeta}\to R^{\epsilon\zeta}_{\red})$,
so $\xx$ also generates $\ker(R_\fm\to R_\fm^{\red})$ over $R_\fm$.
\begin{defn}
For any $R_{\fm}$-module $M$
we call $M^{\red}\coloneqq M/\xx M$
the reducible part of $M$.
\end{defn}
In this subsection we follow \cite{urban}
and relate $(M(U^p)_\fm')^{\red}$
with $S^{\ord}(U^p,E/\oo)_\fm'$.

\begin{lem}
	For some non-negative integer $r$
	there exists a projective resolution 
	\begin{equation}\label{eq:resolution}
	0\to \tilde{P}_{\B,\fm}^{\oplus r}\to 
	\tilde{P}_{\B,\fm}^{\oplus r}\to 
	M(U^p)_{\fm}'\to 0
	\end{equation}
\end{lem}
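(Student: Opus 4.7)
The plan is to use the anti-equivalence \eqref{eq:anti_equiv}: since $\mathbf{m}(-) = \Hom_{Q'}(\tilde{P}_{\B,\fm}, -)$ identifies $\fC_{Q'}(\oo)^\B$ with the category of compact $\tilde{E}_\fm$-modules, it suffices to produce a free resolution
\[
0 \to \tilde{E}_\fm^{\oplus r} \to \tilde{E}_\fm^{\oplus r} \to \mathbf{m} \to 0
\]
of the $\tilde{E}_\fm$-module $\mathbf{m} \coloneqq \mathbf{m}(M(U^p)_\fm')$, which is finite over $\tilde{E}_\fm$ by Corollary \ref{cor:Hecke_Noetherian}. Taking $r$ to be the minimal number of generators, I obtain a surjection $\phi\colon \tilde{E}_\fm^{\oplus r} \twoheadrightarrow \mathbf{m}$ whose kernel $K$ lies in the Jacobson radical, and the whole task is to show $K \cong \tilde{E}_\fm^{\oplus r}$.

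By Lemma \ref{lem:B26} (applied after the completed tensor with $\tilde{E}$), $\tilde{E}_\fm$ is finite free of rank $4$ over its center $R_\fm$, a regular local ring. I would show that $\mathbf{m}$, viewed as an $R_\fm$-module, has projective dimension exactly $1$, which via the Auslander-Buchsbaum formula amounts to $\depth_{R_\fm}\mathbf{m} = \dim R_\fm - 1$. Since $\tilde{E}_\fm$ is $R_\fm$-free, one can then deduce that $\mathbf{m}$ also has projective dimension $1$ over $\tilde{E}_\fm$, so that $K$ is projective and, by Nakayama over the local ring, free; a rank comparison modulo the maximal ideal forces the rank to equal $r$. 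To establish the depth, I would exploit the generator $\xx$ of the reducibility ideal $\ker(R_\fm \to R_\fm^{\red})$: since $R_\fm^{\red}$ is regular of dimension one less, it suffices to show that $\xx$ is $\mathbf{m}$-regular and that $\mathbf{m}/\xx\mathbf{m}$ is maximal Cohen-Macaulay, in fact free, over $R_\fm^{\red}$.

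Under the equivalence, $\mathbf{m}/\xx\mathbf{m}$ corresponds to the reducible part $(M(U^p)_\fm')^{\red}$, which the next subsection (see the forthcoming Corollary \ref{cor:fund}) identifies with an extension built from the ordinary completed homology $M^{\ord}(U^p)_\fm$. The latter is finite free over a Hida family ring by Proposition \ref{prop:ord_to_dual}, and this ring should be identified with $R_\fm^{\red}$ via the local-global compatibility of Proposition \ref{prop:compatibility} combined with the description of ordinary characters in Proposition \ref{prop:big_char_at_p}. The main obstacle is precisely this identification of $(M(U^p)_\fm')^{\red}$ together with the $\xx$-regularity, which ties this lemma to the construction of the fundamental exact sequence in the next subsection; in practice the two are established in tandem, perhaps by first proving the $\xx$-regularity via direct analysis of the $Q'$-action on the twist $\Ord_P(S(U^p, E/\oo))_\fm'$ and then using it to deduce freeness of the reduction.
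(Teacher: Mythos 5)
Your proposal has a fundamental circularity problem. You reduce to showing that $\mathbf{m}/\xx\mathbf{m}$ is free over $R_\fm^{\red}$, and to establish this you invoke the identification of $(M(U^p)_\fm')^{\red}$ with ordinary completed homology via Corollary~\ref{cor:fil_by_ord} and the fundamental exact sequence. But in the paper those results (the left exactness of~\eqref{eq:exact_ord}, Proposition~\ref{prop:no_torsion}, Corollary~\ref{cor:fil_by_ord}, Proposition~\ref{prop:Hecke_finite}) are all proved \emph{from} the projective resolution~\eqref{eq:resolution}: the resolution is applied by hitting it with $\Ord$ and with $\Hom_{Q'}(\tilde{P}_{j,\fm},-)$, and the two-term structure is exactly what makes the snake-lemma diagrams in those proofs work. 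You acknowledge this tension at the end, but "established in tandem" does not resolve it --- the downstream arguments genuinely require a finite-length projective resolution as an a priori input, so using their conclusions to build the resolution would be circular.

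There is a second, more local gap even if the circularity were repaired. From $\mathrm{pd}_{\tilde E_\fm}\mathbf{m}=1$ you conclude that $K=\ker(\phi)$ is projective, and then invoke Nakayama to get $K\cong\tilde E_\fm^{\oplus r}$. But $\tilde E_\fm$ is not local: it is $\End(\tilde P_{1,\fm}\oplus\tilde P_{2,\fm})$ with two distinct simple quotients, so its indecomposable projectives are $\tilde E_\fm e_1$ and $\tilde E_\fm e_2$ rather than $\tilde E_\fm$ itself. A projective $K$ decomposes as a direct sum of copies of these two, with no reason the multiplicities should be equal or should match $r$. The paper sidesteps this by establishing the equality $\dim_\fF\Hom_{Q'}(\pi_i,S')=\dim_\fF\Ext^1_{Q'}(\pi_i,S')=a_i$ for each $i$ separately, which pins down the socle and the next layer of a minimal injective resolution of $S'$, and then pads up to $r=\max\{a_1,a_2\}$.

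The key technical input you are missing is Lemma~\ref{lem:inj}: the restriction of $\Ord_P(S(U^p,E/\oo))_\fm$ to the compact subgroup $K_p$ is an injective object in $\Mod^{\sm}_{Q_0}(\oo)$. Combined with the Barthel--Livn\'e two-term presentations $0\to\cInd_{KZ}^{G}\sigma_i\to\cInd_{KZ}^{G}\sigma_i\to\pi_i\to0$, this kills $\Ext^i_{Q'}(\pi_j,S')$ for $i\geq 2$ and forces the equality of dimensions in degree $0$ and $1$. One then builds the injective coresolution of $S'$ directly (embed the socle into an injective envelope, check the cokernel again embeds, and use $\Ext^2=0$ to get surjectivity) and dualizes. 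This is a short, self-contained homological argument that feeds into the subsequent subsections rather than depending on them.
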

\begin{proof}
Identify $\Qp^\times$ with the center of $\GL_2(\Qp)$.
By \cite[Thm 33]{barthel} and \cite[Thm 19]{barthel}, 
there exists
smooth irreducible 
$\GL_2(\Zp)\Qp^\times$-representations $\sigma_i$
and exact sequences
\begin{equation}
	0\to 
	\cInd_{\GL_2(\Zp)\Qp^\times}^{\GL_2(\Qp)}\sigma_i\to
	\cInd_{\GL_2(\Zp)\Qp^\times}^{\GL_2(\Qp)}\sigma_i\to
	\pi_i\to 0
\end{equation}
Let $Z_Q$ act on above by $\upsilon$
and apply $\Ext^i_{Q'}(*,S')$
for $S'=\Ord_P(S(U^p,E/\oo))_{\fm}'$,
we obtain
\begin{equation*}
    \begin{tikzcd}[row sep=2ex]
	    \Ext^{i-1}_{Q'}
	    (\pi, S')\arrow[r] &
	    \Ext^{i}_{Q'}
        (\cInd_{KZ\times Z_Q}^{Q'}\sigma, S')
	    \arrow[r] \arrow[d,equal] &
	    \Ext^{i}_{Q'}
        (\cInd_{KZ\times Z_Q}^{Q'}\sigma, S')
	    \arrow[r] \arrow[d,equal] &
	    \Ext^{i}_{Q'}(\pi, S')\\ 
	 & \Ext^i_{\GL_2(\Zp)\times Z_Q}(\sigma ,S') &
	    \Ext^i_{\GL_2(\Zp)\times Z_Q}(\sigma ,S') &
    \end{tikzcd}
\end{equation*}
for $K=\GL_2(\Zp)$ and $Z=\Qp^\times$.
By Lemma \ref{lem:inj}
the restriction of $\Ord_P(S(U^p,E/\oo))_\fm$ to $K$
is an injective object.
Since Lemma \ref{lem:twist} and \eqref{eq:root_charm} imply that 
the restriction of $\xi_{\fm}^{1/2}\circ \Art$ to $\Zp^\times$
factors through  $\oo\llbracket 1+p\Zp\rrbracket$,
we can apply the twist to all smooth $K$-representations.
In particular as a $K$-representation
the twist $S'=\Ord_P(S(U^p,E/\oo))_\fm'$
is still an injective object.
Therefore the long exact sequence reduces to 
the following sequence, where
all the terms are finite-dimensional over $\fF$
since $S'$ is $Q'$-admissible.
\begin{equation*}
	0 \to \Hom_{Q'}(\pi_i,S')\to 
	\Hom_{\GL_2(\Zp)\times Z_Q}(\sigma_i,S')\to 
	\Hom_{\GL_2(\Zp)\times Z_Q}(\sigma_i,S')\to 
	\Ext^1_{Q'}(\pi_i,S')\to 0
\end{equation*}

Write $a_i\coloneqq \dim_{\fF} \Hom_{Q'}(\pi_i,S')=
\dim_{\fF} \Ext^1_{Q'}(\pi_i,S')$
so that $\soc(S')=\pi_1^{a_1}\oplus \pi_2^{a_2}$.
Let $\tilde{J}_{i,\fm}$ denote 
the injective envelope of $\pi_i$.
Then the injective envelope 
$\soc(S')\hookrightarrow \tilde{J}=
\tilde{J}_{1,\fm}^{'a_1}\oplus \tilde{J}_{2,\fm}^{'a_2}$
factors through an injective morphism 
$\phi_0\colon S'\to \tilde{J}$
since the inclusion $\soc(S')\hookrightarrow S'$
is essential.
Apply the same construction 
to $\soc(\coker(\phi_0))$
and use $\dim_{\fF}\Hom_{Q'}(\pi_i, \coker(\phi_0))=
\dim_{\fF}\Ext^1_{Q'}(\pi_i, S')=a_i$
gives another injective morphism
$\phi_1\colon \coker(\phi_0)\to \tilde{J}$,
which is surjective as 
$\Hom(\pi_i,\coker(\phi_1))
\cong \Ext^1(\phi_i,\coker(\phi_0))
\cong \Ext^2(\pi, S')=0$.
Now the lemma follows by picking
$r=\max\{a_1,a_2\}$ and taking the Pontryagin dual.
\end{proof}

Let 
$A\colon \tilde{P}_{\B,\fm}^{\oplus r}\to\tilde{P}_{\B,\fm}^{\oplus r}$ 
denote the morphism in \eqref{eq:resolution} and
identify $\Phi_{ij}$ with $\Phi_{ij}\otimes \id_{\tilde{P}}\in
\Hom_{Q'}(\tilde{P}_{j,\fm},\tilde{P}_{i,\fm})$.
By \eqref{eq:proj_1}
the morphism $A$
can be represented by a matrix
$A=\smat{A_{11} & A_{12}\Phi_{12}\\A_{21}\Phi_{21} & A_{22}}$,
where $A_{ij}\in M_r(R_{\fm})$.
By Lemma \ref{lem:proj_enve}
and Lemma \ref{lem:ker_red} we have
$\Ord(\tilde{P}_{i,\fm}^\vee)^\vee\cong 
\tilde{P}_{\chi_i^\vee,\fm}$
and $\Ord(A)$ is represented by the matrix
$\smat{\tilde{A}_{11} & \\& \tilde{A}_{22}}$,
where $\bar{A}_{ij}\in M_r(R^{\red}_{\fm})$ are 
the reductions of the matrices.
Apply the right exact functor
$\Ord$ to 
\eqref{eq:resolution} gives
\begin{equation}\label{eq:exact_ord}
	\tilde{P}_{\chi_1^\vee,\fm}^{\oplus r}\oplus 
	\tilde{P}_{\chi_2^\vee,\fm}^{\oplus r}
	\xrightarrow{\overline{A}_{11}\oplus\overline{A}_{22}}
	\tilde{P}_{\chi_1^\vee,\fm}^{\oplus r}\oplus 
	\tilde{P}_{\chi_2^\vee,\fm}^{\oplus r}
	\to M^{\ord}(U^p)_\fm'\to 0
\end{equation}
It follows that 
$M^{\ord}(U^p)_\fm'$ is the direct sum of
$M^{\ord}(U^p)_{\fm,i}'\coloneqq \coker(\bar{A}_{ii})$,
on which the action of 
$\Qp^\times=\{\smat{1&\\&*}\}\subset \GL_2(\Qp)$
has all irreducible subquotients
isomorphic to $\chi_i^\vee$.

\begin{cor}\label{cor:two_max}
There are only two maximal ideals 
$\fm_1,\fm_2$ in $\TT^{\ord}(U^p,\oo)_{\fm}$,
which are determined by 
$\Psi_{2,w_0}\bmod \fm_i=\chi_i$,
and $M^{\ord}(U^p)_{\fm,i}'=M^{\ord}(U^p)_{\fm_i}$
as $\TT^{\ord}(U^p,\oo)_{\fm_i}$-modules.
\end{cor}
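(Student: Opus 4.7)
The plan is to push the direct sum decomposition
\[ M^{\ord}(U^p)'_\fm = M^{\ord}(U^p)'_{\fm,1} \oplus M^{\ord}(U^p)'_{\fm,2} \]
extracted from the block-diagonal form of $\Ord(A)$ in \eqref{eq:exact_ord} over to a matching product decomposition of the Hecke algebra. The summands are $\TT^{\ord}(U^p,\oo)_\fm$-stable because the decomposition is induced from $\tilde{P}_{\chi_1^\vee,\fm}^{\oplus r} \oplus \tilde{P}_{\chi_2^\vee,\fm}^{\oplus r}$ and the commutative Hecke algebra preserves the block-diagonal structure. Since $\TT^{\ord}(U^p,\oo)_\fm$ acts faithfully on $M^{\ord}(U^p)_\fm$ (the twist used to form $M^{\ord}(U^p)'_\fm$ does not affect the Hecke action), we obtain an injective $\oo$-algebra homomorphism
\[ \TT^{\ord}(U^p,\oo)_\fm \hookrightarrow \TT_1 \times \TT_2, \qquad \TT_i \coloneqq \Image\bigl(\TT^{\ord}(U^p,\oo)_\fm \to \End(M^{\ord}(U^p)'_{\fm,i})\bigr). \]

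To distinguish $\TT_1$ from $\TT_2$ and pin down their maximal ideals, I would exploit the torus action given by Proposition \ref{prop:twist_ord}. On $M^{\ord}(U^p)'_\fm$ the second $\Qp^\times$ factor of $T(\Qp)$ acts through $\xi_\fm^{1/2}\Psi_{w_0,2}$, and on the summand $M^{\ord}(U^p)'_{\fm,i}$, which lies inside a dual of $\tilde{P}_{\chi_i^\vee,\fm}^{\oplus r}$, this character reduces residually to $\chi_i$ by the description of the $Q_0$-action on the projective envelopes in Lemma \ref{lem:end_comp} and Proposition \ref{prop:envelope}, using that $\xi_\fm^{1/2}$ is a deformation of the trivial character. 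Because \eqref{cond:red_gen} guarantees $\chi_1 \ne \chi_2$, the two factors cannot share any maximal ideal; combined with the semi-locality of $\TT^{\ord}(U^p,\oo)_\fm$ from Lemma \ref{lem:big_red}, each $\TT_i$ is itself local with a unique maximal ideal $\fm_i$ characterized by $\Psi_{w_0,2} \equiv \chi_i \pmod{\fm_i}$. The injection above is then forced to be an isomorphism $\TT^{\ord}(U^p,\oo)_\fm \cong \TT_{\fm_1}\times \TT_{\fm_2}$, which identifies $M^{\ord}(U^p)'_{\fm,i}$ with $M^{\ord}(U^p)_{\fm_i}$.

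The main obstacle I anticipate is verifying the non-vanishing of both summands, so that the Hecke algebra really has two rather than one maximal ideal over $\fm$. This should follow from Corollary \ref{cor:density}, i.e.\ the density of classical crystalline points, together with the standard fact that a $p$-ordinary classical form whose residual Galois representation decomposes as $\bar\delta_1 + \bar\delta_2$ admits two distinct ordinary refinements at the degree-one place $w_0$, corresponding to the two assignments of the unit-root eigenvalue. A secondary point is to see that each $\TT_i$ is local rather than merely semi-local: any two maximal ideals of $\TT_i$ would necessarily yield the same value of $\Psi_{w_0,2}$ (namely $\chi_i$), the same residual pseudo-representation $T_\fm \bmod \fm$, and hence agree on every Hecke generator contributing to $\TT^{\ord}(U^p,\oo)_\fm$, contradicting their distinctness.
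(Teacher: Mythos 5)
Your argument follows essentially the same route as the paper's proof: the block-diagonal form of $\Ord(A)$ yields a Hecke-stable splitting $M^{\ord}(U^p)'_\fm = M^{\ord}(U^p)'_{\fm,1}\oplus M^{\ord}(U^p)'_{\fm,2}$; the $\Qp^\times$-factor $\{\smat{1&\\&*}\}$ acts through $\xi_\fm^{1/2}\Psi_{2,w_0}$ by Proposition \ref{prop:twist_ord}; $\xi_\fm$ is residually trivial; and since $\chi_1\neq\chi_2$ and $\TT^{\ord}(U^p,\oo)$ is the $\TT(U^p,\oo)$-algebra generated by the image of $\Psi_{2,w_0}$, there is exactly one maximal ideal of $\TT^{\ord}(U^p,\oo)_\fm$ over $\fm$ per non-zero summand, determined by the residue $\chi_i$ of $\Psi_{2,w_0}$. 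The paper's proof is terser but uses precisely these ingredients, with the generator observation doing the work your final paragraph outlines.

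On the non-vanishing worry: the paper's own proof is equally silent on it, so you are right that something is being glossed over, but your proposed resolution is not quite the right tool. Corollary \ref{cor:density} concerns the $P$-ordinary algebra $\TT(U^p,E)$ rather than $\TT^{\ord}(U^p,E)$, and asserting that both $B$-ordinary refinements at $w_0$ actually appear in the completed homology is exactly what would need proving --- density of crystalline points in the $P$-ordinary algebra does not by itself guarantee that both unit-root choices are realized on the ordinary quotient. In the intended application (Section 4) the two maximal ideals are supplied directly by the CM eigensystem $\lambda_\fs^{\ord}$ and its conjugate, so nothing is lost there; but read as a free-standing statement the corollary is cleanest interpreted as asserting that there are at most two maximal ideals, each pinned down by the residue of $\Psi_{2,w_0}$.
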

\begin{proof}
Since $\TT^{\ord}(U^p,\oo)$ is 
the $\TT(U^p,\oo)$-algebra generated by the image of $\Psi_{2,w_0}$
in Proposition \ref{prop:big_char_at_p},
the corollary follows from the above decomposition of 
$M^{\ord}(U^p)_{\fm}'$ and the fact that 
the action of $\Qp^\times=\{\smat{1&\\&*}\}$ on which 
factors through $\TT^{\ord}(U^p,\oo)$
via $\xi_{\fm}^{1/2}\Psi_{2,w_0}$ by 
Proposition \ref{prop:twist_ord}.
Note that $\xi_{\fm}^{1/2}\Psi_{2,w_0}\equiv\Psi_{2,w_0}\bmod\fm_i$
since $\xi_{\fm}$ is congruent to the trivial character.
\end{proof}

\begin{lem}    
The sequence \eqref{eq:exact_ord}
is also left exact.
\end{lem}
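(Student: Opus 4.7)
The map $\overline{A}_{11}\oplus\overline{A}_{22}$ is block diagonal, so it suffices to show each $\overline{A}_{ii}\colon \tilde{P}_{\chi_i^\vee,\fm}^{\oplus r}\to \tilde{P}_{\chi_i^\vee,\fm}^{\oplus r}$ is injective. Combining the freeness of $\tilde{P}_{\chi_i^\vee}$ of rank one over $\End_{\fC_T(\oo)}(\tilde{P}_{\chi_i^\vee})\cong\oo\llbracket x,y\rrbracket$ with Lemma \ref{lem:end_comp} and Proposition \ref{prop:envelope}, the module $\tilde{P}_{\chi_i^\vee,\fm} = \tilde{P}_{\chi_i^\vee}\,\hat{\otimes}_\oo\,\tilde{P}$ is free of rank one over $R^{\red}_\fm \cong R^{\zeta\epsilon}_\red\,\hat{\otimes}_\oo\,\tilde{E}$, which is a formal power series ring over $\oo$ and in particular a regular local domain. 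Thus $\overline{A}_{ii}$ corresponds to an $r\times r$ matrix over $R^{\red}_\fm$, and injectivity is equivalent to the non-vanishing of $\det(\overline{A}_{ii})$, which in turn holds provided the cokernel $M^{\ord}(U^p)_{\fm_i}$ has $R^{\red}_\fm$-support of strictly smaller Krull dimension than $R^{\red}_\fm$ itself.

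For the upper bound on the support dimension, the $R_\fm$-action on $M(U^p)_\fm'$ factors through $\TT(U^p,\oo)_\fm$ by Proposition \ref{prop:compatibility}, and the generator $\xx$ of the reducibility ideal acts trivially on $M^{\ord}(U^p)_\fm'$ since $\Ord$ kills the off-diagonal morphisms $\phi_{ij}$, $i\neq j$ (the singleton blocks containing $\chi_1^\vee,\chi_2^\vee$ in $\fC_T(\oo)$ are distinct). Hence the induced $R^{\red}_\fm$-action on $M^{\ord}(U^p)_{\fm_i}$ factors through $\TT^{\ord}(U^p,\oo)_{\fm_i}$ in view of Corollary \ref{cor:two_max}. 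Since $M^{\ord}(U^p)_{\fm_i}$ is finite over $\Lambda^+_{\fm_i}$ by Proposition \ref{prop:ord_to_dual}, its $R^{\red}_\fm$-support has Krull dimension at most $\dim\Lambda^+_{\fm_i} = 1+2d$.

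On the other hand $\dim R^{\zeta\epsilon}_\red = 3$ (formally smooth over $\oo$ of relative dimension $2$), and $\tilde{E} = \oo\llbracket Z_Q^\wedge\rrbracket$ has dimension $1 + \rank_{\Zp}Z_Q^\wedge$. A direct count, using $f_{w_0}=1$ for the contribution of the center of $\GL_2(\K_{w_0})$ and $2(1+f_{w'})$ from each $T(\K_{w'})$ with $w'\neq w_0$, together with $\sum_{w'\in\Sigma_p}f_{w'} = d$, gives $\rank_{\Zp}Z_Q^\wedge = 2|\Sigma_p|+2d-2$, so $\dim R^{\red}_\fm = 2|\Sigma_p|+2d+1$, which is strictly greater than $1+2d$ since $|\Sigma_p|\geq 1$. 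The main obstacle is the bookkeeping of three distinct ring actions on $M^{\ord}(U^p)_{\fm_i}$—from $\Lambda^+$, $\TT^{\ord}(U^p,\oo)$, and $R^{\red}_\fm$—and verifying they all factor through a common $\TT^{\ord}(U^p,\oo)_{\fm_i}$-structure, which is exactly what makes the dimension comparison meaningful.
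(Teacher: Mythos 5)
Your proof is correct and takes essentially the same route as the paper: reduce to injectivity of each block $\overline{A}_{ii}$, identify the source/target as free $R_\fm^\red$-modules (the paper does this by applying $\Hom_{T'}(\tilde{P}_{\chi_i^\vee,\fm},-)$ and using \eqref{eq:proj_2}, you by invoking freeness of $\tilde{P}_{\chi_i^\vee,\fm}$ of rank one over $R_\fm^\red$, which is equivalent), and then observe that the cokernel is $R_\fm^\red$-torsion because it is constrained to be finite over $\Lambda$ while $\dim R_\fm^\red$ is strictly larger. Your explicit arithmetic $\dim R_\fm^\red=2|\Sigma_p|+2d+1>1+2d=\dim\Lambda^+$ makes concrete what the paper leaves as the remark that ``the relative dimension of $\Lambda$ is strictly smaller than that of $R_\fm^\red$''; the count is right, using that $p$ is unramified in $\F$ and odd so $Z_Q^\wedge$ is $\Zp$-free of the stated rank. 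The extra step you interpose—tracing the $R_\fm^\red$-action on $M^{\ord}(U^p)_{\fm_i}$ through $\TT^{\ord}(U^p,\oo)_{\fm_i}$—is not strictly needed: it suffices, as in the paper, that the $R_\fm^\red$-action on the cokernel commutes with a $\Lambda$-module structure over which the cokernel is finite, forcing the image of $R_\fm^\red$ in its endomorphism ring to have dimension at most $\dim\Lambda$.
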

\begin{proof}
    Since $S^{\ord}(U^p,E/\oo)_\fm'$
    is an admissible $T'$-representation,
	by \cite[Lem 2.2.11]{emeI}
    the Pontryagin dual $M^{\ord}(U^p)_\fm'$
    is finite over the algebra
    $\Lambda=\oo\llbracket T(p^1)\rrbracket$
    introduced in \eqref{def:lambda_rings}.
	Apply $\Hom_{T'}(\tilde{P}_{\chi_i^\vee,\chi},*)$
	to \eqref{eq:exact_ord} gives
\begin{equation*}
	\End_{T'}(\tilde{P}_{\chi_i^\vee,\fm})^r\xrightarrow{\bar{A}_{ii}}
	\End_{T'}(\tilde{P}_{\chi_i^\vee,\fm})^r\to 
	\Hom_{T'}(\tilde{P}_{\chi_i^\vee,\fm}, M^{\ord}(U^p)_\fm')
	\to 0
\end{equation*}
    We observe that 
    $\Hom_{T'}(\tilde{P}_{\chi_i^\vee,\fm}, M^{\ord}(U^p)_\fm')$
    must be torsion over $R_{\fm}^\red$
    since the relative dimension of $\Lambda$ is
	strictly smaller than that of $R_{\fm}^{\red}$,
    while $\End_{T'}(\tilde{P}_{\chi_i^\vee,\fm})$
	is finite free over $R_{\fm}^{\red}$.
	Passing to the field of fraction of $R_{\fm}^{\red}$,
	we see that the matrix $\bar{A}_{ii}$ has to be invertible
	and the result follows.
\end{proof}
\begin{prop}\label{prop:no_torsion}
    The $R_\fm$-module $M(U^p)_{\fm}'$
    has no $\xx$-torsion.
\end{prop}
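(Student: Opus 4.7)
The plan is to apply the snake lemma associated to multiplication by $\xx$ on the short exact resolution \eqref{eq:resolution}. The ring $R_\fm = R^{\zeta\epsilon}\hat{\otimes}_{\oo}\tilde E$ is a formal power series ring over $\oo$, hence a UFD, with $\xx$ a prime element (since $R_\fm^{\red}$ is also a formal power series ring). By Proposition \ref{prop:envelope} together with Pask\={u}nas' theory of projective envelopes, each $\tilde P_{i,\fm}$ is free of rank one over $R_\fm$, so $\tilde P_{\B,\fm}^{\oplus r}$ is free of rank $2r$ over $R_\fm$ and in particular has no $\xx$-torsion. The snake lemma then yields
\[
0\to M(U^p)'_\fm[\xx]\to \tilde P_{\B,\fm}^{\oplus r}/\xx\xrightarrow{\bar A}\tilde P_{\B,\fm}^{\oplus r}/\xx\to M(U^p)'_\fm/\xx\to 0,
\]
reducing the claim to showing that $\bar A$ is injective.

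Next I will compute $\bar A$ explicitly. Fix generators $e_i\in\tilde P_{i,\fm}$ over $R_\fm$; by Lemma \ref{lem:B26} there exist $\alpha,\beta\in R_\fm$ with $\Phi_{12}(e_2)=\alpha e_1$, $\Phi_{21}(e_1)=\beta e_2$, and $\alpha\beta=\xx$. Since $\xx$ is prime in $R_\fm$, exactly one of $\alpha,\beta$ lies in $(\xx)$ while the other is a unit, so after absorbing units into the generators we may take $\alpha=\xx$ and $\beta=1$. Writing $A = \smat{A_{11}& A_{12}\Phi_{12}\\ A_{21}\Phi_{21}& A_{22}}$ as in Lemma \ref{lem:B26} and passing to the $R_\fm$-linear matrix on $\tilde P_{\B,\fm}^{\oplus r}\cong R_\fm^{2r}$, we obtain the expression $\smat{A_{11}& \xx A_{12}\\ A_{21}& A_{22}}$, whose reduction modulo $\xx$ is the block lower triangular matrix $\bar A = \smat{\bar A_{11}& 0\\ \bar A_{21}& \bar A_{22}}$ over $R_\fm^{\red}$. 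Hence $\bar A$ is injective if and only if the diagonal blocks $\bar A_{11}$ and $\bar A_{22}$ are injective, which is exactly the content of the preceding lemma once one identifies $\tilde P_{i,\fm}/\xx$ with $\tilde P_{\chi_i^\vee,\fm}$ as free rank-one $R_\fm^{\red}$-modules (so that the statement there about injectivity of $\bar A_{11}\oplus\bar A_{22}$ on $\tilde P_{\chi_1^\vee,\fm}^{\oplus r}\oplus \tilde P_{\chi_2^\vee,\fm}^{\oplus r}$ gives exactly what we need).

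The most subtle step is the normalization. A priori both $\bar\Phi_{12}$ and $\bar\Phi_{21}$ are nonzero in $\tilde E_\fm/\xx$ (they are only required to satisfy $\bar\Phi_{12}\bar\Phi_{21} = 0 = \bar\Phi_{21}\bar\Phi_{12}$), so it is not immediate that either off-diagonal block of $\bar A$ vanishes. The point is that on the rank-one free $R_\fm$-modules $\tilde P_{i,\fm}$, the morphisms $\Phi_{12}, \Phi_{21}$ act by multiplication by elements of $R_\fm$ whose product is the prime element $\xx$; unique factorization in $R_\fm$ then forces one of these factors to be a unit and the other to be associate to $\xx$, producing the triangular form that reduces injectivity of $\bar A$ to the diagonal statement already in hand.
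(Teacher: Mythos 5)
Your overall strategy (snake lemma, then show $\bar A$ is injective by exhibiting triangular structure) is the right one and matches the paper's. But the execution rests on a claim that is false, and the step you flag as ``most subtle'' is exactly where it goes wrong.

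The assertion that $\tilde P_{i,\fm}$ is free of rank one over $R_\fm$ is not true and is not what Proposition \ref{prop:envelope} or Pask\={u}nas' theory says. The objects $\tilde P_{i,\fm}$ are Pontryagin duals of injective envelopes of smooth $Q'$-representations: they are large compact objects in $\fC_{Q'}(\oo)$, not modules of finite type over $R_\fm$. What \emph{is} free of rank one over $R_\fm$ are the Hom-\emph{spaces} $\Hom_{Q'}(\tilde P_{i,\fm},\tilde P_{j,\fm})$, by \eqref{eq:proj_1}; the statement about $\tilde P$ itself being free of rank one over $\tilde E$ in Lemma \ref{lem:end_comp} is special to the abelian group $Z_Q$ and does not propagate to $\GL_2(\Qp)$. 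Consequently there is no generator $e_i$, and the equation $\Phi_{12}(e_2)=\alpha e_1$ with $\alpha\in R_\fm$ does not make sense: $\Phi_{12}$ is a morphism $\tilde P_{2,\fm}\to\tilde P_{1,\fm}$ between non-isomorphic objects, not a scalar.

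The UFD argument therefore proves something false. If one of $\Phi_{12},\Phi_{21}$ were (after absorbing a unit) the identity, it would be an isomorphism $\tilde P_{1,\fm}\cong\tilde P_{2,\fm}$, contradicting Lemma \ref{lem:proj_enve}: both $\Phi_{12}$ and $\Phi_{21}$ have nonzero cokernels $\tilde M_1$, $\tilde M_2$. The identity $\Phi_{12}\Phi_{21}=\xx$ lives in $\End_{\fC_{Q'}(\oo)}(\tilde P_{1,\fm})\cong R_\fm$, but it is the value of a \emph{pairing} $\Hom(\tilde P_2,\tilde P_1)\times\Hom(\tilde P_1,\tilde P_2)\to R_\fm$ between distinct free rank-one modules, not a factorization of $\xx$ inside $R_\fm$; unique factorization tells you nothing here.

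The correct way to exhibit the triangular structure, which is what the paper does, is to first apply the exact functor $\Hom_{Q'}(\tilde P_{j,\fm},-)$ to the snake-lemma sequence. This replaces the big objects by honest free $R_\fm$-modules of rank $2r$, namely $\Hom_{Q'}(\tilde P_{j,\fm},\tilde P_{\B,\fm}^{\oplus r})\cong R_\fm^r\oplus(R_\fm\Phi_{ij})^r$, on which $A$ acts by a genuine $2r\times 2r$ matrix. The off-diagonal entry then factors as $A_{ij}\Phi_{ij}\circ(\cdot\,\Phi_{ji}) = A_{ij}\cdot(\Phi_{ij}\Phi_{ji}) = \xx A_{ij}$ in $\End(\tilde P_{j,\fm})\cong R_\fm$, which dies mod $\xx$; no normalization of $\Phi_{12}$ or $\Phi_{21}$ is required or possible. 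After this repair, the identification of $\Hom_{Q'}(\tilde P_{j,\fm},M(U^p)_\fm'[\xx])$ with the kernel of the triangular matrix \eqref{eq:fil_ij}, the injectivity of its diagonal blocks from the left-exactness of \eqref{eq:exact_ord}, and the anti-equivalence \eqref{eq:anti_equiv} together give the conclusion.
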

\begin{proof}
    By the anti-equivalence \eqref{eq:anti_equiv},
	it suffices to show that
	$\Hom_{Q'}(\tilde{P}_{j,\fm}, M(U^p)_{\fm}'[\xx])=0$
	for $j=1,2$.
    Since $R_\fm$ acts faithfully on $\tilde{P}_{i,\fm}$
    by \eqref{eq:proj_1}, 
	we can apply the snake lemma to the commutative diagram
    below then apply the exact functor
    $\Hom_{Q'}(\tilde{P}_{j,\fm}, *)$
    to the resulting sequence.
    \begin{equation*}
    \begin{tikzcd}
    0 \arrow[r] & 
    \tilde{P}_{\B,\fm}^{\oplus r} 
	\arrow[d,"\xx",hookrightarrow] \arrow[r,"A"] & 
	\tilde{P}_{\B,\fm}^{\oplus r} 
	\arrow[d,"\xx",hookrightarrow] \arrow[r] & 
	M(U^p)_{\fm}'
    \arrow[d,"\xx"]  \arrow[r] & 0 \\ 
    0 \arrow[r] & 
    \tilde{P}_{\B,\fm}^{\oplus r}
	\arrow[r,"A"] & 
    \tilde{P}_{\B,\fm}^{\oplus r}
	\arrow[r] &
    M(U^p)_{\fm}'  
    \arrow[r] & 0 
    \end{tikzcd}
\end{equation*}
From which we can observe that
$\Hom_{Q'}(\tilde{P}_{j,\fm}, M(U^p)_{\fm}'[\xx])$
is isomorphic to the kernel of the homomorphism below,
which is injective by the previous lemma.
\begin{equation}\label{eq:fil_ij}
	\smat{\bar{A}_{ii}& \bar{A}_{ij}\Phi_{ij}\\& \bar{A}_{jj}}\colon 
	\Hom_{Q'}(\tilde{P}_{j,\fm},\tilde{P}_{\B,\fm}^{\oplus r})^{\red}\to
	\Hom_{Q'}(\tilde{P}_{j,\fm},\tilde{P}_{\B,\fm}^{\oplus r})^{\red}
\end{equation}
\end{proof}

\begin{cor}\label{cor:fil_by_ord}
	We have the following exact sequence of
	$\TT(U^p,\oo)_\fm\times R_{\fm}^{\red}$-modules
\begin{equation*}
\begin{aligned}
	0\to M^{\ord}(U^p)_{\fm,1}'\to
	\Hom_{Q'}(\tilde{P}_{2,\fm},M(U^p)_{\fm}')^{\red}
    \to
	M^{\ord}(U^p)_{\fm,2}'\to0\\
	0\to M^{\ord}(U^p)_{\fm,2}'\to
	\Hom_{Q'}(\tilde{P}_{1,\fm},M(U^p)_{\fm}')^{\red}
    \to
	M^{\ord}(U^p)_{\fm,1}'\to0
\end{aligned}
\end{equation*}
\end{cor}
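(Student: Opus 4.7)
The plan is to apply the functor $\Hom_{Q'}(\tilde{P}_{j,\fm},-)$ to the projective resolution \eqref{eq:resolution} for $j=2$ and $j=1$ in turn, and then reduce modulo $\xx$. Projectivity of $\tilde{P}_{j,\fm}$ gives the short exact sequence
\[
0 \to \Hom_{Q'}(\tilde{P}_{j,\fm},\tilde{P}_{\B,\fm}^{\oplus r}) \xrightarrow{A_*} \Hom_{Q'}(\tilde{P}_{j,\fm},\tilde{P}_{\B,\fm}^{\oplus r}) \to \Hom_{Q'}(\tilde{P}_{j,\fm},M(U^p)_\fm') \to 0.
\]
Using the identification \eqref{eq:proj_1} together with the relation $\Phi_{ij}\Phi_{ji}=\xx$ (which comes from \eqref{eq:end_deform} and Lemma \ref{lem:B26}), the computation already performed in the proof of Proposition \ref{prop:no_torsion} shows that, after reducing modulo $\xx$, the map $A_*$ is represented by the triangular matrix displayed in \eqref{eq:fil_ij} acting on $(R_\fm^{\red})^{\oplus 2r}$: upper triangular with diagonal $(\bar{A}_{11},\bar{A}_{22})$ when $j=2$, and lower triangular with diagonal $(\bar{A}_{11},\bar{A}_{22})$ when $j=1$.

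Next I exploit the triangular structure. For $j=2$, the decomposition $(R_\fm^{\red})^{\oplus 2r} = (R_\fm^{\red})^{\oplus r}\oplus(R_\fm^{\red})^{\oplus r}$ as first factor and quotient furnishes a short exact sequence of two-term complexes whose restriction to the first factor is multiplication by $\bar{A}_{11}$ and whose induced map on the quotient is $\bar{A}_{22}$. Applying the snake lemma, and using the injectivity of $\bar{A}_{11}$ and $\bar{A}_{22}$ established in the lemma preceding Proposition \ref{prop:no_torsion} (the relevant matrices are invertible over the fraction field of the domain $R_\fm^{\red}$), the six-term exact sequence collapses to
\[
0 \to \coker(\bar{A}_{11}) \to \coker(A_*^{\red}) \to \coker(\bar{A}_{22}) \to 0.
\]
Identifying $\coker(\bar{A}_{kk}) = M^{\ord}(U^p)_{\fm,k}'$ from the decomposition of \eqref{eq:exact_ord} recorded before Corollary \ref{cor:two_max}, and $\coker(A_*^{\red}) = \Hom_{Q'}(\tilde{P}_{2,\fm},M(U^p)_\fm')^{\red}$ from right-exactness of $(-)^{\red}$, this produces the first advertised sequence. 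The case $j=1$ is identical with the roles of the two factors swapped because the matrix is lower triangular, yielding the second sequence with $M^{\ord}(U^p)_{\fm,2}'$ as subobject and $M^{\ord}(U^p)_{\fm,1}'$ as quotient.

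The $\TT(U^p,\oo)_\fm\times R_\fm^{\red}$-equivariance is automatic: the Hecke action acts on the coefficient $M(U^p)_\fm'$ and commutes with all maps in sight, while the $R_\fm^{\red}$-action on the middle term comes from pre-composition by $\End_{\fC_{Q'}(\oo)}(\tilde{P}_{j,\fm})/\xx$, which matches the $R_\fm^{\red}$-structure inherited on the outer $M^{\ord}(U^p)_{\fm,k}'$ from \eqref{eq:proj_2}. The only real bookkeeping is that of directions, namely whether the triangular matrix is upper or lower; this is dictated by the placement of the single $\Phi_{12}$ or $\Phi_{21}$ in the off-diagonal entries of $A$, and so fixes which of $M^{\ord}(U^p)_{\fm,1}'$ or $M^{\ord}(U^p)_{\fm,2}'$ sits as subobject versus quotient for each $j$.
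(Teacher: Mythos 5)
Your proposal is correct and takes essentially the same approach as the paper: both proofs apply $\Hom_{Q'}(\tilde{P}_{j,\fm},-)$ to the resolution \eqref{eq:resolution}, reduce modulo $\xx$, observe that \eqref{eq:fil_ij} becomes triangular with injective diagonal blocks $\bar{A}_{ii}$, and extract the filtration by a snake-lemma argument. The only difference is presentational: the paper organizes the snake lemma around the functor $\Ord$ and the identification $\Hom_{Q'}(\tilde{P}_{i,\fm},\tilde{P}_{i,\fm})^{\red}\cong\End_{T'}(\tilde{P}_{\chi_i^\vee,\fm})$ from Lemma \ref{lem:ker_red}, whereas you phrase the same step as a short exact sequence of two-term complexes built from the triangular structure, and the paper reduces mod $\xx$ (via the $\xx$-multiplication diagram and Proposition \ref{prop:no_torsion}) before applying $\Hom$ rather than after.
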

\begin{proof}
    Apply the snake lemma again to the 
    commutative diagram in the previous proposition 
    and apply $\Hom_{Q'}(\tilde{P}_{2,\fm},*)$
    to the resulting exact sequence
	gives the middle row of the commutative diagram
\begin{equation*}
    \begin{tikzcd}
	    0 \arrow[r]& 
	    \Hom_{Q'}(\tilde{P}_{2,\fm},\tilde{P}_{1,\fm}^{\oplus r})^{\red}
	    \arrow[r,"\bar{A}_{11}"] \arrow[d]&
	    \Hom_{Q'}(\tilde{P}_{2,\fm}, \tilde{P}_{1,\fm}^{\oplus r})^{\red}
	    \arrow[d] &&\\
	    0\arrow[r] & 
	    \Hom_{Q'}(\tilde{P}_{2,\fm},\tilde{P}_{\B,\fm}^{\oplus r})^{\red}
	    \arrow[r,"\eqref{eq:fil_ij}"] \arrow[d,"\Ord"] &
	    \Hom_{Q'}(\tilde{P}_{2,\fm},\tilde{P}_{\B,\fm}^{\oplus r})^{\red}
	    \arrow[d,"\Ord"] \arrow[r]&
	    \Hom_{Q'}(\tilde{P}_{2,\fm}, M(U^p)_{\fm}')^{\red}\arrow[r]
        \arrow[d,"\Ord"] &0\\
	    0\arrow[r] & 
	    \End_{T'}(\tilde{P}_{\chi_2^\vee,\fm})^{r}
	    \arrow[r,"\bar{A}_{22}"] &
	    \End_{T'}(\tilde{P}_{\chi_2^\vee,\fm})^{r}
        \arrow[r]&
        \Hom_{T'}(\tilde{P}_{\chi_2^\vee,\fm}, M^{\ord}(U^p)_\fm')
        \arrow[r]&0
    \end{tikzcd}
\end{equation*}
The exactness of the columns follows from 
$\Hom_{Q'}(\tilde{P}_{2,\fm}, \tilde{P}_{2,\fm})^{\red}
\cong \End_{T'}(\tilde{P}_{\chi_2^\vee,\fm})$
by Lemma \ref{lem:ker_red}
and the exactness of the rows follows from the previous lemma.
Since $\tilde{P}_{\chi_i^\vee,\fm}$
is free of rank one over over $R_\fm^{\red}$
by Lemma \ref{lem:end_comp} and
$\Hom_{T'}(\tilde{P}_{\chi_i^\vee,\fm},\tilde{P}_{\chi_j^\vee,\fm})=0$
if $i\neq j$, we have
\begin{equation}
    \Hom_{T'}(\tilde{P}_{\chi_i^\vee,\fm}, M^{\ord}(U^p)_\fm')\cong
    \Hom_{T'}(\tilde{P}_{\chi_i^\vee,\fm}, M^{\ord}(U^p)_\fm')
    \hat{\otimes}_{R^{\red}_\fm}\tilde{P}_{\chi_i^\vee,\fm}\cong
    M^{\ord}(U^p)_{\fm, i}'
\end{equation}
as $\TT(U^p,\oo)_\fm\times R_{\fm}^\red$-modules,
where the last isomorphism is the valuation map
(see \cite[Lem. 3.24]{pask}).
Similarly,  since
$\Hom_{Q'}(\tilde{P}_{2,\fm}, \tilde{P}_{1,\fm})^{\red}=
\Hom_{Q'}(\tilde{P}_{1,\fm}, \tilde{P}_{1,\fm})^{\red}
\hat{\otimes}_{R_\fm^\red}R_\fm^{\red}\Phi_{12}$,
the cokernel of the first row is
$\Hom_{Q'}(\tilde{P}_{1,\fm}, M(U^p)_{\fm}')^{\red}
\hat{\otimes}_{R_\fm^\red}R_\fm^{\red}\Phi_{12}$,
which is isomorphic to $M^{\ord}(U^p)_{\fm, 1}'$
as a $\TT(U^p,\oo)_\fm\times R_{\fm}^\red$-module.
From these we get the first claimed exact sequence,
and the second one can be derived similarly.
\end{proof}  

Now recall that both $\TT(U^p,\oo)$
and $\TT^{\ord}(U^p,\oo)$
are algebras over $\Lambda=\oo\llbracket T(p^1)\rrbracket$
via the diamond operators in Definition \ref{def:ord_hecke}.
In particular $M(U^p)_\fm'$ is also a module over $\Lambda$ and
$\Lambda\hat{\otimes}_{\oo}\oo\llbracket\xx\rrbracket=
\Lambda\llbracket\xx\rrbracket$.

\begin{prop}\label{prop:Hecke_finite}
The modules
$\Hom_{Q'}(\tilde{P}_{1,\fm},M(U^p)_{\fm}'),
\Hom_{Q'}(\tilde{P}_{2,\fm},M(U^p)_{\fm}')$,
are finite free over 
$\Lambda\llbracket \xx\rrbracket$
and consequently so is 
$\mathbf{m}=
\Hom_{Q'}(\tilde{P}_{\B,\fm},M(U^p)_{\fm}')$.
\end{prop}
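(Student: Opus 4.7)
\medskip

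Write $H_i \coloneqq \Hom_{Q'}(\tilde{P}_{i,\fm}, M(U^p)_{\fm}')$, viewed as a module over $\Lambda\llbracket\xx\rrbracket$ via the Hecke action (diamonds give $\Lambda \to \TT(U^p,\oo)_{\fm}$ and $\xx$ acts through \eqref{eq:RtoT}); since $\mathbf{m} = H_1 \oplus H_2$ it will be enough to treat each $H_i$ separately. The plan is to reduce modulo $\xx$, identify the reduction as a finite free $\Lambda$-module using the filtration of Corollary \ref{cor:fil_by_ord}, and then lift to finite freeness over $\Lambda\llbracket\xx\rrbracket$ by a topological Nakayama argument that uses the absence of $\xx$-torsion established in Proposition \ref{prop:no_torsion}.

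First I would check that $H_i$ has no $\xx$-torsion: since $\tilde{P}_{i,\fm}$ is projective in $\fC_{Q'}(\oo)^{\B}$ the functor $\Hom_{Q'}(\tilde{P}_{i,\fm}, -)$ is exact, and the $\xx$-action on $H_i$ is induced from that on $M(U^p)_{\fm}'$, which is injective by Proposition \ref{prop:no_torsion}. Next, by Corollary \ref{cor:fil_by_ord} the quotient $H_i^{\red} = H_i/\xx H_i$ sits in a short exact sequence whose outer terms are $M^{\ord}(U^p)_{\fm,1}'$ and $M^{\ord}(U^p)_{\fm,2}'$. By Proposition \ref{prop:ord_to_dual} the untwisted $M^{\ord}(U^p)$ is finite free over $\Lambda^+$; decomposing along characters of the prime-to-$p$ part of $\Delta_p \times \Delta_S$ and localizing at the two maximal ideals $\fm_1, \fm_2$ given by Corollary \ref{cor:two_max}, we find that each $M^{\ord}(U^p)_{\fm,k}'$ is finite free over $\Lambda$, the twist by $\xi_\fm^{1/2}$ being a unit multiplication and hence harmless. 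Consequently $H_i^{\red}$ is finite free over $\Lambda$, of rank equal to the sum of the two ranks.

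Finally I would lift this to finite freeness of $H_i$ over $\Lambda\llbracket\xx\rrbracket$. Because $H_i$ is the Pontryagin dual of a discrete $\oo$-torsion representation, it is compact, and Corollary \ref{cor:Hecke_Noetherian} already shows it is finitely generated over $\TT(U^p,\oo)_{\fm}$, a fortiori over $\Lambda\llbracket\xx\rrbracket$; in particular it is $\xx$-adically complete and separated. Lift a $\Lambda$-basis $\{\bar e_k\}$ of $H_i^{\red}$ to $e_k \in H_i$. Topological Nakayama applied to the $\xx$-adic filtration shows $\{e_k\}$ generates $H_i$ over $\Lambda\llbracket\xx\rrbracket$. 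To rule out relations, note that any identity $\sum_k a_k(\xx)\, e_k = 0$ with $a_k \in \Lambda\llbracket\xx\rrbracket$ reduces mod $\xx$ to a $\Lambda$-linear relation on the $\bar e_k$, forcing each $a_k \in (\xx)$; writing $a_k = \xx b_k$ and using $\xx$-torsion-freeness of $H_i$ gives $\sum_k b_k\, e_k = 0$, and iterating places each $a_k$ in $\bigcap_n (\xx^n) = 0$.

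The main obstacle is Step 3, the identification of $M^{\ord}(U^p)_{\fm,k}'$ as a finite free $\Lambda$-module at the correct localization; once the bookkeeping of the determinant twist $\xi_\fm^{1/2}$ and of the decomposition of $\TT^{\ord}(U^p,\oo)_\fm$ along $\fm_1, \fm_2$ is in place, the remaining arguments ($\xx$-torsion-freeness and topological Nakayama) are standard and the statement for $\mathbf{m}$ follows immediately from the direct sum decomposition $\tilde P_{\B,\fm} = \tilde P_{1,\fm} \oplus \tilde P_{2,\fm}$.
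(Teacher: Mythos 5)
Your proof is correct and follows essentially the same route as the paper: reduce mod $\xx$, use Corollary~\ref{cor:fil_by_ord} and the finite-freeness of $M^{\ord}(U^p)_{\fm,i}$ over $\Lambda$ (from the argument in Proposition~\ref{prop:ord_to_dual} combined with Corollary~\ref{cor:two_max}) to identify $\Hom_{Q'}(\tilde{P}_{i,\fm},M(U^p)_{\fm}')^{\red}$ as finite free over $\Lambda$, then lift via topological Nakayama and the absence of $\xx$-torsion from Proposition~\ref{prop:no_torsion}. Your explicit remark that projectivity of $\tilde{P}_{i,\fm}$ transports $\xx$-torsion-freeness from $M(U^p)_{\fm}'$ to the Hom-module is a small gap the paper leaves implicit, and your basis-lifting/iteration argument is a cosmetic variant of the paper's snake-lemma step; the one inaccuracy is your parenthetical that the twist by $\xi_\fm^{1/2}$ is ``a unit multiplication'' — the cleaner point is that the Hecke action, hence the $\Lambda$-module structure, is simply not twisted at all (cf.\ Definition~\ref{def:twist}).
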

\begin{proof}
    The same proof in Proposition \ref{prop:ord_to_dual}
    shows that $M^{\ord}(U^p)_\fm$ is finite free over $\Lambda$
    under the assumption \eqref{cond:small}.
    Therefore $M^{\ord}(U^p)_{\fm,i}$ are finite free over $\Lambda$
    by Corollary \ref{cor:two_max} and so are 
    $\Hom_{Q'}(\tilde{P}_{i,\fm},M(U^p)_{\fm}')^{\red}$
    by the previous corollary.

    Write $\mathbf{m}_i=\Hom_{Q'}(\tilde{P}_{i,\fm},M(U^p)_{\fm}')$.
    Since the finite $R_\fm$-module
    $\mathbf{m}_i$ has a natural compact topology
    and $\mathbf{m}_i^\red$ is finite free over $\Lambda$,
	by the topological Nakayama lemma
	$\mathbf{m}_i$ is finite over $\Lambda\llbracket x\rrbracket$.
    Suppose $r=\rank_{\Lambda}\mathbf{m}_i^\red$
    and fix an isomorphism $\Lambda^{\oplus r}\cong \mathbf{m}_i^\red$.
    We can lift which to a surjective homomorphism
    $\Lambda\llbracket \xx\rrbracket^{\oplus r}\to\mathbf{m}$.
    Let $N$ denote the kernel of which and 
    apply the snake lemma to the diagram
    \[
    \begin{tikzcd}
    0\arrow[r]& 
    N\arrow[r]\arrow[d,"\xx"]&
    \Lambda\llbracket \xx\rrbracket^{\oplus r}\arrow[r]\arrow[d,"\xx"]&
    \mathbf{m}_i\arrow[r]\arrow[d,"\xx"] &0\\
    0\arrow[r]& 
    N\arrow[r]&
    \Lambda\llbracket \xx\rrbracket^{\oplus r}\arrow[r]&
    \mathbf{m}_i\arrow[r] &0
    \end{tikzcd}
    \]
    Then $N^\red=0$ since $\fm_i[\xx]=0$ by
    Proposition \ref{prop:no_torsion} and 
    $\Lambda^{\oplus r}\cong \fm_i^\red$.
    Therefore $N=0$ by Nakayama's lemma, which proves the claim.
\end{proof}

\begin{cor}\label{cor:Hecke_ff}

Let $\xx$ acts on $\TT(U^p,\oo)_\fm$ via 
the homomorphism in part (1) of Proposition \ref{lem:twist}.
Then $\TT(U^p,\oo)_\fm$ is a finite free 
$\Lambda\llbracket\xx\rrbracket$-algebra.
In particular we may identify $\xx$ with
a non-zero divisor in $\TT(U^p,\oo)_\fm$,
and the ideal $\xx\TT(U^p,\oo)_\fm$
is free of rank one over $\TT(U^p,\oo)_\fm$.

\end{cor}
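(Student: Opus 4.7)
The plan is to bootstrap the $A$-freeness of $\TT \coloneqq \TT(U^p,\oo)_\fm$ from the freeness of the faithful module $\mathbf{m} \coloneqq \Hom_{Q'}(\tilde{P}_{\B,\fm}, M(U^p)_{\fm}')$, where $A \coloneqq \Lambda\llbracket\xx\rrbracket$. The two ingredients from preceding results are Corollary \ref{cor:Hecke_Noetherian}, which gives that $\mathbf{m}$ is a finite faithful $\TT$-module, and Proposition \ref{prop:Hecke_finite}, which gives that $\mathbf{m}$ is finite free over $A$. Combining these, the structure map $A \to \TT$ from Proposition \ref{lem:twist} is injective (since $A$ acts faithfully on the free module $\mathbf{m}$ via $\TT$), the faithful action embeds $\TT \hookrightarrow \End_A(\mathbf{m})$ into a finite free $A$-module, and therefore $\TT$ is a finite $A$-algebra with $\dim \TT = \dim A$.

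For the non-zero divisor assertion, I would argue directly. Since $\mathbf{m}$ is $A$-free, multiplication by $\xx$ is injective on $\mathbf{m}$. If $a\xx = 0$ in $\TT$, then the commuting endomorphisms satisfy $\xx \circ a = 0$ on $\mathbf{m}$; injectivity of $\xx$ forces $a$ to act as zero, and faithfulness yields $a = 0$. Multiplication by $\xx$ therefore induces an isomorphism of $\TT$-modules $\TT \xrightarrow{\sim} \xx\TT$, which is precisely the rank-one freeness of $\xx\TT$.

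For the freeness of $\TT$ over $A$, I would invoke Auslander--Buchsbaum: since $A$ is regular local, the finite $A$-module $\TT$ is $A$-free if and only if $\depth_A \TT = \dim A$. Depth is preserved under the finite extension $A \hookrightarrow \TT$, so $\depth_\TT \mathbf{m} = \depth_A \mathbf{m} = \dim A = \dim \TT$, making $\mathbf{m}$ a faithful maximal Cohen--Macaulay $\TT$-module. To descend this to a Cohen--Macaulay property for $\TT$ itself, one takes a system of parameters $x_1, \ldots, x_d$ in $\fm_\TT$ (automatically $\mathbf{m}$-regular by MCM): any element $x \in \TT$ acting injectively on $\mathbf{m}$ is automatically $\TT$-regular, because if $xa = 0$ then $x(a\mathbf{m}) = 0$, hence $a\mathbf{m} = 0$ by $x$-injectivity, hence $a = 0$ by faithfulness. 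Propagating this through the successive quotients gives $\depth \TT = \dim \TT$, hence $\depth_A \TT = \dim A$, and Auslander--Buchsbaum forces $\TT$ to be $A$-free.

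The main obstacle is the inductive step that transports the faithfulness of $\mathbf{m}$ through the quotients $\TT/(x_1, \ldots, x_k)$: faithfulness of an $R$-module does not formally descend to the quotient by a regular sequence without additional input, and one needs to verify that the module $\mathbf{m}/(x_1,\ldots,x_k)\mathbf{m}$ remains faithful over the corresponding ring quotient. A cleaner alternative, which I would fall back on if this step proves delicate, is to apply the local criterion for flatness along the regular sequence $(\varpi, T_1, \ldots, T_d, \xx)$ generating the maximal ideal of $A$, reducing at each stage to the explicit free $\Lambda$-modules $M^\ord(U^p)_{\fm_i}$ produced by Corollary \ref{cor:fil_by_ord} to verify the flatness of $\TT/\xx\TT$ over $\Lambda$ before invoking the criterion to lift flatness back to $A$.
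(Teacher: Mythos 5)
Your non‑zero‑divisor argument is clean and correct, and you are right to be uneasy about deducing freeness: the paper's own proof stops at the injection $\TT(U^p,\oo)_\fm\hookrightarrow\End_{\Lambda\llbracket\xx\rrbracket}(\mathbf{m})$ and declares the corollary ``follows,'' but that inclusion by itself only yields that $\TT(U^p,\oo)_\fm$ is finite and torsion-free over $\Lambda\llbracket\xx\rrbracket$, not free, since a finitely generated torsion-free module over a regular local ring of dimension $\geq 2$ need not be free. So the step you are worried about is indeed the crux, and the paper leaves it implicit.

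However, the obstruction you flag in your Auslander--Buchsbaum route is not merely ``delicate'' -- it is fatal. The implication ``a Noetherian local ring admitting a faithful finite maximal Cohen--Macaulay module is Cohen--Macaulay'' is \emph{false}. A standard counterexample: set $B=k\llbracket x,y,u,v\rrbracket/(x,y)(u,v)$ (the union of two planes meeting at a point), which has $\dim B=2$ but $\depth B=1$. The module $M=B/(x,y)\oplus B/(u,v)\cong k\llbracket u,v\rrbracket\oplus k\llbracket x,y\rrbracket$ is faithful (its annihilator is $(x,y)\cap(u,v)=0$), finitely generated, and MCM. Moreover, taking $A=k\llbracket x+u,\,y+v\rrbracket$, one checks that $A\hookrightarrow B$ is finite, $M$ is free of rank two over $A$, and $B\hookrightarrow\End_A(M)$ -- exactly the configuration of Corollary \ref{cor:Hecke_Noetherian} plus Proposition \ref{prop:Hecke_finite} -- yet $B\cong A\oplus\mathfrak{m}_A$ is \emph{not} $A$-free. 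So no argument that only uses ``$\mathbf{m}$ is faithful over $\TT$, free over $A$, and $\TT$ is finite over $A$'' can prove freeness of $\TT$ over $A$. Your own observation that faithfulness fails to propagate through $\TT/(x_1,\dots,x_k)$ is precisely what goes wrong in this example.

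Your fallback via the local criterion for flatness is the right shape of argument -- it is, in effect, how Proposition \ref{prop:Hecke_finite} itself is proved (establish $\xx$-torsion-freeness, then that the reduction $\mathbf{m}/\xx\mathbf{m}$ is $\Lambda$-free, and lift). But the key reduction step, showing $\TT/\xx\TT$ (or rather the image of $\TT$ in $\TT^{\ord}(U^p,\oo)_{\fm_1}\times\TT^{\ord}(U^p,\oo)_{\fm_2}$) is $\Lambda$-\emph{free}, is exactly the same kind of assertion one is trying to prove, one level down: the faithful action on the free $\Lambda$-modules $M^{\ord}(U^p)_{\fm_i}$ again only gives $\Lambda$-torsion-freeness. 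As written, the fallback therefore does not close the gap either; some genuinely new input (a self-duality/Gorenstein statement for the Hecke module, or a numerical coincidence of ranks forcing the endomorphism image to be free) would be required, and neither you nor the paper supplies it.
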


\begin{proof}
Since $\TT(U^p,\oo)_{\fm}$ acts faithfully on 
$M(U^p)_{\fm}'$, it also acts faithfully on
$\mathbf{m}=\Hom_{Q'}(\tilde{P}_{\B,\fm},M(U^p)_{\fm}')$ 
because of the anti-equivalence \eqref{eq:anti_equiv}.
The previous proposition then gives an injective homomorphism
\[
    \TT(U^p,\oo)_\fm\hookrightarrow
    \End_{\Lambda\llbracket\xx\rrbracket}(\mathbf{m})
\]
between $\Lambda\llbracket\xx\rrbracket$-algebras
where the right hand side is finite free,
from which the corollary follows.
\end{proof}

\subsection{Fundamental exact sequence}
\label{sub:fund_exact_sequence}

Recall that for $\Lambda^+=\Lambda[\Delta_p\times\Delta_S]$
introduced in \eqref{def:lambda_rings},
both $\TT(U^p,\oo)$ and $\TT^{\ord}(U^p,\oo)$
are $\Lambda$-algebras via the diamond operators
and the map 
$\TT(U^p,\oo)\to \TT^{\ord}(U^p,\oo)$
from Lemma \ref{lem:coh_to_ord}
is a homomorphism between $\Lambda^+$-algebras.

Let $\fG=W\times \Delta$ be an abelian pro-$p$ group,
where $W$ is fintie free over $\Zp$
and $\Delta$ is a finite abelian group,
then $\I\coloneqq\oo\llbracket\fG\rrbracket$
is a local complete Noetherian flat $\oo$-algebra.
We suppose there exists a homomorphism
$\Lambda^+\to \I$ such that 
$\I$ is finite over $\Lambda^+$.
Let $\lambda^{\ord}\colon 
\TT^{\ord}(U^p,\oo)\to\I$
be a homomorphism of $\Lambda^+$-algebras and
\begin{equation}
    \lambda\colon 
    \TT(U^p,\oo)\to
    \TT^{\ord}(U^p,\oo)\to\I
\end{equation}
denote the composition, by which
we view $\I$ as a $\TT(U^p,\oo)$-algebra.
If $\fm_\I$ is the maximal ideal of $\I$,
we consider the maximal ideals
$\fm=\lambda^{-1}(\fm_\I)$ 
and $\fm^\circ=(\lambda^{\ord})^{-1}(\fm_\I)$
of the Hecke algebras.
We will assume that
$\lambda$ is surjective and that 
$\fm$ satisfies \eqref{cond:red_gen}.
Then Corollary \ref{cor:two_max} implies that
that $\fm^\circ$ is either $\fm_1$ or $\fm_2$ and,
say $\fm^{\circ}=\fm_1$,
there exists characters 
$\bar{\delta}_i\colon \Gal_\K\to \fF^\times$ such that
\begin{align*}
    (\lambda\circ T_\fm\bmod \fm_\I)&=
    (T_\fm\bmod \fm)=
    (\bar{\delta}_1+\bar{\delta}_2)\colon 
    \Gal_\K\to \fF\\
    (\lambda^{\ord}\circ \Psi_{2,w_0}\bmod\fm_\I)&=
    (\Psi_{2,w_0}\bmod\fm^{\circ})=
    (\bar{\delta}_1\vert_{\Gp})\colon 
    \Gp\to \fF^\times
\end{align*}

We write
$M^{\ord}(U^p,\I)_{\fm_i}\coloneqq
M^{\ord}(U^p)_{\fm_i}\otimes_{\Lambda^+}\I$ for $i=1,2$
and make the following assumptions.
\begin{enumerate}[label=(C\arabic*)]
\item There exists a $\TT(U^p,\oo)_\fm$-module 
homomorphism
$\Theta\colon M^{\ord}(U^p,\I)_{\fm_1}\to \I$.
\label{cond:C1}
\item There exists $F\in M^{\ord}(U^p,\I)_{\fm_1}$
such that,
for any character $\alpha$ of $\Delta$,
the image of $\Theta(F)$ under the induced homomorphism
$\alpha\colon\I\to\oo\llbracket W\rrbracket$ is nonzero.
\label{cond:C3}
\item There exists an ideal 
$\fq\subset \TT(U^p,\oo)_{\fm}$
that contains $\xx$ such that
\[
    \fq M^{\ord}(U^p)_{\fm_2}=0\quad
    \text{ and }\quad
    \fq F=0 \text{ in } M^{\ord}(U^p,\I)_{\fm_1}.
\]
\label{cond:C2}

\end{enumerate}

In the rest of the section, we write
$M=\Hom_{Q'}(\tilde{P}_{1,\fm},M(U^p)_{\fm}')
\otimes_{\Lambda^+}\I$,
$M_i^{\ord}=M^{\ord}(U^p,\I)_{\fm_i}$ for $i=1,2$ and
$\TT=\TT(U^p,\oo)_{\fm}$.
By Corollary \ref{cor:fil_by_ord}
we have a right exact sequence of $\TT(U^p,\oo)_\fm$-modules
\begin{equation}\label{eq:chain_coh}
    M_2^{\ord}\to M/\xx M\xrightarrow{(*)} M_1^{\ord}\to 0
\end{equation}
We define $S_1^{\ord}=\ker(\Theta)$ and let $S\subset M$
be the pre-image of $S_1^{\ord}$ under $(*)$.
Note that $\xx M\subset S$ by definition.
For any character $\alpha$ of $\Delta$,
we let $\Lambda_\alpha$ denote the $\I$-algebra structure
on $\Lambda_W=\oo\llbracket W\rrbracket$ given by 
the induced homomorphism $\alpha\colon\I\to \Lambda_W$.
We also write $\lambda_\alpha$
for the composition of which with $\lambda$.

\begin{lem}\label{lem:right_exact}
There exists an exact sequence
of $\TT$-modules
\begin{equation}\label{eq:fund}
	M/S\xrightarrow{\xx} \fq M/\fq S\to 
	\fq M_1^\ord/\fq S_1^\ord \to 0
\end{equation}
\end{lem}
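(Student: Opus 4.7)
The plan is to verify that \eqref{eq:fund} is exact term by term. For the rightmost term, the composition $\phi\colon M \twoheadrightarrow M/\xx M \twoheadrightarrow M_1^{\ord}$ from \eqref{eq:chain_coh} restricts, by construction of $S$ as the preimage of $S_1^{\ord}=\ker\Theta$, to a surjection $\phi\vert_{S}\colon S\twoheadrightarrow S_1^{\ord}$. Consequently $\phi(\fq M)=\fq M_1^{\ord}$ and $\phi(\fq S)=\fq S_1^{\ord}$, giving a well-defined surjective map $\fq M/\fq S \twoheadrightarrow \fq M_1^{\ord}/\fq S_1^{\ord}$. The composition in \eqref{eq:fund} is zero because $\xx$ annihilates $M^{\ord}$, as noted in the introduction ($\xx \in \ker(R\to\TT^{\ord})$ since ordinary representations are reducible at $w_0$).

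For exactness in the middle, I would apply the snake lemma to
\begin{equation*}
\begin{tikzcd}[column sep=small]
0 \arrow[r] & \fq S \arrow[r] \arrow[d,"\phi",twoheadrightarrow] & \fq M \arrow[r] \arrow[d,"\phi",twoheadrightarrow] & \fq M/\fq S \arrow[r] \arrow[d] & 0 \\
0 \arrow[r] & \fq S_1^{\ord} \arrow[r] & \fq M_1^{\ord} \arrow[r] & \fq M_1^{\ord}/\fq S_1^{\ord} \arrow[r] & 0,
\end{tikzcd}
\end{equation*}
identifying the kernel of the right vertical map with $(\tilde K \cap \fq M)/(\tilde K \cap \fq S)$, where $\tilde K\coloneqq\ker\phi$. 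The image of $M/S \xrightarrow{\xx} \fq M/\fq S$ equals $(\xx M+\fq S)/\fq S$, and the inclusion $\xx M\subset \tilde K\cap \fq M$ (valid since $\xx$ kills $M_1^{\ord}$ and $\xx\in\fq$) gives the trivial containment image $\subset$ kernel. The remaining task is to prove $\tilde K\cap \fq M \subset \xx M+\fq S$.

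To this end I would use Proposition \ref{prop:ord_to_dual} and Corollary \ref{cor:two_max} to see that $M^{\ord}(U^p)_{\fm_i}'$ is a $\Lambda^+$-direct summand of a finite free $\Lambda^+$-module, so that tensoring the short exact sequence of Corollary \ref{cor:fil_by_ord} with $\I$ over $\Lambda^+$ preserves exactness and yields $0\to M_2^{\ord}\to M/\xx M\to M_1^{\ord}\to 0$, identifying $\tilde K/\xx M\cong M_2^{\ord}$. For $y\in \tilde K\cap \fq M$ the class $\bar y\in M/\xx M$ lies in $M_2^{\ord}\cap \fq(M/\xx M)$ and is annihilated by $\fq$ thanks to \ref{cond:C2}. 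The main obstacle is now to show that such $\bar y$ admits a lift inside $\fq S$. I expect this to follow by combining \ref{cond:C2}, which forces the image of $M_2^{\ord}$ in $M/\xx M$ to be $\fq$-torsion, with the fact that $\xx$ acts as zero on the quotient $M/S\cong M_1^{\ord}/S_1^{\ord}$ and with a careful analysis of the injection $M/S\hookrightarrow \I$ provided by \ref{cond:C1}: the relations $\Theta(F)\neq 0$ and $\fq\,\Theta(F)=0$ from \ref{cond:C3} and \ref{cond:C2} control the $\fq$-torsion in the relevant direction, yielding $S\cap \fq M\subset \xx M+\fq S$. Once this containment is established, middle exactness in \eqref{eq:fund} is immediate and the lemma is proved.
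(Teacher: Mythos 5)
Your overall strategy is sound — you directly attempt to verify middle exactness, and you correctly reduce the problem to the containment $\tilde K\cap\fq M\subset\xx M+\fq S$. The surjectivity at the end and the vanishing of the composition are handled correctly, and the observation that $M^{\ord}(U^p)_{\fm_i}'$ is $\Lambda^+$-projective (so that $\otimes_{\Lambda^+}\I$ preserves exactness of the sequence from Corollary \ref{cor:fil_by_ord}) is valid, though the paper's argument only ever uses right exactness and does not need it.

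The genuine gap is in the last paragraph: you identify the containment that must be proved, but you do not prove it. The sentences ``I expect this to follow by combining \ref{cond:C2} \dots with a careful analysis of the injection $M/S\hookrightarrow\I$ \dots yielding $S\cap\fq M\subset\xx M+\fq S$'' list relevant facts but do not assemble them into an argument. This is precisely the step where \ref{cond:C2} must be invoked in a precise way; leaving it as an expectation leaves the lemma unproved. (Note also that your final sentence states $S\cap\fq M\subset\xx M+\fq S$, which is \emph{stronger} than the containment $\tilde K\cap\fq M\subset\xx M+\fq S$ you had correctly isolated a few lines earlier; the weaker one is what is actually needed.)

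The paper's proof organizes the argument differently, and the reorganization is what makes \ref{cond:C2} bite cleanly. Since $\xx\in\fq$, it forms the commutative diagram with (right-)exact rows
\[
\begin{tikzcd}
S\arrow[r,"\xx"]\arrow[d,hookrightarrow]&\fq S\arrow[r]\arrow[d,hookrightarrow]&\fq S^{\red}\arrow[r]\arrow[d]&0\\
M\arrow[r,"\xx"]&\fq M\arrow[r]&\fq M^{\red}\arrow[r]&0
\end{tikzcd}
\]
where $\fq M^{\red}=\fq M/\xx M$ and $\fq S^{\red}=\fq S/\xx S$. Passing to cokernels of the columns yields, with no extra work, the exact sequence $M/S\xrightarrow{\xx}\fq M/\fq S\to Q\to 0$ with $Q=\coker(\fq S^{\red}\to\fq M^{\red})$. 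The entire content is then concentrated in a single identification $Q\cong\fq M_1^{\ord}/\fq S_1^{\ord}$, which is extracted from \ref{cond:C2}: using the right-exact sequences $M_2^{\ord}\to M/\xx M\to M_1^{\ord}\to 0$ and $M_2^{\ord}\to S/\xx M\to S_1^{\ord}\to 0$, the condition $\fq M_2^{\ord}=0$ lets one identify the images $\fq M/\xx M$ and $(\fq S+\xx M)/\xx M$ inside $M^{\red}$ with $\fq M_1^{\ord}$ and $\fq S_1^{\ord}$ respectively, whence $Q\cong\fq M_1^{\ord}/\fq S_1^{\ord}$. To repair your argument you would need to carry out the analogous verification for $\tilde K\cap\fq M\subset\xx M+\fq S$, but as written the crucial use of \ref{cond:C2} is missing.
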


\begin{proof}
Since $\xx\in \fq$ by \ref{cond:C2}
we have the commutative diagram
\[
\begin{tikzcd}
	S\arrow[r,"\xx"] \arrow[d]
	& \fq S \arrow[r] \arrow[d]
	& \fq S^{\red} \arrow[r] \arrow[d] & 0\\
	M\arrow[r,"\xx"]
	& \fq M \arrow[r]
	& \fq M^{\red} \arrow[r] & 0
\end{tikzcd}
\]
Applying the snake lemma
reduces the lemma to showing that
$\fq M^{\red}/\fq S^{\red}=\fq M_1^{\ord}/\fq S_1^{\ord}$.
Observe that $\fq M^{\red}=\fq M/\xx M$ 
is mapped injectively into $M^{\red}=M/\xx M$ and we have
\begin{align*}
    M_2^{\ord}\to M/\xx M\to M_1^{\ord}\to 0
    &\Longrightarrow
	\Image(\fq M^{\red}\to M^{\red})=
    \fq M/\xx M\cong\fq M_1^{\ord}\\
    M_2^{\ord}\to S/\xx M\to S_1^{\ord}\to 0
    &\Longrightarrow
	\Image(\fq S^{\red}\to M^{\red})=
    \fq S+\xx M/\xx M\cong\fq S_1^{\ord}.
\end{align*}
where the last isomorphisms 
follows from \ref{cond:C2}.
From the above we have the desired isomorphism.
\end{proof}

Let $\hat{\Delta}$ be the set 
of characters on the finite abelian group $\Delta$.
We write $K_W=\textnormal{Frac}\Lambda_W$
and $K_\alpha=\textnormal{Frac}\Lambda_\alpha$
for $\alpha\in \hat{\Delta}$
if we want to specify the $\I$-algebra structure.
Note that the natural map
$\I\hookrightarrow\prod_{\alpha\in\hat{\Delta}}\Lambda_\alpha$ 
is injective and the cokernel is $p$-torsion.
Now observe that each terms in 
the sequence \eqref{eq:fund}
is a quotient of $M/S$ 
and the $\TT$-action on which factor through $\lambda$.
Thus \eqref{eq:fund} is also a sequence of $\I$-modules.

\begin{lem}\label{lem:inj_crit}
The sequence \eqref{eq:fund}
is left exact if 
$(\fq M/\fq S)\otimes_{\I}K_\alpha\neq 0$
for all $\alpha\in\hat{\Delta}$.
\end{lem}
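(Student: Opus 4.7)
The plan is to reduce injectivity of $M/S \xrightarrow{\xx} \fq M/\fq S$ to a dimension count after base change to each localization $K_\alpha$, then to verify the count using conditions \ref{cond:C3} and \ref{cond:C2}.

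First I would extend \eqref{eq:fund} to the four-term exact sequence
\[
0 \to K \to M/S \xrightarrow{\xx} \fq M/\fq S \to \fq M_1^\ord/\fq S_1^\ord \to 0,
\]
where $K \coloneqq \ker(M/S \to \fq M/\fq S)$. Via the isomorphism $M/S \cong M_1^\ord/S_1^\ord \cong \Theta(M_1^\ord)$, the module $K$ is identified with an ideal of $\I$; since $\I$ embeds in $\prod_{\alpha \in \hat\Delta} \Lambda_\alpha$ with $p$-torsion cokernel, $K$ vanishes once $K \otimes_\I K_\alpha = 0$ is established for every $\alpha$. By flatness of $K_\alpha$ over $\I$, this reduces to injectivity of the tensored map $(M/S) \otimes_\I K_\alpha \to (\fq M/\fq S) \otimes_\I K_\alpha$.

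The first dimension input is straightforward: by condition \ref{cond:C3}, $\alpha(\Theta(F)) \neq 0$ in $\Lambda_\alpha$, so $\Theta(M_1^\ord) \otimes_\I K_\alpha = K_\alpha$ and hence $(M/S) \otimes_\I K_\alpha$ is one-dimensional over $K_\alpha$. The key step is to show that $(\fq M_1^\ord/\fq S_1^\ord) \otimes_\I K_\alpha = 0$. For this I would use that the same nonvanishing of $\alpha(\Theta(F))$ exhibits the image $F_\alpha$ of $F$ as a complement to the codimension-one subspace $(S_1^\ord)_\alpha = \ker \Theta_\alpha$ inside $(M_1^\ord)_\alpha$, yielding the splitting $(M_1^\ord)_\alpha = (S_1^\ord)_\alpha \oplus K_\alpha \cdot F_\alpha$. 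For any $q \in \fq$ and any $m = s + c F_\alpha$, condition \ref{cond:C2} gives $qF = 0$ in $M_1^\ord$, hence $q F_\alpha = 0$, and so $qm = qs \in \fq \cdot (S_1^\ord)_\alpha = (\fq S_1^\ord)_\alpha$; this shows $(\fq M_1^\ord)_\alpha \subset (\fq S_1^\ord)_\alpha$, while the reverse inclusion is automatic.

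Finally, tensoring the four-term sequence with the flat $\I$-algebra $K_\alpha$ and taking the alternating sum of $K_\alpha$-dimensions gives
\[
\dim_{K_\alpha}(K \otimes_\I K_\alpha) \;=\; 1 \;-\; \dim_{K_\alpha}\!\big((\fq M/\fq S) \otimes_\I K_\alpha\big).
\]
The hypothesis that $(\fq M/\fq S) \otimes_\I K_\alpha$ is nonzero forces the right-hand side to be non-positive, so $K \otimes_\I K_\alpha = 0$ for every $\alpha$, whence $K = 0$. The main obstacle is the vanishing of $(\fq M_1^\ord/\fq S_1^\ord) \otimes_\I K_\alpha$: this is the only place where both hypotheses \ref{cond:C3} (to split off $F_\alpha$ as a direct complement) and \ref{cond:C2} (to make $\fq$ act trivially on that line) are used in an essential way. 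Once that vanishing is granted, the remaining dimension count and descent from the $K_\alpha$'s back to $\I$ are routine.
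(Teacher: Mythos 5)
Your proposal correctly identifies the two key ingredients — \ref{cond:C3} makes $(M/S)\otimes_\I K_\alpha$ one-dimensional, \ref{cond:C2} kills $(\fq M_1^\ord/\fq S_1^\ord)\otimes_\I K_\alpha$ — and your argument for the second vanishing (splitting off the line $K_\alpha F_\alpha$ and using $\fq F=0$) is essentially the paper's argument with $E_1^\ord=\TT F$. But the reduction to $K_\alpha$ rests on a false flatness claim. The ring map $\I\to\Lambda_\alpha$ is a \emph{quotient} (it kills $\delta-\alpha(\delta)$ for $\delta\in\Delta$), and since $\Delta$ is a nontrivial $p$-group here, $\oo[\Delta]$ is local and this ideal is not generated by an idempotent; so $\Lambda_\alpha$, and hence $K_\alpha$, is not flat over $\I$. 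Without flatness, tensoring the four-term sequence with $K_\alpha$ does not preserve exactness at the $(M/S)$ spot, the kernel of the tensored map need not equal $K\otimes_\I K_\alpha$, and the alternating dimension count collapses.

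The paper circumvents this entirely with a diagram chase that only needs right-exactness of $\otimes$. The module $M/S$ embeds in $\I$ and is therefore $p$-torsion-free; since $\I[1/p]=\prod_\alpha\Lambda_\alpha[1/p]$ and each $(M/S)\otimes_\I\Lambda_\alpha[1/p]$ sits inside the domain $\Lambda_\alpha[1/p]$, one gets $M/S\hookrightarrow\prod_\alpha(M/S)\otimes_\I K_\alpha$. A commutative square then shows that injectivity of $(M/S)\otimes_\I K_\alpha\to(\fq M/\fq S)\otimes_\I K_\alpha$ for every $\alpha$ already forces $\ker(\xx)=0$, no Tor-vanishing required. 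And over $K_\alpha$, right-exactness plus your two vanishing computations exhibit $(\fq M/\fq S)\otimes_\I K_\alpha$ as a quotient of the one-dimensional space $(M/S)\otimes_\I K_\alpha$; the hypothesis that it is nonzero makes the surjection an isomorphism, hence injective. Your proof is correct in its two essential computations but needs the left-exactness step replaced by this injectivity/diagram-chase argument rather than flatness.
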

\begin{proof}
Since $\Theta$ induces $M/S\hookrightarrow\I$,
the $\I$-module $M/S$ has no $p$-torsion and we have the diagram
\[
\begin{tikzcd}
M/S
\arrow[r,"(a)"]\arrow[d,hookrightarrow] &
\fq M/\fq S
\arrow[r]\arrow[d] &
\fq M_1^{\ord}/\fq S_1^{\ord}
\arrow[r]\arrow[d] & 0\\
(M/S)\otimes_{\I}\I[1/p]
\arrow[r,"(b)"] &
(\fq M/\fq S)\otimes_{\I}\I[1/p]
\arrow[r] &
(\fq M_1^{\ord}/\fq S_1^{\ord})\otimes_{\I}\I[1/p]
\arrow[r] & 0
\end{tikzcd}
\]
Therefore $(a)$ is injective if
$(b)$ is injective.
Moreover, since $\I[1/p]=\prod_\alpha \Lambda_\alpha[1/p]$,
the map $(b)$ is injective if and only if the map $(b)_\alpha$ below
is injective for each $\alpha\in\hat{\Delta}$.
\[
(M/S)\otimes_{\I}\Lambda_\alpha[1/p]
\xrightarrow{(b)_\alpha}
(\fq M/\fq S)\otimes_{\I}\Lambda_\alpha[1/p] \to
(\fq M_1^{\ord}/\fq S_1^{\ord})\otimes_{\I}\Lambda_\alpha[1/p]\to 0
\]
Since $\Theta$ also induces
$(M/S)\otimes_{\I}\Lambda_\alpha[1/p]
\hookrightarrow\Lambda_\alpha[1/p]$,
we can pass to the field of fraction and get
\[
\begin{tikzcd}
(M/S)\otimes_{\I}\Lambda_\alpha[1/p]
\arrow[r,"(b)_\alpha"]\arrow[d,hookrightarrow] &
(\fq M/\fq S)\otimes_{\I}\Lambda_\alpha[1/p]
\arrow[r]\arrow[d] &
(\fq M_1^{\ord}/\fq S_1^{\ord})\otimes_{\I}\Lambda_\alpha[1/p]
\arrow[r]\arrow[d] & 0\\
(M/S)\otimes_{\I}K_\alpha
\arrow[r,"(b)_\alpha'"] &
(\fq M/\fq S)\otimes_{\I}K_\alpha
\arrow[r] &
(\fq M_1^{\ord}/\fq S_1^{\ord})\otimes_{\I}K_\alpha
\arrow[r] & 0
\end{tikzcd}
\]
From which we see that $(b)_\alpha$ 
is injective if $(b)_\alpha'$ is injective.
Since the map of $K_W$-modules
$(M/S)\otimes_{\I}K_\alpha\hookrightarrow K_\alpha$
induced by $\Theta$ is nonzero by \ref{cond:C3},
we can identify
$(M/S)\otimes_{\I}K_\alpha=K_\alpha$.

On the other hand,
let $E_1^{\ord}\subset M_1^{\ord}$
be the $\TT$-submodule generated by $F$.
Then \ref{cond:C3} implies that
$(M_1^{\ord}/(S_1^{\ord}+E_1^{\ord}))\otimes_{\I}\Lambda_\alpha$
is a torsion module and \ref{cond:C2} implies that
$\fq(S_1^{\ord}+E_1^{\ord})=\fq S_1^{\ord}$.
Consequently
\[
    0=\fq\otimes_{\TT}\left(M_1^{\ord}/(S_1^{\ord}+E_1^{\ord})\right)
    \otimes_{\I}K_\alpha
    =\fq\otimes_{\TT}(M_1^{\ord}/S_1^{\ord})
    \otimes_{\I}K_\alpha
    \twoheadrightarrow
    (\fq M_1^{\ord}/\fq S_1^{\ord})\otimes_{\I}K_\alpha
\]
In other word $(b)_\alpha'$ is a surjective map from $K_W$,
and therefore it is injective if and only if
$(\fq M/\fq S)\otimes_{\I}K_\alpha$ is nonzero.
This conclude the proof of the lemma.
\end{proof}

We now consider a fixed 
$\lambda_\alpha\colon \TT\to \Lambda_\alpha$
and define the prime ideals 
$\wp_1=\ker(\lambda_\alpha)\subset \TT$,
$\fp_1=\wp_1\cap \Lambda\llbracket\xx\rrbracket$, and
$\fp=\fp_1\cap \Lambda$.
Note that $\fp_0\coloneqq (\fp\Lambda)\llbracket\xx\rrbracket\subset \fp_1$
is also a prime ideal.
And in fact $\fp_1=\fp_0+(\xx)$
since $\lambda$ factors through the ordinary Hecke algebra.
Let $\Lambda'\coloneqq \Lambda/\fp$,
which is isomorphic to a subring of $\Lambda_\alpha$ via $\lambda_\alpha$
and $K$ denote the field of fraction of $\Lambda'$,
then $K_\alpha/K$ is a finite extension 
since $\I$ is finite over $\Lambda$.

\begin{lem}\label{lem:smallprime}
There exists a prime ideal 
$\wp_0\subset \wp_1$ of $\TT$ such that 
$\wp_0\cap \Lambda\llbracket \xx\rrbracket=\fp_0$.
\end{lem}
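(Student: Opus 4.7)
The plan is to invoke the going-down theorem for flat ring extensions. The key input has already been established: by Corollary \ref{cor:Hecke_ff}, the Hecke algebra $\TT = \TT(U^p,\oo)_\fm$ is finite free over $\Lambda\llbracket\xx\rrbracket$, so in particular the inclusion $\Lambda\llbracket\xx\rrbracket \hookrightarrow \TT$ is flat.

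First I would verify that $\fp_0$ and $\fp_1$ genuinely form a chain of primes in $\Lambda\llbracket\xx\rrbracket$. Since $\Lambda\llbracket\xx\rrbracket/\fp_0 \cong (\Lambda/\fp)\llbracket\xx\rrbracket$ and $\Lambda/\fp$ is a domain, $\fp_0$ is prime; the inclusion $\fp_0 \subset \fp_1$ is immediate from $\fp \subset \fp_1$. The relation $\fp_1 = \fp_0 + (\xx)$ explains why the containment is strict: $\xx \in \fp_1$ because $\lambda$ factors through $\TT^{\ord}(U^p,\oo)_{\fm^\circ}$, on which $\xx$ acts as zero by the definition of $\xx$ as a generator of the reducibility ideal.

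Next I would apply the standard going-down theorem (e.g. Matsumura, \emph{Commutative Ring Theory}, Theorem 9.5) which asserts that any flat homomorphism of Noetherian rings satisfies going-down. Applied to the chain $\fp_0 \subset \fp_1$ in $\Lambda\llbracket\xx\rrbracket$ together with the given prime $\wp_1 \subset \TT$ lying over $\fp_1$, this produces a prime $\wp_0 \subset \wp_1$ of $\TT$ with $\wp_0 \cap \Lambda\llbracket\xx\rrbracket = \fp_0$, which is precisely the desired conclusion.

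There is essentially no obstacle once the flatness of $\TT$ over $\Lambda\llbracket\xx\rrbracket$ is in hand, and this flatness is exactly the payoff of the extensive analysis via Pask\={u}nas' theory carried out in the preceding subsection (culminating in Proposition \ref{prop:Hecke_finite} and Corollary \ref{cor:Hecke_ff}). Thus the lemma should be essentially a one-line application of going-down, with the real content hidden in the structural results already established.
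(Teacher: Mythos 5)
Your proof is correct, and it takes a genuinely different route from the paper. You invoke the going-down theorem for flat ring extensions directly, using that $\TT = \TT(U^p,\oo)_\fm$ is finite free (hence flat) over $\Lambda\llbracket\xx\rrbracket$ by Corollary \ref{cor:Hecke_ff}. This is a clean, essentially one-line argument once that freeness is in hand, and it is logically complete; the only cosmetic remark is that going-down for flat homomorphisms (Matsumura, Thm.\ 9.5) holds without any Noetherian hypothesis, so that hypothesis in your citation is superfluous (though of course true here).

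The paper's proof is more hands-on: it passes to the completion $\Lambda\llbracket\xx\rrbracket_{\fp_1}$ of the localization at $\fp_1$, identifies $\Lambda\llbracket\xx\rrbracket_{\fp_1}/\fp_0\Lambda\llbracket\xx\rrbracket_{\fp_1}\cong K\llbracket\xx\rrbracket$ as a complete DVR, decomposes $\TT_{\fp_1}/\fp_0\TT_{\fp_1}$ as a product of its local factors (of which $\TT_{\wp_1}/\fp_0\TT_{\wp_1}$ is one), observes that each factor is finite free over $K\llbracket\xx\rrbracket$ and hence non-Artinian, and takes $\wp_0$ to be the pullback of a minimal prime. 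This amounts to reproving going-down concretely in the finite free case, but it serves a second purpose: the localization $\TT_{\wp_1}$ and the decomposition set up here are explicitly reused in the proof of Proposition \ref{prop:res} (``As in the proof of \ref{lem:smallprime}, we write $\Lambda\llbracket\xx\rrbracket_{\fp_1}$...''), so the extra work is not wasted. Your argument is perfectly valid for the lemma as stated, but if you adopt it you would still need to reintroduce the localization bookkeeping when you get to Proposition \ref{prop:res}.
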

\begin{proof}
By abuse of notation we will write
$\Lambda\llbracket\xx\rrbracket_{\fp_1}$
for the completion of the localization of
$\Lambda\llbracket\xx\rrbracket$ at $\fp_1$
and refer localizations at $\fp_1$ as taking
$\otimes_{\Lambda\llbracket\xx\rrbracket}
\Lambda\llbracket\xx\rrbracket_{\fp_1}$.
Then $\TT_{\fp_1}/\fp_0\TT_{\fp_1}$
is finite free over
the complete discrete valuation ring
\[
\Lambda\llbracket\xx\rrbracket_{\fp_1}/
\fp_0\Lambda\llbracket\xx\rrbracket_{\fp_1}\cong
\left(
\Lambda\llbracket\xx\rrbracket/\fp_0
\right)_{\fp_1}\cong
(\Lambda'\llbracket\xx\rrbracket)_{\fp_1}\cong 
K\llbracket\xx\rrbracket
\]
by Proposition \ref{prop:Hecke_finite}.
Therefore $\TT_{\fp_1}/\fp_0\TT_{\fp_1}$
is the direct product of its localizations at the maximal ideals.
In particular $\TT_{\wp_1}/\fp_0\TT_{\fp_1}$
is one of the component and is also finite free over 
$K\llbracket\xx\rrbracket$.
This implies that $\TT_{\wp_1}/\fp_0\TT_{\fp_1}$ cannot be Artinian
and the pull-back of any minimal ideal of which to $\TT$
gives the desired ideal.

\end{proof}

Let $\wp_0\subset \TT$ be a prime above $\fp_0$,
then $\TT_{\wp_1}/\wp_0\TT_{\wp_1}$
is an integral domain and a finite extension
over $K\llbracket\xx\rrbracket$.
The integral closure of $K\llbracket\xx\rrbracket$
in the field of fraction of 
$\TT_{\wp_1}/\wp_0\TT_{\wp_1}$ is then a Dedekind domain.
Let $\tilde{\TT}$ denote the localization of which
at any maximal ideal.
Then $\tilde{\TT}$ is a discrete valuation ring
and $\fq\tilde{T}\cong \tilde{T}$.
From which we have the isomorphism
\begin{multline*}
((\fq M/\fq S)\otimes_{\I} K_\alpha)
\otimes_{\TT_{\wp_1}}\tilde{\TT}\cong 
(\fq M\otimes_{\I} K_\alpha)
\otimes_{\TT_{\wp_1}}\tilde{\TT}/
(\fq S\otimes_{\I} K_\alpha)
\otimes_{\TT_{\wp_1}}\tilde{\TT}\\\cong
(M\otimes_{\I} K_\alpha)
\otimes_{\TT_{\wp_1}}\tilde{\TT}/
(S\otimes_{\I} K_\alpha)
\otimes_{\TT_{\wp_1}}\tilde{\TT}\cong
((M/S)\otimes_{\I} K_\alpha)
\otimes_{\TT_{\wp_1}}\tilde{\TT}=
K_\alpha \otimes_{\TT_{\wp_1}}\tilde{\TT}
\end{multline*}
The last ring is nonzero since 
the spectrum of which 
is nothing but the pre-image
under the finite morphism
$\Spec(\tilde{\TT})\to \Spec(\TT_{\wp_1})$
of the maximal ideal $\wp_1\TT_{\wp_1}=\ker(\TT_{\wp_1}\to K_\alpha)$,
which is the maximal ideal of $\tilde{\TT}$.
By Lemma \ref{lem:inj_crit}
we can conclude that \eqref{eq:fund} is left exact.
We organize this result into the following corollary.

\begin{cor}\label{cor:fund}
	Under the setting of this subsection
    and the assumptions
    \ref{cond:C1},
    \ref{cond:C2}, and
    \ref{cond:C3},
    we have the following commutative diagram
    of exact sequences of $\TT$-modules
    \begin{equation}
    \begin{tikzcd}
    & (M/S)
    \arrow[r,"\xx"]\arrow[d,equal] &
    (M/S)\otimes_{\TT}\fq
    \arrow[r]\arrow[d] &
    (M/S)\otimes_{\TT}\fq^\red
    \arrow[r]\arrow[d] &0\\
    0\arrow[r] &
    (M/S)
    \arrow[r,"\xx"] &
    \fq M/\fq S
    \arrow[r] &
    \fq M_1^{\ord}/\fq S_1^{\ord}
    \arrow[r] &0
    \end{tikzcd}
    \end{equation}
\end{cor}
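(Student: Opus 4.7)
The plan is to synthesize Lemmas \ref{lem:right_exact}, \ref{lem:inj_crit}, and \ref{lem:smallprime} with a tensor-product descent along a finite extension of DVRs.

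The top row of the diagram is obtained by tensoring $M/S$ with the right-exact sequence $\TT \xrightarrow{\cdot \xx} \fq \to \fq^{\red} \to 0$, using $\xx\in\fq$; it is therefore right-exact for formal reasons. Commutativity of both squares is immediate from the construction of the vertical arrows: the middle arrow $m\otimes q\mapsto qm$ sends $m\otimes \xx$ to $\xx m\in \fq M/\fq S$, which matches the bottom horizontal map. Right-exactness of the bottom row is already Lemma \ref{lem:right_exact}, so the content of the corollary is the injectivity of $M/S \xrightarrow{\cdot \xx} \fq M/\fq S$.

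For this injectivity, Lemma \ref{lem:inj_crit} reduces the claim to $(\fq M/\fq S)\otimes_{\I}K_\alpha\neq 0$ for every $\alpha\in\hat{\Delta}$. Fix such an $\alpha$, put $\wp_1=\ker(\lambda_\alpha)$, and apply Lemma \ref{lem:smallprime} to produce a prime $\wp_0\subsetneq \wp_1$ of $\TT$ with $\wp_0\cap \Lambda\llbracket\xx\rrbracket=\fp_0$. Then $\TT_{\wp_1}/\wp_0\TT_{\wp_1}$ is a local integral domain finite over $K\llbracket\xx\rrbracket$; let $\tilde{\TT}$ denote the localization at any maximal ideal of the integral closure of $K\llbracket\xx\rrbracket$ in its fraction field, which is a DVR. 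Chasing the tensor products along the short exact sequences relating $\fq M$, $\fq S$, $M$, and $S$, and exploiting that the image of $\fq$ in $\tilde{\TT}$ is a nonzero ideal, one obtains
\[
\bigl((\fq M/\fq S)\otimes_{\I}K_\alpha\bigr)\otimes_{\TT_{\wp_1}}\tilde{\TT}\;\cong\; K_\alpha\otimes_{\TT_{\wp_1}}\tilde{\TT}.
\]
The right-hand side is nonzero because $\Spec\tilde{\TT}\to \Spec\TT_{\wp_1}$ is a finite morphism covering the closed point $\wp_1\TT_{\wp_1}=\ker(\TT_{\wp_1}\to K_\alpha)$, whence the left-hand side is nonzero and so is $(\fq M/\fq S)\otimes_{\I}K_\alpha$.

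The real obstacle is upstream: the existence of the intermediate prime $\wp_0$ (Lemma \ref{lem:smallprime}) and the finite-freeness of $\TT(U^p,\oo)_{\fm}$ over $\Lambda\llbracket\xx\rrbracket$ (Corollary \ref{cor:Hecke_ff}) both rest on the Pask\={u}nas/Pan machinery developed earlier in the section; once those structural inputs are in hand, the present corollary is a formal synthesis of the preceding lemmas with the tensor-product calculation above.
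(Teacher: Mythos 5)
Your proposal is correct and follows the paper's argument essentially step for step: right-exactness of both rows via Lemma~\ref{lem:right_exact} and tensoring, reduction of left-exactness to $(\fq M/\fq S)\otimes_{\I}K_\alpha\neq 0$ via Lemma~\ref{lem:inj_crit}, then the localization at $\wp_0$ from Lemma~\ref{lem:smallprime} and the DVR absorption $\fq\tilde{\TT}\cong\tilde{\TT}$ to conclude nonvanishing. One small remark for completeness: the right vertical arrow in the diagram is itself \emph{defined} through the identification $\fq M^{\red}/\fq S^{\red}=\fq M_1^{\ord}/\fq S_1^{\ord}$ established in the proof of Lemma~\ref{lem:right_exact}, so the commutativity of the right square is not quite ``immediate from the construction'' but rests on that isomorphism; this is the one point the paper makes explicit that your write-up glosses over.
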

\begin{proof}
The vertical maps are the natural ones
and the commutative properties 
follows from the isomorphism
$\fq M^\red/\fq S^\red=\fq M_1^{\ord}/\fq S_1^{\ord}$
shown in the proof of Lemma \ref{lem:right_exact}.
The exactness of the first row
is straightforward and
the exactness of the second row
follows from the discussions above.
\end{proof}

\begin{rem}
Aside from some modifications,
the formulation and the proof of the corollary
are entirely from \cite[Prop. 6.3.5]{urban},
in which the corollary is proven 
for the case of Eisenstein ideals on modular curves.
We also refer to \cite[Thm 3.4.1]{urban}
for a different proof 
which relies on the geometry of eigencurves.
\end{rem}

\section{Construction of Euler systems}

Let $G$ and $H$ denote the algebraic groups over $\F$
defined by \eqref{def:def_unitary}
for $n=2$ and $n=1$ respectively.
The Hermitian pairing on $\K^{\oplus 2}$
that defines $G$ can be twisted into 
a skew-Hermitian pairing after multiplied by 
a nonzero $\delta\in \K$ such that $\delta=-\bar{\delta}$.
Then $(G, H)$ forms a dual reductive pair
and the theory of theta correspondence 
allows us to transfer automorphic representations
between $G$ and $H$.

In particular, 
let $\eta$ be a Hecke character on $\K$,
then the theta lift $\pi=\pi_\eta$ of the restriction of $\eta$
to $H(\A)=\A_{\K}^1$ could be zero or an irreducible 
automorphic representation. 
And when $\pi\neq 0$, the associated Galois representation
$r_\pi$ is essentially the direct product of $\eta$ and $\eta^c$.
Alternatively, one can think of $\pi$ as the Jacquet-Langlands
transfer of the classical CM representations.

In the author's previous work \cite{lee},
it is shown that the theta lifts
can be obtained from the quasi-split unitary group
of signature $(2,2)$ through a pull-back procedure.
Using the theory of integral models
of PEL-type Shimura varieties
associated to quasi-split unitary groups
developed in \cite{Hida04},
it is then shown in \cite{lee}
that these theta lifts 
are $p$-integral up to powers of the canonical
CM periods of $\K$
and form a Hida family.
We note that the same technique has already appeared in \cite{wan}.
And such pull-back argument is well-known 
for Eisenstein series and is used for example in 
\cite{SU},\cite{EHLS}, and \cite{Hsieh2014}.

To fix the idea, let $\psi$ be a finite order Hecke character
of $\K$ which is ramified only at primes that splits in $\K$ and
such that the associated character of the Galois group
is anticyclotomic.
Moreover, we assume that $\psi$ satisfies
\begin{equation}\label{cond:gen_psi}\tag{$\psi$-gen}
    \psi\vert_{D_{w_0}}\not\equiv 1,\omega^{\pm1}.
\end{equation}
For each ideal $\ff\subset \oo_\K$
that is a square-free product of split primes
and such that $\ff+\ff^c=\oo_K$,
we put $\fs=\ff\ff^c$ and
let $\fG_{\fs}$ and $\fG_\fs^a$ denote respectively
the Galois group of the maximal pro-$p$ extension over $\K$
with tame level $\fs$ and the anticyclotomic quotient.
Use the results from \cite{lee},
we will show the existence of a Hida family
$\euF_\fs$ over 
$\I_\fs\coloneqq \eo\llbracket\fG_\fs^a\rrbracket$,
and verify \ref{cond:C1} and \ref{cond:C3}
for the corresponding Hecke eigensystem.
We remark that here $\eo$ is a finite extension
over $\bar{\Q}_p^{un}$,
but the relevant results from previous sections still holds 
after the flat base change from $\oo$,
the ring of integers of a finite field extension over $\Qp$,
to $\eo$.

Then after verifying \ref{cond:C2},
we will apply Corollary \ref{cor:fund}
and construct a nontrivial cohomology class for 
\[
    \Psi_\fs\colon\Gal_\K\to \I_\fs\quad
    \gamma\mapsto \psi(\gamma)^{-1}\langle \gamma\rangle
\]
where $\langle*\rangle$ denote the tautological character
to $\I_\fs=\eo\llbracket\fG_\fs^a\rrbracket$.
At last, we show that these classes are compatible under the norm maps 
in a way that justifies calling them
an ``anticyclotomic Euler system''
when we vary the ideal $\ff$.
We postpone the application of the Euler system
to Iwasawa theory till the next section.

\subsection{Hida family of theta lifts}

To define the theta correspondence between $G$ and $H$
it is necessary to fix an auxiliary Hecke character
$\chi$ of $\K$ which restricts to 
$\qch_{\K/\F}$ on $\A^\times$.
We further require that $\chi$ satisfies the following conditions.
\begin{enumerate}[label={($\chi$\arabic*)}]
    \item \label{cond:chi1}
    The central $L$-value $L(\frac{1}{2},\chi)$ is nonzero.
    \item \label{cond:chi2}
    The conductor $\fc$ of $\chi$ is prime-to-$p$.
    \item \label{cond:chi3}
    The Hecke character 
    $\chi_\circ\coloneqq \chi|\cdot|_\K^{1/2}$ is algebraic and
    has the infinity type $\Sigma^c$
\end{enumerate}

\begin{defn}\label{def:admchar}
An algebraic Hecke character $\eta$ of $\K$ is admissible
if it satisfies the following conditions.
\begin{enumerate}
\item 
There exists an ideal $\fs=\fs^c$
consisting only of split primes such that 
the conductor of $\eta$ divides $\fs$.
\item 
It has the infinity type $k\Sigma$
for some $k\geq 0$.
\end{enumerate}
\end{defn}

\begin{prop}\label{prop:single}
When $\eta$ is admissible of the infinity type $k\Sigma$,
there exists an algebraic modular form
$f(\eta)\in S_{\wt{k}}^{\ord}(K^p\Iw(p^{n,n}),\eo)$,
of weight $\wt{k}=(0,-k\Sigma)$ and level given by
some open compact subgroup $K\subset G(\A_f)$,
with the following properties.
\begin{itemize}
\item The central character of $f(\eta)$
is the restriction of $\hat{\eta}$ to $\A_{\K,f}^1$.
\item  It is an eigenform for 
$\TT_{\wt{k}}(U^p\Iw(p^{n,n}),\eo)$ with eigenvectors:
\begin{align*}
&T_w^{(1)}\mapsto
q_w\chi_\circ(\varpi_w)+\chi_\circ^{-1}\tilde{\eta}(\varpi_w)&
&T_w^{(2)}\mapsto \tilde{\eta}(\varpi_w)\\
&U_{\wt{k},w}^{(1)}\mapsto
(\chi_\circ^{-1}\tilde{\eta}^\wedge)(\varpi_w)&
&U_{\wt{k},w}^{(2)}\mapsto
\tilde{\eta}^\wedge(\varpi_w)&
&\langle u\rangle_{\wt{k}}\mapsto
\tilde{\eta}^\wedge(u_{11})\\
&U_{w}^{(1)}\mapsto\chi_\circ^{-1}\tilde{\eta}(\varpi_w)&
&U_{w}^{(2)}\mapsto\tilde{\eta}(\varpi_w)&
&\langle u\rangle\mapsto \tilde{\eta}(u_{11})
\end{align*}
\end{itemize}
\end{prop}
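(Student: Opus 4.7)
The plan is to construct $f(\eta)$ as the Jacquet--Langlands/theta transfer of a Hecke character, following the pull-back procedure from \cite{lee}. First, I would form the theta lift of $\eta|_{H(\A)}$ (or more precisely of the Hecke character on $\A_\K^1$ obtained by restriction, viewed as an automorphic form on $H = \UU(1)$) to $G = \UU(2)$, using the dual pair $(G,H)$ together with the splitting character $\chi$. The conditions \ref{cond:chi1}--\ref{cond:chi3} ensure respectively that the seesaw integral governing nonvanishing of the lift is not zero, that the level structure at $p$ is preserved, and that the archimedean components produce the holomorphic discrete series corresponding to the weight $\wt{k}=(0,-k\Sigma)$ when combined with $\eta$ of infinity type $k\Sigma$. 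This produces an automorphic form whose adelic level is determined by the conductors of $\eta$ and $\chi$, and whose local component at each $v \in \Sigma_p$ is the unramified principal series associated to $\tilde\eta_w$ and $\chi_{\circ,w}$.

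Second, to obtain $p$-integrality I would invoke the realization of the theta lift on the Shimura variety attached to the quasi-split unitary group of signature $(2,2)$ via the doubling pull-back of \cite{lee}. After dividing out by the canonical CM period, one gets an element of $S_{\wt{k}}(K^p\Iw(p^{n,n}),\eo)$ for the appropriate open compact $K^p$. The condition that the conductor of $\eta$ divides a product $\fs=\ff\ff^c$ of split primes controls the tame level, and the Iwahori subgroup at $p$ is imposed to carry out the $p$-stabilization.

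Third, the Hecke eigenvalues need to be computed. At a split prime $v=w\bw$ outside the ramification of $\eta$, the local theta correspondence shows that $\pi_{\eta,w}$ is the unramified principal series with Satake parameters $\{q_w^{1/2}\chi_\circ(\varpi_w),\, q_w^{-1/2}\chi_\circ^{-1}\tilde\eta(\varpi_w)\}$ in the unitarily normalized convention, from which the eigenvalues for $T_w^{(1)}$ and $T_w^{(2)}$ (which in the conventions of \eqref{def:hecke_away_p} are unnormalized double cosets) follow by direct Satake computation. At $w \in \Sigma_p$, the $U_{\wt{k},w}^{(j)}$-eigenvalues arise from choosing the $p$-stabilization on the root $\chi_\circ^{-1}\tilde\eta(\varpi_w)$, whose Newton slope is determined by the infinity type $k\Sigma$ and the ordinary CM type; the diamond operator eigenvalues come from the central character of the local principal series restricted to $T_n(\oo_w)$. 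At $v \in S$, the eigenvalues are read off from the component of $\tilde{\eta}$ at $v$, using the Iwahori-fixed vector in the local principal series.

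The hard part, and the place where the approach of \cite{lee} does the real work, is verifying that the chosen $p$-stabilization yields a $p$-adic unit eigenvalue for $U_{\wt{k},w}^{(2)}$ so that the resulting form is genuinely ordinary (i.e.\ survives $e_B$) and lies in $S_{\wt{k}}^{\ord}(K^p\Iw(p^{n,n}),\eo)$ rather than merely in $S_{\wt{k}}(K^p\Iw(p^{n,n}),\eo)\otimes E$. This ordinariness is the content of the slope computation: the infinity type $k\Sigma$ supported on the ordinary CM type guarantees that $\tilde\eta^{\wedge}(\varpi_w)$ is a $p$-unit, and then the explicit Satake formula above gives the listed eigenvalue for $U_{\wt{k},w}^{(2)}$. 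A secondary bookkeeping obstacle is keeping the Tate twists straight between $\chi$, $\chi_\circ=\chi|\cdot|_\K^{1/2}$, and the unnormalized induction implicit in the $T_w^{(1)}$ formula, since the factor $q_w$ in $q_w\chi_\circ(\varpi_w)$ is exactly the discrepancy between normalized and unnormalized Satake parameters.
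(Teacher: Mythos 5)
Your outline correctly reconstructs the strategy of \cite{lee}: realize $f(\eta)$ as the theta lift of $\eta|_{\A_\K^1}$ from $\UU(1)$ to $\UU(2)$ using the splitting character $\chi$, obtain $p$-integrality and ordinariness via the doubling pull-back from quasi-split $\UU(2,2)$ together with Hida's integral PEL models, and read off the Hecke eigenvalues from the local theta correspondence and Satake parameters, with the ordinary $p$-stabilization dictated by the CM type. The paper's own proof is simply the citation to \cite[Props.~6.6, 6.7]{lee}, which carry out exactly this argument, so your sketch matches the intended approach.
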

\begin{proof}
    This is \cite[Prop. 6.6]{lee} and \cite[Prop. 6.7]{lee} 
    respectively.
\end{proof}

\begin{rem}
    In the notation of \cite{lee} $f(\eta)$ is an algebraic modular
    form for the representation $\rho_{\wt{k}}$ of 
    lowest weight $(-k\Sigma, 0)$. Therefore we may identify
    $\rho_{\wt{k}}$ with $\xi_{\wt{k}}$ for 
    $\wt{k}=(0, -k\Sigma)$ in the notation of the current article.
\end{rem}

\begin{lem}\label{lem:good_chi}
Assume that $\K$ satisfies the following conditions.
\begin{enumerate}[label=($\K$\arabic*)]
\item The extension $\K/\F$ is generic 
($\oo_\K^\times\subset \F$
and $Cl_\F\to Cl_\K$ is injective)
in the sense of \cite{Rohrlich},
\label{cond:K2}
\item Every prime of $\F$ above $2$ splits in $\K$.
\label{cond:K3}
\end{enumerate}
Then there exists a unitary Hecke character $\chi$
with the following properties.
\begin{enumerate}
\item The restriction of $\chi$ to $\A_F^\times$ is $\qch_{\K/\F}$.
\item The character $\chi_\circ=\chi|\cdot|^{1/2}_\K$
has the infinity type $\Sigma^c$.
\item The central value $L(1/2,\chi)$ is nonzero.
\end{enumerate}
\end{lem}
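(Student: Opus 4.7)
The plan proceeds in two stages: first I would establish the algebraic existence of a Hecke character satisfying conditions (1) and (2), then invoke a nonvanishing theorem of Rohrlich to refine the choice so that $L(1/2,\chi)\neq 0$.

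For the existence step, condition (2) pins down the archimedean component of $\chi$ completely: at each $\sigma\in\Sigma$, viewing $\chi_\sigma$ as a character of $\C^\times$, we require $\chi_{\circ,\sigma}(z)=\bar z$, equivalently $\chi_\sigma(z)=\bar z/(z\bar z)^{1/2}$. Together with condition (1), this forces the relation $\chi\cdot \chi^c=\qch_{\K/\F}|\cdot|_\K$ on $\A_\K^\times$, so that the set of admissible $\chi$ is a torsor under the group of finite-order characters of $\A_\K^1/\K^1$. To produce one base point, I would build $\chi$ place by place. Archimedean places are determined by the infinity type. At a finite place $v$ of $\F$ that splits in $\K$ as $v=w\bw$, the restriction condition determines the product $\chi_w\chi_{\bw}$ and leaves freedom in the ratio. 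At an inert or ramified place, one needs a character of $\K_v^\times$ whose restriction to $\F_v^\times$ is the prescribed local quadratic character; such a character exists locally, and condition \ref{cond:K3} eliminates the only serious local obstruction, which would come from dyadic primes where the quadratic character $\qch_{\K/\F,v}$ is nontrivial. One then glues the local characters into a global Hecke character by a standard class field theory argument, using that the product of the local character values on any global unit is trivial (this is where the control of $\oo_\K^\times\subset \F$ afforded by \ref{cond:K2} helps).

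For the nonvanishing step, I would appeal directly to Rohrlich's theorem from \cite{Rohrlich}. Given any base character $\chi_0$ satisfying (1) and (2), the family $\{\chi_0\nu\}$, where $\nu$ ranges over finite-order characters of $\A_\K^1/\K^1$, also satisfies (1) and (2). Rohrlich's theorem, valid precisely under the genericity condition \ref{cond:K2}, asserts that in such an anticyclotomic family all but finitely many members have nonvanishing central $L$-value. Choosing any such $\nu$ and replacing $\chi_0$ by $\chi_0\nu$ produces the desired $\chi$.

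The main subtlety is matching normalizations: Rohrlich phrases his result in terms of twists of a given algebraic Hecke character of specified infinity type, and one must verify that after absorbing the $|\cdot|_\K^{1/2}$ shift into the infinity type the family one gets is the family that his theorem covers. This is a bookkeeping matter rather than a conceptual obstacle. The real content, namely the existence of infinitely many non-vanishing central values in an anticyclotomic family, is exactly the theorem of Rohrlich and is the primary reason for imposing \ref{cond:K2} and \ref{cond:K3} in the final statement.
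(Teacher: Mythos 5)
Your outline correctly identifies the two stages (produce a character with the right archimedean and $\A_\F^\times$-restriction behavior, then twist to get nonvanishing), but the second stage as you describe it has a genuine gap, and the role of condition \ref{cond:K3} is misidentified.

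For existence, the paper does not glue local characters by hand; it simply takes Rohrlich's \emph{canonical Hecke character} $\chi_{\mathrm{can}}$ of infinity type $-\Sigma^c$, whose construction is the actual content of \cite{Rohrlich} and is exactly what condition \ref{cond:K2} is for. Your place-by-place construction might be made to work, but you invoke \ref{cond:K3} to ``eliminate the only serious local obstruction'' at dyadic places; this is not where \ref{cond:K3} is used. In fact there is no local obstruction being killed at $2$: the point of assuming every prime above $2$ splits is to control the \emph{global root number}. Under \ref{cond:K3} one has $W(\chi_{\mathrm{can}})=+1$, and without this there would be no nonvanishing to be had at all: if the sign of the functional equation of the self-dual family is $-1$, every central value in the anticyclotomic family vanishes identically and no twist can rescue you.

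The bigger issue is the nonvanishing theorem you cite. You attribute to Rohrlich a theorem asserting that, under genericity, all but finitely many members of an anticyclotomic family of central $L$-values over a CM field $\K/\F$ are nonzero. Rohrlich's nonvanishing results are for imaginary quadratic fields (that is, $\F=\Q$), and \cite{Rohrlich} as cited in this paper is used for the canonical character, not for nonvanishing. The result the paper actually needs and uses is Hsieh's theorem \cite[Thm.~A]{Hsieh2012} (building on Hida), which gives nonvanishing modulo $p$ of central $L$-values of anticyclotomic twists of a self-dual Hecke character over a general CM field, \emph{conditional on the root number being $+1$} — which is precisely what \ref{cond:K3} buys. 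As written, your argument invokes a theorem that does not cover the setting, and it does not explain why the family has the right sign for any member to have a chance of being nonzero. Fixing this requires both identifying the correct nonvanishing input and correctly routing \ref{cond:K3} through the root-number computation.
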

\begin{proof}

Let $\chi_{can}$
be the canonical Hecke character 
with the infinity type $-\Phi=-\Sigma^c$
defined in \cite{Rohrlich} 
under the condition \ref{cond:K2}.
Then $\chi_{can}$ 
is ramified precisely at all $w\mid \mathfrak{d}_{\K/\F}$.
And the restriction of $\chi_{can}$ to $\oo_w^\times$
is the unique character 
of conductor $(\varpi_w)$ 
that extends $\qch_{\K_w/\F_v}$ for $w\mid v$.
It follows from \cite[\S 8]{Rohrlich}
(see also \cite[Lem 2.1]{Rod})
that the root number $W(\chi_{can})=1$
under the condition \ref{cond:K3}.

Therefore the conditions in
\cite[Thm A]{Hsieh2012}
are satisfied for the self-dual character
$\chi_{can}^{-1}$.
Consequently for any split prime $\fl_\circ$
that is prime to $p$, there exists 
a twist of which by a finite-order 
anticyclotomic character
ramified at $\fl_\circ\fl^s_\circ$
for which the central $L$-value is nonzero
(in fact, the algebraic part of the 
the central $L$-value is nonzero modulo $p$).
We take $\chi$
for which $\chi_\circ=\chi|\cdot|^{1/2}_\K$
is such a twist.

Now the last property above follows by construction.
The second follows from that 
$\chi_\circ$ is a finite-order twist of $\chi_{can}^{-1}$,
which has the infinity type $\Sigma^c$.
And the first follows from that 
$\chi_{can}$ restricts to $\qch_{\K/\F}|\cdot|^{-1}_\F$.
\end{proof}

From now on we fix a character $\chi$ as in the Lemma. 
To apply the results in \cite{lee},
we assume that there exists a Hecke character $\mu$
such that $\psi=\mu^c/\mu$ and
\begin{equation}\label{cond:can}\tag{can}
    \chi_\circ\mu \text{ is ramified only at split primes},
\end{equation}
which means that $\mu$ is unramifed at all inert places
and $\mu\vert_{\oo_w^\times}$
is the unique nontrivial quadratic character 
at all ramified places (recall that all ramified places are odd
since we assumed that all places above $2$ split in $\K/\F$).
Write $\fG_\fs=\fG$ when $\fs=\oo_\K$
and let $\fX$ denote the set of characters of $\fG$
that are p-adic avatars of algebraic Hecke characters 
of infinity type $(k+1)\Sigma$ for $k\geq0$. 
Pick any $\widehat{\alpha}\in\fX$,
then by Proposition \ref{prop:single}
we have an algebraic modular form 
$f(\eta)\in S^\ord_{\wt{k}}(K^p\Iw(p^{n,n}),\eo)$
for $\eta=\chi_\circ\mu\alpha$,
where $K\subset G(\A_f)$ is an open compact subgroup 
independent of $\eta$
and $\wt{k}=(0,-k\Sigma)$ if $\alpha$
has the infinity type $(k+1)\Sigma$.
By shrinking $K^p$ if necessary, we may assume that
$K^p$ satisfies \eqref{cond:small} for $S=\emptyset$.

Let $\ff$ be an square-free prime-to-$p$ ideal that is divisible
only by split primes such that 
\begin{itemize}
    \item $\ff+\ff^c=\oo_\K$.
    \item if $w\mid\ff$ then $\iota_w^{-1}(\GL_2(\oo_w))\subset K$.
\end{itemize}
We let $U^p=U_\fs^p\subset K^p$ be the subgroup obtained by 
replacing the component of $K$ at each 
$v\in\finite$ which divdes $\fs\coloneqq \ff\ff^c$
with $\Iw_1(w)$.
By definition $U^p$ satisfies \eqref{cond:s-ram}
if $S$ is the set of places dividing $\fs$.

For $w\in \ff$ and $w\mid v$
let $\Delta_v=\Iw(w)/\Iw_1(w)$
and $\Delta_{\fs}=\prod_{v\mid \ff}\Delta_v$.
We put 
\[
\Lambda=\eo\llbracket T(p^1)\rrbracket,\quad
\Lambda_\fs=\Lambda[\Delta_\fs],\quad
\Lambda_\fs^+=\Lambda_\fs[\Delta_p],\quad
\]
as in \eqref{def:lambda_rings}
and let $\Lambda^+_\fs\to \I_\fs$
be the ring homomorphism induced by 
the map $T(p^0)\times \Delta_{\fs}\to \fG_\fs^a$,
which is the composition of
the projection to the upper-left entry
from $T(p^0)\times \Delta_{\fs}$ to 
$\prod_{w\in \Sigma_p}\oo_w^\times\times 
\prod_{w\mid \ff}\big(\oo_w^\times/(1+\varpi_w\oo_w)\big)$
and the reciprocity map \eqref{eq:anticyc_rec}.
We then let $S^{\ord}(U^p,\I_\fs)$ 
denote the space of $\I_\fs$-adic Hida families
as in Definition \ref{def:Hida_family}.

Let $\fX_\fs$ denote the set of characters of $\fG_\fs$
that are p-adic avatars of algebraic Hecke characters 
of infinity type $(k+1)\Sigma$ for $k\geq 0$.
We can view $\widehat{\alpha}\in\fX_\fs$ as a character of
$\fG_\fs^a$ via the injection $\fG_\fs^a\hookrightarrow\fG_\fs$
in \eqref{eq:anticyc_rec}.
\begin{prop}\cite[Thm. 6.8]{lee}\label{prop:family}
    There exists a Hida family
    $\euF\in S^{\ord}(U^p, \I_\fs)$ such that,
    when viewed as a $S^{\ord}(U^p)$-valued measure on $\fG_\fs^a$,
    \[
    \int_{\fG_\fs^a} \widehat{\alpha}\,d\euF
    =\beta_{\wt{k}}(f(\eta))\quad\text{ for }
    \eta=\chi_\circ\mu\alpha\text{ and }
    \alpha\in \fX_\fs.
    \]
    Here $\wt{k}=(0,-k\Sigma)$ if $\alpha$ has the infinity type
    $(k+1)\Sigma$ and $\beta_{\wt{k}}\colon 
    S^{\ord}_{\wt{k}}(U^p)\to S^{\ord}(U^p)$
    is the injection induced by Proposition \ref{prop:wt_indep},
    using that $\pi_{\wt{k}}^*$ is one-dimenlional in the ordinary case.
    See also \cite[Prop 2.22]{ger}.
\end{prop}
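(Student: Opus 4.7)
The plan is to realize $\euF$ as the unique $S^{\ord}(U^p)$-valued measure on $\fG_\fs^a$ whose moments against arithmetic characters recover the individual theta lifts. The construction has three layers: (i) identify the eigensystems of $f(\eta)$ as specialisations of a single $\I_\fs$-valued system; (ii) show the forms glue integrally in the space of ordinary $p$-adic modular forms; (iii) identify the resulting Hida family with a measure on $\fG_\fs^a$.

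First I would read off from Proposition~\ref{prop:single} that the Hecke eigenvalues of $f(\chi_\circ\mu\alpha)$ at the operators generating $\TT^{\ord}(U^p\Iw(p^{n,n}),\eo)$ are, after a common normalisation depending only on the fixed character $\chi_\circ\mu$, values of the $p$-adic avatar $\widehat{\alpha}$ on elements of $\fG_\fs^a$ (via the reciprocity map and the projection $\oo_w^\times\times\oo_{\bw}^\times\twoheadrightarrow\oo_w^\times$ on the split components). Concretely, both $\tilde{\eta}(\varpi_w)$ and $\tilde{\eta}^\wedge(u)$ are built from $(\chi_\circ\mu)^{1-c}\cdot\alpha^{1-c}$, and the $c-1$ twist lands precisely in the anticyclotomic quotient $\fG_\fs^a$. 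This gives a natural candidate homomorphism $\lambda_\fs^\ord\colon\TT^{\ord}(U^p\Iw(p^{n,n}),\eo)\to\I_\fs$ for every $n\geq 1$, compatible among levels.

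Next I would establish integrality and gluing. In \cite{lee} the forms $f(\eta)$ arise by pulling back a Siegel--Eisenstein series from $U(2,2)$ along $G\times H\hookrightarrow U(2,2)$ and pairing with a theta kernel on $H$; the sections are chosen independently of $\alpha$ at all finite places, and the archimedean ratio with the canonical CM period is $p$-integral (and nonvanishing modulo $p$ by \ref{cond:chi1}). Hence, after normalisation by the fixed CM period, the family $\{\beta_{\wt{k}}f(\chi_\circ\mu\alpha)\}_{\alpha\in\fX_\fs}$ consists of $\eo$-integral elements of $S^{\ord}(U^p)$ all lying in the eigenspace of the common $\Lambda_\fs^+$-action (through diamond operators) prescribed by $\lambda_\fs^\ord$. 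By Proposition~\ref{prop:ord_to_dual}, $M^{\ord}(U^p)$ is finite free over $\Lambda^+$, so $S^{\ord}(U^p)\widehat{\otimes}\I_\fs$ also inherits the required finiteness, and the compatibility of $\lambda_\fs^\ord$ under level changes produces a unique $\euF\in S^{\ord}(U^p,\I_\fs)$ with $(\text{id}\otimes\alpha)\euF=\beta_{\wt k}(f(\chi_\circ\mu\alpha))$ for all $\alpha\in\fX_\fs$.

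Finally, the interpolation identity follows by unwinding the definition of a measure: $\int_{\fG_\fs^a}\widehat{\alpha}\,d\euF$ is exactly the image of $\euF$ under the specialisation $\I_\fs\twoheadrightarrow\eo$ induced by $\widehat{\alpha}$. Uniqueness is ensured by the density of $\fX_\fs$ in $\Spec\I_\fs[1/p]$, coming from the fact that $\fG_\fs^a$ has $\Zp$-rank $d$ equal to the number of infinite-type degrees of freedom for admissible $\alpha$. The principal obstacle is the $p$-integrality of the pullback at infinite level: one must control the denominators in the Eisenstein section and in the theta kernel uniformly as $\alpha$ varies, which in \cite{lee} is handled via the geometric theory of \cite{Hida04} for PEL Shimura varieties attached to $U(2,2)$. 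Assuming this input, the Hida-theoretic packaging above assembles the individual specialisations into the desired $\I_\fs$-adic family.
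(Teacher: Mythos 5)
The paper does not actually prove this statement: it is imported verbatim as \cite[Thm.~6.8]{lee}, and the preceding prose only sketches the strategy (pull-back of a Siegel--Eisenstein series from $\UU(2,2)$ paired with a theta kernel, $p$-integrality up to CM periods via the PEL integral models of \cite{Hida04}, and packaging into a $\Lambda$-adic family). Your proposal reconstructs exactly that strategy — eigensystem identification as in Proposition~\ref{prop:eigensystem}, uniform integral gluing via \cite{Hida04}, and uniqueness from Zariski density of $\fX_\fs$ — so it is consistent with the approach the paper attributes to \cite{lee}, and there is nothing in the present text to contrast it against.
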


Moreover, let
$\mathbf{B}\colon S^{\ord}(U^p,\I_\fs)\times 
S^{\ord}(U^p,\I_\fs)\to\I_\fs$ 
denote the pairing defined in \cite{lee}
and let $L=\Omega_p^{-2\Sigma}\mathbf{B}(\euF,U_\fs^{-1}\euF)$,
where $U_{\fs}$ be the product of all $U_w^{(1)}$ for $w\mid\ff$.
Since $\chi^{-2}\tilde{\eta}=\psi^{-1}\tilde{\alpha}$
if $\eta=\chi_\circ\mu\alpha$,
we have the following result from 
\cite[Thm 6.11]{lee}.
\begin{prop}\label{prop:L-function_at_s}
Suppose $\widehat{\alpha}\in\fX_\fs$ and $\alpha$
has the infinity type $(k+1)\Sigma$, then
\begin{equation}
        \frac{1}{\Omega_p^{(2k+2)\Sigma}}
        \int_{\fG_{\fs}^a}\widehat{\alpha}\,L=C(\K,\chi)
        \left(\frac{2\pi}{\Omega_\infty}\right)^{(2k+2)\Sigma}
    \textnormal{Im}(\delta)^{k}\tilde{\eta}(z_\delta^{-1})
    \frac{\Gamma((k+2)\Sigma)}{(2\pi)^{(k+2)\Sigma}}
    L(1,\psi^{-1}\tilde{\alpha})
    \prod_{v}E_v(\psi^{-1}\tilde{\alpha}).
\end{equation}
where the product ranges through $v\in\finite$
that divides $p\fs$ or the conductor of $\mu$, and 
$E_v(\psi^{-1}\tilde{\alpha})$ is the modified Euler factor 
\[
\frac{
(1-\psi^{-1}\tilde{\alpha}(\varpi_\bw)q_\bw^{-1})
(1-\psi\tilde{\alpha}^{-1}(\varpi_w))}
{{\varepsilon(1,(\psi^{-1}\tilde{\alpha})_w,\psi_w)}}
\]
Here for each $v\in\finite$ that splits we fix a place $w\mid v$
and assume that $w\in\Sigma_p$ or $w\mid \ff$ if $v\in p\fs$.
\end{prop}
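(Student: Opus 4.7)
The plan is to unwind \cite[Thm.~6.11]{lee} in the present setting using the interpolation of the Hida family $\euF$ established in Proposition~\ref{prop:family}.

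First, I would specialize both sides of the claimed identity at $\widehat\alpha\in\fX_\fs$. Since the pairing $\mathbf{B}$ is $\I_\fs$-linear in each variable and compatible with the classical specialization maps (by its construction in \cite{lee}), combining with Proposition~\ref{prop:family} gives
\[
    \int_{\fG_\fs^a}\widehat\alpha\,L
    =\Omega_p^{-2\Sigma}\,\mathbf{B}\!\left(\beta_{\wt k}(f(\eta)),\,U_\fs^{-1}\beta_{\wt k}(f(\eta))\right),
    \qquad \eta=\chi_\circ\mu\alpha,\ \wt k=(0,-k\Sigma).
\]
The factor $\Omega_p^{(2k+2)\Sigma}$ on the left then records the difference between the natural normalizations of the pairing at weight $\wt k$ versus weight $0$, relative to the $p$-adic CM period.

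Next, I would apply \cite[Thm.~6.11]{lee} to the right-hand side, which expresses the pairing of two classical theta lifts as a product of the CM-period constant $C(\K,\chi)(2\pi/\Omega_\infty)^{(2k+2)\Sigma}$, the archimedean factor $\textnormal{Im}(\delta)^k\,\tilde\eta(z_\delta^{-1})\,\Gamma((k+2)\Sigma)/(2\pi)^{(k+2)\Sigma}$, the central value $L(1,\chi^{-2}\tilde\eta)$, and a finite product of modified local Euler factors. To match the global $L$-factor, I would invoke the algebraic identity $\chi^{-2}\tilde\eta=\psi^{-1}\tilde\alpha$, which follows from $\eta=\chi_\circ\mu\alpha$ together with the relation $\psi=\mu^{1-c}$ guaranteed by \ref{cond:psi2in} and \ref{cond:can}, and the unitary normalization \ref{cond:chi1} of $\chi$ so that $\chi^{-2}\tilde{\chi_\circ}$ is trivial on $\A_\K^1$.

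Finally, I would verify that the local factors output by \cite[Thm.~6.11]{lee} assemble into $\prod_v E_v(\psi^{-1}\tilde\alpha)$ over the claimed set of places. The factors at $v\mid\fs$ arise from the insertion of $U_\fs^{-1}$ on the second argument of $\mathbf{B}$; those at $v\mid p$ come from the ordinary $p$-stabilization built into $\euF$; and those at the ramified primes of $\mu$ from the explicit local computations of $f(\eta)$ in \cite{lee}, which apply here because of \ref{cond:chi2} and \ref{cond:can}. The main obstacle is purely bookkeeping—tracking the auxiliary datum $(\delta,z_\delta)$ from the pull-back construction and reconciling the precise normalizations of $\Omega_p$, $\Omega_\infty$, and $\beta_{\wt k}$—but all underlying computations are carried out in \cite{lee}, so the present proposition is an assembly of those results rather than a new calculation.
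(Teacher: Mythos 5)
Your proposal matches the paper's own treatment: the result is a direct citation of \cite[Thm.~6.11]{lee} applied to $\euF_\fs$, together with the observation that $\chi^{-2}\tilde\eta=\psi^{-1}\tilde\alpha$ for $\eta=\chi_\circ\mu\alpha$, which is exactly what the paper does. One small miscitation: the cancellation $\chi^{-2}\chi_\circ^{1-c}=1$ on $\A_\K^1$ rests on the requirement $\chi\vert_{\A_\F^\times}=\qch_{\K/\F}$ (stated as a baseline assumption on $\chi$ preceding conditions \ref{cond:chi1}--\ref{cond:chi3}), not on the non-vanishing condition \ref{cond:chi1} that you invoked.
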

We refer to \textit{loc.cit.} for the precise definition
of the notations.
Here we only note that
$\Omega_p$ and $\Omega_\infty$
are the canonical CM periods associated to $\K$,
$z_\delta$ is an element in $\A_{\K,f}^\times$, and 
$C(\chi,\K)$ is a nonzero 
constant which involves the central $L$-value
of $\chi$.

\begin{defn}\label{def:family_at_s}

From now on we only consider $\fs=\ff\ff^c$ for 
$\ff$ satisfies the conditions listed above.
For such $\fs$, we let $U_\fs^p$
be the open compact subgroup defined above, which satisfies
\eqref{cond:small} and \eqref{cond:s-ram}
if $S$ is the set of places dividing $\fs$.
We then let
\[
    \euF_{\fs}\in 
    S^{\ord}(U^p_{\fs},\I_\fs),\quad
    L_\fs=\Omega_p^{-2\Sigma}
    \mathbf{B}(\euF_{\fs}, U_\fs^{-1}\euF_{\fs})
    \in \I_\fs
\]
be as given in Proposition \ref{prop:family} and
Proposition \ref{prop:L-function_at_s}.
We will write $\euF_\fs=\euF_\id$ and $L_\fs=L_\id$
when $\fs=\oo_\K$.
\end{defn}

\begin{prop}\label{prop:eigensystem}
The Hida familiy $\euF_\fs$ induces an eigensystem
$\lambda_{\fs}^{\ord}\colon \TT^{\ord}(U^p_\fs,\eo)\to\I_\fs$ with
\begin{align*}
&T_w^{(1)}\mapsto
q_w\chi_\circ(\varpi_w)(1+\Psi_\fs(\varpi_w))&
&T_w^{(2)}\mapsto q_w\chi_\circ^2\Psi_\fs(\varpi_w)\\
&U_{w}^{(1)}\mapsto
\epsilon^{-1}\chi_\circ\Psi_\fs(\varpi_w)&
&U_{w}^{(2)}\mapsto
\epsilon^{-1}\chi_\circ^2\Psi_\fs(\varpi_w)&
&\langle u\rangle\mapsto
\epsilon^{-1}\chi_\circ^2\Psi_\fs(u_{11})\\
&U_{w}^{(1)}\mapsto q_w\chi_\circ\Psi_\fs(\varpi_w)&
&U_{w}^{(2)}\mapsto q_w\chi_\circ^2\Psi_\fs(\varpi_w)&
&\langle u\rangle\mapsto \Psi_\fs(u_{11})
\end{align*}
Here we identify $\Psi_\fs$ with a character of 
$\A_{\K}^\times/\K^\times$ by composition with the 
reciprocity map.
\end{prop}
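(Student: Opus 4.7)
The plan is to establish the eigensystem by specializing at arithmetic characters $\hat{\alpha}\in\fX_\fs$ and invoking the separating property of these specializations on $\I_\fs$. By Proposition \ref{prop:family}, the image of $\euF_\fs$ under the ring homomorphism $\I_\fs\to\eo$ induced by $\hat{\alpha}$ is precisely $\beta_{\wt{k}}(f(\eta))$ with $\eta=\chi_\circ\mu\alpha$ and $\wt{k}=(0,-k\Sigma)$ matching the infinity type $(k+1)\Sigma$ of $\alpha$. Since the arithmetic characters $\hat{\alpha}\in\fX_\fs$ jointly separate elements of $\I_\fs$, it suffices to verify that each entry in the table becomes, after applying every such evaluation, the Hecke eigenvalue of the corresponding operator on $\beta_{\wt{k}}(f(\eta))$ in the big ordinary cohomology.

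To access those eigenvalues I would combine Proposition \ref{prop:single}, which gives the eigensystem of $f(\eta)$ as a form of weight $\wt{k}$, with the surjection $\varphi_{\wt{k}}\colon\TT^{\ord}(U^p_\fs,\eo)\twoheadrightarrow\TT^{\ord}_{\wt{k}}(U^p_\fs,\eo)$ of Proposition \ref{prop:wt_indep}. Because $\varphi_{\wt{k}}$ sends $T_w^{(j)}$ to itself, sends the operators $U_w^{(j)}$ and $\langle u\rangle$ at $p$ to $U_{\wt{k},w}^{(j)}$ and $\langle u\rangle_{\wt{k}}$ respectively, and acts as the identity on the operators at places dividing $\fs$, the eigenvalue of any $T\in\TT^{\ord}(U^p_\fs,\eo)$ on $\beta_{\wt{k}}(f(\eta))$ coincides with the eigenvalue of $\varphi_{\wt{k}}(T)$ on $f(\eta)$. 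Proposition \ref{prop:single} then provides each such eigenvalue in terms of $\chi_\circ$, $\tilde{\eta}$, and---at $p$---the $p$-adic avatar $\tilde{\eta}^\wedge$.

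It remains to rewrite these eigenvalues as specializations of the displayed $\I_\fs$-valued formulas. Using $\eta=\chi_\circ\mu\alpha$ together with $\psi=\mu^{c-1}$, one obtains $\tilde{\eta}=\chi_\circ^{1-c}\psi^{-1}\tilde{\alpha}$; the factor $\chi_\circ^{1-c}(\varpi_w)$, computed via $\chi_\circ|_{\A_\F^\times}=\qch_{\K/\F}|\cdot|_\F^{1/2}$, contributes exactly the power of $q_w$ appearing in the $T_w^{(2)}$ and $U_w^{(j)}$ entries at places dividing $\fs$, while passing from $\tilde{\eta}$ to $\tilde{\eta}^\wedge$ at places in $\Sigma_p$ produces the extra $\epsilon^{-1}$ coming from the Hodge--Tate twist of $\chi_\circ^\wedge$ together with the weight-normalization $(w_0\wt{k})^{-1}$ built into $\langle u\rangle_{\wt{k}}$ and $U_{\wt{k},w}^{(j)}$. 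Finally, the Artin reciprocity map identifies $\tilde{\alpha}(\varpi_w)$ with the value of $\hat{\alpha}$ at $\Fr_w$, so $\psi^{-1}\tilde{\alpha}$ is precisely the specialization of $\Psi_\fs$ at $\hat{\alpha}$, and the same identification handles the diamond entries via $\tilde{\alpha}(u_{11})$. The main obstacle is purely one of bookkeeping: consistently tracking the half-integer twist inside $\chi_\circ=\chi|\cdot|_\K^{1/2}$ and the resulting $q_w$-powers at split uniformizers, the Hodge--Tate shift hidden in $\tilde{\eta}^\wedge$, and the weight-dependent diamond normalization at $p$; once these are aligned, the table follows entry by entry.
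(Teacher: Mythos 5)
Your proof follows essentially the same route as the paper's: specialize at arithmetic characters $\hat{\alpha}\in\fX_\fs$ (which is implicit in the paper's one‑sentence proof), reduce to the eigenvalue formulas in Proposition \ref{prop:single}, and rewrite $\tilde{\eta}$ in terms of $\chi_\circ$, $\psi^{-1}$, and $\tilde{\alpha}$. You spell out the intermediate steps more explicitly (in particular the role of $\varphi_{\wt{k}}$ from Proposition \ref{prop:wt_indep} and the separating property of arithmetic specializations), but the core computation $\tilde{\eta}=\chi_\circ^{1-c}\psi^{-1}\tilde{\alpha}=\chi^2\psi^{-1}\tilde{\alpha}=\chi_\circ^2|\cdot|_\K^{-1}\psi^{-1}\tilde{\alpha}$ together with $\widehat{\alpha}\circ\Psi_\fs\circ\Art=\psi^{-1}\tilde{\alpha}^\wedge$ is exactly what the paper uses. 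One small slip: since $|\cdot|_\K$ restricts to $|\cdot|_\F^2$ on $\A_\F^\times$, the restriction of $\chi_\circ$ to $\A_\F^\times$ is $\qch_{\K/\F}|\cdot|_\F$, not $\qch_{\K/\F}|\cdot|_\F^{1/2}$; this does not affect your argument because in $\chi_\circ^{1-c}=\chi^{1-c}$ the norm factor cancels, and the $q_w$-powers genuinely come from the $|\cdot|_\K^{-1}$ in $\chi^2=\chi_\circ^2|\cdot|_\K^{-1}$.
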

\begin{proof}
Since $\widehat{\alpha}\circ\Psi_\fs(\Art(\varpi))=
\psi^{-1}\tilde{\alpha}^\wedge(\varpi_w)$,
the proposition is a consequence of Proposition \ref{prop:single}
and that 
$\tilde{\eta}=\chi^2\psi^{-1}\tilde{\alpha}
=\chi_\circ^2|\cdot|^{-1}\psi^{-1}\tilde{\alpha}$.
\end{proof}

\begin{cor}\label{cor:reducible}
Let $T^{\ord}=T^{\ord}(U^p_\fs)
\colon\Gal_\K\to \TT^{\ord}(U^p_\fs,\eo)$
be the big ordinary Galois pseudo-representation
as in Definition \ref{def:big_Gal}, then we have
\begin{equation*}
    \lambda_{\fs}^{\ord}\circ T^{\ord}=
    \epsilon^{-1}\hat{\chi}_\circ+
    \epsilon^{-1}\hat{\chi}_\circ\Psi_\fs,\quad
    \lambda_{\fs}^{\ord}\circ (\epsilon \det T^{\ord})=
    \epsilon^{-1}\hat{\chi}^2_\circ\Psi_\fs.
\end{equation*}
\end{cor}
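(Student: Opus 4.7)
The plan is to verify both equalities by evaluating at geometric Frobenius elements at split primes and invoking Chebotarev's density theorem, which reduces the corollary to a direct comparison of eigenvalues already listed in Proposition \ref{prop:eigensystem}.

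First, by the construction of the big ordinary Galois pseudo-representation in Definition \ref{def:big_Gal} and \eqref{eq:pseudo_rep_finite}, for every prime $v=w\bw$ of $\F$ that splits in $\K$ and at which the open compact subgroup $U^p_\fs$ is hyperspecial (so in particular $v\nmid p\fs$ and $v$ is outside the conductor of $\mu$), we have $T^{\ord}(\Fr_w)=T_w^{(1)}$ and $(\epsilon\det T^{\ord})(\Fr_w)=T_w^{(2)}$. Composing with $\lambda_\fs^{\ord}$ and invoking Proposition \ref{prop:eigensystem} gives
\[
\lambda_\fs^{\ord}(T^{\ord}(\Fr_w))=q_w\chi_\circ(\varpi_w)\bigl(1+\Psi_\fs(\varpi_w)\bigr),\quad
\lambda_\fs^{\ord}((\epsilon\det T^{\ord})(\Fr_w))=q_w\chi_\circ^2(\varpi_w)\Psi_\fs(\varpi_w).
\]

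Next, I would evaluate the claimed characters at the same $\Fr_w$. Using the normalization $\Art_w(\varpi_w)=\Fr_w$ together with $\epsilon(\Fr_w)=q_w^{-1}$ (so $\epsilon^{-1}(\Fr_w)=q_w$), and the fact that $\chi_\circ$ is unramified at $v$ and satisfies $\hat{\chi}_\circ(\Fr_w)=\chi_\circ(\varpi_w)$, together with $\Psi_\fs(\Fr_w)=\Psi_\fs(\varpi_w)$ through the identification explicitly made at the end of Proposition \ref{prop:eigensystem}, I obtain
\[
(\epsilon^{-1}\hat{\chi}_\circ+\epsilon^{-1}\hat{\chi}_\circ\Psi_\fs)(\Fr_w)=q_w\chi_\circ(\varpi_w)\bigl(1+\Psi_\fs(\varpi_w)\bigr),\quad
(\epsilon^{-1}\hat{\chi}_\circ^2\Psi_\fs)(\Fr_w)=q_w\chi_\circ^2(\varpi_w)\Psi_\fs(\varpi_w),
\]
matching both traces and determinants on the Frobenius elements in question.

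Finally, since the collection of such Frobenius elements $\{\Fr_w\}$ has density one in $\Gal_\K$, and both $\lambda_\fs^{\ord}\circ T^{\ord}$ and $\epsilon^{-1}\hat{\chi}_\circ+\epsilon^{-1}\hat{\chi}_\circ\Psi_\fs$ are continuous two-dimensional pseudo-representations of $\Gal_\K$ valued in $\I_\fs$, Chebotarev's density theorem forces the two to coincide globally; the same argument applied to the determinant gives the second equality. The corollary is really a bookkeeping consequence of Proposition \ref{prop:eigensystem}, and the only point requiring care — not a genuine obstacle — is to line up the conventions for geometric Frobenius, the $p$-adic avatar, and the Artin reciprocity map at $w$ so that the evaluations on both sides truly agree.
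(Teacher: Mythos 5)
Your proof is correct and follows exactly the route the paper takes: the paper's own proof simply says the claim ``is immediate from Chebotarev's density and the formulae in the previous proposition,'' and your write-up is the spelled-out version of that one-liner, including the only computation worth doing explicitly, namely that $\epsilon^{-1}(\Fr_w)=q_w$ and $\hat{\chi}_\circ(\Fr_w)=\chi_\circ(\varpi_w)$ so that the two sides agree on the density-one set of split Frobenii covered by Proposition \ref{prop:eigensystem}.
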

\begin{proof}
The claim is immediate from Chebotarev's density
and the formulae in the previous proposition.
\end{proof}

\subsection{Main construction}

We now apply the formulation in \S\ref{sub:fund_exact_sequence}
to the eigensystem $\lambda^{\ord}_\fs$. Let
\[
    \lambda_\fs\colon \TT(U^p,\eo)\to\TT^{\ord}(U^p,\eo)\to\I_\fs
\]
denote the composition with $\lambda^{\ord}_\fs$,
$\fm\subset \TT(U^p_\fs,\eo)$ and
$\fm_1\subset \TT^\ord(U^p_\fs,\eo)$
denote the pre-images of the maximal ideal $\fm_{\I_\fs}$ of $\I_\fs$,
and let $\bar{\delta_1}, \bar{\delta_2}$
denote $\epsilon^{-1}\hat{\chi}_\circ\bmod \fm_{\I_\fs}$ 
and $\epsilon^{-1}\hat{\chi}_\circ\Psi_\fs\bmod \fm_{\I_\fs}$ 
respectively. We then have
\begin{itemize}
\item $T(U^p_\fs)\equiv\bar{\delta}_1+\bar{\delta}_2\bmod \fm$.
\item The maximal ideal $\fm$ satisfies \eqref{cond:red_gen}
because of \eqref{cond:gen_psi}.
\item The map $\lambda_\fs\colon \TT(U^p_\fs,\eo)\to \I_\fs$
is surjective by Chebotarev's density
since $\lambda_\fs(T_w^{(2)})=q_w\chi_0^2\Psi_\fs(\varpi_w)$
for $v=w\bw$ in \eqref{def:hecke_away_p},
which form a set of density one.
\end{itemize}

\begin{defn}
Let $M^{\ord}(U^p_\fs,\I_\fs)\cong\Hom_{\I_\fs}
(S^{\ord}(U^p_\fs,\I_\fs),\I_\fs)$
be the isomorphism from Proposition \ref{prop:ord_to_dual}.
We let $F_\fs\in M^{\ord}(U^p_\fs,\I_\fs)$
denote the pre-image of
$\Omega_p^{-2\Sigma}\mathbf{B}(*,U_\fs^{-1}\euF_{\fs})$
under the isomorphism and define
\[
    \Theta_\fs=ev(\euF_\fs)\colon 
    M^{\ord}(U^p_\fs,\I_\fs)\cong\Hom_{\I_\fs}
    (S^{\ord}(U^p_\fs,\I_\fs),\I_\fs)\to\I_\fs.
\]
In particular we have $\Theta_\fs(F_\fs)=
\Omega_p^{-2\Sigma}\mathbf{B}(\euF_\fs,U_\fs^{-1}\euF_\fs)=L_\fs$.
\end{defn}

\begin{lem}
The choice of $\Theta_\fs$ and $F_\fs$ above
satisfies \ref{cond:C1} and \ref{cond:C3}.
\end{lem}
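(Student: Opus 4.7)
The plan is to verify the two conditions in turn; \ref{cond:C1} is essentially formal, while the content of \ref{cond:C3} reduces, via the interpolation formula, to a known nonvanishing of central Hecke $L$-values.

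For \ref{cond:C1}, by definition $\Theta_\fs$ is evaluation at $\euF_\fs$ under the identification $M^{\ord}(U^p_\fs,\I_\fs) \cong \Hom_{\I_\fs}(S^{\ord}(U^p_\fs,\I_\fs),\I_\fs)$. For any Hecke operator $T$ and any $\varphi$ in this Hom one has
\[
\Theta_\fs(T\varphi) = \varphi(T\cdot \euF_\fs) = \lambda^{\ord}_\fs(T)\,\varphi(\euF_\fs) = \lambda^{\ord}_\fs(T)\,\Theta_\fs(\varphi),
\]
where the middle equality uses that $\euF_\fs$ is a $\TT^{\ord}(U^p_\fs,\eo)$-eigenform with system $\lambda^{\ord}_\fs$ (Proposition \ref{prop:eigensystem}). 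Localizing at $\fm_1$ and composing with the natural $\TT(U^p_\fs,\eo)_\fm \to \TT^{\ord}(U^p_\fs,\eo)_{\fm_1}$ from Lemma \ref{lem:coh_to_ord} supplies the required $\TT(U^p_\fs,\eo)_\fm$-equivariance, with $\I_\fs$ carrying the $\TT(U^p_\fs,\eo)_\fm$-algebra structure induced by $\lambda_\fs$.

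For \ref{cond:C3}, the decomposition $\I_\fs \cong \eo\llbracket W\rrbracket[\Delta]$ shows that each character $\alpha$ of $\Delta$ induces a projection $\alpha\colon \I_\fs \to \eo\llbracket W\rrbracket$, and since $\eo\llbracket W\rrbracket$ is an integral domain it suffices to exhibit, for each such $\alpha$, a single character $\widehat{\gamma}$ of $\fG_\fs^a$ with $\widehat{\gamma}\vert_\Delta = \alpha$ and $\widehat{\gamma}(L_\fs)\neq 0$. We look for $\widehat{\gamma}$ in $\fX_\fs$; arranging $\widehat{\gamma}\vert_\Delta = \alpha$ is possible because the restriction map $\fX_\fs \to \widehat{\Delta}$ is surjective (one is free to vary $k$ and the finite part of $\widehat{\gamma}$). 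The interpolation formula of Proposition \ref{prop:L-function_at_s} then reads
\[
\widehat{\gamma}(L_\fs) = C_\gamma \cdot L(1,\psi^{-1}\tilde{\gamma}) \cdot \prod_v E_v(\psi^{-1}\tilde{\gamma}),
\]
where $C_\gamma$ gathers archimedean, period and $\Gamma$-factors and is nonzero by the choice of $\chi$ (Lemma \ref{lem:good_chi}; in particular by \ref{cond:chi1}). The modified Euler factors $E_v$ at the finitely many bad places can be kept nonzero by imposing the open conditions $\psi^{-1}\tilde{\gamma}(\varpi_w) \neq q_w$ and $\psi\tilde{\gamma}^{-1}(\varpi_w) \neq 1$ on $\widehat{\gamma}$ (the $\varepsilon$-factor in the denominator is automatically nonzero).

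The main and only substantive obstacle is then the nonvanishing of $L(1,\psi^{-1}\tilde{\gamma})$ for at least one $\widehat{\gamma}\in\fX_\fs$ with prescribed restriction to $\Delta$. This is supplied precisely by the generic nonvanishing theorems of Rohrlich and Hsieh for critical anticyclotomic Hecke $L$-values, under assumptions \ref{cond:K2in} and \ref{cond:K3in}; in fact Hsieh's results give nonvanishing modulo $p$ for infinitely many characters in each branch, so after fixing $\widehat{\gamma}\vert_\Delta = \alpha$ there remain infinitely many candidates with nonzero central value, and on each of them the Euler-factor conditions can be simultaneously met. These are the same analytic inputs that underwrite the existence of $\chi$ in Lemma \ref{lem:good_chi}; once they are in place the verification of \ref{cond:C3} is a direct application of the interpolation formula.
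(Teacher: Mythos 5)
Your verification of \ref{cond:C1} is essentially identical to the paper's: $\euF_\fs$ is a Hecke eigenform with system $\lambda_\fs^{\ord}$, the pairing $\mathbf B$ is Hecke-equivariant, and $\Theta_\fs=ev(\euF_\fs)$ inherits $\TT(U^p_\fs,\eo)_\fm$-linearity via the map of Lemma \ref{lem:coh_to_ord}. No concerns there.

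For \ref{cond:C3} the paper does something different and more direct: it identifies the image $\alpha(L_\fs)\in\eo\llbracket W\rrbracket$ as (a nonzero multiple of) the anticyclotomic $p$-adic Hecke $L$-function attached to $\epsilon\psi^{-1}\alpha$, whose conductor is divisible only by split primes, and then invokes Hida's nonvanishing theorem for such $p$-adic $L$-functions (the reference \cite{Hida10}). Your route --- find a single $\widehat{\gamma}\in\fX_\fs$ restricting to $\alpha$ on $\Delta$ and use the interpolation formula of Proposition \ref{prop:L-function_at_s} to show $\widehat{\gamma}(L_\fs)\neq 0$ --- is a logically valid strategy for showing $\alpha(L_\fs)\neq 0$, and it has the merit of exposing exactly which factors (periods, Gamma factors, the constant $C(\K,\chi)$, the modified Euler factors $E_v$) need to be controlled.

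However, the justification you give for the crucial nonvanishing step is misattributed and partially incorrect. You claim the nonvanishing of $L(1,\psi^{-1}\tilde{\gamma})$ is ``supplied precisely by the generic nonvanishing theorems of Rohrlich and Hsieh\dots under assumptions \ref{cond:K2in} and \ref{cond:K3in}'' and that ``these are the same analytic inputs that underwrite the existence of $\chi$ in Lemma \ref{lem:good_chi}.'' That is not the case. Rohrlich's canonical-character construction and Hsieh's \cite[Thm A]{Hsieh2012} concern the \emph{central} value $L(1/2,\chi)$ of a \emph{self-dual} Hecke character, and conditions \ref{cond:K2in}, \ref{cond:K3in} serve only to guarantee the existence of such a $\chi$ with $W(\chi_{\mathrm{can}})=1$. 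What \ref{cond:C3} requires is nonvanishing for the family $\{L(1,\psi^{-1}\tilde{\gamma})\}_{\widehat{\gamma}\in\fX_\fs}$, a non-central critical value of an \emph{anticyclotomic} (hence unitary but not self-dual) character; the relevant theorem is Hida's, it is independent of \ref{cond:K2in} and \ref{cond:K3in}, and it is stated as a nonvanishing of the entire $p$-adic $L$-function in $\eo\llbracket W\rrbracket$ rather than of a single complex specialization. If you want to keep your specialization-by-specialization argument, you should cite Hida's result directly (or, at a single specialization, the classical nonvanishing of Hecke $L$-functions at the edge of the critical strip together with a check that some $\widehat{\gamma}$ simultaneously satisfies $\widehat{\gamma}\vert_{\Delta}=\alpha$ and keeps every $E_v\neq 0$) --- but as written the appeal to Rohrlich/Hsieh does not support the step.
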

\begin{proof}
That $\Theta_\fs$ satisfies \ref{cond:C1} 
follows from that $\euF_\fs$ is an eigenform
with eigensystem $\lambda_\fs^{\ord}$
and that the pairing $\mathbf{B}$ is Hecke-equivariant
by \cite[Rmk 5.13]{lee}.
Decompose $\fG_\fs^a=\Delta_\fs^a\times W$
into the finite part and the free part.
The image of $L_\fs=\Theta_\fs(F_\fs)$
under $\alpha\colon \I_\fs\to \eo\llbracket W\rrbracket$,
for $\alpha\in\hat{\Delta}_\fs$,
is a $p$-adic $L$-function 
for the character $\epsilon\psi^{-1}\alpha$.
It is shown in \cite{Hida10}
that the $p$-adic $L$-function for such characters,
with the conductor only divisible by split primes,
is nonzero.
From which we conclude that \ref{cond:C3}
is also satisfied.
\end{proof}

\subsubsection{Residually reducible pseudo-representations}

We briefly recall the process of constructing
cohomology classes from a generically irreducible
pseudo-representation form \cite[\S 2.1]{urban}.
Suppose $T\colon \mathcal{G}\to R$
is a two-dimensional pseudo-representation
into a Henselian local ring $R$
with maximal ideal $\fm_R$
and of odd residual characteristic.
We assume there exists distinct characters
$\bar{\delta}_i\colon \mathcal{G}\to R/\fm_R$ of $\mathcal{G}$
for $i=1,2$ such that  
$T\equiv \bar{\delta}_1+\bar{\delta}_2\bmod \fm_R$.
After fixing $z\in \mathcal{G}$
with $\bar{\delta}_1(z)\neq \bar{\delta}_2(z)$
we apply the Henselian property on
\begin{equation*}
    P(z,X)=
    X^2-T(z)X+\det(T)(z) \equiv 
    (X-\bar{\delta}_1(z))(X-\bar{\delta}_2(z))
    \mod \fm_R
\end{equation*}
and lift the distinct roots $\bar{\delta}_i(z)$
to roots $\alpha,\beta$ of $P(z,X)$
with $\alpha-\beta\in R^\times$.
We then define the functions
\begin{equation}\label{eq:presentation}
   A(\sigma)=
   \frac{T(\sigma z)-\beta T(\sigma)}{\alpha-\beta}\quad
   D(\sigma)=
   \frac{T(\sigma z)-\alpha T(\sigma)}{\beta-\alpha}\quad
   x(\sigma,\tau)=a(\sigma\tau)-a(\sigma)a(\tau).
\end{equation}
Then 
$A(\sigma)\equiv \bar{\delta}_1(\sigma)$,
$D(\sigma)\equiv \bar{\delta}_2(\sigma)\bmod\fm_R$,
and $x(\sigma,\tau)$
generate the reducibility ideal of $T$.

Moreover when there exists $\sigma_0,\tau_0$ such that
$x=x(\sigma_0,\tau_0)\in R$ is not a zero divisor
\begin{equation}\label{eq:present_repn}
    \sigma\mapsto 
    \begin{pmatrix}
        A(\sigma)& B(\sigma)\\
        C(\sigma) & D(\sigma)
    \end{pmatrix}\coloneqq
    \begin{pmatrix}
        A(\sigma)& x(\sigma,\tau_0)/x(\sigma_0,\tau_0)\\
        x(\sigma_0,\sigma) & D(\sigma)
    \end{pmatrix}
        \in \GL_2(R[1/x])
\end{equation}
defines a group representation 
and satisfies $x(\sigma,\tau)=B(\sigma)C(\tau)$.
Let $B\subset R[1/x]$ and $C\subset R$
be the $R$-submodules generated respectively by
$B(\sigma)$ and $C(\sigma)$ for $\sigma\in\mathcal{G}$.
The following proposition is a restatement of 
\cite[Thm 1.5.5]{BC} in this case.
\begin{prop}\label{prop:BC}
Let $S$ be an $R$-algebra with structure 
homomorphism $\phi\colon R\to S$.
Suppose the reducibility ideal 
$I$ of $T$ is contained $\ker(\phi)$,
so $\delta_1\coloneqq \phi\circ A(\sigma)$
and $\delta_2\coloneqq \phi\circ D(\sigma)$
become characters.
Write $\delta=\delta_1\delta_2^{-1}$, then
there exists injections 
\begin{align*}
    \Hom_R(B,S)&\hookrightarrow H^1(\mathcal{G},S(\delta))\quad
    f\mapsto \delta_2(\sigma)^{-1}f(B(\sigma))\\
    \Hom_R(C,S)&\hookrightarrow H^1(\mathcal{G},S(\delta^{-1}))\quad
    f\mapsto \delta_1(\tau)^{-1}f(C(\tau))
\end{align*}
\end{prop}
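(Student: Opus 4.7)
My plan is to extract the cocycle directly from the matrix entries in \eqref{eq:present_repn}, following the Ribet-style argument encoded in the generalized matrix algebra (GMA) formalism of Bellaïche--Chenevier.

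Given $f\in\Hom_R(B,S)$, I would set $c_f(\sigma)\coloneqq \delta_2(\sigma)^{-1}f(B(\sigma))$. The cocycle identity for $S(\delta)$ comes from expanding the $(1,2)$-entry of the group-homomorphism identity $\rho(\sigma\tau)=\rho(\sigma)\rho(\tau)\in\GL_2(R[1/x])$, namely
\[
B(\sigma\tau)=A(\sigma)B(\tau)+B(\sigma)D(\tau).
\]
Applying the $R$-linear map $f$ (with $R$ acting on $S$ through $\phi$) and using $\phi\circ A=\delta_1$ and $\phi\circ D=\delta_2$ (consequences of $I\subset\ker\phi$, since off-diagonal products $B(\sigma)C(\tau)=x(\sigma,\tau)$ all lie in $I$) yields
\[
f(B(\sigma\tau))=\delta_1(\sigma)f(B(\tau))+\delta_2(\tau)f(B(\sigma)),
\]
and dividing by $\delta_2(\sigma\tau)=\delta_2(\sigma)\delta_2(\tau)$ rearranges into $c_f(\sigma\tau)=c_f(\sigma)+\delta(\sigma)c_f(\tau)$, the $\delta$-cocycle identity.

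The map $f\mapsto[c_f]$ is clearly $S$-linear, so for injectivity it suffices to show that $[c_f]=0$ forces $f=0$. Suppose $c_f=dc$ for some $c\in S$; unfolding gives $f(B(\sigma))=(\delta_1-\delta_2)(\sigma)\cdot c$ for all $\sigma$. The block-upper-triangular assignment $\rho_f(\sigma)=\smat{\delta_1(\sigma)&f(B(\sigma))\\0&\delta_2(\sigma)}$ on $S^{\oplus2}$ (which is a valid representation by the cocycle identity above) then splits after conjugation by $\smat{1&c\\0&1}$. To conclude $f=0$ I would invoke the GMA structure recorded in Lemma \ref{lem:B26}: the pairing $B\times C\to R$, $(B(\sigma),C(\tau))\mapsto x(\sigma,\tau)$, lands in $I\subset\ker\phi$, and combining this with the $R$-linearity of $f$ rigidifies the possibilities and forces $c=0$. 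The second injection, $\Hom_R(C,S)\hookrightarrow H^1(\mathcal{G},S(\delta^{-1}))$, follows symmetrically: one starts from the $(2,1)$-entry identity $C(\sigma\tau)=C(\sigma)A(\tau)+D(\sigma)C(\tau)$ and constructs the cocycle $c_g(\sigma)=\delta_1(\sigma)^{-1}g(C(\sigma))$.

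The main obstacle is the injectivity step. The cocycle verification is a formal expansion of the matrix product, but forcing $f=0$ from the coboundary condition relies on the full GMA formalism of \cite[Thm.~1.5.5]{BC}, which uses the relation $B\cdot C\subset I$ to tie the abstract $R$-module $B$ to the space of extensions. For the later construction of Euler systems we only need non-vanishing of $c_f$ for the specific $f$ produced by the fundamental exact sequence (Corollary \ref{cor:fund}), so even a weaker form of this injectivity would suffice.
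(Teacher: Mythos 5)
Your cocycle computation is correct and is essentially the content of the constructive half of Bellaïche--Chenevier's argument: expanding the $(1,2)$-entry of $\rho(\sigma\tau)=\rho(\sigma)\rho(\tau)$ gives $B(\sigma\tau)=A(\sigma)B(\tau)+B(\sigma)D(\tau)$, and applying the $R$-linear map $f$ (with $A,D$ landing in $R$ so that $\phi\circ A=\delta_1$, $\phi\circ D=\delta_2$ are genuine characters, precisely because $x(\sigma,\tau)=B(\sigma)C(\tau)\in I\subset\ker\phi$) yields the twisted cocycle relation after dividing by $\delta_2$. The $(2,1)$-entry version works symmetrically. For the record, the paper does not give an independent proof of this proposition; it states it as a restatement of \cite[Thm.~1.5.5]{BC}, so there is no in-paper argument to compare against beyond the citation.

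The genuine gap is exactly where you flag it: the injectivity. Your sketch for that step does not work as written, for two reasons. First, the phrase ``the pairing $B\times C\to R$ lands in $I\subset\ker\phi$, and combining this with the $R$-linearity of $f$ rigidifies the possibilities and forces $c=0$'' is not an argument; in fact, the observation that $B(\sigma)C(\tau)\in\ker\phi$ is what makes $\delta_1,\delta_2$ multiplicative, but by itself it gives nothing on the coboundary side, since $\phi(C(\tau))=0$ for all $\tau$, so the relation $\phi(C(\tau))f(B(\sigma))=f(x(\sigma,\tau))=0$ is vacuous and cannot force $f(B(\sigma))=(\delta_1(\sigma)-\delta_2(\sigma))c$ to vanish. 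Second, the invocation of Lemma~\ref{lem:B26} is misdirected: that lemma describes the concrete GMA $\End_{\Gp}(\tilde\rho_{12}\oplus\tilde\rho_{21})\cong\smat{R&R\Phi_{12}\\R\Phi_{21}&R}$ attached to the universal \emph{local} deformation over $\Gp$, whereas Proposition~\ref{prop:BC} is stated and later used for an arbitrary profinite group $\mathcal{G}$ (notably $\Gal_\K$) and an arbitrary Henselian local coefficient ring (notably $\TT_\fm$, where no such free-rank-one description of $B$ is available). The injectivity in \cite[Thm.~1.5.5]{BC} is a genuinely structural statement about GMAs — it rests on the fact that the $R$-module $B$ is, essentially by construction, a universal source for extensions of $\delta_2$ by $\delta_1$, so the tautological class in $H^1(\mathcal{G},(B/IB)(\delta))$ is ``universal'' and pushing forward along distinct $f$'s gives distinct classes — and it does not reduce to the local-deformation calculation in Lemma~\ref{lem:B26}. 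If you want to keep the proof self-contained, the right move is to reconstruct BC's universality argument for $B$; otherwise simply cite \cite[Thm.~1.5.5]{BC} as the paper does. Your closing remark, that for the Euler-system construction one only needs non-vanishing of $c_{f_\fs}$ for the specific maps produced by Corollary~\ref{cor:fund}, is a fair observation about how the proposition is consumed downstream, but it does not repair the injectivity claim as stated.
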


\begin{rem}
The idea of constructing nontrivial cohomology classes
from generically irreducible deformations
dates back to the proof of the converse of
Herbrand-Ribet theorem in \cite{Ribet1976}
and is further investigated in \cite{Urban1999}.
It is now commonly refered as Urban's lattice construction
in applications to Iwasawa theory.
\end{rem}

\subsubsection{Condition C3}

Write $T_\fm=T(U^p_\fs)_\fm$ and put
$T_\fm'\coloneqq \xi_\fm^{1/2}\epsilon T_\fm$, so that
by Corollary \ref{cor:reducible}
\[
    \lambda_\fs\circ T'_\fm=
    \delta_1'+\delta_2'\quad
    \delta_1'\coloneqq 
    (\lambda_\fs\circ\xi_\fm^{1/2})\hat{\chi}_0,
    \delta_2'\coloneqq 
    (\lambda_\fs\circ\xi_\fm^{1/2})\hat{\chi}_0\Psi_\fs
\]
Note that $\bar{\delta}'_i\coloneqq\delta'_i\bmod \fm_{\I_\fs}
=\omega\delta_i$ and $\chi_i=\delta'_i\vert_{\Gp}$ for $i=1,2$.
Then the homomorphism 
$R^{\zeta\epsilon}\to \TT(U^p_\fs,\eo)_\fm$
from Proposition \ref{lem:twist}
push $T^{\zeta\epsilon}$, the universal deformation 
of $\chi_1+\chi_2$ with determinant $\zeta\epsilon$,
to $T'_\fm\vert_{\Gp}$.

Since $\fm$ satisfies \eqref{cond:red_gen},
there exists $z\in\Gp$ such that 
${\chi}_1(z)\neq{\chi}_2(z)$.
Let $\alpha_0,\beta_0$ the roots of $P(z,X)$ on $T^{\epsilon\zeta}$.
We then construct the $R^{\zeta\epsilon}$-valued functions
$A_0(\sigma),D_0(\sigma)$, and $x_0(\sigma,\tau)$ 
on $\Gp$ by \eqref{eq:presentation}.
Moreover, since $R^{\zeta\epsilon}$
is isomorphic to a power series ring of 3 variables
and the reducibility ideal is generated by a regular element,
there exists $\sigma_0,\tau_0\in\Gp$ such that
$\xx=x(\sigma_0,\tau_0)$ is a 
generator of the reducibility ideal.
Thus if we define the functions $B_0(\sigma)$ and $C_0(\sigma)$ 
by \eqref{eq:present_repn},
then the module $B_0$ generated by all $B_0(\sigma)$
is $R^{\zeta\epsilon}$.

Let $\alpha,\beta\in \TT(U^p_\fs,\eo)_\fm$
denote the images of the roots $\alpha_0,\beta_0$
under the homomorphism $R^{\zeta\epsilon}\to \TT(U^p_\fs,\eo)_\fm$.
We can define the $\TT(U^p_\fs,\eo)_\fm$-valued functions
$A(\sigma), D(\sigma)$, and $x(\sigma,\tau)$ on $\Gal_\K$
by \eqref{eq:presentation} so that $T'_\fm=A+D$.
Note that the restrictions of which to 
$\Gp$ coincides with the images of 
$A_0(\sigma), D_0(\sigma)$, and $x_0(\sigma,\tau)$.
Since $\xx=x(\sigma_0,\tau_0)$ is not a zero divisor
by Corollary \ref{cor:Hecke_ff}, we can define
the functions $B$ and $C$ by \eqref{eq:present_repn}.
Again their restrictions to $\Gp$ coincides with 
the images of $B_0(\sigma)$ and $C_0(\sigma)$.

\begin{defn}\label{def:fq_ideal}
Let $B\subset\TT(U^p_\fs,\eo)_\fm[1/\xx]$
and $C\subset\TT(U^p_\fs,\eo)_\fm$
be the $\TT(U^p_\fs,\eo)_\fm$-submodules
generated by $B(\sigma)$ and $C(\sigma)$ 
for $\sigma\in\Gal_\K$. We define the ideal
\[
    \fq_\fs=(x(\sigma,\tau_0)=B(\sigma)C(\tau_0)=B(\sigma)\xx
    \mid\sigma\in\Gal_\K)\subset \TT(U^p_\fs,\eo)_\fm.
\]
\end{defn}

\begin{lem}
For $w=w_0$ and $i=1,2$ let
$x_i(\sigma,\tau)$ be the images of $x(\sigma,\tau)$ in
$\TT^{\ord}(U^p_\fs,\eo)_{\fm_i}$, then 
\begin{itemize}
    \item $x_1(\sigma,\tau)=0=x_2(\tau, \sigma)$ 
    for $\sigma\in D_{w}$ and $\tau\in\Gal_\K$.
    \item $x_1(\sigma,\tau)=0=x_2(\tau,\sigma)$ 
    for $\sigma\in \Gal_\K$ and $\tau\in D_{\bw}$.
\end{itemize}
\end{lem}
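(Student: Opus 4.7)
The plan is to exploit the reducibility of $T^{\ord}$ at both $D_w$ and $D_{\bw}$ inside $\TT^{\ord}(U^p_\fs,\eo)_{\fm_i}$. As a first step, I would identify the image of $A\vert_{D_w}$ in each $\fm_i$: by Proposition~\ref{prop:big_char_at_p} one has $T^{\ord}\vert_{D_w}=\psi'_1+\psi'_2$ with $\psi'_1=\Psi_{w,1}$, $\psi'_2=\epsilon^{-1}\Psi_{w,2}$, and $T^{\ord}$ is $\psi'_1$-ordinary at $D_w$; Corollary~\ref{cor:two_max} fixes $\Psi_{w,2}\bmod\fm_i=\chi_i$, so Hensel's lemma forces $\alpha=\psi'_2(z)$, $\beta=\psi'_1(z)$ in $\fm_1$ (with the roles reversed in $\fm_2$), and a direct substitution into $A(\sigma)=(T(\sigma z)-\beta T(\sigma))/(\alpha-\beta)$ gives $A\vert_{D_w}=\psi'_2$ in $\fm_1$ and $A\vert_{D_w}=\psi'_1$ in $\fm_2$.

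For the first pair of identities I would apply Pan's $\psi'_1$-ordinary identity to $T(z\sigma\eta)$, with $z,\sigma\in D_w$ as the two ``ordinary'' arguments and $\eta\in\Gal_\K$ free, together with the cyclicities $T(\sigma\eta z)=T(z\sigma\eta)$ and $T(z\eta)=T(\eta z)$. A short calculation in $\TT^{\ord}(U^p_\fs,\eo)_{\fm_1}$ then produces
\[
A(\sigma\eta)=\psi'_2(\sigma)\cdot\frac{T(\eta z)-\beta T(\eta)}{\alpha-\beta}=A(\sigma)A(\eta),
\]
giving $x_1(\sigma,\eta)=0$. The identity $x_2(\tau,\sigma)=0$ for $\sigma\in D_w$ follows from the mirrored computation in $\fm_2$: the cyclicity $T(\tau\sigma z)=T(\sigma z\tau)$ lets me apply the ordinary identity with $\sigma,z\in D_w$ as the two $D_w$-arguments, yielding $A(\tau\sigma)=A(\tau)\psi'_1(\sigma)=A(\tau)A(\sigma)$.

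For the second pair, with $\tau\in D_{\bw}$, I would transport the ordinary structure from $D_w$ to $D_{\bw}$ using the functional equation $T^c(g)=(\det T\cdot\epsilon)^{-1}(g)\cdot T(g)$ on $\Gal_\K$, which holds in $\TT^{\ord}$ by Chebotarev's density applied to $r_\pi^c\cong r_\pi^\vee\epsilon^{-1}$ of Proposition~\ref{prop:gal_ger} (itself reflecting that $G=\UU(2)$ is a unitary group). Conjugating Pan's identity by $c$ and substituting via this functional equation produces a companion $\tilde\psi_1$-ordinary identity for $T$ at $D_{\bw}$, with $\tilde\psi_1(\tau)=(\det T\cdot\epsilon)^{-1}(\tau^c)\psi'_1(\tau^c)$; residually this \emph{swaps} the ordering of characters against the $D_w$ situation, so that in $\fm_1$ the restriction $A\vert_{D_{\bw}}$ now coincides with the \emph{ordinary} character (in contrast to the non-ordinary matching at $D_w$). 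Choosing an auxiliary $z'\in D_{\bw}$ with $\tilde\psi_1(z')\neq\tilde\psi_2(z')$ and running the same cyclicity-and-ordinary manipulation then establishes both $A(\sigma\tau)=A(\sigma)A(\tau)$ and $A(\tau\sigma)=A(\tau)A(\sigma)$ for $\tau\in D_{\bw}$, $\sigma\in\Gal_\K$, and hence the desired vanishings; the $\fm_2$ case follows by exchanging $\psi'_1,\psi'_2$.

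The hard part will be the bookkeeping in the third paragraph: I must verify that the twist by $(\det T\cdot\epsilon)^{-1}$ genuinely preserves the quadratic Frobenius structure underlying Pan's condition, and that the auxiliary Henselian lifts at $D_{\bw}$ combine with the fixed $\alpha,\beta$ chosen via $z\in\Gp$ in a way compatible with the generalized matrix algebra decomposition of Lemma~\ref{lem:B26}---equivalently, that the off-diagonal modules $B$ and $C$ realize the opposite triangular forms at $D_w$ and $D_{\bw}$ forced by the contragredient relation, producing the asymmetric zero pattern $(B\vert_{D_w}=0,\,C\vert_{D_{\bw}}=0)$ in $\fm_1$ and $(C\vert_{D_w}=0,\,B\vert_{D_{\bw}}=0)$ in $\fm_2$.
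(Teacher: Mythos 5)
Your proposal is a genuinely different route from the paper's. The paper does not argue at the level of the big pseudo\mbox{-}representation; it invokes the density result of Corollary~\ref{cor:density} to reduce to the honest Galois representations $r_\fp$ at finite level, and there the conclusion $B_\fp\vert_{D_w}=0$ (resp.\ $C_\fp\vert_{D_{\bw}}=0$) is read off directly from the fact that $r_\fp'\vert_{D_w}$ has the character $D_\fp\vert_{D_w}$ as a subrepresentation and the chosen basis diagonalizes $r_\fp'(z)$; the $D_{\bw}$ case then follows from $r_\fp^c\cong r_\fp^\vee\epsilon^{-1}$. You instead work at the big level, directly from Pan's $\Psi_1$\mbox{-}ordinary relation of Proposition~\ref{prop:big_char_at_p} applied to $T^{\ord}$ at $w=w_0$. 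The $\sigma\in D_w$ computation you sketch is sound: once one pins down that in $\fm_1$ the Hensel root $\beta=D(z)$ agrees exactly (not merely residually) with the value at $z$ of the ``ordinary'' character appearing in the first slot of the quadratic identity (using that $\TT^{\ord}_{\fm_1}$ is complete local and that the characters $\xi_\fm^{1/2}\epsilon\Psi_{w,1}$ and $\xi_\fm^{1/2}\Psi_{w,2}$ are the two characters of $T'_\fm\vert_{D_w}$), the cyclicity manipulation produces $A(\sigma\eta)=A(\sigma)A(\eta)$, hence $x_1(\sigma,\eta)=0$, with the mirrored computation giving $x_2(\tau,\sigma)=0$. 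This is also a cleaner conceptual parallel to the paper's own treatment of the split places $w'\ne w_0$ in Proposition~\ref{prop:res}, which \emph{does} use the pseudo\mbox{-}representation argument; so your approach uniformizes the two places.

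The step you flag as the hard part is indeed where the gap lies, and I think you slightly understate it. The functions $A,D,x$ are defined using the fixed auxiliary element $z\in D_{w_0}$; you cannot simply ``run the same cyclicity-and-ordinary manipulation'' at $D_{\bw}$ with an auxiliary $z'\in D_{\bw}$, because that produces the different functions $A',D',x'$ attached to $z'$, and the lemma's claim is about $x$, not $x'$. To close the gap one must argue (and it is true, but needs saying) that the diagonal entry $A$ in a faithful generalized matrix algebra is determined by the idempotent $e_1$ and not by the particular $z$, so long as both $z$ and $z'$ residually separate $\bar\delta_1$ and $\bar\delta_2$ \emph{and} the labeling ($\alpha\leftrightarrow\bar\delta_1$, $\alpha'\leftrightarrow\bar\delta_1$) is the same for both; this in turn requires knowing $\bar\delta_1\vert_{D_{\bw}}\ne\bar\delta_2\vert_{D_{\bw}}$, which follows from \eqref{cond:gen_psi} and the anticyclotomic property of $\psi$. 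Only then does $x=x'$ and your $D_{\bw}$ manipulation with $z'$ prove the desired vanishing of $x$. The transport of Pan's identity via $T^c=(\epsilon\det T)^{-1}T$ is also correct in outline, though your displayed formula for $\tilde\psi_1$ should produce the character $\Psi_1^c\cdot(\epsilon\det T)\vert_{D_{\bw}}$; some care with the various $\epsilon$ and $\xi_\fm^{1/2}$ twists is needed to check which of the resulting characters is residually $\chi_1$ and which is $\chi_2$ in each $\fm_i$. The paper's approach via $r_\fp$ avoids all of this because there the ``change of basis'' between $z$ and the conjugate structure at $D_{\bw}$ is an honest matrix conjugation, and the vanishing of a single off-diagonal entry is read off from the one-dimensional invariant subspace; it pays the price of needing the density of crystalline specializations (Corollary~\ref{cor:density}) to pass back to the big ring.
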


\begin{proof}

By the density result from Corollary \ref{cor:density},
it suffices to show the equalities in $E_\fp$
for minimal primes
$\fp\subset \TT_{\wt{k}}(U^p_\fs\Iw(p^{0,1}),E)_{\fm_i}$.
Let $A_\fp(\sigma), D_\fp(\sigma)$, and $x_\fp(\sigma,\tau)$
denote the images of the function in $E_\fp$,
then we have $A_\fp+D_\fp=(\xi_\fp^{1/2}\epsilon)\mtr(r_\fp)$.
where $\xi_\fp^{1/2}$ denote the image 
of the character $\xi_\fm^{1/2}$ in $E_\fp$.

On the other hand, 
let $\Psi_{w,i}\colon D_{w}\to \TT(U^p_\fs,\eo)_{\fm_i}$,
$i=1,2$, be the characters defined in Proposition 
\ref{prop:big_char_at_p}
and let $\psi_{1},\psi_2$ be the respective images in $E_\fp$,
so that $\mtr(r_\fp)\vert_{D_{w}}=\psi_1+\epsilon^{-1}\psi_2$.
Then Proposition \ref{prop:eigensystem}
and the characterization of $\fm_i$ in 
Corollary \ref{cor:two_max} together implies that
\begin{align}
\xi_\fp^{1/2}\psi_2\equiv\psi_2
\equiv\Psi_{w,2}\equiv 
\chi_1\coloneqq 
\omega\bar{\delta}_1\vert_{D_{w}}\equiv A_\fp\vert_{D_{w}}
\text{ if }
\fp\subset \TT(U^p_\fs\Iw(p^{0,1}),\eo)_{\fm_1}
\label{eq:con1}\\
\xi_\fp^{1/2}\psi_2\equiv\psi_2
\equiv\Psi_{w,2}\equiv 
\chi_2\coloneqq 
\omega\bar{\delta}_1\vert_{D_{w}}\equiv D_\fp\vert_{D_{w}}
\text{ if }
\fp\subset \TT(U^p_\fs\Iw(p^{0,1}),\eo)_{\fm_2}
\end{align}
Focus on the case when
$\fp\subset \TT(U^p_\fs\Iw(p^{0,1}),\eo)_{\fm_1}$,
then \eqref{cond:gen_psi} and \eqref{eq:con1}
implies that 
$A_\fp\vert_{D_{w}}=\xi_\fp^{1/2}\psi_2$ and
$D_\fp\vert_{D_{w}}=\xi_\fp^{1/2}\epsilon\psi_1$.
The choice of $z\in\Gp=D_w$
determines a basis for $r_\fp$ such that
\[
    r'_\fp(\sigma)\coloneqq
    (\xi_\fp^{1/2}\epsilon)r_\fp(\sigma)=
    \begin{pmatrix}
        A_\fp(\sigma) & B_\fp(\sigma)\\
        C_\fp(\sigma) & D_\fp(\sigma)
    \end{pmatrix}
    \in \GL_2(E_\fp) \text{ and }
    r'_\fp(z)=
    \begin{pmatrix}
        \alpha_\fp & \\
        & \beta_\fp
    \end{pmatrix}
\]
which satisfies $x_\fp(\sigma,\tau)=B_\fp(\sigma)C_\fp(\tau)$.
Here $\alpha_\fp$ and $\beta_\fp$
are the images of $\alpha$ and $\beta$ in $E_\fp$.
But Lemma \ref{lem:galois_at_p}
implies that $r'_\fp\vert_{D_{w}}$
has the character $D_\fp\vert_{D_{w}}$
as a subrepresentation.
Since $\beta_\fp=D_\fp(z)$,
the shape of $r'_\fp(z)$ implies that $(0,1)^\intercal$ is
the $D_\fp\vert_{D_w}$-eigenvector.
Therefore $B_\fp(\sigma)=0$
and consequently $x_\fp(\sigma,\tau)=0$ if $\sigma\in D_w$.

The case when $\tau\in D_{\bw}$
can be proved similarly,
using that $r_\fp^c\cong r_\fp^\vee\epsilon^{-1}$
has $\psi_2^{-1}$ has a subrepresentation.
And the case when 
$\fp\subset \TT(U^p_\fs\Iw(p^{0,1}),\eo)_{\fm_2}$
is the same except that the roles
of $A_\fp$ and $D_\fp$ are switched.

\end{proof}

\begin{prop}
The ideal  $q_{\fs}$
satisfies the condition \ref{cond:C2}.
\end{prop}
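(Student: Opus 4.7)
The plan is to verify the three requirements of \ref{cond:C2} separately: (i) $\xx\in\fq_\fs$, (ii) $\fq_\fs\cdot M^{\ord}(U^p_\fs)_{\fm_2}=0$, and (iii) $\fq_\fs\cdot F_\fs=0$ in $M^{\ord}(U^p_\fs,\I_\fs)_{\fm_1}$. The first is immediate from Definition \ref{def:fq_ideal}, since $\xx=x(\sigma_0,\tau_0)$ appears among the listed generators. The remaining two require different ingredients.

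For (ii), I would show that every generator $x(\sigma,\tau_0)$ of $\fq_\fs$ has zero image $x_2(\sigma,\tau_0)$ in $\TT^{\ord}(U^p_\fs,\eo)_{\fm_2}$, which immediately forces annihilation of every $\TT^{\ord}(U^p_\fs,\eo)_{\fm_2}$-module. Since $\tau_0$ was selected in $\Gp$, identified with $D_{w_0}=D_w$, the first bullet of the preceding lemma (which states $x_2(\tau,\sigma)=0$ for $\sigma\in D_w$ and $\tau\in\Gal_\K$) applies with the dummy $\sigma$ specialized to $\tau_0$ and $\tau$ left free, yielding $x_2(\sigma,\tau_0)=0$ for every $\sigma\in\Gal_\K$ after a relabelling.

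Part (iii) is more delicate, because the analogous argument on the $\fm_1$-side would require either $\sigma\in D_w$ (first bullet) or $\tau_0\in D_{\bw}$ (second bullet), and neither holds for an arbitrary $\sigma$. Instead I would work directly with the eigensystem $\lambda_\fs$: first, transport $F_\fs$ to the Hecke-equivariant dual via Proposition \ref{prop:ord_to_dual}, where it becomes the functional $\Omega_p^{-2\Sigma}\mathbf{B}(-,U_\fs^{-1}\euF_\fs)$. Combining the Hecke-equivariance of $\mathbf{B}$ from \cite[Rmk.\,5.13]{lee}, the commutativity of $\TT^{\ord}$ with $U_\fs^{-1}$, and the fact that $\euF_\fs$ is an eigenform with eigensystem $\lambda_\fs^{\ord}$ (Proposition \ref{prop:eigensystem}), one finds that any $T\in\TT(U^p_\fs,\eo)_\fm$ acts on $F_\fs$ as the scalar $\lambda_\fs(T)\in\I_\fs$. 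It therefore suffices to verify that $\lambda_\fs(x(\sigma,\tau_0))=0$ for all $\sigma$. But by Corollary \ref{cor:reducible}, $\lambda_\fs\circ T'_\fm=\delta'_1+\delta'_2$ is a sum of characters, so $\lambda_\fs\circ A$ coincides with one of them, say $\delta'_1$, and the defining identity $x(\sigma,\tau)=A(\sigma\tau)-A(\sigma)A(\tau)$ forces $\lambda_\fs(x(\sigma,\tau))=0$ for \emph{every} pair $(\sigma,\tau)\in\Gal_\K^2$.

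The main technical point I expect to require care is the reduction in (iii) from annihilation of $F_\fs$ to the vanishing of $\lambda_\fs$ on the reducibility ideal: this relies on the precise form of the Hecke-equivariance of the CM pairing $\mathbf{B}$ in \cite{lee} so that no adjoint twist intervenes. Once this scalar-action property is in hand, the reducibility of $\lambda_\fs\circ T'_\fm$ supplied by Corollary \ref{cor:reducible} gives the desired vanishing without further computation.
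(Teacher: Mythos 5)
Your proposal is correct and follows essentially the same route as the paper: membership of $\xx$ by definition, the preceding lemma's vanishing $x_2(\sigma,\tau_0)=0$ for the $\fm_2$-annihilation, and the reducibility of $\lambda_\fs\circ T'_\fm$ from Corollary \ref{cor:reducible} to show $\fq_\fs\subset\ker(\lambda_\fs)$ hence $\fq_\fs F_\fs=0$. The only difference is that you spell out why the $\TT$-action on $F_\fs$ factors through $\lambda_\fs$ (Hecke-equivariance of $\mathbf{B}$ plus the eigenform property), which the paper leaves implicit.
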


\begin{proof}

By definition we have $\xx=x(\sigma_0,\tau_0)\in \fq_\fs$.
And since the action of $\TT(U^p_\fs,\eo)_\fm$
on $F_\fs$ factors through $\lambda_\fs$
and $\lambda_\fs\circ T'_\fm$ is reducible 
by Corollay \ref{cor:reducible},
we have $\fq_\fs\subset \ker(\lambda_\fs)$ and 
therefore $\fq_\fs F_\fs=0$.
At last, the lemma above
implies that $x_2(\sigma, \tau_0)=0$
for any $\sigma\in\Gal_\K$
and consequently
$\fq_\fs M^{\ord}(U^p_\fs)_{\fm_2}=0$.
\end{proof}

We can now apply Corollary \ref{cor:fund}
and obtain the commutative diagram
\begin{equation}\label{eq:comm_s}
    \begin{tikzcd}
    & (M_\fs/S_\fs)
    \arrow[r]\arrow[d,equal] &
    (M_\fs/S_\fs)\otimes_{\TT}\fq_\fs
    \arrow[r,"q"]\arrow[d,"p"] &
    (M_\fs/S_\fs)\otimes_{\TT}\fq_\fs^\red
    \arrow[r]\arrow[d,"s"] &0\\
    0\arrow[r] &
    (M_\fs/S_\fs)
    \arrow[r] &
    \fq_\fs M_\fs/\fq_\fs S_\fs
    \arrow[r,"r"] &
    \fq_\fs M^{\ord}_{\fs}/\fq_\fs S^{\ord}_{\fs}
    \arrow[r] &0
    \end{tikzcd}
\end{equation}
Here
$M^{\ord}_{\fs}=M^{\ord}(U^p_\fs,\I_\fs)_{\fm_1}$,
$S^{\ord}_{\fs}=\ker(\Theta_\fs)$,
$M_\fs=\Hom_{Q'}(\tilde{P}_{2,\fm},M(U^p_\fs)_{\fm}')
\otimes_{\Lambda_{\fs}^+}\I_\fs$,
$S_\fs$ the pre-image of $S^{\ord}_\fs$
under $M_\fs\to M^{\ord}_\fs$,
$\TT=\TT(U^p_\fs,\eo)_\fm$, and
$\fq_\fs^\red\coloneqq\fq_\fs\TT^{\red}$.

\begin{defn}\label{def:cong_map}
Let $\tilde{F}_\fs$ be the pre-image of
$(F_\fs\bmod S_\fs^{\ord})$ under 
$M_\fs/S_\fs\cong M^{\ord}_{\fs}/S^{\ord}_{\fs}$.
We define a map  $f_\fs\in \Hom_{\TT}(B,\I_\fs)$ 
as follows:
for $b\in B$ write $q(b)=b\xx=bC(\tau_0)\in\fq_\fs$.
We define
\begin{align*}
   &y_\fs'(b)=
(\tilde{F}_\fs\bmod S_\fs)\otimes q(b)\in 
(M_\fs/S_\fs)\otimes_\TT\fq_\fs\\
   &y_\fs(b)=p(y_\fs'(b))=
q(b)\tilde{F}_\fs\bmod \fq_\fs S_\fs\in 
\fq_\fs M_\fs/\fq_\fs S_\fs
\end{align*}
On the other hand, observe that 
$(M_\fs/S_\fs)\otimes_{\TT}\fq_\fs^\red=
(M_\fs^{\red}/S_\fs^{\red})\otimes_{\TT}\fq_\fs^\red=
(M_\fs^{\ord}/S_\fs^{\ord})\otimes_{\TT}\fq_\fs^\red$
and therefore
\begin{align*}
   q(y_\fs'(b))&=
(\tilde{F}_\fs\bmod S_\fs)\otimes q(b)\in 
(M^{\ord}_\fs/S^{\ord}_\fs)\otimes_\TT\fq_\fs^\red\\
   r(y_\fs(b))&=(s\circ q)(y_\fs'(b))=
q(b)F_\fs\bmod \fq_\fs S^{\ord}_\fs\in 
\fq_\fs M^{\ord}_\fs/\fq_\fs S^{\ord}_\fs
\end{align*}
But $q(b){F}_\fs\in \fq_\fs {F}_\fs=0$ 
because of \ref{cond:C2}.
By the exactness of the sequence
there exists $\tilde{F}_\fs(b)\in M_\fs/ S_\fs$
such that 
$\xx \tilde{F}_\fs(b)\equiv q(b)\tilde{F}_\fs
\bmod \fq_\fs S_\fs$.
We now define 
$f_\fs(b)=\Theta_\fs(\tilde{F}_\fs(b))\in \I_\fs$.

\end{defn}

\begin{prop}\label{prop:res}
For $w'\in\Sigma_p\setminus\{w_0\}$
and $\sigma_1,\sigma_2\in D_{w'}$ we have
\[
(A(\sigma_1)-D(\sigma_1))B(\sigma_2)=
(A(\sigma_2)-D(\sigma_2))B(\sigma_1)
\]
\end{prop}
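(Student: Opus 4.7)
The plan is to combine the $\Psi_{w',1}$-ordinariness of Proposition \ref{prop:big_char_at_p} with its $\Psi_{w',2}$-symmetric counterpart and with the multiplicativity of the matrix representation built in \eqref{eq:present_repn}. Let $\psi_1,\psi_2\colon D_{w'}\to\TT(U^p_\fs,\eo)_\fm$ denote the two characters from Proposition \ref{prop:big_char_at_p} (after absorbing the twist of Proposition \ref{lem:twist}), so that $T'_\fm\vert_{D_{w'}} = \psi_1+\psi_2$. The ordinariness then reads
\[
T'_\fm(\sigma\tau\eta) - \psi_1(\sigma)T'_\fm(\tau\eta) - \psi_2(\tau)T'_\fm(\sigma\eta) + \psi_1(\sigma)\psi_2(\tau)T'_\fm(\eta) = 0
\]
for $\sigma,\tau\in D_{w'}$ and $\eta\in\Gal_\K$. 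Since the pseudo-representation $T'_\fm\vert_{D_{w'}}$ is literally $\psi_1+\psi_2 = \psi_2+\psi_1$, the same identity holds with $\psi_1,\psi_2$ interchanged; this can be verified directly by expanding both sides using $T'_\fm(\gamma) = \psi_1(\gamma)+\psi_2(\gamma)$ for $\gamma\in D_{w'}$.

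I will first apply the $\psi_1$-ordinary identity with $\eta\in\{\tau_0,\tau_0 z\}$ and substitute into $A(\gamma) = (T'_\fm(\gamma z) - \beta T'_\fm(\gamma))/(\alpha-\beta)$ to obtain
\[
A(\sigma_1\sigma_2\tau_0) = \psi_1(\sigma_1)A(\sigma_2\tau_0) + \psi_2(\sigma_2)A(\sigma_1\tau_0) - \psi_1(\sigma_1)\psi_2(\sigma_2)A(\tau_0),
\]
together with the analogous formula for $A(\sigma_1\sigma_2)$ (take $\tau_0 = 1$). Plugging these into $\xx B(\sigma_1\sigma_2) = A(\sigma_1\sigma_2\tau_0) - A(\sigma_1\sigma_2)A(\tau_0)$, the cross terms cancel neatly to give $B(\sigma_1\sigma_2) = \psi_1(\sigma_1)B(\sigma_2) + \psi_2(\sigma_2)B(\sigma_1)$. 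The symmetric ordinariness analogously yields $B(\sigma_1\sigma_2) = \psi_2(\sigma_1)B(\sigma_2) + \psi_1(\sigma_2)B(\sigma_1)$, and subtracting the two expressions produces
\[
(\psi_1-\psi_2)(\sigma_1)B(\sigma_2) = (\psi_1-\psi_2)(\sigma_2)B(\sigma_1).
\]

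Next, reading off the $(1,2)$-entry of the matrix identity $\rho(\sigma_1)\rho(\sigma_2) = \rho(\sigma_1\sigma_2)$ gives $B(\sigma_1\sigma_2) = A(\sigma_1)B(\sigma_2) + B(\sigma_1)D(\sigma_2)$; matching with the first expression for $B(\sigma_1\sigma_2)$ above yields $(A-\psi_1)(\sigma_1)B(\sigma_2) = B(\sigma_1)(\psi_2-D)(\sigma_2)$. Using $A+D = \psi_1+\psi_2$ on $D_{w'}$, the quantity $P := A-\psi_1 = \psi_2-D$ is well-defined and satisfies $P(\sigma_1)B(\sigma_2) = P(\sigma_2)B(\sigma_1)$. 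Finally, writing $A-D = 2P + (\psi_1-\psi_2)$ and combining this with the displayed identity above produces
\[
(A-D)(\sigma_1)B(\sigma_2) = 2P(\sigma_2)B(\sigma_1) + (\psi_1-\psi_2)(\sigma_2)B(\sigma_1) = (A-D)(\sigma_2)B(\sigma_1),
\]
which is the claim. The main obstacle is the bookkeeping for the first derivation $B(\sigma_1\sigma_2) = \psi_1(\sigma_1)B(\sigma_2) + \psi_2(\sigma_2)B(\sigma_1)$ from the $\psi_1$-ordinariness; once this cocycle relation and its swap are in hand, the conclusion follows by formal algebra.
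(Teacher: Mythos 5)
The argument breaks down at the ``swap'' step, and the failure is not merely cosmetic: the swapped ordinariness identity is in fact \emph{equivalent} to the proposition you are trying to prove, so assuming it is circular.

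Let me spell out why the offered justification does not work. You claim that interchanging $\psi_1$ and $\psi_2$ in the ordinariness identity ``can be verified directly by expanding both sides using $T'_\fm(\gamma) = \psi_1(\gamma)+\psi_2(\gamma)$ for $\gamma\in D_{w'}$.'' But the ordinariness identity is quantified over $\sigma,\tau\in D_{w'}$ \emph{and} $\eta\in\Gal_\K$; the expansion you describe is only available when $\eta\in D_{w'}$, and in that range the identity holds for trivial reasons (it vanishes identically for any pair of characters summing to $T'_\fm\vert_{D_{w'}}$, so it carries no information). The nontrivial content of $\psi_1$-ordinariness lies entirely in the case $\eta\notin D_{w'}$. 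There the $\psi_1$-ordinariness encodes that the matrix representation $\rho\vert_{D_{w'}}$ admits a common $\psi_1$-eigenvector (equivalently, $\psi_1$ is a subrepresentation), which is exactly what Lemma~\ref{lem:galois_at_p}(a) gives. The $\psi_2$-swapped version would assert the existence of a $\psi_2$-eigenvector, which fails whenever the local extension $0\to\psi_1\to\rho\vert_{D_{w'}}\to\psi_2\to 0$ is non-split.

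To see the circularity concretely: compute the coefficient of the $(2,1)$-entry of $\rho(\eta)$ in the swapped identity, working with $\rho\vert_{D_{w'}}=\smat{\psi_1&b\\0&\psi_2}$. One gets $(\psi_1-\psi_2)(\sigma)b(\tau)-(\psi_1-\psi_2)(\tau)b(\sigma)$, which is precisely the quantity whose vanishing Proposition~\ref{prop:res} asserts. Equivalently, your own reduction shows this: combining (a) and (c) correctly yields $P(\sigma_1)B(\sigma_2)=P(\sigma_2)B(\sigma_1)$ with $P=A-\psi_1=\psi_2-D$, and since $A-D=2P+(\psi_1-\psi_2)$, the proposition reduces exactly to $(\psi_1-\psi_2)(\sigma_1)B(\sigma_2)=(\psi_1-\psi_2)(\sigma_2)B(\sigma_1)$ --- which, in light of (a), is exactly the swapped identity (b). So the proof of (b) is the content of the proposition; it cannot be assumed.

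The paper circumvents this by a different mechanism: after reducing (via the density argument of Corollary~\ref{cor:density} and the structure of $\TT_{\wp_1}[1/\xx]$) to working over integral domains $\TT_{\wp_1}[1/\xx]/\wp$, it applies Pan's Lemma~5.3.3 to produce a single nonzero vector $(X,Y)$ annihilated by $\smat{A(\sigma)-D(\sigma)&B(\sigma)\\C(\sigma)&0}$ simultaneously for all $\sigma\in D_{w'}$. The case analysis on whether $X$ vanishes then gives either $B\vert_{D_{w'}}=0$ outright or $C\vert_{D_{w'}}=0$ together with the claimed linear dependence. The existence of this \emph{common} eigenvector is strictly more information than the functional-equation form of $\psi_1$-ordinariness used in your step (a), and it is precisely what supplies the missing relation. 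Any repair of your approach would need a comparable input; the pseudo-representation identities and $\psi_1$-ordinariness alone will not produce (b).
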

\begin{proof}

Follow the formulation in \S\ref{sub:fund_exact_sequence},
we decompose $\fG_\fs^a=W\times\Delta_\fs^a$.
And for each $\alpha\in\widehat{\Delta}_\fs^a$ 
we have an induced homomorphism
$\alpha\colon \I_\fs\to \Lambda_\alpha$,
where $\Lambda_\alpha$ is $\eo\llbracket W\rrbracket$
with an $I_\fs$-algebra structure on which $\Delta$ acts by $\alpha$.
Let $K_\alpha$ be the field of fraction of $\Lambda_\alpha$.
Since $\I_\fs\hookrightarrow 
\prod_{\alpha\in\hat{\Delta}}K_\alpha$,
it suffices to verify the equality for each 
$\alpha\circ f\in \Hom_{\TT}(B,K_\alpha)$.

Write $\lambda_\alpha=\alpha\circ\lambda_\fs
\colon\TT\to K_\alpha$
and define the prime ideals $\wp_1=\ker(\lambda_\alpha)$,
$\fp_1=\wp_1\cap\Lambda\llbracket\xx\rrbracket$,
and $\fp=\fp_1\cap \Lambda$.
As in the proof of \ref{lem:smallprime},
we write $\Lambda\llbracket\xx\rrbracket_{\fp_1}$
for the completion of the localization at $\fp_1$
and refer localizations at $\fp_1$ as
$\otimes_{\Lambda\llbracket\xx\rrbracket}
\Lambda\llbracket\xx\rrbracket_{\fp_1}$.
Since $(\TT_{\fm})_{\fp_1}$ is finite free
over $\Lambda\llbracket\xx\rrbracket_{\fp_1}$
by Corollary \ref{cor:Hecke_ff},
it is complete and therefore isomorphic
to the direct product of its localizations 
at prime ideals.
In particular the component that corresponds to 
$\lambda_\alpha\colon (\TT)_{\fp_1}\to K_\alpha$
is none other than $\TT_{\wp_1}$,
the completion of the localizations at $\wp_1$.
From this we conclude that
$\Hom_{\TT}(B,K_\alpha)=
\Hom_{(\TT)_{\fp_1}}(B_{\fp_1},K_\alpha)=
\Hom_{\TT_{\wp_1}}(B_{\wp_1},K_\alpha)$.

Observe that $\TT_{\wp_1}$ is reduced by Lemma \ref{lem:big_red}
and also finite free over over 
$\Lambda\llbracket\xx\rrbracket_{\fp_1}$.
Therefore $\TT_{\wp_1}[1/\xx]$ is reduced and Noetherian.
So when $\wp$ ranges over minimal primes of which we have the diagram
\[
\begin{tikzcd}
\TT_{\wp_1}[1/\xx] \arrow[r,hookrightarrow] &
\prod_{\wp} \TT_{\wp_1}[1/\xx]/\wp \\
\TT_{\wp_1} \arrow[u,hookrightarrow] \arrow[r,hookrightarrow] &
\prod_{\wp} \TT_{\wp_1}/\wp\cap \TT_{\wp_1}\arrow[u,hookrightarrow] 
\end{tikzcd}
\]
On the other hand recall $B\subset \TT[1/\xx]$,
thus $B_{\fp_1}\subset (\TT)_{\fp_1}[1/\xx]$ and 
consequently $B_{\wp_1}\subset \TT_{\wp_1}[1/\xx]$.
Therefore we can further reduce to the cases
in which we replace $B_{\wp_1}$
by its images in $\TT_{\wp_1}[1/\xx]/\wp$.

Replace $T_\fm', A(\sigma),B(\sigma),C(\sigma),D(\sigma)$
by their images in $\TT_{\wp_1}[1/\xx]/\wp$.
By Proposition \ref{prop:big_char_at_p},
$T_\fm'\vert_{D_w'}=\Psi_1'+\epsilon^{-1}\Psi_2'$ is reducible
and $\Psi_1'$-ordinary, where
$\Psi_i'$ denotes the image of 
$\xi_\fm^{1/2}\epsilon\Psi_{w',i}$ in $\TT_{\wp_1}[1/\xx]/\wp$ 
for $i=1,2$.
Observe that every functions in 
$A\vert_{D_{w'}}+D\vert_{D_{w'}}=T'_\fm\vert_{D_{w'}}=
\Psi_1'+\epsilon^{-1}\Psi_2'$
takes value in the integral domain 
$\TT_{\wp_1}/\wp\cap \TT_{\fp_1}$.
And since Proposition \ref{prop:eigensystem} implies that
\begin{align*}
    &\lambda_\alpha\circ A\vert_{D_{w'}}=
    \delta_1'\vert_{D_{w'}}=
    (\lambda_\alpha\circ\xi_\fm^{1/2})\hat{\chi}_\circ=
    \lambda_\alpha\circ (\xi_\fm^{1/2}\epsilon\Psi_{w',2})\\
    &\lambda_\alpha\circ D\vert_{D_{w'}}=
    \delta_2'\vert_{D_{w'}}=
    (\lambda_\alpha\circ\xi_\fm^{1/2})\hat{\chi}_\circ
    (\alpha\circ\Psi_\fs)=
    \lambda_\alpha\circ (\xi_\fm^{1/2}\epsilon\Psi_{w',1})
\end{align*}
for $\lambda_\alpha\colon \TT_{\wp_1}\to K_\alpha$,
we consequently have
$A(\sigma)=\epsilon^{-1}\Psi_2'(\sigma)$ and
$D(\sigma)=\Psi_1'(\sigma)$ for $\sigma\in D_{w'}$.
We can now apply \cite[Lem 5.3.3]{pan} to the representation below,
which is irreducible since $\xx$ is nonzero.
\[
    \rho(\sigma)\coloneqq
    \begin{pmatrix}
        A(\sigma)& B(\sigma)\\
        C(\sigma) & D(\sigma)
    \end{pmatrix}
    \in \GL_2(\TT_{\wp_1}[1/\xx]/\wp)
\]
Then there exists a nonzero vector $(X,Y)$
over some extension of 
the field $\textnormal{Frac}(\TT_{\wp_1}[1/\xx]/\wp)$
which is an $\Psi_1'(\sigma)$-eigenvector 
under $\rho(\sigma)$ for all $\sigma\in D_{w'}$.
In other word,
\[
    \begin{pmatrix}
        A(\sigma)-D(\sigma) & B(\sigma)\\
        C(\sigma) &0
    \end{pmatrix}
    \begin{pmatrix}
        X\\Y
    \end{pmatrix}=0
    \quad \text{ for all } \sigma\in D_{w'}
\]
There can be two possibilities,
and each gives the desired equality.
\begin{itemize}
    \item If $X=0$, then $Y\neq 0$ and consequently
    $B(\sigma)=0$ for all $\sigma\in D_{w'}$.
    \item If $X\neq 0$, then $XC(\sigma)=0$
    implies $C(\sigma)=0$ for all $\sigma\in D_{w'}$.
    Then the proof of \textit{loc.cit.} shows that 
    \[
        (A(\sigma_1)-D(\sigma_1))B(\sigma_2)=
        (A(\sigma_2)-D(\sigma_2))B(\sigma_1)\quad
        \text{ for all  }\sigma_1,\sigma_2\in D_{w'}.
    \]
\end{itemize}

\end{proof}

\begin{prop}\label{prop:res_0}

Let $f_0=R^{\zeta\epsilon}\to \I_\fs$ 
be the composition of $\lambda_\fs$ with 
$R^{\zeta\epsilon}\to \TT$
and recall that $B_0=R^{\zeta\epsilon}$.
Then the composition of $f_\fs$ with
$B_0\to B$
is $L_\fs\cdot f_0$.
\end{prop}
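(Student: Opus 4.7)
The plan is to unpack the construction of $f_\fs$ from Definition \ref{def:cong_map} and show that for elements in the image of $B_0$ the defining identity simplifies drastically. Let $\phi\colon R^{\zeta\epsilon}\to \TT(U^p_\fs,\eo)_\fm$ denote the homomorphism from Proposition \ref{lem:twist} through which $f_0=\lambda_\fs\circ\phi$ factors, and note that the map $B_0\to B$ is induced by the extension $\phi\colon R^{\zeta\epsilon}[1/x_0]\to \TT[1/\xx]$ sending $B_0(\sigma)\mapsto B(\sigma)$ for $\sigma\in\Gp$. The key observation is that for any $b_0\in B_0=R^{\zeta\epsilon}$, the image $\phi(b_0)$ already lies in $\TT$ itself: the reducibility ideal of $T^{\zeta\epsilon}$ is principally generated by $\xx$, so each $x_0(\sigma,\tau_0)$ is divisible by $\xx$ in $R^{\zeta\epsilon}$, which forces $B_0(\sigma)=x_0(\sigma,\tau_0)/\xx\in R^{\zeta\epsilon}$ and hence $\phi(b_0)\in\TT$.

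With this integrality in hand, the defining identity
\[
\xx\,\tilde{F}_\fs(\phi(b_0)) \equiv q(\phi(b_0))\,\tilde{F}_\fs \pmod{\fq_\fs S_\fs}
\]
from Definition \ref{def:cong_map} can be rewritten, using $q(\phi(b_0))=\phi(b_0)\,\xx$, as
\[
\xx\bigl(\tilde{F}_\fs(\phi(b_0))-\phi(b_0)\,\tilde{F}_\fs\bigr) \equiv 0 \pmod{\fq_\fs S_\fs},
\]
where the product $\phi(b_0)\,\tilde{F}_\fs\in M_\fs/S_\fs$ is well-defined precisely because $\phi(b_0)\in\TT$. Now the left exactness of the bottom row of the diagram in Corollary \ref{cor:fund} asserts that multiplication by $\xx$ induces an injection $M_\fs/S_\fs\hookrightarrow \fq_\fs M_\fs/\fq_\fs S_\fs$, so the displayed congruence forces
\[
\tilde{F}_\fs(\phi(b_0)) = \phi(b_0)\,\tilde{F}_\fs \quad\text{in } M_\fs/S_\fs.
\]

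To conclude, recall that $\Theta_\fs\colon M_\fs/S_\fs\to \I_\fs$ is the composition of the isomorphism $M_\fs/S_\fs\cong M_\fs^{\ord}/S_\fs^{\ord}$ with the injection induced by $\Theta_\fs\colon M_\fs^{\ord}\to \I_\fs$, and is therefore $\TT$-linear with $\TT$ acting on $\I_\fs$ through $\lambda_\fs$; moreover $\Theta_\fs(\tilde{F}_\fs)=\Theta_\fs(F_\fs)=L_\fs$ by the very choice of $\tilde{F}_\fs$ as the preimage of $F_\fs\bmod S_\fs^{\ord}$. Applying $\Theta_\fs$ to the equality above then yields
\[
f_\fs(\phi(b_0))
= \Theta_\fs\bigl(\phi(b_0)\,\tilde{F}_\fs\bigr)
= \lambda_\fs(\phi(b_0))\,\Theta_\fs(\tilde{F}_\fs)
= f_0(b_0)\cdot L_\fs,
\]
which is exactly the asserted equality. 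There is no substantive obstacle: the whole argument rests on the integrality of $\phi(b_0)\in \TT$ (which distinguishes elements of $B_0$ from arbitrary elements of $B$, the latter possibly needing $1/\xx$) and on the injectivity of multiplication by $\xx$ from Corollary \ref{cor:fund}, which supplies the uniqueness needed to identify $\tilde{F}_\fs(\phi(b_0))$ with $\phi(b_0)\,\tilde{F}_\fs$.
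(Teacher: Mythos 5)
Your proof is correct and matches the paper's own argument: both rest on observing that $B_0=R^{\zeta\epsilon}$ maps into $\TT$ (so $q(b)=b\xx$ with $b\in\TT$), deducing $\tilde{F}_\fs(b)=b\,\tilde{F}_\fs$ from the injectivity of multiplication by $\xx$ in Corollary \ref{cor:fund}, and then applying the $\lambda_\fs$-linear map $\Theta_\fs$ using $\Theta_\fs(\tilde{F}_\fs)=L_\fs$. You merely spell out the uniqueness step that the paper compresses into the phrase ``by definition.''
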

\begin{proof}
Observe that if $b\in\TT$, then
$\tilde{F}_\fs(b)=b\tilde{F}_\fs$ by definition
and consequently 
$f_\fs(b)=\Theta_\fs(b\tilde{F}_\fs)=\lambda_{\fs}(b)\cdot L_\fs$.
The proposition then follows since the image of $B_0=R^{\zeta\epsilon}$
belongs to $\TT$.

\end{proof}

\subsection{Compatibility}

Let $\ff$ and $\fs=\ff\ff^c$ be as in 
Definition \ref{def:family_at_s}
and let $\fl$ be a prime such that $\fl\neq\fl^c$ and
$\iota_\fl^{-1}(\GL_2(\oo_\fl))\subset U_\fs^p$.
Write $\ell=\fl\fl^c$, then both $\ff$ and $\fl\ff$
satisfies the conditions in Definition \ref{def:family_at_s}.
Let $\phi^{\ell\fs}_\fs\colon \I_{\ell\fs}\to \I_\fs$
be the ring homomorphism induced by $\fG_{\ell\fs}^a\to \fG_\fs^a$,
which satisfies $\phi^{\ell\fs}_\fs\circ \Psi_{\ell\fs}=\Psi_{\fs}$.

Since $U^p_{\ell\fs}\subset U^p_{\fs}$,
we have the natural map
$\varphi^{\ell\fs}_{\fs}\colon 
\TT(U^p_{\ell\fs},\eo)\to \TT(U^p_{\fs},\eo)$
that pushes $T(U^p_{\ell\fs})$ to $T(U^p_{\fs})$.
Then by Proposition \ref{prop:eigensystem}
we have the commutative diagram
\[
\begin{tikzcd}
	& \TT(U^p_{\fs\ell},\eo)_\fm
    \arrow[d,"\varphi^{\ell\fs}_\fs"] \arrow[r,"\lambda_{\fs\ell}"]
    &\I_{\ell\fs} \arrow[d,"\phi^{\ell\fs}_\fs"]\\
	R^{\zeta\epsilon}\arrow[r]\arrow[ur]
	& \TT(U^p_\fs,\eo)_\fm \arrow[r,"\lambda_\fs"] &
    \I_\fs
\end{tikzcd}
\]
Here we abuse the notation
and let $\fm\subset \TT(U^p_\bullet,\eo)$
denote the respective maximal ideals 
given by $\lambda_\bullet^{-1}(\fm_{\I_{\bullet}})$.
Therefore the choice of $z$ and the roots of $P(z,T)$ 
on $T^{\epsilon\zeta}$ defines the functions
$A_\bullet(\sigma),B_\bullet(\sigma),
C_\bullet(\sigma)$ and $D_\bullet(\sigma)$
valued in $\TT(U^p_\bullet,\eo)[1/\xx]$ compatibly.
In other word we have
$\varphi^{\ell\fs}_\fs\circ A_{\ell\fs}(\sigma)=
A_\fs(\sigma)$ and similarly for the rest of the functions.

On the other hand, recall that 
$F_{\fs}\in M^{\ord}(U^p_\fs,\I_\fs)\cong
\Hom_{\I_\fs}(S^{\ord}(U^p_\fs,\I_\fs),\I_\fs)$
is the homomorphism that sends
$\EuScript{G}_\fs\in S^{\ord}(U^p_\fs,\I_\fs)$
to $\Omega_p^{-2\Sigma}\mathbf{B}(\EuScript{G}_\fs, U_\fs^{-1}\euF_{\fs})$.
Let $\overline{F}_{\ell\fs}\in M^{\ord}(U^p_{\ell\fs},\I_\fs)$
denote the image of $F_{\ell\fs}$ under $\phi^{\ell\fs}_\fs$.
We consider its images in $M^{\ord}(U^p_\fs,\I_\fs)$
under the maps in the diagram below.
Here we define the map
$V_\fl\colon S(U^p_\fs,A)\to S(U^p_{\fs\ell},A)$
between algebraic modular forms by
$V_\fl f(g)=f(g\iota_\fl^{-1}(\smat{1&\\&\varpi_\fl}))$
and we use the same notation to denote the induced map on
$S^{\ord}(U^p_\fs,\I_\fs)$ and $S(U^p_\fs)$.

\[
\begin{tikzcd}
M^{\ord}(U^p_{\ell\fs},\I_\fs) 
\arrow[d, shift right=6.5ex, "V_\fl^*",swap]
\arrow[d, shift right=5.5ex, "\id^*"]
\times  S^{\ord}(U^p_{\ell\fs},\I_\fs) 
\arrow[r] &
\I_\fs \arrow[d,equal]\\
M^{\ord}(U^p_{\ell\fs},\I_\fs) \times
S^{\ord}(U^p_{\ell\fs},\I_\fs)
\arrow[u, shift right=7.5ex, "V_\fl"]
\arrow[u, shift right=8.5ex, "\id",swap]
\arrow[r] & \I_\fs
\end{tikzcd}
\]

\begin{lem}
Suppose $\EuScript{G}_{\fs}\in 
S^{\ord}(U^p_{\fs},\I_{\fs})$,
then the images 
$\id^*\overline{F}_{\ell\fs}$ and
$V_\fl^*\overline{F}_{\ell\fs}$ satisfies
\begin{align*}
\id^*\overline{F}_{\ell\fs}(\EuScript{G}_{\fs})&
\coloneqq \overline{F}_{\ell\fs}(\EuScript{G}_{\fs})=
(1-\epsilon\Psi_{\fs}(\varpi_{\bar{\fl}}))
F_{\fs}(\EuScript{G}_{\fs}) \\
V_\fl^*\overline{F}_{\ell\fs}(\EuScript{G}_{\fs})&
\coloneqq \overline{F}_{\ell\fs}(V_\fl\EuScript{G}_{\fs})=
(\epsilon^{-1}\chi_\circ)(\varpi_\fl)\cdot 
(1-\epsilon\Psi_{\fs}(\varpi_{\bar{\fl}}))
F_{\fs}(\EuScript{G}_{\fs})
\end{align*}
\end{lem}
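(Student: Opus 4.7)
The strategy is to first establish, by interpolation from arithmetic points, an explicit ordinary stabilization relation between $\phi^{\ell\fs}_\fs(\euF_{\ell\fs})$ and $\euF_\fs$ inside $S^{\ord}(U^p_{\ell\fs},\I_\fs)$, and then to evaluate the pairing $\mathbf{B}$ using its Hecke-equivariance together with the eigensystem in Proposition \ref{prop:eigensystem}.

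At every $\widehat{\alpha}\in\fX_\fs$, viewed as a character of $\fG_{\ell\fs}^a$ through the quotient $\fG_{\ell\fs}^a\twoheadrightarrow\fG_\fs^a$, both $\euF_\fs$ and $\phi^{\ell\fs}_\fs(\euF_{\ell\fs})$ specialize to theta lifts of the same Hecke character $\eta=\chi_\circ\mu\alpha$. Because $\eta$ is unramified at $\ell=\fl\fl^c$, the local representation $\pi_\fl$ is an unramified principal series whose Satake parameters at $\fl$, as read off from Propositions \ref{prop:single} and \ref{prop:eigensystem}, are $q_\fl\chi_\circ(\varpi_\fl)$ and $q_\fl\chi_\circ\Psi_\fs(\varpi_\fl)$ (the latter specializing to $\chi_\circ^{-1}\tilde{\eta}(\varpi_\fl)$). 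Since $f_{\ell\fs}(\eta)$ is the $U_\fl^{(1)}$-ordinary stabilization with eigenvalue $\chi_\circ^{-1}\tilde{\eta}(\varpi_\fl)$, a consistency check of the normalizations in the pullback construction of \cite{lee} gives
\[
f_{\ell\fs}(\eta)=f_\fs(\eta)-q_\fl\chi_\circ(\varpi_\fl)\cdot V_\fl f_\fs(\eta),
\]
and interpolation then yields the key identity
\[
\phi^{\ell\fs}_\fs(\euF_{\ell\fs})=\euF_\fs-q_\fl\chi_\circ(\varpi_\fl)\cdot V_\fl\euF_\fs\quad\text{in }S^{\ord}(U^p_{\ell\fs},\I_\fs).
\]

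Substituting into the definition of $\overline{F}_{\ell\fs}(\EuScript{G}_\fs)$, using $U_{\ell\fs}=U_\fs\cdot U_\fl^{(1)}$ and the $U_\fl^{(1)}$-eigenvalue $q_\fl\chi_\circ\Psi_\fs(\varpi_\fl)$ on $\phi^{\ell\fs}_\fs(\euF_{\ell\fs})$, together with the commutation of $V_\fl$ and $U_\fs^{-1}$, one obtains
\[
\overline{F}_{\ell\fs}(\EuScript{G}_\fs)=(q_\fl\chi_\circ\Psi_\fs(\varpi_\fl))^{-1}\bigl[F_\fs(\EuScript{G}_\fs)-q_\fl\chi_\circ(\varpi_\fl)\,\Omega_p^{-2\Sigma}\mathbf{B}(\EuScript{G}_\fs,V_\fl U_\fs^{-1}\euF_\fs)\bigr].
\]
The residual pairing is evaluated via Hecke-equivariance by transferring $V_\fl$ to the first argument through its adjoint $V_\fl^*$; composing $V_\fl^*$ with the level inclusion $S^{\ord}(U^p_\fs,\I_\fs)\hookrightarrow S^{\ord}(U^p_{\ell\fs},\I_\fs)$ reduces to a standard Hecke operator at $\fl$ acting on $\EuScript{G}_\fs$, whose eigenvalue on $\euF_\fs$ is obtained from Proposition \ref{prop:eigensystem}. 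The anticyclotomic relation $\Psi_\fs(\varpi_{\bar{\fl}})=\Psi_\fs^{-1}(\varpi_\fl)$ together with $\chi_\circ(\varpi_\fl)\chi_\circ(\varpi_{\bar{\fl}})=q_\fl$ then converts the resulting coefficient into the Euler factor $(1-\epsilon\Psi_\fs(\varpi_{\bar{\fl}}))$; the appearance of $\varpi_{\bar{\fl}}$ rather than $\varpi_\fl$ originates from the implicit conjugation in $\mathbf{B}$ which interchanges the two primes above $\ell$.

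The second identity follows by the same manipulations with $V_\fl\EuScript{G}_\fs$ replacing $\EuScript{G}_\fs$. The prefactor $(\epsilon^{-1}\chi_\circ)(\varpi_\fl)=q_\fl\chi_\circ(\varpi_\fl)$ --- precisely the Satake parameter appearing in the stabilization formula --- emerges from the adjoint relation $V_\fl^*\circ V_\fl$ on forms already at level $U^p_\fs$.

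The principal obstacle is the careful tracking of normalizing constants: verifying the stabilization formula with trivial scalar requires revisiting the pullback construction in \cite{lee} to check that the theta lifts at the two levels $U^p_\fs$ and $U^p_{\ell\fs}$ are normalized consistently, and the precise coefficients arising from $V_\fl^*$ and $V_\fl^*\circ V_\fl$ depend on the normalization of local volumes entering $\mathbf{B}$. These steps are tedious but routine, and the Euler factors in the statement are intrinsic and insensitive to such choices.
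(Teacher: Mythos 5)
Your strategy is the right one in outline --- write $\phi^{\ell\fs}_\fs(\euF_{\ell\fs})$ as an ordinary stabilization of $\euF_\fs$ and then push through the pairing $\mathbf{B}$ --- but the two concrete ingredients that actually drive the computation are either wrong or missing, and the final paragraph dismisses the very thing the lemma is asserting.

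The stabilization relation you write down is not the one that holds. Writing $u=\epsilon^{-1}\chi_\circ(\varpi_\fl)$ and $v=\epsilon^{-1}\chi_\circ\Psi_\fs(\varpi_\fl)$ for the two eigenvalues of $T_\fl^{(1)}$, the paper's construction gives $\phi^{\ell\fs}_\fs(\euF_{\ell\fs})=(1-v^{-1}V_\fl)\euF_\fs$ --- the coefficient is the \emph{reciprocal} of the $U_\fl^{(1)}$-eigenvalue $v$, not the complementary Satake parameter $u=q_\fl\chi_\circ(\varpi_\fl)$ that you propose. These are genuinely different elements of $\I_\fs$, and if you substitute $u$ instead of $v^{-1}$ the resulting scalar is $(u/v+1)-(q_\fl^{-1}+1)u^2$, which is not the Euler factor $(1-q_\fl^{-1}u/v)=(1-\epsilon\Psi_\fs(\varpi_{\bar\fl}))$. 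So the starting point of the computation is off by exactly the kind of normalization you declare to be inconsequential.

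The second missing piece is the precise change-of-level adjunction for $\mathbf{B}$. You invoke ``Hecke-equivariance'' and ``transfer $V_\fl$ through its adjoint,'' but $\mathbf{B}_{\ell\fs}$ and $\mathbf{B}_\fs$ are pairings at two different levels, and the relations that are actually needed carry nontrivial volume factors: $\mathbf{B}_{\ell\fs}(\euF_1,\euF_2)=\mathbf{B}_\fs(\euF_1,T_\fl^{(1)}\euF_2)$, $\mathbf{B}_{\ell\fs}(\euF_1,V_\fl\euF_2)=(q_\fl+1)\mathbf{B}_\fs(\euF_1,T_\fl^{(2)}\euF_2)$, and $\mathbf{B}_{\ell\fs}(V_\fl\euF_1,V_\fl\euF_2)=\mathbf{B}_\fs(T_\fl^{(1)}\euF_1,T_\fl^{(2)}\euF_2)$, as in Proposition 5.12 of \cite{lee}. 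The factor $(q_\fl+1)$ (index of the Iwahori in the maximal compact at $\fl$) is essential and is what makes the final coefficient collapse to $1-q_\fl^{-1}u/v$; nothing in your outline produces it. Relatedly, your identity $\chi_\circ(\varpi_\fl)\chi_\circ(\varpi_{\bar\fl})=q_\fl$ has the wrong sign in the exponent: since $\chi_\circ=\chi|\cdot|_\K^{1/2}$ and $\chi$ restricts to $\qch_{\K/\F}$, one gets $\chi_\circ(\varpi_\fl)\chi_\circ(\varpi_{\bar\fl})=q_\fl^{-1}$. Finally, the claim that ``the Euler factors in the statement are intrinsic and insensitive to such choices'' is exactly backwards: the lemma's content \emph{is} the identification of those coefficients, and the proof is precisely the bookkeeping you defer. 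You would need to state and use the three $\mathbf{B}$-relations, use the correct stabilization coefficient $v^{-1}$, and carry the arithmetic through to the displayed $(1-\epsilon\Psi_\fs(\varpi_{\bar\fl}))$; and the second identity requires the third $\mathbf{B}$-relation (the one with two $V_\fl$'s), which you have not engaged with.
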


\begin{proof}

To distinguish the level, 
we let $\mathbf{B}_\fs=\mathbf{B}$ denote 
the usual pairing on $S^{\ord}(U^p_\fs, \I_\fs)$
while $\mathbf{B}_{\ell\fs}=\phi^{\ell\fs}_\fs\circ \mathbf{B}$
denote the images of the pairing on 
$S^{\ord}(U^p_{\ell\fs}, \I_{\ell\fs})$ under $\phi^{\ell\fs}_\fs$,
so both pairings are valued in $\I_\fs$.
Then by Proposition \cite[Prop 5.12]{lee},
the pairings satisfies the relation
\begin{align*}
	&\mathbf{B}_{\ell\fs}(\euF_1,\euF_2)=
	\mathbf{B}_{\fs}(\euF_1,T_{\fl}^{(1)}\cdot \euF_2)\\
	&\mathbf{B}_{\ell\fs}(\euF_1,V_{\fl}\cdot\euF_2)=
	(q_\fl+1) \mathbf{B}_{\fs}(\euF_1, T_{\fl}^{(2)}\cdot\euF_2)\\
	&\mathbf{B}_{\ell\fs}
	(V_{\fl}\cdot \euF_1,V_{\fl}\cdot\euF_2)=
	\mathbf{B}_{\fs} (T_{\fl}^{(1)}\cdot\euF_1,
	T_{\fl}^{(2)}\cdot \euF_2)
\end{align*} 
for $\euF_1,\euF_2\in S^{\ord}(U^p_\fs,\I_\fs)$.
Now let $u=\epsilon^{-1}\chi_\circ(\varpi_\fl)$
and $v=\epsilon^{-1}\chi_\circ\Psi_\fs(\varpi_\fl)$.
By Proposition \ref{prop:eigensystem}
we have $T_\fl^{(1)}\euF_\fs=(u+v)\euF_\fs$,
$T_\fl^{(2)}\euF_\fs=q_\fl^{-1}(uv)\euF_\fs$, and
$U_\fl\euF_{\ell\fs}=v\euF_{\ell\fs}$.
Moreover, by the construction in \cite{lee}, the Hida families
$\overline{\euF}_{\ell\fs}=\phi^{\ell\fs}_\fs\circ \euF_{\ell\fs}\in S^{\ord}(U^p_{\ell\fs}, \I_\fs)$ and
$\euF_\fs\in S^{\ord}(U^p_{\fs}, \I_\fs)$ are related by
\begin{equation}\label{eq:compare_level}
    \overline{\euF}_{\ell\fs}=
	\big(1-v^{-1}V_\fl)\euF_{\fs}
    \in S^{\ord}(U^p_{\ell\fs},\I_\fs).
\end{equation}
Since $U_{\ell\fs}^{-1}\euF_{\ell\fs}=v^{-1}U_{\fs}^{-1}\euF_{\ell\fs}$,
we can then compute that for $\EuScript{G}_{\fs}\in S^{\ord},(U^p_{\fs},\I_\fs)$, 
\begin{align*}
\overline{F}_{\ell\fs}(\EuScript{G}_{\fs})&=
\mathbf{B}_{\ell\fs}(\EuScript{G}_{\fs},
v^{-1}U_\fs^{-1}(1-v^{-1}V_\fl)\euF_\fs)\\&=
(u+v)v^{-1}
\mathbf{B}_\fs(\EuScript{G}_{\fs},U_\fs^{-1}\euF_\fs)-
(q_\fl^{-1}+1)(u/v)
\mathbf{B}_\fs(\EuScript{G}_{\fs}, U_\fs^{-1}\euF_\fs)=
(1-(u/v)q_\fl^{-1})F_\fs(\EuScript{G}_\fs).\\
\overline{F}_{\ell\fs}(V_\fl\EuScript{G}_{\fs})&=
\mathbf{B}_{\ell\fs}(V_\fl\EuScript{G}_{\fs},
v^{-1}U_\fs^{-1}(1-v^{-1}V_\fl)\euF_\fs)\\&=
u(q_\fl^{-1}+1)
\mathbf{B}_\fs(\EuScript{G}_{\fs},U_\fs^{-1}\euF_\fs)-
(u+v)(u/v)q_\fl^{-1}
\mathbf{B}_\fs(\EuScript{G}_{\fs}, U_\fs^{-1}\euF_\fs)=
u(1-(u/v)q_\fl^{-1})F_\fs(\EuScript{G}_\fs).
\end{align*}
Here we have used that $\mathbf{B}_\fs$
is equivariant with respect to the Hecke operators
by \cite[Rmk 5.13]{lee}.

\end{proof}

Let $u, v$ be as above. 
Then for $F\in M_{\ell\fs}^{\ord}$ we have
$\phi^{\ell\fs}_\fs(F(\euF_{\ell\fs}))=(\psi^*\overline{F})(\euF_\fs)$.
by \eqref{eq:compare_level}. Thus the morphism
\[
    \varphi\coloneqq (1-v^{-1}V_\fl)^*\otimes \phi^{\ell\fs}_\fs\colon 
    M_{\ell\fs}^{\ord}\to M_\fs^{\ord},
\]
which is equivariant with respect to the ring homomorphism
$\varphi^{\ell\fs}_\ell$ between Hecke algebra,
maps $S_{\ell\fs}^{\ord}\coloneqq\ker(\Theta_{\ell\fs})=
\{F\in M^{\ord}_{\ell\fs}\mid F(\euF_{\ell\fs})=0\}$ into 
$S_{\fs}^{\ord}$ as well. Then the morphism
$\varphi\colon M_{\ell\fs}\to M_\fs$ defined similarly also maps
$S_{\ell\fs}$ into $S_\fs$.

\begin{prop}\label{prop:cores}

Write $\bar{b}=\varphi^{\ell\fs}_{\fs}(b)\in B_\fs$
for $b\in B_{\ell\fs}$.
Then for the maps $f_\fs$ and $f_{\ell\fs}$
constructed in Definition \ref{def:cong_map}
we have the equality
\[
\overline{f_{\ell\fs}(b)}=
(1-\epsilon\Psi_{\fs}(\varpi_{\bar{\fl}}))
(1-\Psi_\fs(\varpi_{\bar{\fl}}))
\cdot f_\fs(\bar{b})\in \I_\fs.
\]
\end{prop}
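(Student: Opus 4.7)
The plan is to trace through Definition \ref{def:cong_map} at both levels $\fs$ and $\ell\fs$ and push down via $\phi^{\ell\fs}_\fs$, tracking the Euler factor that arises from comparing $\euF_{\ell\fs}$ with $\euF_\fs$. Write $E\coloneqq(1-\Psi_\fs(\varpi_{\bar\fl}))(1-\epsilon\Psi_\fs(\varpi_{\bar\fl}))$ and set $u=\epsilon^{-1}\chi_\circ(\varpi_\fl)$, $v=\epsilon^{-1}\chi_\circ\Psi_\fs(\varpi_\fl)$ as in the lemma preceding Definition \ref{def:family_at_s}. Since $\Psi_\fs$ is anticyclotomic, $\Psi_\fs(\varpi_\fl)\Psi_\fs(\varpi_{\bar\fl})=1$, hence $u/v=\Psi_\fs(\varpi_{\bar\fl})$.

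First I would extend the morphism $\varphi\colon M^\ord_{\ell\fs}\to M^\ord_\fs$ of the previous subsection to a morphism $\widetilde\varphi\colon M_{\ell\fs}\to M_\fs$ equivariant with respect to $\varphi^{\ell\fs}_\fs$, compatible with the reduction $M_\bullet\twoheadrightarrow M^\ord_\bullet$, and mapping $S_{\ell\fs}$ into $S_\fs$. Concretely, the operator $(1-v^{-1}V_\fl)$ used in the definition of $\varphi$ is defined on the full completed cohomology (not only on its ordinary part), so $\widetilde\varphi$ is constructed as the Pontryagin dual of $(1-v^{-1}V_\fl)$ tensored with the scalar change $\phi^{\ell\fs}_\fs\colon \I_{\ell\fs}\to \I_\fs$. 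The identity $\Theta_\fs\circ\widetilde\varphi=\phi^{\ell\fs}_\fs\circ\Theta_{\ell\fs}$ follows from the relation $\overline\euF_{\ell\fs}=(1-v^{-1}V_\fl)\euF_\fs$, and this forces $\widetilde\varphi(S_{\ell\fs})\subset S_\fs$. Using the two identities of the lemma,
\[
    \widetilde\varphi(F_{\ell\fs})(\EuScript{G}_\fs)=\overline F_{\ell\fs}((1-v^{-1}V_\fl)\EuScript{G}_\fs)=(1-u/v)(1-\epsilon\Psi_\fs(\varpi_{\bar\fl}))F_\fs(\EuScript{G}_\fs)=E\cdot F_\fs(\EuScript{G}_\fs),
\]
so $\widetilde\varphi(F_{\ell\fs})=E\cdot F_\fs$ in $M^\ord_\fs$, hence $\widetilde\varphi(\tilde F_{\ell\fs})\equiv E\tilde F_\fs\pmod{S_\fs}$ under the isomorphism $M_\fs/S_\fs\cong M^\ord_\fs/S^\ord_\fs$.

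Next I would apply $\widetilde\varphi$ to the defining congruence $\xx\tilde F_{\ell\fs}(b)\equiv q(b)\tilde F_{\ell\fs}\pmod{\fq_{\ell\fs}S_{\ell\fs}}$. By equivariance $\varphi^{\ell\fs}_\fs(\xx)=\xx$ and $\varphi^{\ell\fs}_\fs(q(b))=q(\bar b)$, while $\widetilde\varphi(\fq_{\ell\fs}S_{\ell\fs})\subset \fq_\fs S_\fs$ because $\varphi^{\ell\fs}_\fs(\fq_{\ell\fs})\subset \fq_\fs$ and $\widetilde\varphi(S_{\ell\fs})\subset S_\fs$. Combining with the previous step,
\[
    \xx\,\widetilde\varphi(\tilde F_{\ell\fs}(b))\equiv q(\bar b)\widetilde\varphi(\tilde F_{\ell\fs})\equiv E\cdot q(\bar b)\tilde F_\fs\equiv E\cdot\xx\tilde F_\fs(\bar b)\pmod{\fq_\fs S_\fs}
\]
in $\fq_\fs M_\fs/\fq_\fs S_\fs$, where the last step uses the definition of $\tilde F_\fs(\bar b)$. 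Invoking the left-exactness of the bottom row of Corollary \ref{cor:fund}, i.e.\ the injectivity of the $\xx$-map $M_\fs/S_\fs\hookrightarrow\fq_\fs M_\fs/\fq_\fs S_\fs$, cancels the $\xx$ and gives $\widetilde\varphi(\tilde F_{\ell\fs}(b))=E\cdot\tilde F_\fs(\bar b)$ in $M_\fs/S_\fs$. Applying $\Theta_\fs$ and using $\Theta_\fs\circ\widetilde\varphi=\phi^{\ell\fs}_\fs\circ\Theta_{\ell\fs}$ yields $\overline{f_{\ell\fs}(b)}=\phi^{\ell\fs}_\fs(\Theta_{\ell\fs}(\tilde F_{\ell\fs}(b)))=\Theta_\fs(\widetilde\varphi(\tilde F_{\ell\fs}(b)))=E\cdot f_\fs(\bar b)$, as claimed.

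The hardest step is the construction of $\widetilde\varphi$, specifically extending the $V_\fl$-twisted comparison map from the ordinary quotient to an operator on the entire (twisted) completed homology that remains compatible with both the filtration $S_\bullet\subset M_\bullet$ and the Hecke actions at both levels. Once this is in place, the rest of the argument is a formal unwinding of the definitions together with the injectivity supplied by the fundamental exact sequence of Corollary \ref{cor:fund}.
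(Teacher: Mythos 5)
Your proof is correct and follows essentially the same route as the paper's: you construct the morphism $\widetilde\varphi$ (the paper calls it $\varphi\colon M_{\ell\fs}\to M_\fs$, defined in the paragraph preceding the proposition) compatible with $\varphi^{\ell\fs}_\fs$ and the reduction to the ordinary quotient, show $\varphi F_{\ell\fs}=E\cdot F_\fs$ using the pairing lemma, push the defining congruence $\xx\tilde F_{\ell\fs}(b)\equiv q(b)\tilde F_{\ell\fs}$ through $\widetilde\varphi$, and cancel $\xx$ by the injectivity supplied by Corollary \ref{cor:fund}. The paper packages the diagram chase as a commutative cube rather than a linear unwinding, but the key ingredients and their order are identical; your identification of the construction of $\widetilde\varphi$ and its compatibility with $S_\bullet$ as the crux step is the same observation the paper disposes of with the sentence ``Then the morphism $\varphi\colon M_{\ell\fs}\to M_\fs$ defined similarly also maps $S_{\ell\fs}$ into $S_\fs$.''
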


\begin{proof}

Let $u,v$ be as in the previous lemma 
and consider the following cube,
where vertical maps are induced from 
the morphism $\varphi$ defined above
and the horizontal ones are from \eqref{eq:comm_s}.
\begin{equation*}
\begin{tikzcd}[column sep=tiny]
M_{\ell\fs}/S_{\ell\fs}\arrow{dd} \arrow[rd,equal]\arrow[rr]
&& (M_{\ell\fs}/S_{\ell\fs})\otimes_{\TT}\fq_{\ell\fs}
	\arrow[rd]\arrow[dd]\\
& M_{\ell\fs}/S_{\ell\fs}
	\arrow[crossing over]{dd} \arrow[crossing over]{rr} 
&& \fq_{\ell\fs}M_{\ell\fs}/\fq_{\ell\fs}S_{\ell\fs}
	\arrow[crossing over]{dd}\\
M_{\fs}/S_{\fs}\arrow{rr}\arrow[rd,equal]
&& (M_{\fs}/S_{\fs})\otimes_{\TT} \fq_{\fs}
	\arrow[rd]\\
& M_{\fs}/S_{\fs} \arrow[crossing over]{rr} 
&& \fq_{\fs}M_{\fs}/\fq_{\fs}S_{\fs}
\arrow[from=2-2,to=4-2,crossing over]
\arrow[from=2-4,to=4-4,crossing over]
\end{tikzcd}
\end{equation*}
Write $\bar{b}=\varphi^{\ell\fs}_{\fs}(b)$
for $b\in B_{\ell_\fs}$.
By Definition \ref{def:cong_map} we have
$f_\fs(\bar{b})=\Theta_{\fs}(\tilde{F}_\fs(\bar{b}))$ and 
$f_{\ell\fs}(b)=\Theta_{\ell\fs}(\tilde{F}_{\ell\fs}(b))$,
where $\tilde{F}_\bullet(b)\in (M_\bullet/S_\bullet)$
is the unique element such that 
$\xx \tilde{F}_\bullet(b)\equiv 
q(b)\tilde{F}_\bullet\mod q_\bullet S_\bullet$
and $\tilde{F}_\bullet$ is the preimage 
of $F_\bullet$ under 
$M_\bullet/S_\bullet\cong M_\bullet^{\ord}/S_\bullet^{\ord}$.
By the previous lemma we have
$\varphi F_{\ell\fs}=
(1-(u/v))(1-(u/v)q_\fl^{-1})F_{\fs}$. Therefore
$\varphi\tilde{F}_{\ell\fs}\equiv
(1-(\alpha/\beta))(1-(\alpha/\beta)q_\fl^{-1})\tilde{F}_{\fs}
\bmod S_\fs$ and consequently 
\begin{multline*}
\xx\varphi(\tilde{F}_{\ell\fs}(b))=
\varphi(\xx \tilde{F}_{\ell\fs}(b))\equiv 
\varphi(q(b)\tilde{F}_{\ell\fs})=
q(\bar{b})\varphi(\tilde{F}_{\ell\fs}) \equiv 
q(\bar{b})(1-(u/v))(1-(u/v)q_\fl^{-1})
\tilde{F}_{\fs}\\\equiv
\xx(1-(u/v))(1-(u/v)q_\fl^{-1})
\tilde{F}_{\fs}(\bar{b})\mod \fq_\fs S_\fs
\end{multline*}
Then by the injectivity in \eqref{eq:comm_s} we conclude that
$\varphi F_{\ell\fs}(b)\equiv
(1-(u/v))(1-(u/v)q_\fl^{-1})
F_\fs(\bar{b})\bmod S_\fs$.
And since $\Theta_\bullet$ is the evaluation
at $\euF_\bullet$ of the images in $M_\bullet^{\ord}$,
we have
\[
    \overline{\Theta_{\ell\fs}(\tilde{F}_{\ell\fs}(b))}
    =\Theta_\fs(\varphi\tilde{F}_{\ell\fs}(b))=
    (1-(u/v))(1-(u/v)q_\fl^{-1})
    \Theta_\fs(\tilde{F}_\fs(\bar{b}))
\]
and the proposition follows from 
$u/v=\Psi_\fs^{-1}(\varpi_\fl)=\Psi_\fs(\varpi_{\bar{\fl}})$.
	
\end{proof}

\begin{defn}\label{def:bigEuler}

Let $\mathcal{R}$ be the set of ideals
$\fs=\ff\ff^c$ as in Definition \ref{def:family_at_s}.
For $\fs\in\mathcal{R}$ we define
$\mathcal{Z}_\fs\in H^1(\K,\I_\fs(\Psi_\fs^{-1}))$
as the cohomology class
associated to $f_\fs\in \Hom_{\TT_\fm}(B,\I_\fs)$
by Proposition \ref{prop:BC}.
Similarly, we define 
$\mathcal{Z}_{p,\fs}\in H^1(\Qp,\I_\fs(\Psi_\fs^{-1}))$
as the class associated to 
$f_0\in \Hom_{R^{\zeta\epsilon}}(B_0,\I_\fs)$
from Proposition \ref{prop:BC}.

\end{defn}

\begin{thm}\label{thm:Bigeu}
The cohomology classes
$\{\mathcal{Z}_\fs\in H^1(\K, \I_\fs(\Psi_\fs^{-1}))\}_{\fs\in\mathcal{R}}$
has the following properties.
\begin{enumerate}
    \item There exists a finite set $S$ of primes of $\K$
    that contains all the places above $p$ and all the 
    places where $\psi$ is ramified, such that 
    $\mathcal{Z}_{\fs}$ is unramified away 
    $S\cup \{w\colon w\mid \fs\}$.
    \item For $w'\in \Sigma_p\setminus\{w_0\}$
    and $\sigma\in D_{w'}$, the restriction
    $\loc_{w'}(\mathcal{Z}_\fs)\in H^1(\K_{w'}, \I_\fs(\Psi_\fs^{-1}))$
    satisfies 
    \[
        (\Psi_{\fs}^{-1}(\sigma)-1)\cdot \loc_{w'}(\mathcal{Z}_\fs)=0
    \]
    \item For $w=w_0$, the restriction 
    $\loc_{w}(\mathcal{Z}_\fs)\in H^1(\Qp, \I_\fs(\Psi_\fs^{-1}))$
    is $L_\fs\cdot \mathcal{Z}_{p,\fs}$.
    \item If $\ell=\fl\fl^c$ and $\ell\fs,\fs\in \mathcal{R}$, then
    \[
        \phi^{\ell\fs}_\fs(\mathcal{Z}_{\ell\fs})=
        (1-\epsilon\Psi_{\fs}(\varpi_{\bar{\fl}}))
        (1-\Psi_\fs(\varpi_{\bar{\fl}}))\cdot 
        \mathcal{Z}_{\fs}
    \]
\end{enumerate}
\end{thm}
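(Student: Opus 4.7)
The plan is to translate algebraic identities satisfied by the matrix entries of the generic representation $\rho_\fs\colon\Gal_\K\to\GL_2(\TT(U^p_\fs,\eo)_\fm[1/\xx])$ attached to $T_\fm'$ into cohomological statements via Proposition \ref{prop:BC}. In the notation of that proposition with $\phi=\lambda_\fs$, the cocycle representing $\mathcal{Z}_\fs$ is $\xi_\fs(\sigma)=\delta_2(\sigma)^{-1}f_\fs(B(\sigma))$, where $\delta_1=\xi_\fm^{1/2}\hat\chi_\circ$, $\delta_2=\xi_\fm^{1/2}\hat\chi_\circ\Psi_\fs$, and $\delta_1/\delta_2=\Psi_\fs^{-1}$; similarly $\mathcal{Z}_{p,\fs}$ is represented by $\sigma\mapsto \delta_2(\sigma)^{-1}f_0(B_0(\sigma))$ on $\Gp$. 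Each of the four properties will then correspond to a structural feature of $f_\fs$, $B(\sigma)$, or $T_\fm$.

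The first step is to dispose of (3) and (4), which are essentially cohomological repackagings of statements already established. For (3), the functions $A,B,C,D$ restricted to $\Gp$ are, by construction, the images under $R^{\zeta\epsilon}\to\TT(U^p_\fs,\eo)_\fm$ of the universal $A_0,B_0,C_0,D_0$; combining this with Proposition \ref{prop:res_0}, which says $f_\fs|_{B_0}=L_\fs\cdot f_0$, gives at once $\loc_{w_0}(\mathcal{Z}_\fs)=L_\fs\cdot\mathcal{Z}_{p,\fs}$. For (4), applying $\phi^{\ell\fs}_\fs$ to the cocycle $\xi_{\ell\fs}$, using that $\varphi^{\ell\fs}_\fs$ sends $B_{\ell\fs}(\sigma)$ to $B_\fs(\sigma)$ and intertwines $\phi^{\ell\fs}_\fs\circ\delta_2^{\ell\fs}=\delta_2^\fs$, reduces the claim to Proposition \ref{prop:cores}, from which the Euler factor $(1-\epsilon\Psi_\fs(\varpi_{\bar\fl}))(1-\Psi_\fs(\varpi_{\bar\fl}))$ appears directly.

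For (2), I will fix $\sigma_0\in D_{w'}$ and write
\[
\Psi_\fs^{-1}(\sigma_0)-1=\lambda_\fs(A(\sigma_0)-D(\sigma_0))/\delta_2(\sigma_0).
\]
Proposition \ref{prop:res} gives $(A(\sigma_0)-D(\sigma_0))B(\tau)=(A(\tau)-D(\tau))B(\sigma_0)$ for all $\tau\in D_{w'}$ in $\TT(U^p_\fs,\eo)_\fm[1/\xx]$. Applying the $\TT$-linear map $f_\fs$ and dividing through by $\delta_2(\sigma_0)\delta_2(\tau)$ yields
\[
(\Psi_\fs^{-1}(\sigma_0)-1)\,\xi_\fs(\tau) = (\Psi_\fs^{-1}(\tau)-1)\cdot y_0,\qquad y_0\coloneqq f_\fs(B(\sigma_0))/\delta_2(\sigma_0)\in\I_\fs,
\]
which exhibits the left-hand side as the coboundary of $y_0$, so $(\Psi_\fs^{-1}(\sigma_0)-1)\loc_{w'}(\mathcal{Z}_\fs)=0$ in $H^1(\K_{w'},\I_\fs(\Psi_\fs^{-1}))$. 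For (1), I will first fix a finite set $S$ containing $S_p^\K$ and the primes of $\K$ ramified for the auxiliary character $\mu$ (hence a fortiori for $\psi$). For $w\notin S\cup\{w\mid\fs\}$ the level $U^p_\fs$ is maximal at $w$, so $T_\fm=T(U^p_\fs)_\fm$ is unramified at $w$; substituting the pseudo-representation identity $T(\sigma g)=T(g)$ for $\sigma\in I_w$ and $g\in\Gal_\K$ into $A(\sigma g)=(T(\sigma g z)-\beta T(\sigma g))/(\alpha-\beta)$ gives $A(\sigma g)=A(g)$ and $A(\sigma)=1$, whence $x(\sigma,\tau_0)=A(\sigma\tau_0)-A(\sigma)A(\tau_0)=0$ and $B(\sigma)=x(\sigma,\tau_0)/\xx=0$. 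Therefore $\xi_\fs$ vanishes on $I_w$.

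The hard part, or rather the part where the substantive work already sits, is Proposition \ref{prop:res}: once it is granted, the cohomological deduction of (2) is a two-line calculation. That rigidity statement itself draws on the full weight of Pan's local-global compatibility (Proposition \ref{prop:compatibility}), the density of crystalline points (Corollary \ref{cor:density}), and the $\Psi_{w',1}$-ordinary structure of $T(U^p)|_{D_{w'}}$ (Proposition \ref{prop:big_char_at_p}). Consequently, the proof of this theorem is really the final harvest of the machinery built up throughout Sections 2 and 3, and the only genuine bookkeeping to be done concerns the identification of the finite set $S$ in (1) in terms of the tame ramification of $\mu$ and the auxiliary data entering the construction of $\euF_\fs$.
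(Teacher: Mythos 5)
Your proof is correct and takes essentially the same approach as the paper: parts (3) and (4) are direct applications of Propositions \ref{prop:res_0} and \ref{prop:cores} to the cocycle formula from Proposition \ref{prop:BC}, part (2) is the coboundary deduction from the symmetry of Proposition \ref{prop:res}, and part (1) follows from the unramifiedness of the pseudo-representation away from $S\cup\{w\mid\fs\}$ (though you should make explicit that $S$ must also include the finitely many places where the tame level $K^p$ is not hyperspecial, as the paper does, since that is what makes $U^p_\fs$ maximal at $w\notin S\cup\{w\mid\fs\}$).
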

\begin{proof}
Recall that each $U^p_\fs$ is obtained 
from replacing the component at $\fs$
of a fixed compact open subgroup $K^p$.
Let $S$ be the union of 
\begin{itemize}
    \item places above $p$.
    \item places where $\psi$ is ramified.
    \item places where the component of $K^p$ at which
    is not hyperspecial.
\end{itemize}
It then follows from Proposition \ref{prop:gal_ger}
that each $r_\fp$ and therefore the big Galois pseudo-representations
$T(U^p_\fs)$ factors through the Galois group over $\K$
of the maximal extension that is unramified away 
$S\cup \{w\colon w\mid \fs\}$ and the first part of the claim follows.

The second claim follows from that for any $\sigma\in D_{w'}$, 
where $w'\in\Sigma_p\setminus w_0$, the class 
\[
   \sigma'\mapsto 
   f_\fs\big((A(\sigma)-D(\sigma))B(\sigma')\big)=
   \epsilon^{-1}\hat{\chi}_\circ(\sigma)
   (1+\Psi_\fs(\sigma))\mathcal{Z}_\fs(\sigma')
\]
is a coboundary by Proposition \ref{prop:res}.
Similarly, by construction
the last two claims follow respectively from
Proposition \ref{prop:res_0} and
Proposition \ref{prop:cores}.
\end{proof}

\subsection{Specializations}

By Shapiro lemma, the compatible system constructed in 
Theorem \ref{thm:Bigeu}
can also be viewed as an infinite collection of systems
of cohomology classes,
indexed by characters of $\fG_\fs^a$,
that are compatible under the corestriction maps.
Here we explicate this transition.

\begin{defn}\label{def:anticyc_extn}

Let $\tilde{\K}_\id/\K$ be the extension 
that corresponds to 
$\fG^a_\id=\Delta^a_\id\times W$ and fix an isomorphism
$W\cong \Zp^d$.
Then $\tilde{K}\coloneqq \tilde{K}_\id^{\Delta^a_\id}$
is the maximal anticyclotomic $\Zp^d$-extension
with $\Gal(\tilde{K}/\K)\cong W$.
Define
$U\coloneqq \prod_{w\in\Sigma_p}(1+\varpi_w\oo_w)^\times$
and let $H=Gal(\K(1)/\K)$ be the Galois group of 
the maximal anticyclotomic unramified $p$-extension.
Since $\K$ contains no $p$-power roots of unity,
by class field theory we have an exact sequences
\[
    1\to U\to \fG_\id^a\to H\to 1
\]
If $V\subset U$ is an open subgroup
we let $\K(V)/\K(\id)$ denote the extension such that
$Gal(\K(V)/\K(\id))\cong U/V$.

More generally,
for $\fs=\ff\ff^c\in\mathcal{R}$
defined as in Definition \ref{def:bigEuler}
let $U=\prod_{w\in\ff}U_w$,
where $\Delta_w$ is the maximal pro-$p$ quotient of 
$(\oo_w/(\varpi_w))^\times$ when $w\mid \ff$.
Similarly we have the exact sequence
\[
    1\to \Delta_\ff\times U\to \fG_\fs^a\to H\to 1.
\]
For each $\fl\mid\ff$ and $\ell\coloneqq\fl\fl^c$
we let $\K(\ell)/\K(1)$ be the extension 
such that $H(\ell)\coloneqq Gal(\K(\ell)/\K(\id))\cong \Delta_{\fl}$
and let $\K(\fs)$ be the product of all $\K(\ell_i)$
if $\ff=\fl_1\cdots\fl_r$ and $\ell_i=\fl_i\fl_i^c$. Note that 
\[
    Gal(\K(\fs)/\K(\id))\cong 
    H(\ell_1)\times\cdots\times H(\ell_r)\cong \Delta_\ff
\]
For $V\subset U$ as above, 
we also define $\K(\fs V)=\K(\fs)\K(V)$,
which satisfies that 
$Gal(\K(\fs V)/\K(\id))\cong \Delta_{\fs}\times U/V$.
Then $\tilde{K}_\fs=\cup_V \K(\fs V)=\K(\fs)\tilde{\K}_\id$
is the extension that corresponds to $\fG_\fs^a$,
where $V$ ranges through open subgroups of $U$.

\end{defn}

Recall that $\Psi_\fs\colon \Gal_\K\to \I_\fs^\times$
is given by $\Psi(\gamma)=\psi^{-1}(\gamma)\langle \gamma)$,
where $\langle*\rangle$ is the tautological character.
Given a character $\alpha\colon W\to \eo^\times$,
we define the homomorphism 
\[
    \alpha^\iota\colon \I_\fs\to \I_\fs\quad
    \langle\gamma\rangle\mapsto \alpha^{-1}(\gamma)
    \langle \gamma^{-1}\rangle
\]
which are compatible between different tame levels 
$\ell\fs, \fs\in \mathcal{R}$ under $\phi^{\ell\fs}_\fs$.
Then $\alpha^\iota$ defines an isomorphism between Galois modules
$\alpha^\iota\colon \I_\fs(\Psi_\fs^{-1})=
\I_\fs(\psi\langle*\rangle^{-1})
\to \I_\fs(\psi\alpha\langle*\rangle)$.
By Shapiro lemma
(cf \cite[Lem 5.8]{Schneider2016} for example), 
we then have an isomorphism between Galois modules
\begin{equation}\label{eq:Shapiro}
    \alpha^\iota(\mathcal{Z}_\fs) \in 
    H^1(\K, \I_\fs(\psi\alpha\langle*\rangle))\cong 
    \varprojlim_{L\subset \tilde{\K}_\fs}
    H^1(L, \eo(\psi\alpha))
\end{equation}
where $\Gal_\K$ acts on the left by $\langle*\rangle$
and on the right by the usual 
Galois actions on cohomology groups.

\begin{defn}

For $\alpha$ as above we let  
$(z_L)_L\in \varprojlim_{L\subset \tilde{\K}_\fs}
H^1(L, \eo(\psi\alpha))$
be the system of cohmology classes that corresponds to 
$\alpha^\iota(\mathcal{Z}_\fs)$
under the isomorphism \eqref{eq:Shapiro}.
We write $z_\fs=z_{\K(\fs)}$ and
$z_{\fs V}=z_{\K(\fs V)}$ 
for the fields $\K(\fs V)$ defined above.
In particular we have  
$z_{\id}\in H^1(\K(\id), \eo(\psi\alpha))$.
We also put 
$z=\Cor_{\K(\id)}^{\K}(z_\id)\in H^1(\K, \eo(\psi\alpha))$,
which is also the bottom class associated to 
$\alpha^\iota(\mathcal{Z}_\id)$.

\end{defn}

\begin{lem}\label{lem:classp}

Let $z_p\in H^1(\Qp, \eo(\psi\alpha))$
be the class associated to 
$\alpha^{-1}\circ f_\circ\in \Hom_{R^{\zeta\epsilon}}(B_0, \eo)$,
where the homomorphism 
$\alpha^{-1}\colon \I_\fs\to \eo$ is induced by 
$\alpha^{-1}\colon \fG_\fs^a\to \eo^\times$, 
then $H^1(\Qp, \eo(\psi\alpha))=\eo z_p$
is free of rank one.
\end{lem}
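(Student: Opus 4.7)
The plan is to first show that $H^1(\Qp,\eo(\psi\alpha))$ is free of rank one over $\eo$, and then verify that $z_p$ is a generator by reducing modulo $\varpi$.

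Since $\alpha\colon W\to\eo^\times$ factors through the torsion-free pro-$p$ group $W$, it takes values in $1+\varpi\eo$, so $\psi\alpha\equiv\psi\bmod\varpi$ on $\Gp$. By \eqref{cond:gen_psi} this residual character is neither trivial nor congruent to $\omega^{\pm 1}$, so $H^0(\Qp,\fF(\psi\alpha))=0$; Tate local duality then gives $H^2(\Qp,\fF(\psi\alpha))=0$ as well, since its Pontryagin dual is the $H^0$ of $(\psi\alpha)^{-1}\omega$, which is also residually nontrivial. The long exact sequence attached to $0\to\eo\xrightarrow{\varpi}\eo\to\fF\to 0$ shows simultaneously that $H^1(\Qp,\eo(\psi\alpha))$ has no $\varpi$-torsion and that its reduction modulo $\varpi$ embeds into the finite group $H^1(\Qp,\fF(\psi\alpha))$, so $H^1(\Qp,\eo(\psi\alpha))$ is finitely generated and torsion-free over $\eo$. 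The local Euler characteristic formula over $E$, combined with the analogous vanishing $H^0(\Qp,E(\psi\alpha))=H^2(\Qp,E(\psi\alpha))=0$, yields $\dim_E H^1(\Qp,E(\psi\alpha))=1$; whence $H^1(\Qp,\eo(\psi\alpha))$ is free of rank one.

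To show $z_p$ generates, it suffices to check that its reduction modulo $\varpi$ in $H^1(\Qp,\fF(\psi\alpha))$ is nonzero. By Proposition \ref{prop:BC}, $z_p$ is defined by the formula $\sigma\mapsto\delta_2(\sigma)^{-1}(\alpha^{-1}\circ f_\circ)(B_0(\sigma))$, a construction which is functorial in the specialization ring. Since $\alpha^{-1}\circ f_\circ\colon R^{\zeta\epsilon}\to\eo$ is a \emph{local} homomorphism, its reduction modulo $\varpi$ is the residue map $R^{\zeta\epsilon}\to\fF$, and the associated cohomology class is precisely the extension class of the residual representation $\tilde\rho_{12}\otimes_{R^{\zeta\epsilon}}\fF=\rho_{12}$. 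By construction $\rho_{12}$ is the unique nonsplit extension of $\chi_2$ by $\chi_1$, so its class in $H^1(\Gp,\fF(\chi_1\chi_2^{-1}))=H^1(\Gp,\fF(\psi\alpha))$ is nonzero, and the lemma follows.

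The main obstacle is the identification in the second paragraph between the reduction of $z_p$ modulo $\varpi$ and the extension class of $\rho_{12}$. This amounts to tracing through the generalized matrix algebra presentation of $\tilde\rho_{12}$ recorded in Lemma \ref{lem:B26}, verifying that the off-diagonal entry $B_0(\sigma)$ in that presentation reduces modulo $\fm_R$ to a cocycle representative of the class of $\rho_{12}$; this is implicit in the Bellaïche--Chenevier formalism but deserves to be made explicit in the present application.
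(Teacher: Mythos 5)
Your proof is correct and follows the same strategy as the paper's: deduce torsion-freeness and one-dimensionality of the mod-$\varpi$ reduction from \eqref{cond:gen_psi} and the local Euler characteristic, then apply Nakayama once the image of $z_p$ in $H^1(\Qp,\fF(\psi\alpha))$ is seen to be nonzero. Your second paragraph supplies the detail the paper leaves implicit — that the residue map $R^{\zeta\epsilon}\to\fF$ restricted to $B_0$ picks out the nonsplit extension class of $\rho_{12}$ — and this is exactly the content of the injectivity in Proposition \ref{prop:BC} applied with coefficients $S=\fF$, so the ``obstacle'' you flag at the end is not a gap but a standard consequence of the Bella\"iche--Chenevier formalism.
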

\begin{proof}

Let $\fF$ denote the residue field of $\eo$.
The assumption \eqref{cond:gen_psi}
implies that 
$H^1(\Qp, \eo(\psi\alpha))$ is torsion-free and that
$H^1(\Qp, \fF(\psi\alpha))=H^1(\Qp, \fF(\psi))$
is the one-dimensional vector space generated by 
the image of $z_p$ under the exact sequence
\[
    H^1(\Qp, \eo(\psi\alpha))\xrightarrow{\varpi}
    H^1(\Qp, \eo(\psi\alpha))\to
    H^1(\Qp, \fF(\psi\alpha))
\]
Therefore $H^1(\Qp, \eo(\psi\alpha))$
is generated by $z_p$ by Nakayama's lemma
and free over $\eo$.

\end{proof}

When $L/\K$ is a finite extension and $w$ is a place of $\K$,
we let $\loc_w$ denote
$\prod_{\tilde{w}\mid w}\loc_{\tilde{w}}\colon 
H^1(L, \eo(\psi\alpha))\to 
\prod_{\tilde{w}\mid w}H^1(L_{\tilde{w}}, \eo(\psi\alpha))$.
The following proposition then follows immediately
from Theorem \ref{thm:Bigeu}
and that $\phi^{\ell\fs}_\fs$
corresponds to the corestriction maps
between cohomology groups of different tame levels.

\begin{thm}\label{thm:eu}

Given a character $\alpha\colon W\to \eo^\times$,
the cohomology classes
$\{ z_{\fs V}\in H^1(\K(\fs V), \eo(\psi\alpha))\}
_{\fs\in \mathcal{R},V\subset U}$
and $z=\Cor_{\K(\id)}^{\K}(z_\id)\in H^1(\K, \eo(\psi\alpha))$
satisfies the following properties.

\begin{enumerate}
    \item There exists a finite set $S$ of primes of $\K$
    that contains all the places above $p$ and all the 
    places where $\psi$ is ramified, such that 
    $\mathcal{Z}_{\fs}$ is unramified away primes above
    $S\cup \{w\colon w\mid \fs\}$.
    \item For $w'\in \Sigma_p\setminus\{w_0\}$
    and $\sigma\in D_{w'}$, the restriction
    $\loc_{w'}(z_\fs)\in H^1(\K(\fs)_{w'}, \eo(\psi\alpha))$
    satisfies 
    \[
        ((\psi\alpha)(\sigma)-1)\cdot \loc_{w'}(z_\fs)=0.
    \]
    \item For $w=w_0$, the restriction 
    $\loc_{w}(z)\in H^1(\Qp, \eo(\psi\alpha))$
    is $\alpha^{-1}(L_\id)\cdot z_{p}$.
    \item
    For $\ell=\fl\fl^c$ such that $\ell\fs,\fs\in \mathcal{R}$ define
    $P_\fl(X)=(1-\epsilon(\psi\alpha)^{-1}(\varpi_\bl)X)
    (1-(\psi\alpha)^{-1}(\varpi_\bl)X)$, then
    \begin{equation}\label{eq:Pl}
        \Cor_{\K(\fs V)}^{\K(\ell\fs V)}(z_{\ell\fs V})=
        P_\fl(\Fr_\fl)\cdot z_{\fs V}.
    \end{equation}
\end{enumerate}

\end{thm}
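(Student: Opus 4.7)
The plan is to deduce Theorem~\ref{thm:eu} from Theorem~\ref{thm:Bigeu} by tracking $\alpha^\iota(\mathcal{Z}_\fs)$ through the Shapiro lemma isomorphism \eqref{eq:Shapiro} and its finite-level quotients. The starting point is to record the necessary compatibilities: for any open subgroup $V \subset U$, the finite-level quotient
\begin{equation*}
H^1\big(\K, \eo[\Gal(\K(\fs V)/\K)](\psi\alpha\langle*\rangle)\big)
\cong H^1(\K(\fs V), \eo(\psi\alpha))
\end{equation*}
of \eqref{eq:Shapiro} is compatible with local restrictions at a place $w$ of $\K$ and the places of $\K(\fs V)$ above it, with the corestrictions between different levels of the tower $\tilde\K_\fs$, and it intertwines the specialization $\phi^{\ell\fs}_\fs \colon \I_{\ell\fs} \to \I_\fs$ with the corestriction $\Cor_{\K(\fs V)}^{\K(\ell\fs V)}$.

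Granting these compatibilities, assertions (1), (2), and (3) follow readily. For (1), the unramifiedness of $\mathcal{Z}_\fs$ outside $S \cup \{w : w \mid \fs\}$ descends to that of each $z_{\fs V}$ at all places above this set. For (2), the annihilator $\Psi_\fs^{-1}(\sigma)-1$ of $\loc_{w'}(\mathcal{Z}_\fs)$ from Theorem~\ref{thm:Bigeu}(2) is translated by $\alpha^\iota$ into $(\psi\alpha)(\sigma)\langle\sigma\rangle-1$, and after projecting to level $\K(\fs)$ the tautological part $\langle\sigma\rangle$ is absorbed into the Galois action on cohomology, leaving the scalar annihilator $(\psi\alpha)(\sigma)-1$. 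For (3), the identity $\loc_{w_0}(\mathcal{Z}_\id)=L_\id\cdot\mathcal{Z}_{p,\id}$ specializes under $\alpha^\iota$ to $\alpha^{-1}(L_\id)$ times the image of $\mathcal{Z}_{p,\id}$; this image coincides with $z_p$ by construction, since $z_p$ is attached to $\alpha^{-1}\circ f_\circ$ while $\mathcal{Z}_{p,\id}$ is attached to $f_\circ$. Corestriction from $\K(\id)$ to $\K$, which is unramified at $w_0$, then yields $\loc_{w_0}(z) = \alpha^{-1}(L_\id)\cdot z_p$; Lemma~\ref{lem:classp} controls any residual multiplicative ambiguity in the identification of the local class.

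Assertion (4) is the main computation. Under the $\phi^{\ell\fs}_\fs \leftrightarrow \Cor$ compatibility, Theorem~\ref{thm:Bigeu}(4) gives
\begin{equation*}
\Cor_{\K(\fs V)}^{\K(\ell\fs V)}(z_{\ell\fs V}) =
\text{image under }\alpha^\iota\text{ and Shapiro of }
(1-\epsilon\Psi_\fs(\varpi_{\bar\fl}))(1-\Psi_\fs(\varpi_{\bar\fl}))\cdot\mathcal{Z}_\fs.
\end{equation*}
Writing $\Psi_\fs(\varpi_{\bar\fl}) = \psi^{-1}(\varpi_{\bar\fl})\langle\varpi_{\bar\fl}\rangle$, using the anticyclotomic identity $\langle\varpi_{\bar\fl}\rangle = \langle\varpi_\fl\rangle^{-1}$ in $\fG_\fs^a$ and the relation $\psi(\varpi_\fl)=\psi^{-1}(\varpi_{\bar\fl})$, a direct unwinding yields $\alpha^\iota(\Psi_\fs(\varpi_{\bar\fl})) = (\psi\alpha)^{-1}(\varpi_{\bar\fl})\langle\varpi_\fl\rangle$. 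Under Shapiro, multiplication by $\langle\varpi_\fl\rangle$ translates into the geometric Frobenius $\Fr_\fl$ acting on cohomology, producing the factor $1-(\psi\alpha)^{-1}(\varpi_{\bar\fl})\Fr_\fl$. A parallel calculation for the factor containing $\epsilon$, using $\epsilon(\Fr_\fl)=q_\fl^{-1}$ under the chosen normalization of Artin reciprocity, delivers the full Euler factor $P_\fl(\Fr_\fl)$ of \eqref{eq:Pl}.

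The main obstacle will be the book-keeping in assertion (4): one must carefully match the algebraic scalar action of $\Psi_\fs(\varpi_{\bar\fl})$ on the big coefficient module with the topological action of $\Fr_\fl$ on the specialized cohomology, absorbing the anticyclotomic inversion, the twist by $\alpha$, and the Tate twist by $\epsilon$ consistently. The secondary delicate point is in assertion (3), where one must verify that corestriction along the unramified extension $\K(\id)/\K$ preserves the local equation at $w_0$ and identifies the image of $\mathcal{Z}_{p,\id}$ with the canonical generator $z_p$ supplied by Lemma~\ref{lem:classp}.
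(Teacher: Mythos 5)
Your proposal follows exactly the approach the paper intends: Theorem~\ref{thm:eu} is read off from Theorem~\ref{thm:Bigeu} by applying the twist $\alpha^\iota$, passing through the Shapiro isomorphism~\eqref{eq:Shapiro} to its finite levels, and identifying the specialization maps $\phi^{\ell\fs}_\fs$ with corestrictions; the paper's own proof consists of one sentence asserting exactly this, so your write-up is a faithful (and more explicit) expansion of it. Your computation of $\alpha^\iota(\Psi_\fs(\varpi_{\bar\fl})) = (\psi\alpha)^{-1}(\varpi_{\bar\fl})\langle\varpi_\fl\rangle$ and the matching of Euler factors in part (4), and the identification of $z_p$ with the image of $\mathcal{Z}_{p,\id}$ under $\alpha^{-1}$ in part (3), are both correct.

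One caution worth recording, which the paper's own terse proof also glosses over: in part (2), applying $\alpha^\iota$ and Shapiro to $(\Psi_\fs^{-1}(\sigma)-1)\cdot\loc_{w'}(\mathcal{Z}_\fs)=0$ literally gives that $(\psi\alpha)(\sigma)$ times the \emph{conjugation action} of the image of $\sigma$ in $\Gal(\K(\fs)/\K)$ fixes $\loc_{w'}(z_\fs)$; saying that ``$\langle\sigma\rangle$ is absorbed into the Galois action, leaving the scalar $(\psi\alpha)(\sigma)-1$'' is only immediate for $\sigma$ in $D_{w'}\cap\Gal_{\K(\fs)}$, where the conjugation is inner on the local Galois group and hence trivial on cohomology. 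Since $\K(\fs)/\K$ is unramified at $w'$ this is a finite-index subgroup of $D_{w'}$, and the later Selmer-group application only needs some $\sigma$ with $(\psi\alpha)(\sigma)\neq 1$, so the argument still goes through; but the claim as phrased (and as stated in the theorem) should be understood with this restriction or with the extra Frobenius term accounted for.
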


\section{Application to the main conjecture}

Let $\tilde{\K}$ be the maximal anticylotomic $\Z_p^d$-extension
over $\K$ with Galois group $W$ 
and let $\Lambda_W\coloneqq\eo\llbracket W\rrbracket$
for the ring of integers $\eo$ of a sufficiently large
finite extension over $\bar{\Q}^{un}$.
When $\psi$ is a finite order character of $\Gal_\K$ we can define
\[
\Psi\colon \Gal_\K\to \Lambda_W\quad \gamma\to \psi(\gamma)
\langle \gamma\rangle
\]
where $\langle*\rangle$ is the tautological character.
Let $\Gal_\K$ acts on $\Lambda_W^*\coloneqq
\Hom^{\cts}(\Lambda_W,\Qp/\Zp)$ by
$(\gamma\cdot f)(\lambda)=f(\Psi(\gamma)\lambda)$
for $f\in \Lambda_W^*$.
The Selmer groups defined by
\begin{equation}\label{def:Selmers}
\begin{aligned}
    &\Sel(\Psi,\Sigma_p^c)=
    \ker\big\{
    H^1(\K,\Lambda_W^*(\Psi))\to 
    \prod_{w\notin \Sigma_p^c}
    H^1(I_w,\Lambda_W^*(\Psi))
    \big\}\\
    &\Sel^{str}(\Psi,\Sigma_p^c)=
    \ker\big\{
    \Sel(\Psi,\Sigma_p^c)\to
    \prod_{w\in \Sigma_p}
    H^1(\K_w,\Lambda_W^*(\Psi))
    \big\}\\
\end{aligned}
\end{equation}
are co-finitely generated over $\Lambda_W$
in the sense that the Pontryagin duals 
$X(\Psi,\Sigma_p^c)\coloneqq  \Sel(\Psi,\Sigma_p^c)^\vee$ and
$X^{str}(\Psi,\Sigma_p^c)\coloneqq  \Sel^{str}(\Psi,\Sigma_p^c)^\vee$ 
are finitely generated over $\Lambda_W$.
Then the anticyclotomic main conjecture
states that $X(\Psi,\Sigma^c_p)$ is $\Lambda_W$-torsion and
that the characteristic power series
of $X(\Psi,\Sigma^c_p)$ 
is equal to a $p$-adic $L$-function up to units.

Our goal in this section is to show that 
$X(\Psi,\Sigma_p^c)$ is indeed $\Lambda_W$-torsion
and that the characteristic polynomial divides
a $p$-adic $L$-function in 
$\Lambda_W\hat{\otimes}_\eo E$,
where $E$ is the field of fraction of $\eo$,
using the collection of Euler systems 
$\{\mathcal{Z}_\fs\}_{\fs\in\mathcal{R}}$
constructed in Definition \ref{def:bigEuler}.
Recall that for the construction of which 
we assume  \ref{cond:K2} and \ref{cond:K3}
in Lemma \ref{lem:good_chi}
and that $\psi$ satisfies \eqref{cond:gen_psi}
and \eqref{cond:can}.

Following the notations in \cite{Hsieh2010},
we also consider the dual Selmer groups 
$\Sel(\Psi^D,\Sigma_p)$ and
$\Sel^{str}(\Psi^D,\Sigma_p)$ defined as above
for the character $\Psi^D=\epsilon\Psi^{-1}$,
but with the role of $\Sigma_p^c$ and $\Sigma_p$ switched.
By the main result from \textit{loc.cit.}
we will work with $\Sel^{str}(\Psi^D,\Sigma_p)$
instead of $\Sel(\Psi,\Sigma_p^c)$.

\subsection{Specialization principles}

We recall Ochiai's technique
developed in \cite{Och05}
that reduce the determination of the characteristic power series
to the zero-dimensional case.
For an integer $n\geq 0$ we define
$\Lda{n}_{\eo}=\eo\llbracket X_1,\cdots, X_n\rrbracket$.
In particular, 
after fixing topological generators 
$\{\gamma_1,\cdots,\gamma_d\}$ of $W$
we may identify $\Lambda_W$
and $\Lda{d}_{\eo}$ by $\langle\gamma_i\rangle=1+X_i$.
Since $\eo$ is a complete discrete valuation ring,
we can pick a uniformizer $\varpi\in\eo$ such that $\fp=(\varpi)$
is the maximal ideal.

\begin{defn}\label{def:lin_elt}
A polynomial 
$\ell=a_0+a_1X_1+\cdots a_nX_n\in\Lda{n}_{\eo}$
of degree at most 1
is called a linear element 
if it is neither a unit
nor an element in $\fp\Lda{n}_{\eo}$.
In other word $a_0\in\fp$
and $a_i\in\eo^\times$
for some $1\leq i\leq n$.
Then 
\[
    \lin{n}_{\eo}\coloneqq\{(\ell)\subset \Lda{n}_{\eo}\mid\ell
    \text{ is a linear element in }\Lda{n}_{\eo}
\]
Moreover, if $n\geq 2$
and $M$ is a finitely generated torsion 
$\Lda{n}_{\eo}$-module, we define 
$\lin{n}_{\eo}(M)$ to be the subset of
$(\ell)\in \lin{n}_{\eo}$ satisfying the following conditions.
\begin{enumerate}
    \item The quotient $M/\ell M$ is 
    torsion over $\Lambda^{(n)}/(\ell)$.
    \item The image of the characteristic ideal
    $\car_{\Lda{n}_{\eo}}(M)\subset \Lda{n}_{\eo}$ in $\Lambda^{(n)}/(\ell)$
    is equal to $\car_{\Lda{n}_{\eo}/(\ell)}(M/\ell M)$.
\end{enumerate}
\end{defn}

Let $S_\psi$ be the set of primes on which $\psi$ is ramified
and let $\Gal$ be the Galois group of the maximal extension
over $\K$ that is unramified away $S_\psi$ and primes above $p$.
For any quotient $\mathcal{A}$ of $\Lambda_W$, we define 
\begin{equation}
    \Sel(\mathcal{A})=
    \ker\big\{
    H^1(\Gal, \mathcal{A}^*(\Psi^D))\to
    \prod_{w\in S_\psi}
    H^1(I_w,\mathcal{A}^*(\Psi^D))\times 
    \prod_{w\in \Sigma_p^c}
    H^1(\K_w,\mathcal{A}^*(\Psi^D))
    \big\}
\end{equation}
and $X(\mathcal{A})=\Sel(\mathcal{A})^\vee$.
Note that by definition we have
$\Sel^{str}(\Psi^D,\Sigma_p)=\Sel(\Lambda_W)$.
As in \cite[Prop 2.6]{Hsieh2010},
when $\mathcal{A}\cong \Lda{n}_{\eo}$ for some $n\geq 1$
and $\ell\in\Lda{n}_{\eo}$ is a linear element,
a standard control theorem argument gives an exact sequence
$0\to \coker(\loc^0)^\vee\to X(\mathcal{A})/\ell X(\mathcal{A})
\to X(\mathcal{A}/(\ell))\to \ker(\loc^0)^\vee\to 0$
by applying the snake lemma to the diagram below
then take the Pontryagin duals.
\[
\begin{tikzcd}
0 & 0\\
H^1(\Gal,\mathcal{A}^*(\Psi^\circ))[\ell] \arrow[u]\arrow[r] &
\prod_{w\in S_\psi}
H^1(I_w,\mathcal{A}^*(\Psi^\circ))[\ell]\times
\prod_{w\in\Sigma_p^c}
H^1(D_w,\mathcal{A}^*(\Psi^\circ))[\ell] \arrow[u]\\
H^1(\Gal,\mathcal{A}^*(\Psi^\circ)[\ell]) \arrow[u]\arrow[r] &
\prod_{w\in S_\psi}
H^1(I_w,\mathcal{A}^*(\Psi^\circ)[\ell])\times
\prod_{w\in\Sigma_p^c}
H^1(D_w,\mathcal{A}^*(\Psi^\circ)[\ell]) \arrow[u]\\
\frac{H^0(\Gal,\mathcal{A}^*(\Psi^\circ))}
{(\ell)H^0(\Gal,\mathcal{A}^*(\Psi^\circ))}
\arrow[u]\arrow[r,"\loc^0"] &
\prod_{w\in S_\psi}
\frac{H^0(I_w,\mathcal{A}^*(\Psi^\circ))}
{(\ell)H^0(I_w,\mathcal{A}^*(\Psi^\circ))}\times
\prod_{w\in\Sigma_p^c}
\frac{H^0(D_w,\mathcal{A}^*(\Psi^\circ))}
{(\ell)H^0(D_w,\mathcal{A}^*(\Psi^\circ))} \arrow[u]\\
0\arrow[u] & 
0\arrow[u]
\end{tikzcd}
\]

\begin{lem}
For $\mathcal{A}\cong \Lda{n}_{\eo}$ as above
and suppose $\Psi^D\vert_{I_w}\neq \id$ on $\mathcal{A}$
for $w\in\Sigma_p^c$,  then
for all but finitely many linear element $\ell\in\Lda{n}_{\eo}$
the followings hold:
\begin{itemize}
\item 
For $w\in\Sigma_p^c$
the character $\Psi^D\vert_{I_w}$ is nontrivial on $\mathcal{A}/(\ell)$
\item 
The modules $\coker(\loc^0)^\vee$ and $\ker(\loc^0)^\vee$
are torsion over $\mathcal{A}/(\ell)$.
\end{itemize}
\end{lem}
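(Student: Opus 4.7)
The plan is to re-express both conditions in terms of the auxiliary $\mathcal{A}$-modules $\mathcal{A}/J_*$, where for $*\in\{\Gal,\,I_w\,(w\in S_\psi),\,D_w\,(w\in\Sigma_p^c)\}$ I set $J_*\coloneqq(\Psi^D(\sigma)-1:\sigma\in *)$. The twisted Galois action makes $f\in\mathcal{A}^*(\Psi^D)$ fixed by $*$ precisely when $f(J_*)=0$, giving $H^0(*,\mathcal{A}^*(\Psi^D))\cong(\mathcal{A}/J_*)^\vee$; Pontryagin-dualising the cokernel sequence for multiplication by $\ell$ on the compact module $\mathcal{A}/J_*$ then yields the key identification
\[
	\bigl(H^0(*,\mathcal{A}^*(\Psi^D))/\ell\bigr)^\vee \;\cong\; (\mathcal{A}/J_*)[\ell].
\]
Both assertions of the lemma will be deduced by analyzing these $\ell$-torsion submodules for the finitely many choices of $*$.

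For the first assertion, $\Psi^D\vert_{I_w}$ becomes trivial on $\mathcal{A}/(\ell)$ exactly when $J_{I_w}\subseteq(\ell)$. Fix a nonzero $a\in J_{I_w}$, which exists by hypothesis; any bad $\ell$ must then divide $a$. After multiplying by a unit, every linear element is a distinguished Weierstrass polynomial of degree one in some $X_i$ over $\eo\llbracket(X_j)_{j\ne i}\rrbracket$, hence irreducible in the UFD $\mathcal{A}=\eo\llbracket X_1,\ldots,X_n\rrbracket$. Since $a$ has only finitely many irreducible divisors up to units, only finitely many principal ideals $(\ell)$ generated by linear elements can be exceptional.

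For the second assertion, $\ker(\loc^0)^\vee$ is a quotient of $(\mathcal{A}/J_\Gal)[\ell]$ while $\coker(\loc^0)^\vee$ embeds into $\bigoplus_*(\mathcal{A}/J_*)[\ell]$, so it suffices to show each $(\mathcal{A}/J_*)[\ell]$ is torsion over the domain $\mathcal{A}/(\ell)$. Torsion-ness fails precisely when the generic point $(\ell)$ of $\Spec\mathcal{A}/(\ell)$ lies in the support; localising at the height-one prime $(\ell)$, where $\mathcal{A}_{(\ell)}$ is a DVR by the regularity of $\mathcal{A}$, this occurs iff $J_*\mathcal{A}_{(\ell)}$ is a nonzero proper ideal, equivalently iff $0\ne J_*\subseteq(\ell)$. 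This forces $\ell$ to divide the gcd of any finite generating set of $J_*$ (which exists since $\mathcal{A}$ is Noetherian and $J_*\ne 0$), and this gcd has only finitely many linear divisors by the same UFD argument as above. Combining over the finitely many choices of $*$ yields the desired cofinite set. The main technical point will be the Pontryagin-duality identification together with the passage to the DVR $\mathcal{A}_{(\ell)}$; once these are in place the argument reduces to the finiteness of irreducible divisors of a fixed nonzero element of $\mathcal{A}$, a direct consequence of unique factorization in $\eo\llbracket X_1,\ldots,X_n\rrbracket$.
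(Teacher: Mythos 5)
Your proposal is correct and follows essentially the same route as the paper: identify $H^0(*,\mathcal{A}^*(\Psi^D))^\vee$ with $\mathcal{A}/J_*$, observe that the kernel/cokernel of $\loc^0$ are controlled by $\ell$-torsion of those quotients, and reduce the whole statement to avoiding the finitely many linear elements dividing fixed nonzero generators of the $J_*$. The one place where you do something slightly different is in how the finiteness is packaged: the paper simply notes that $J_{I_w}$ ($w\in S_\psi$) is generated by constants of $\eo$, so those quotients are \emph{always} torsion over $\mathcal{A}/(\ell)$, deduces the $\Gal$ case from inclusion of ideals, and then asks $\ell$ to avoid the explicit generators at $\Sigma_p^c$; you instead treat all $*$ uniformly, localize at the height-one prime $(\ell)$ (a DVR), and read off that failure of torsion-ness is exactly $0\neq J_*\subseteq(\ell)$, then invoke unique factorization in $\eo\llbracket X_1,\ldots,X_n\rrbracket$ to bound the number of linear-element divisors of a fixed generator. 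This uniform treatment is a bit more robust — in particular it handles $J_\Gal$ directly without needing to compare it to $J_{I_w}$ for some $w\in S_\psi$ — and it makes the ``distinct from the image of which'' phrase in the paper precise.
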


\begin{proof}
Note that $H^0(\Gal,\mathcal{A}^*(\Psi^D))^\vee$
is isomorphic to $\mathcal{A}/(\Psi^D(\Gal)-1)$.
Therefore $\ker(\loc^0)^\vee$ is a quotient of
$\big(\mathcal{A}/(\Psi^D(\Gal)-1)\big)[\ell]$.
Similarly, $\coker(\loc^0)$ is a submodule of 
the product of 
$\big(\mathcal{A}/(\Psi^D(I_w)-1)\big)[\ell]$
for $w\in S_\psi$ and
$\big(\mathcal{A}/(\Psi^D(D_w)-1)\big)[\ell]$
for $w\in \Sigma_p^c$.
It is clear that 
$\big(\mathcal{A}/(\Psi^D(I_w)-1)\big)[\ell]=
\big(\mathcal{A}/(\psi(I_w)-1)\big)[\ell]$
is always torsion over $\mathcal{A}/(\ell)\cong \Lda{n-1}_{\eo}$.
Therefore $\big(\mathcal{A}/(\Psi^D(\Gal)-1)\big)[\ell]$
is also torsion over $\mathcal{A}/(\ell)\cong \Lda{n-1}_{\eo}$.
For $w\in\Sigma_p^c$,
$(\Psi^D(I_w)-1)$ is generated by
$(\Psi^D(\Art_w(1+\varpi_w))-1$,
thus the desired properties are satisfies as long as 
$\ell$ is distinct from the image of which in $\mathcal{A}$.
    
\end{proof}

\begin{prop}\label{prop:tor_crit}
Let $\alpha\colon \Lambda_W\to \eo$
be the homomorphism induced by a
character $\alpha\colon W\to \eo^\times$
and let $\eo(\alpha)$ denote 
the resulting $\Gal_\K$-module structure
on $\eo(\alpha)=\eo$. 
Write $\mu=\psi\alpha$ and 
$\mu^*=\alpha\circ\Psi^D=\epsilon(\psi\alpha)^{-1}$ so that
\begin{equation}\label{eq:Sel_base}
\Sel(\eo(\alpha))=
\ker\big\{
H^1(\Gal, (E/\eo)(\mu^*))\to
\prod_{w\in S_\psi}
H^1(I_w,(E/\eo)(\mu^*))\times 
\prod_{w\in \Sigma_p^c}
H^1(\K_w,(E/\eo)(\mu^*))
\big\}
\end{equation}
Then $X(\Lambda_W)$
is torsion over $\Lambda_W$
if $\Sel(\eo(\alpha))^\vee$ is finite over $\eo$
for all but finitely many characters $\alpha$.
\end{prop}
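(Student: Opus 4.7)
The strategy is Ochiai's specialization principle: induct on the codimension of the specialization, using the control-theoretic exact sequence that has already been derived from the snake lemma just before the statement. Concretely, after fixing topological generators $\gamma_1,\dots,\gamma_d$ of $W$ and identifying $\Lambda_W$ with $\Lda{d}_{\eo}$ via $\langle\gamma_i\rangle=1+X_i$, I will show by descending induction on $n$ that $X(\Lda{n}_{\eo})$ is torsion over $\Lda{n}_{\eo}$, with $n=d$ being the statement and $n=0$ being the hypothesis.

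The inductive step runs as follows. Suppose for contradiction $X(\Lda{n}_{\eo})$ has positive $\Lda{n}_{\eo}$-rank. Using the exact sequence
\[
    0\to \coker(\loc^0)^\vee\to X(\Lda{n}_{\eo})/\ell X(\Lda{n}_{\eo})
    \to X(\Lda{n}_{\eo}/(\ell))\to \ker(\loc^0)^\vee\to 0
\]
established before the statement, together with the lemma ensuring that for all but finitely many linear elements $\ell\in\lin{n}_{\eo}$ both $\ker(\loc^0)^\vee$ and $\coker(\loc^0)^\vee$ are $\Lda{n}_{\eo}/(\ell)$-torsion, one concludes that $X(\Lda{n}_{\eo}/(\ell))$ has positive rank over $\Lda{n}_{\eo}/(\ell)\cong \Lda{n-1}_{\eo}$ for all but finitely many such $\ell$. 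Since the Galois module structure on $\Lda{n}_{\eo}/(\ell)$ is again of the same form (twisted by a suitable finite-order character pulled from the specialization), this contradicts the inductive hypothesis once we identify the codimension-one quotients with Selmer modules of the inductively smaller shape.

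To close the induction at $n=0$, I iterate the specialization and observe that any character $\alpha\colon W\to\eo^\times$ can be obtained (up to extending $\eo$) by successive linear specializations, since the resulting quotient $\Lda{d}_{\eo}/(\ell_1,\dots,\ell_d)\cong \eo$ carries the Galois action by $\alpha\circ \Psi^D=\mu^*$ for some $\alpha$ depending on the chosen $\ell_i$. By varying $\ell_i$ I can hit all but finitely many $\alpha$, so the hypothesis that $\Sel(\eo(\alpha))^\vee$ is finite (in particular $\eo$-torsion) furnishes the base case. Running the induction backwards then yields that $X(\Lda{d}_{\eo})=X(\Lambda_W)$ is torsion over $\Lambda_W$.

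The main technical obstacle is verifying that the exceptional set of linear elements in $\lin{n}_{\eo}(X(\Lda{n}_{\eo}))$ -- those for which either the control sequence misbehaves or the specialized character $\Psi^D\vert_{I_w}$ becomes trivial for some $w\in\Sigma_p^c$ -- does not accumulate to swallow all the characters we need at the base of the induction. This is handled by noting that each forbidden condition cuts out only a finite union of proper closed subsets in the space of linear elements (the lemma preceding the statement is explicit about this), so that a dense set of parameters $\alpha$ is reachable by iterated specialization. The remainder of the argument is a straightforward bookkeeping of ranks using the additivity of characteristic ideals along the control sequence.
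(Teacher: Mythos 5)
Your proposal is essentially the same argument as the paper's, and the core mechanism is correct: choose a chain of linear specializations $\ell_1,\dots,\ell_d$ avoiding the finitely many exceptions in the preceding lemma (and avoiding the finitely many bad $\alpha$), anchor at the base $X(\mathcal{A}_d)=\Sel(\eo(\alpha))^\vee$ finite, and use the control exact sequence to propagate torsionness up the chain. The paper does this directly in one pass without the ``proof by contradiction'' and formal double induction wrapper; it simply fixes one good $\alpha$ (whose existence the lemma guarantees), sets $\ell_i=X_i-\alpha(\gamma_i)$, and walks the chain $\mathcal{A}_0\supset\cdots\supset\mathcal{A}_d$ upward. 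Your framing is heavier but functionally identical. Two small points worth flagging: (1) the parenthetical ``twisted by a suitable finite-order character pulled from the specialization'' is misstated --- the characters $\gamma_i\mapsto 1+a_i$ with $a_i\in\mathfrak p$ are generically of \emph{infinite} order, and in any case no twisting is needed since the Galois module on $\mathcal{A}^*(\Psi^D)$ for any quotient $\mathcal{A}$ of $\Lambda_W$ is already of the canonical form, so the inductive hypothesis applies verbatim; (2) the step where you conclude that positive rank of $X(\Lda{n}_{\eo})$ forces positive rank of $X(\Lda{n}_{\eo}/(\ell))$ for generic $\ell$ implicitly uses that the $\Lda{n-1}_{\eo}$-rank of $X(\Lda{n}_{\eo})/\ell X(\Lda{n}_{\eo})$ equals the $\Lda{n}_{\eo}$-rank of $X(\Lda{n}_{\eo})$ for all but finitely many $\ell$, which holds but deserves a sentence. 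Neither issue undermines the argument.
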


\begin{proof}
By the previous lemma, there exists a character $\alpha$
such that  $\Sel(\eo(\alpha))^\vee$ is finite over $\eo$
and that the kernel and cokernel of the homomorphism
\[
    X(\mathcal{A}_i)/(\ell_{i+1})X(\mathcal{A}_{i})
    \to X(\mathcal{A}_i/(\ell_{i+1}))
\]
are torsion over $\mathcal{A}_{i+1}\coloneqq \mathcal{A}_i/(\ell_{i+1})$.
Here $\mathcal{A}_0=\Lambda_W$
and $\ell_i=(X_i-\alpha(\gamma_i))$ for $1\leq i\leq d$.
Then $X(\mathcal{A}_{d-1})$ is torsion over 
$\mathcal{A}_{d-1}\cong \Lda{1}_{\eo}$ since
$X(\mathcal{A}_d)=\Sel(\eo(\alpha))^\vee$,
and thus $X(\mathcal{A}_{d-1})/(\ell_{d})X(\mathcal{A}_{d-1})$,
torsion over $\eo$.
We can then iterate the argument and prove that 
$X(\Lambda_W)$ is torsion over $\Lambda_W$.

\end{proof}

\begin{prop}\label{prop:specialize}

Let $\eo'$ be an arbitrary extension over $\eo$
which is the ring of integers 
of a finite extension over $\bar{\Q}^{un}$
and let $L\in \Lambda_W$.
If for all but finitely many characters
$\alpha\colon W\to (\eo')^\times$
the Pontryagin dual $X(\eo'(\alpha))$ defined 
over $\eo'$ satisfies that
\[
    \length_{\eo'}(\eo'/\alpha(L))-
    \length_{\eo'}(X(\eo'(\alpha))
\]
is bounded independent of $\alpha$,
then there exists an integer $h\geq 0$
such that 
$\car_{\Lambda_W}(X(\Lambda_W))\supset (\varpi^hL)$,
where $\varpi$ is a uniformizer 
of the complete discrete valuation ring $\eo$.

\end{prop}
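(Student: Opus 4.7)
The plan is to adapt Ochiai's specialization principle from \cite{Och05}, which converts pointwise control of specializations into divisibility of characteristic ideals. The argument proceeds in three main stages.

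First, via the iterated control theorem for Selmer groups, I would relate $X(\eo'(\alpha))$ to the specialization $X(\Lambda_W)/\fa_\alpha X(\Lambda_W)$, where $\fa_\alpha \subset \Lambda_W \otimes_{\eo} \eo'$ denotes the kernel of the homomorphism induced by $\alpha$. Applying the four-term exact sequence preceding Proposition \ref{prop:tor_crit} successively along a chain of $d$ linear elements $\ell_i = X_i - \alpha(\gamma_i)$ for $i = 1, \ldots, d$, I expect the kernel and cokernel of the natural map
\[
X(\Lambda_W) \otimes_{\Lambda_W, \alpha} \eo' \longrightarrow X(\eo'(\alpha))
\]
to have $\eo'$-length uniformly bounded in $\alpha$ outside a thin (Zariski constructible proper) set. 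The condition \eqref{cond:gen_psi}, together with the admissibility hypothesis on $\psi$, ensures that $\Psi^D|_{I_w}$ remains non-trivial at each place $w \in S_\psi \cup \Sigma_p^c$, so the local invariants $H^0(I_w, *)$ and $H^0(D_w, *)$ contribute only uniformly bounded length.

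Second, by the structure theorem for finitely generated torsion modules over the regular local UFD $\Lambda_W$, there is a pseudo-isomorphism $X(\Lambda_W) \sim \bigoplus_i \Lambda_W/(g_i)$ with $F \coloneqq \car_{\Lambda_W}(X(\Lambda_W)) = \prod_i g_i$ up to units. Specializing along $\alpha$ yields
\[
\length_{\eo'}\bigl(X(\Lambda_W) \otimes_{\Lambda_W, \alpha} \eo'\bigr) = \length_{\eo'}(\eo'/\alpha(F)) + O(1),
\]
where the $O(1)$ term, coming from the pseudo-null kernel and cokernel of the pseudo-isomorphism, is uniform in $\alpha$ outside a thin set. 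Combining this with the first stage and the hypothesis of the proposition yields
\[
\length_{\eo'}(\eo'/\alpha(L)) \leq \length_{\eo'}(\eo'/\alpha(F)) + C
\]
for an absolute constant $C$ and all $\alpha$ outside a thin set; equivalently, the valuations satisfy $v_\varpi(\alpha(L)) \leq v_\varpi(\alpha(F)) + C$ whenever both sides are finite, and $\alpha(L) = 0$ forces $\alpha(F) = 0$.

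Third, I would convert this pointwise inequality into the global divisibility $F \mid \varpi^h L$ in $\Lambda_W$. Writing $L/F = A/B$ in the field of fractions of $\Lambda_W$ with $A, B$ coprime, the inequality above translates to $\alpha(B) \mid \varpi^C \alpha(A)$ in $\eo'$ for all $\alpha$ outside a thin set. If $B$ were not associate to a power of $\varpi$, it would have an irreducible $\varpi$-primitive factor $P$ which, by coprimality, does not divide $A$. Then the closed subvariety $\Spec(\Lambda_W/(P))$ is of positive dimension and meets the open locus $\{v_\varpi(\alpha(A)) \leq C\}$, since $A \bmod P$ is non-zero in the domain $\Lambda_W/(P)$ and $\varpi$ is a regular element; any $\alpha$ in this intersection lying outside the thin exceptional set contradicts $\alpha(B) \mid \varpi^C \alpha(A)$, as the left side vanishes while the right does not. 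Hence $B$ is a unit times a power of $\varpi$, giving the desired conclusion after adjusting $h$. The main obstacle is precisely this third stage: verifying that the thin exceptional set from Stages 1--2 does not exhaust $\Spec(\Lambda_W/(P))$ for each irreducible factor $P$ of $B$, which is the standard Zariski-density-versus-exceptional-locus issue treated carefully in \cite{Och05}.
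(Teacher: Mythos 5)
Your proposal is correct in broad outline but organized differently from the paper's argument. The paper follows Ochiai's \cite{Och05} inductive scheme directly: it reduces the $n$-variable case to the $(n-1)$-variable case by slicing along linear elements $\ell$ (using the set $\lin{n}_{\eo'}(M)$ for which characteristic-ideal formation commutes with passage to $\Lda{n}_{\eo}/(\ell)$), with the base case $n=1$ handled by \cite[Prop 3.11]{Och05} and the inductive step by \cite[Prop 3.6]{Och05} and \cite[Claim 3.10]{Och05}. Your route instead invokes the structure theorem for torsion $\Lambda_W$-modules directly and then runs a Zariski-density argument; this is conceptually equivalent but does not allow simply citing Ochiai's propositions as black boxes, since the uniform boundedness in Stages 1--2 (for $\alpha$ outside a thin set) and the compatibility of that thin set with the irreducible hypersurface $\Spec(\Lambda_W/(P))$ in Stage 3 together constitute exactly the content of Ochiai's careful lemmas. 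You acknowledge this, but a fully written-out version of your route would end up reproving those lemmas, so the paper's organization is more economical.

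There is also a sign slip worth fixing. From $L/F = A/B$ with $A,B$ coprime, one has $\ord_\varpi(\alpha(L)) - \ord_\varpi(\alpha(F)) = \ord_\varpi(\alpha(A)) - \ord_\varpi(\alpha(B))$. The one-sided bound you write down, $v_\varpi(\alpha(L)) \leq v_\varpi(\alpha(F)) + C$, therefore translates to $\alpha(A) \mid \varpi^C \alpha(B)$, not to $\alpha(B) \mid \varpi^C\alpha(A)$ as stated. The direction you actually need for the Stage 3 contradiction (an irreducible non-$\varpi$ factor $P$ of $B$ would force $\alpha(B)=0$ while $\alpha(A) \neq 0$) is $\alpha(B) \mid \varpi^C\alpha(A)$, which comes from the opposite one-sided bound $v_\varpi(\alpha(F)) \leq v_\varpi(\alpha(L)) + C$, i.e., from $\length_{\eo'}(X(\eo'(\alpha))) \leq \length_{\eo'}(\eo'/\alpha(L)) + C$. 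Since ``bounded'' in the hypothesis is two-sided, the argument can be repaired simply by using the other inequality, but the derivation as written is inconsistent with its own conclusion. (This is also the arithmetically natural direction: the Euler-system input in \S 5 gives an upper bound on the Selmer group in terms of the $L$-value, which corresponds precisely to $v_\varpi(\alpha(F)) \leq v_\varpi(\alpha(L)) + C$.)
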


\begin{proof}

Let $\mathcal{A}$
be an arbitrary quotient of $\Lambda_W$
that is isomorphic to $\Lda{n}_{\eo}$
for some integer $n\geq 1$.
The previous proposition implies that
$X(\mathcal{A})$ is a finite and torsion 
$\mathcal{A}$-module
and therefore $\car_{\mathcal{A}}(X(\mathcal{A}))$ exists.
We will prove inductively that
$\car_{\mathcal{A}}(X(\mathcal{A}))\supset 
(\varpi^{h'} \bar{L})$
for some integer $h'\geq 0$,
where $\bar{L}$ is the image of $L$ in $\mathcal{A}$.

When $n=1$, 
the proposition follows directly
from \cite[Prop 3.11]{Och05}.

When $n\geq 2$, suppose inductively that
$\car_{\mathcal{A}/(\ell)}(X(\mathcal{A}/(\ell))\supset 
(\varpi^{h'} \bar{L})$ for all but finitely many 
$\ell\in \lin{n}_{\eo'}$.
After inverting $p$ and take the completion,
this implies that 
$\car_{\mathcal{A}_{E'}/(\ell)}(X(\mathcal{A}_{E'}/(\ell))
\supset (\bar{L})$,
where $E'$ is the field of fraction of $\eo'$
and  $\mathcal{A}_{E'}$ is isomorphic to
$E'\llbracket X_1,\cdots,X_n\rrbracket$.
Let 
\[
    E'\llbracket X_1,\cdots, X_{n-1}\rrbracket 
    \hookrightarrow
    \prod_{1\leq i\leq j}
    E'\llbracket X_1,\cdots, X_{n-1}\rrbracket/(\ell_i)
\]
be the homomorphism considered in  \cite[p.134]{Och05}
for the choice of the set
$\{(\ell_i)\in \lin{n}_{\eo''}\}_{1\leq i<\infty}$
made in \cite[Claim 3.10]{Och05}
Then the same argument in the proof of 
\cite[Prop 3.6]{Och05}
gives the desired inclusion.

\end{proof}

\begin{rem}
It is assumed in \cite{Och05} that
$\eo$ is the ring of integers
of a finite field extension over $\Qp$.
But the proofs also works when
$\eo$ is a complete discrete valuation ring 
with the residue field $\bar{\fF}_p$.
In particular the propositions cited above
still hold in our case,
when $\eo$ is the ring of integers
of a finite field extension over $\widehat{\Q}_p^{un}$.
Note that it is then necessary to state
\cite[Prop 3.11]{Och05}
in terms of lengths of $\eo'$-modules
instead of using the cardinalities,
which will always be infinite in our case
\end{rem}

\begin{cor}\label{cor:main}
Under the same assumptions and notations
in the previous proposition, 
the $\Lambda_W$-module $X(\Psi, \Sigma_p^c)$
is also torsion and 
$\car_{\Lambda_W}(X(\Psi,\Sigma_p^c))=\car_{\Lambda_W}(X(\Lambda_W))$.
\end{cor}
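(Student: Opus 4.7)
The plan is to deduce the corollary from the \emph{algebraic functional equation} of \cite{Hsieh2010}, which in our anticyclotomic setting provides an equality of characteristic ideals between $X(\Psi,\Sigma_p^c)$ and the strict dual Selmer group $X^{str}(\Psi^D,\Sigma_p)$. Since Proposition \ref{prop:specialize} already controls the latter (via the identification $X(\Lambda_W)=X^{str}(\Psi^D,\Sigma_p)$ built into the definition of $\Sel(\mathcal{A})$), this transfer yields the torsion property and the characteristic ideal equality for $X(\Psi,\Sigma_p^c)$ simultaneously. The first step I would carry out is to note that if $X^{str}(\Psi^D,\Sigma_p)$ is $\Lambda_W$-torsion, then so is $X(\Psi,\Sigma_p^c)$, because equality of characteristic ideals in $\Lambda_W$ forces both to have the same support codimension, so one is torsion iff the other is.

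Second, I would invoke Hsieh's theorem to obtain
\[
\car_{\Lambda_W}\bigl(X(\Psi,\Sigma_p^c)\bigr)
=\car_{\Lambda_W}\bigl(X^{str}(\Psi^D,\Sigma_p)\bigr)
=\car_{\Lambda_W}\bigl(X(\Lambda_W)\bigr).
\]
This is precisely a Greenberg-type global duality assertion: the dualization of local conditions under Tate local duality interchanges ``relaxed at $\Sigma_p^c$'' with ``strict at $\Sigma_p^c$'' while preserving the unramified condition elsewhere, and the potential discrepancy in the auxiliary local Euler factors must be checked to be trivial. Conveniently, under our assumption \eqref{cond:gen_psi} the residual restriction $\psi\vert_{\Gp}$ is neither the trivial character nor $\omega^{\pm 1}$, so the relevant local invariants $H^0(D_w,\Lambda_W^*(\Psi))$ at places $w\in\Sigma_p$ vanish (or are $\Lambda_W$-pseudo-null), ensuring that the strict/unramified comparison at $\Sigma_p$ introduces no extra factor.

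The one step requiring genuine verification rather than direct citation is the clean applicability of the functional equation under our precise hypotheses on $\K$ and $\psi$: the statement in \cite{Hsieh2010} is formulated in a slightly different setup and I would need to check that the local hypotheses at the primes of $S_\psi$ (the ramified set for $\psi$) match ours, in particular that no local factors from primes outside $\Sigma_p\cup\Sigma_p^c$ intervene. Once this is in place, combining the equality of characteristic ideals with the conclusion of Proposition \ref{prop:specialize} gives both the torsion of $X(\Psi,\Sigma_p^c)$ and the claimed equality $\car_{\Lambda_W}(X(\Psi,\Sigma_p^c))=\car_{\Lambda_W}(X(\Lambda_W))$, completing the proof.
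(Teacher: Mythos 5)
Your overall strategy---transfer the result from $X(\Lambda_W)$ to $X(\Psi,\Sigma_p^c)$ via the algebraic functional equation of \cite{Hsieh2010}---is indeed the route taken in the paper. However, your torsion argument is circular. You write that ``equality of characteristic ideals in $\Lambda_W$ forces both to have the same support codimension, so one is torsion iff the other is.'' But the algebraic functional equation \cite[Thm 2.8]{Hsieh2010} is, as with essentially all such statements, proved under the standing hypothesis that both Selmer groups in question are already $\Lambda_W$-torsion; you cannot therefore invoke it to \emph{deduce} torsion of $X(\Psi,\Sigma_p^c)$. The paper takes care of this by a separate prior step: it applies Poitou--Tate duality, in the manner of the proof of \cite[Prop 2.6]{Hsieh2010}, to show that $\Sel(\Psi,\Sigma_p^c)$ satisfies a specialization criterion analogous to Proposition \ref{prop:tor_crit}, which establishes its torsion independently. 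Only once both duals are known to be torsion does the paper bring in the functional equation.

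You also gloss over an intermediate comparison. What Propositions \ref{prop:tor_crit}--\ref{prop:specialize} control is the \emph{strict} dual Selmer group $X(\Lambda_W)=X^{str}(\Psi^D,\Sigma_p)$, whereas the functional equation compares $X(\Psi,\Sigma_p^c)$ with the \emph{non-strict} $X(\Psi^D,\Sigma_p)$. The paper bridges this by showing the cokernel of $\Sel^{str}(\Psi^D,\Sigma_p)\hookrightarrow \Sel(\Psi^D,\Sigma_p)$, which sits inside
\[
\prod_{w\in \Sigma_p^c} H^1\big(D_w/I_w, \big(\Lambda_W^*(\Psi^D)\big)^{I_w}\big),
\]
has pseudo-null Pontryagin dual: for $d>1$ this is because the decomposition groups $D_w$ have $\Zp$-rank at least two, while for $d=1$ the condition \eqref{cond:gen_psi} forces $(\Lambda_W^*(\Psi^D))^{I_{w_0}}=0$. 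Your remark about ``the relevant local invariants $H^0(D_w,\Lambda_W^*(\Psi))$ at places $w\in\Sigma_p$'' is the right instinct but has the wrong character (it should be $\Psi^D$) and the wrong set of places (it should be $\Sigma_p^c$), and for $d>1$ the vanishing argument does not apply; the point is instead the $\Zp$-rank of the decomposition group.
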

\begin{proof}
    We first note that 
    the Pontryain dual of the right hand side of the exact sequence
    \[
    0\to \Sel^{str}(\Psi^D,\Sigma_p)\to 
    \Sel(\Psi^D,\Sigma_p)\to 
    \prod_{w\in \Sigma_p^c}
    H^1(D_w/I_w, \big(\Lambda_W^*(\Psi^D)\big)^{I_w})
    \]
    is a pseudo-null $\Lambda_W$-module.
    This follows from that $D_w$ has $\Zp$-rank at least two
    for $w\in\Sigma_p$ if $d>1$.
    If $d=1$, then the condition \eqref{cond:gen_psi} implies
    that $\big(\Lambda_W^*(\Psi^D)\big)^{I_w}=0$
    at the unique place $w_0$ above $p$.
    Therefore 
    $X(\Psi^D,\Sigma_p)$ is also $\Lambda_W$-torsion
    and the characteristic ideal of which agrees with 
    that of $X(\Lambda_W)=X^{str}(\Psi^D, \Sigma_p)$.

    Apply Poitou-Tate duality as in the proof of
    \cite[Prop 2.6]{Hsieh2010} then implies that 
    $\Sel(\Psi,\Sigma_p^c)$ satisfies a similar 
    result as Proposition \ref{prop:tor_crit}
    and is therefore $\Lambda_W$-torsion.
    The corollary then follows from 
    \cite[Thm 2.8]{Hsieh2010}
    since the proof of which also applies in our case.

\end{proof}

\subsection{Base case}

Given a character $\alpha\colon W\to \eo^\times$,
we write $\mu=\psi\alpha$ and let 
$T=\eo(\mu), V=E(\mu), W=(E/\eo)(\mu)$,
where $E$ is the field of fraction of $\eo$ and
$\Gal_\K$ acts on each of the underlying spaces by $\mu$.
If $M\in\eo$, we define $W_M=W[M]$ and let $\K(W_M)/\K$ 
denote the minimal extension such that
the $\Gal_\K$-representation on the finite group $W_M$
factors through $Gal(\K(W_M)/\K)$.
Let $\mu^*\coloneqq \epsilon\mu^{-1}$ denote the Tate dual.
We similarly define the spaces
$T^*=\eo(\mu^*), V^*=E(\mu^*), W^*=(E/\eo)(\mu^*)$ and 
$W_M^*=W[M]^*$.

Let $\{z_{\fs V}\in H^1(\K(\fs V), T))\}_{\fs\in\mathcal{R}, V\subset U}$
be the system of cohomology classes
given in Theorem \ref{thm:eu}.
We show that a slight modification of 
the argument in \cite{Rubin} 
gives a uniform bound of
$\length_{\eo}(X(\eo(\alpha))$
in terms of $\alpha^{-1}(L_\id)$ for almost all $\alpha$.
We start with the following lemma 
modified from \cite[Lem 4.1.3]{Rubin}.

\begin{lem}\label{lem:estimate}
Suppose $\tau\in \Gal_{\K(\id)}$ and
$M\colon=\mu(\tau)-1$ is a nonzero element in $\varpi\eo$.
Let $\bar{M}\in\Z^+$
denote the smallest power of $p$
which is divisible by $M$ in $\eo$.
Then there exists infinitely many 
prime-to-$p$ split primes $\fl\neq\fl^c$
at which $\mu$ is unramified satisfies the conditions below.
\begin{itemize}
\item $\bar{M}^2\mid n_\ell\coloneqq \#H(\ell)$ for $\ell=\fl\fl^c$,
\item $M\equiv (\mu(\Fr_\fl)-1)\mod M^2$.
\item $\fl$ splits completely in $\K(\id)$.
\end{itemize}
\end{lem}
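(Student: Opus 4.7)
The plan is a Chebotarev density argument in the spirit of \cite[Lem.~4.1.3]{Rubin}. First I would introduce three auxiliary finite extensions of $\K$: let $F_1 = \K(\id)$ (the maximal anticyclotomic unramified $p$-extension from Definition \ref{def:anticyc_extn}); let $F_2 = \K(\mu_{\bar M^2})$ be obtained by adjoining $\bar M^2$-th roots of unity; and let $F_3 \subset \bar{\K}$ be the fixed field of $\ker(\bar\mu)$, where $\bar\mu \coloneqq \mu \bmod M^2 \colon \Gal_\K \to (\eo/M^2\eo)^\times$. Set $\tilde L$ to be the Galois closure over $\F$ of $F_1 F_2 F_3$; by construction $\tilde L/\F$ is finite Galois and unramified outside a finite set of places of $\F$ containing $S_\psi$ and those above $p$.

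Second, I would exhibit an element $\sigma \in \Gal(\tilde L/\K)$ satisfying $\sigma|_{F_1 F_2} = \id$ and $\bar\mu(\sigma) = \bar\mu(\tau)$. Writing $N = \Gal(\tilde L/F_1 F_2)$, the existence of $\sigma$ is equivalent to $\bar\mu(\tau) \in \bar\mu(N)$, and by standard Galois theory this reduces to showing $\tau|_{F_1 F_2 \cap F_3} = \id$. The key point is a $c$-eigenspace argument: complex conjugation $c \in \Gal(\K/\F)$ acts by $-1$ on $\Gal(F_1/\K)$ (since $F_1$ is anticyclotomic) and by $+1$ on $\Gal(F_2/\K)$ (since the action of $c$ by conjugation on $\Gal(\K(\mu_{p^\infty})/\K) \subset \Z_p^\times$ is trivial). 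For $p$ odd, these eigenspaces split, $F_1 \cap F_2 = \K$, and the maximal subextension of $F_1 F_2/\K$ on which $c$ acts by $-1$ is exactly $F_1$. Since $\mu = \psi\alpha$ is anticyclotomic (both $\psi$ and $\alpha$ are, by hypothesis and by construction of $\alpha$ as a character of $W$), $F_3/\K$ is anticyclotomic, hence $F_1 F_2 \cap F_3 \subseteq F_1$. As $\tau \in \Gal_{F_1}$, it fixes $F_1 F_2 \cap F_3$, so $\sigma$ exists.

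Finally, Chebotarev's theorem applied to $\tilde L/\F$ in the $\Gal(\tilde L/\F)$-conjugacy class of $\sigma$ produces infinitely many prime-to-$p$ unramified primes $\lambda$ of $\F$ whose Frobenius lies in that class. Because $\sigma \in \Gal(\tilde L/\K)$, each such $\lambda$ splits in $\K/\F$, so any prime $\fl$ of $\K$ above $\lambda$ satisfies $\fl \neq \fl^c$; that $\Fr_\fl$ is trivial on $F_1 = \K(\id)$ yields complete splitting in $\K(\id)$ and property (3); that $\Fr_\fl$ is trivial on $F_2$ gives $q_\fl = N(\lambda) \equiv 1 \bmod \bar M^2$, whence $\bar M^2 \mid n_\ell = (q_\fl - 1)_p$ (using that $\bar M$ is a $p$-power), giving property (1); and $\mu(\Fr_\fl) \equiv \bar\mu(\sigma) = \bar\mu(\tau) \bmod M^2$ gives property (2). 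The main obstacle is the eigenspace analysis of step two, namely ensuring $F_1 F_2 \cap F_3 \subseteq F_1$; this relies crucially on $p$ being odd and on $\mu$ being strictly anticyclotomic ($\mu^c = \mu^{-1}$), so that the cyclotomic and anticyclotomic $p$-towers decouple cleanly over $\K$.
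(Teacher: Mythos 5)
Your proof is correct and takes essentially the same route as the paper: both arguments build the compositum $\K(\id)\cdot\K(\mu_{\bar M^2})\cdot\K(W_{M^2})$ (your $F_1F_2F_3$, noting that $\K(W_{M^2})$ is exactly your $F_3$), produce a Frobenius target by exploiting that cyclotomic and anticyclotomic $p$-towers over $\K$ decouple for odd $p$, and apply Chebotarev. The only cosmetic difference is the direction in which you phrase the decoupling (you show $F_1F_2\cap F_3\subseteq F_1$ and use that $\tau$ fixes $F_1$, while the paper asserts $F_1F_3\cap F_2=\K$ and uses the resulting product decomposition); your step of passing to the Galois closure over $\F$ to guarantee the split condition $\fl\neq\fl^c$ is slightly more careful than the paper's terse final sentence, though the paper's $F_1F_2F_3$ is already Galois over $\F$ because $\mu$ is anticyclotomic.
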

\begin{proof}

We first observe that if $\fl$ splits completely in 
$\K(\mu_{\bar{M}^2})$, then
$\bar{M}^2\mid (q_\fl-1)=\#(\oo_\K/\fl)^\times$,
and therefore $\bar{M}^2\mid \#\Delta_\fl=\#H(\ell)$
is satisfied if $\ell=\fl\fl^c$.

Then suppose $\Fr_\fl$ is conjugate to $\tau$
in $Gal(\K(\id)\K(W_{M^2})/\K)$.
This automatically implies that $\Fr_\fl$ splits completely 
in $\K(\id)$ since $\tau\in\Gal_{\K(\id)}$;
moreover we have
$(\mu(\Fr_\fl)-1)\equiv (\mu(\tau)-1)=M\bmod M^2$
and thus the last two conditions are satisfied.

Since $\mu$ is anticyclotomic,
the extension $\K(\id)\K(W_{M^2})$
is linearly disjoint from 
$\K(\mu_{\bar{M}^2})$
and there exists some $\bar{\tau}\in 
Gal(\K(\id)\K(W_{M^2})\K(\mu_{\bar{M}^2})/\K)$
that is conjugate to $\tau$ in 
$Gal(\K(\id)\K(W_{M^2})/\K)$
and trivial in 
$Gal(\K(\mu_{\bar{M}^2})/\K)$.
By the discussion above, any $\Fr_\fl$ 
that is conjugate to $\bar{\tau}$ in
$Gal(\K(\id)\K(W_{M^2})\K(\mu_{\bar{M}^2})/\K)$
would satisfy all three conditions,
and by Chebotarev's density,
there exists infinitely such primes that split.
    
\end{proof}

\begin{defn}\label{def:RM}
For a nonzero $M\in \eo$ as above,
we let $\mathcal{R}_M\subset \mathcal{R}$
be the subset of ideals $\fs=\ff\ff^c$
that are coprime to the conductor of $\psi$
and such that each prime divisor $\fl\mid \ff$
satisfies the conditions in Lemma \ref{lem:estimate}.
\end{defn}

\begin{defn}

Suppose $\fl$ is a split prime such that 
$\ell=\fl\fl^c\in \mathcal{R}$.
Let $\sigma_\ell$
be a generator of the cyclic group
$H(\ell)\cong \Delta_\fl$.
We then define
$D_\ell=\sum_{i=0}^{n_\ell-1}i\cdot \sigma_\ell^i
\in \Z[H(\ell)]$
where $n_\ell\coloneqq\#H(\ell)$.
In general, if $\fs=\ff\ff^c\in\mathcal{R}$
where $\ff=\prod_{i=1}^r\fl_i$, we put
\[
    D_\fs=\prod_{i=1}^r D_{\ell_i}\in 
    \Z[Gal(\K(\fs)/\K(\id))]\cong 
    \Z[H(\ell_1)\times\cdots\times H(\ell_r)].
\]
\end{defn}

Recall that $\eo$ is a complete discrete valuation ring
with field of fraction $E$ which is finite over $\widehat{\Q}_p^{un}$.
We fix an uniformizer $\varpi\in\eo$
and let $e$ be the ramification index,
so that $(p)=(\varpi)^e\subset \eo$.
We normalize the valuation function
$\val\colon E^\times/\eo^\times\to \Q$
so that $\val(p)=1$.
We note that $\val(E)\subset e^{-1}\Z$.

\begin{lem}

Suppose $\fs=\ff\ff^c\in\mathcal{R}_M$
for $M$ and $\mathcal{R}_M$ as in Defintion \ref{def:RM}.
We have $D_{\fs}z_{\fs}\bmod M^2\in 
H^1(\K(\fs),T/{M^2}T)^{\K(\id)}$
and consequently
$\Nr_{\K(\id)/\K}D_\fs z_\fs\bmod M^2\in
H^1(\K(\fs),T/{M^2}T)^{\K}$.
\end{lem}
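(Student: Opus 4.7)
The plan is to follow the classical Kolyvagin derivative argument adapted to our anticyclotomic setting. Since $\Gal(\K(\fs)/\K(\id)) \cong \Delta_\fs \cong \prod_{\ell \mid \fs}H(\ell_i)$, it suffices to show that $(\sigma_{\ell_i}-1)D_\fs z_\fs \equiv 0 \bmod M^2$ in $H^1(\K(\fs), T/M^2T)$ for each prime divisor $\fl_i$ of $\ff$. Writing $\fs = \ell_i \cdot (\fs/\ell_i)$ and using that the $D_{\ell_j}$ for $j \ne i$ are polynomials in $\sigma_{\ell_j}$ which commute with $\sigma_{\ell_i}$, we reduce to the one-prime identity.

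The key combinatorial identity is the telescoping computation $(\sigma_\ell-1)D_\ell = n_\ell - N_\ell$ in $\Z[H(\ell)]$, where $N_\ell \coloneqq \sum_{j=0}^{n_\ell-1}\sigma_\ell^j$ is the norm element. Applying this gives
\[
(\sigma_{\ell_i}-1)D_\fs z_\fs = D_{\fs/\ell_i}\bigl(n_{\ell_i}\,z_\fs - N_{\ell_i}z_\fs\bigr).
\]
I will rewrite the second term using the relation $\Res_{\K(\fs/\ell_i)}^{\K(\fs)}\circ \Cor_{\K(\fs/\ell_i)}^{\K(\fs)} = N_{\ell_i}$ together with the norm relation from Theorem \ref{thm:eu}(4), which yields
\[
N_{\ell_i}z_\fs = P_{\fl_i}(\Fr_{\fl_i})\cdot \Res_{\K(\fs/\ell_i)}^{\K(\fs)}(z_{\fs/\ell_i}).
\]

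The heart of the proof is then to verify that both $n_{\ell_i}\,z_\fs$ and $P_{\fl_i}(\Fr_{\fl_i})$ (acting on $T = \eo(\mu)$) vanish modulo $M^2$, using the three conditions on $\fl_i$ from Lemma \ref{lem:estimate}. The first is clear: since $\bar{M}^2 \mid n_{\ell_i}$ in $\Z$ and $M \mid \bar{M}$ in $\eo$, we have $M^2 \mid n_{\ell_i}$ in $\eo$. For the second, the anticyclotomic property of $\mu = \psi\alpha$ combined with $c\Fr_{\fl_i}c^{-1} = \Fr_{\bar\fl_i}$ gives $\mu^{-1}(\varpi_{\bar\fl_i}) = \mu(\Fr_{\bar\fl_i})^{-1} = \mu(\Fr_{\fl_i})$. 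Since $\bar{M}^2 \mid n_{\ell_i} \mid q_{\fl_i}-1$ forces $\epsilon(\Fr_{\fl_i}) \equiv 1 \bmod M^2$, and $\mu(\Fr_{\fl_i}) \equiv 1+M \bmod M^2$ gives $\mu(\Fr_{\fl_i})^2 \equiv 1+2M \bmod M^2$, each factor of $P_{\fl_i}(\mu(\Fr_{\fl_i})) = (1-\epsilon(\Fr_{\fl_i})\mu(\Fr_{\fl_i})^2)(1-\mu(\Fr_{\fl_i})^2)$ is $\equiv -2M \bmod M^2$, so their product is $\equiv 4M^2 \equiv 0 \bmod M^2$ (using that $p$ is odd). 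This will be the main technical step and is where the precise choices made in Lemma \ref{lem:estimate} are essential.

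For the consequence, once $D_\fs z_\fs \bmod M^2$ is $\Gal(\K(\fs)/\K(\id))$-invariant, the operator $\Nr_{\K(\id)/\K} = \sum_{\gamma \in \Gal(\K(\id)/\K)}\tilde{\gamma}$ acting on $H^1(\K(\fs), T/M^2T)$ via lifts $\tilde{\gamma} \in \Gal(\K(\fs)/\K)$ is independent of the choice of lifts, since two lifts differ by an element of $\Gal(\K(\fs)/\K(\id))$ acting trivially. The result is manifestly $\Gal(\K(\id)/\K)$-invariant by construction and remains $\Gal(\K(\fs)/\K(\id))$-invariant as a sum of such invariants; since these two subgroups together generate $\Gal(\K(\fs)/\K)$, we obtain $\Nr_{\K(\id)/\K}D_\fs z_\fs \bmod M^2 \in H^1(\K(\fs), T/M^2T)^{\K}$, completing the lemma.
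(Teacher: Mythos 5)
Your telescoping identity $(\sigma_{\ell_i}-1)D_\ell = n_\ell - N_\ell$ and the reduction via $\Res\circ\Cor = N_{\ell_i}$ and the norm relation are the right ingredients, and they match the paper's argument. But the ``heart of the proof'' step is flawed: you assert that $P_{\fl_i}(\Fr_{\fl_i})$ ``acting on $T=\eo(\mu)$'' vanishes modulo $M^2$ and compute $P_{\fl_i}(\mu(\Fr_{\fl_i}))$. This misidentifies the operator. In the norm relation, $\Fr_{\fl_i}$ is an element of $\Gal(\K(\fs/\ell_i)/\K)$ acting on the cohomology group $H^1(\K(\fs/\ell_i), T)$; this action combines conjugation on cocycles with the coefficient twist, and it is \emph{not} simply multiplication by the scalar $\mu(\Fr_{\fl_i})$. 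Substituting $X=\mu(\Fr_{\fl_i})$ is therefore unjustified.

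The correct mechanism is an induction on the number of prime factors of $\ff$, which you never set up. To show $(\sigma_{\ell_i}-1)D_\fs z_\fs\equiv 0 \bmod M^2$ one must first know, by the inductive hypothesis, that $D_{\fs/\ell_i}z_{\fs/\ell_i}\bmod M^2$ lies in $H^1(\K(\fs/\ell_i),T/M^2T)^{\K(\id)}$. This is precisely where condition (3) of Lemma \ref{lem:estimate} ($\fl_i$ splits completely in $\K(\id)$) enters, and you do not use it: it guarantees $\Fr_{\fl_i}\in\Gal(\K(\fs/\ell_i)/\K(\id))$, which then acts \emph{trivially} modulo $M^2$ on $D_{\fs/\ell_i}z_{\fs/\ell_i}$. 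Consequently $P_{\fl_i}(\Fr_{\fl_i})$ acts as multiplication by $P_{\fl_i}(1)$, not by $P_{\fl_i}(\mu(\Fr_{\fl_i}))$. One then checks
\[
P_{\fl_i}(1)=(1-q_{\fl_i}^{-1}\mu(\Fr_{\fl_i}))(1-\mu(\Fr_{\fl_i}))
\equiv (\mu(\Fr_{\fl_i})-1)^2\equiv M^2 \equiv 0 \bmod M^2.
\]
That your incorrect quantity $P_{\fl_i}(\mu(\Fr_{\fl_i}))\equiv 4M^2\equiv 0$ also happens to vanish is a coincidence, not a validation of the reasoning. The final paragraph on passing from $\K(\id)$-invariance to $\K$-invariance is fine once the first part is established.
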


\begin{proof}

We prove the lemma by induction on $r$,
the number of prime divisors in  $\ff=\fl_1\cdots\fl_r$.
The claim is trivial when $r=0$.
Suppose inductively the claim holds for $\fs$.
Then if $\fl$ is a prime such that 
$\ell\fs\in\mathcal{R}_M$ for $\ell=\fl\fl^c$,
it follows from 
$(\sigma_\ell-1)D_\ell=n_\ell-\Nr_{\K(\ell)/\K(\id)}$
that
\[
	 (\sigma_\ell-1)D_{\fs\ell}z_{\fs\ell}\equiv
	 -\Cor^{\fs\ell}_{\fs}D_{\fs}z_{\fs\ell}=
	 -P_\fl(\Fr_{\fl})D_{\fs}z_{\fs} \equiv
	 -P_\fl(1)D_{\fs}z_{\fs}\equiv 0\mod M^2.
\]
Here the first equality follows from $M^2\mid n_\ell$,
the second from \eqref{eq:Pl},
the third from that
$\Fr_\fl\in \Gal_{\K(\id)}$ acts trivially on $D_\fs z_\fs$
by induction,
and the last follows from that $\ell\in\mathcal{R}_M$, as then
\[P_\fl(1)=(1-q_\bl^{-1}\mu^{-1}(\Fr_\bl))(1-\mu^{-1}(\Fr_\bl))
\equiv (\mu(\Fr_\fl)-1)^2 \equiv 0\mod M^2.\]
\end{proof}

Observe that $(T/{M^2}T)^{K(\fs)}=0$ by \eqref{cond:gen_psi}
since $K(\fs)$ is unramified at all primes above $p$. 
Thereforl the exact sequence below degenerates to 
$\Res\colon H^1(\K, T/{M^2}T)\cong H^1(\K(\fs), T/{M^2}T)^{\K}$.
\[
	0\to H^1(\K(\fs)/\K, (T/{M^2}T)^{\K(\fs)})\to
	H^1(\K, T/{M^2}T)\to
	H^1(\K(\fs), T/{M^2}T)^{\K}\to
	H^2(\K(\fs)/\K, (T/{M^2}T)^{\K(\fs)})
\]

\begin{defn}
For $M\in \eo$ and $\fs\in\mathcal{R}_M$ as above
we define the Kolyvagin derivative 
$\kappa_{\fs,M}\in H^1(\K,W_{M^2})\cong H^1(\K,T/{M^2}T)$
by the property that 
\[
    \Res(z_{\fs,M})=\Nr_{\K(\id)/\K}D_{\fs}z_{\fs} \bmod M^2\in 
    H^1(\K(\fs), T/{M^2}T)^{\K},
\]
which is well-defined as
$\Res\colon H^1(\K, T/{M^2}T)\cong H^1(\K(\fs), T/{M^2}T)^{\K}$.
Here we identify $W_{M^2}$ and $T/M^2T$ 
with the homomorphism $M^2\colon W_{M^2}\cong T/M^2T$
coming from multiplication by $M^2$.
\end{defn}

As in \cite[Def 1.3.4]{Rubin},
if $w$ is a prime-to-$p$ place of $\K$
we let $H^1_f(\K_w,W)$
denote the image of 
\[
    H^1_f(\K_w,V)\coloneqq
    \ker\big(H^1(\K_w,V)\to H^1(I_w, V)\big)
\]
under $H^1(\K_w,V)\to H^1(\K_w, W)$
and let $H^1_f(\K_w,W_{M^2})$ denote the preimage of which
under $H^1(\K_w,W_{M^2})\to H^1(\K_w, W)$. 
Then by \cite[Lem 1.3.8]{Rubin},
when $\psi$ is unramified at $w$ we have
\begin{equation}\label{eq:f_un}
    H^1_f(\K_w,W_{M^2})=
    \ker\big(H^1(\K_w,W_{M^2})\to H^1(I_w, W_{M^2})\big).
\end{equation}
We also let $H^1_s(\K_w,W_{M^2})$
denote the quotient of $H^1(\K_w,W_{M^2})$ by $H^1_f(\K_w,W_{M^2})$,
so we have 
\[
    0\to H^1_f(\K_w,W)\to H^1(\K_w,W)\to H^1_s(\K_w,W)\to 0.
\]

\begin{prop}\label{prop:goodm}
There exists $m\in \eo$
that depends only on $\psi$ in $\mu=\psi\alpha$, such that 
for $\fs=\ff\ff^c\in \mathcal{R}_M$ as above we have
$\loc_w(m\cdot \kappa_{\fs,M})\in H^1_f(\K_w,W_{M^2})$
for any finite place $w$ that is prime to $p\fs$.
\end{prop}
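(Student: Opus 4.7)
The plan is to split the analysis according to whether $\psi$ is ramified at the place $w$ under consideration. Since the ``bad'' set $S$ of Theorem \ref{thm:eu}(1) depends only on $\psi$ (and the fixed tame level $K^p$), the primes $w \nmid p\fs$ fall into two disjoint subsets: those outside $S$, where no correction is needed, and the finite subset $S_\psi \setminus \Sigma_p$, where a uniform $m$ depending only on $\psi$ must absorb the local obstruction.

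First I would treat the unramified case. For $w \nmid p\fs$ with $w \notin S$, Theorem \ref{thm:eu}(1) asserts that $z_\fs$ is unramified at every prime $\tilde{w}$ of $\K(\fs)$ above $w$. Moreover $\K(\fs)/\K$ is itself unramified at $w$: by Definition \ref{def:anticyc_extn} the extension $\K(\id)/\K$ is everywhere unramified, while $\K(\fs)/\K(\id)$ is ramified only at primes dividing $\fs$. Therefore the representing cocycle $\Nr_{\K(\id)/\K} D_\fs z_\fs \bmod M^2$ is trivial on the inertia group at $\tilde{w}$. An inflation–restriction argument, combined with the vanishing of $W_{M^2}^{\K(\fs)_{\tilde{w}}}$ (which follows from \eqref{cond:gen_psi} once $\psi$ is unramified at $w$), shows that $\loc_w(\kappa_{\fs,M})$ lies in the kernel of $H^1(\K_w,W_{M^2}) \to H^1(I_w, W_{M^2})$. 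By \eqref{eq:f_un} this kernel is exactly $H^1_f(\K_w, W_{M^2})$, so the conclusion holds with the trivial factor.

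For $w \in S_\psi \setminus \Sigma_p$, the key observation is that $\mu\vert_{I_w} = \psi\vert_{I_w}$ is independent of $\alpha$, since $\alpha$ factors through $W$, which is unramified away from $p$. Consequently $T^{I_w}$ and the induced $D_w/I_w$-action on it are determined by $\psi\vert_{D_w}$ alone. The cokernel of the inclusion $H^1_f(\K_w, T) \hookrightarrow H^1(\K_w, T)$ is a finitely generated $\eo$-module whose $\eo$-torsion has an annihilator $\mathfrak{a}_w \subset \eo$ depending only on $\psi\vert_{D_w}$; I would take $m$ to be any generator of the product $\prod_{w \in S_\psi \setminus \Sigma_p} \mathfrak{a}_w$. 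A diagram chase between the Kummer sequence $0 \to T \xrightarrow{M^2} T \to W_{M^2} \to 0$ and its analog at $w$ then shows that $m \cdot \loc_w(\kappa_{\fs,M})$ lies in the image of $H^1_f(\K_w, T)/M^2$, which is contained in $H^1_f(\K_w, W_{M^2})$ by definition of the finite subgroup at $W_{M^2}$ in \cite[Lem 1.3.5]{Rubin}.

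The main obstacle is establishing that the annihilators $\mathfrak{a}_w$ are genuinely uniform in both $\alpha$ and $M$. Uniformity in $\alpha$ is immediate once one records $\mu\vert_{I_w}=\psi\vert_{I_w}$, but uniformity in $M$ requires that the torsion of $H^1_s(\K_w, W_{M^2})$ does not grow with $M$. This in turn reduces to the standard local duality statement that $H^1(\K_w^{\mathrm{ur}}/\K_w, T^{I_w})$ is a rank-at-most-one $\eo$-module with torsion controlled by the conductor exponent of $\psi$ at $w$, which depends only on $\psi$. With this in hand, the proposition follows, with $m$ given by the product above over the fixed finite set $S_\psi \setminus \Sigma_p$.
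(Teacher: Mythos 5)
Your decomposition of the finite places is not the same as the paper's and it misses an essential input from the Euler system structure.

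First, the ``bad'' set $S$ of Theorem \ref{thm:eu} is strictly larger than $S_\psi \cup \Sigma_p$: it also contains places where the chosen tame level $K^p$ is not hyperspecial, and at those places $\psi$ is unramified but the cohomology classes $z_\fs$ can still be ramified. Your case split ($w \notin S$ versus $w \in S_\psi \setminus \Sigma_p$) therefore leaves those places untreated.

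The more serious issue is in your treatment of the bad places. The paper's argument does not reduce to a purely local computation: it relies on the Euler-system-theoretic input \cite[Cor 4.6.2]{Rubin}, namely that at split $w\in S$ prime to $p$ the decomposition group $D_w$ has \emph{infinite} image in $\fG_\fs^a$, which forces the Kolyvagin derivative class $\loc_w(\kappa_{\fs,M})$ into $\ker\bigl(H^1(\K_w,W_{M^2})\to H^1(I_w,W_{M^2})\bigr)$. Only after that does one apply \cite[Cor 4.6.5]{Rubin} and kill the residual obstruction $W^{I_w}/(W^{I_w})_{\mathrm{div}}$ by a factor $m$ depending only on $\psi$. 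Your proposal skips this step: you claim a diagram chase with the Kummer sequence $0\to T\xrightarrow{M^2}T\to W_{M^2}\to 0$ shows that $m\cdot\loc_w(\kappa_{\fs,M})$ lies in the image of $H^1_f(\K_w,T)/M^2$, but you have not established that $\loc_w(\kappa_{\fs,M})$ lifts to $H^1(\K_w,T)$ at all, nor that its image in $H^1_s(\K_w,W_{M^2})$ is annihilated by anything uniform in $M$. Choosing $m$ to annihilate the torsion of $H^1(\K_w,T)/H^1_f(\K_w,T)$ presupposes that the singular part of $\loc_w(\kappa_{\fs,M})$ is torsion and is bounded independently of $M$ --- which is precisely what Rubin's Cor~4.6.2 supplies and what your argument never proves. (Note also that when $\psi$ is ramified at $w$, one has $H^1_f(\K_w,T)=H^1(\K_w,T)_{\mathrm{tor}}$ and the quotient $H^1_s(\K_w,T)$ is torsion-\emph{free}, so the annihilator $\mathfrak{a}_w$ you propose is not even the right local invariant.) Finally, the paper separates split and non-split $w\in S$: at non-split places (where $\psi$ is automatically unramified by the running hypothesis on its conductor) it uses \cite[Lem 1.3.2(ii)]{Rubin} and the tame quotient of $(\oo_w/\varpi_w)^\times$ to select $m$, a case your proposal does not distinguish.
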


\begin{proof}

Let $S$ be the finite set of places in Theorem \ref{thm:eu},
then $z_\fs$, and consequently $\kappa_{\fs,M}$
is unramified at $w\notin S\cup\{w\colon w\mid\fs\}$.
Therefore $\loc_w(\kappa_{\fs, M})\in H^1_f(\K_w,W_{M^2})$
as $\mu=\psi\alpha$ is also unramified at 
$w\notin S\cup\{w\colon w\mid\fs\}$.

If $w\in S$ is a split place and prime to $p$,
then the image of $D_w$ in $\fG_{\fs}^a$
is infinite and $\kappa_{\fs, M}$
belongs to the right hand side of \eqref{eq:f_un}
by \cite[Cor 4.6.2]{Rubin}.
Then $\loc_w(m\cdot \kappa_{\fs, M})\in H^1_f(\K_w, W_{M^2})$
by \cite[Cor 4.6.5]{Rubin}
if we pick an $m\in \eo$ that annihilates
$(W^{I_w}/W^{I_w})_{\textnormal{div}}$,
which depends only on $\psi$
as $\alpha$ is ramified only at $p$.
If $w\in S$ is non-split,
by assumption $\psi$ is unramified at $w$.
It then follows from \cite[Lem 1.3.2(ii)]{Rubin} that
\[
    H^1_s(\K_w,W_{M^2})\cong \Hom(I_w, W_{M^2})^{\Fr_w=1}.
\]
Since any homomorphism from $I_w$ to $W$
factors through the maximal pro-p
quotient of $(\oo_w/(\varpi_w))^\times$ by class field theory,
we have $\loc_w(m\cdot \kappa_{\fs,M})\in
H^1_f(\K_w,W_{M^2})$
if $m\in \eo$ annihilates the quotient,
which again is independent of $\alpha$.
We can now pick $m\in \eo$
that satisfies the conditions above
at the finitely many places in $S$
and the proposition follows from the above discussions.

\end{proof}

Let $\fl$ be a prime such that $\ell=\fl\fl^c\in \mathcal{R}_M$.
Then $T$ is unramified at $\fl$ by definition, and we let
\begin{align}
\alpha_\fl&\colon
H^1_s(\K_\fl,W_{M^2})\to W_{M^2}^{\Fr_\fl=1}\quad
[\kappa]\mapsto \kappa(\sigma_\fl)
\label{eq:singular} \\
\beta_\fl&\colon 
H^1_f(\K_\fl,W_{M^2})\to W_{M^2}/(\Fr_\fl-1)W_{M^2}\quad
[\kappa]\mapsto \kappa(\Fr_\fl)
\label{eq:finite}
\end{align}
be the isomorphisms from \cite[Lem 1.4.7]{Rubin}, which applies 
as $\bar{M}^2\mid n_\ell$ implies that
$\mu_{\bar{M}^2}\subset \K_\fl^\times$.

On the other hand,
if we define $Q_\fl(X)$ by
$P_\fl(X)=(1-X)Q_\fl(X)+P_\fl(1)$, then
\[
Q_\fl(X)=\big(\mu(\Fr_\fl)+q_\fl^{-1}\mu(\Fr_\fl)-q_\fl^{-1}\mu^2(\Fr_\fl)\big)-
q_\fl^{-1}\mu^2(\Fr_\fl)X.
\]
Therefore $Q_\fl(1)\equiv 2\mu(\Fr_\fl)(1-\mu(\Fr_\fl))\bmod M^2$
and $Q_\fl(1)\eo=M\eo$.
Since $W_{M^2}^{\Fr_\fl=1}$ and $W_{M^2}/(\Fr_\fl-1)W_{M^2}$ 
are isomorphic to $W_M$ and $W_{M^2}/W_M$
respectively, the map
$Q_\fl(1)\colon W_{M^2}/(\Fr_\fl-1)W_{M^2}\to W_{M^2}^{\Fr_\fl=1}$
is an isomorphism. 
We now define the isomorphism
$\phi_\fl^{\textnormal{fs}}$ by the commutative diagram
\begin{equation}\label{def:finite_singular}
	\begin{tikzcd}
		H^1_s(\K_\fl, W_{M^2}) \arrow[r,"\alpha_\fl"]&
		W_{M^2}^{\Fr_\fl=1} \\
		H^1_{f}(\K_\fl, W_{M^2}) \arrow[r,"\beta_\fl"]
		\arrow[u,"\phi_\fl^{\textnormal{fs}}"]&
		W_{M^2}/(\Fr_\fl-1)W_{M^2}
		\arrow[u,"Q_\fl(1)",swap]
	\end{tikzcd}
\end{equation}

\begin{prop}\label{prop:fscomparison}

Given $\fs\in\mathcal{R}_M$
and suppose $\fl$ is a prime such that 
$\ell\fs\in\mathcal{R}_M$ for $\ell=\fl\fl^c$.
Let $m\in \eo$ be as in Proposition \ref{prop:goodm}
so that $\loc_\fl(m\cdot \kappa_{\fs,M})\in H^1_f(\K_\fl, W_{M^2})$.
Then we have  
\[
\phi_\fl^{\textnormal{fs}}(\loc_{\fl}(m\cdot \kappa_{\fs,M}))=
\loc^s_\fl(m\cdot \kappa_{\ell\fs,M})
\]
where $\loc_\fl^s$ denote the composition of $\loc_\fl$
with the quotient map to $H^1_s(\K_\fl,W_{M^2})$.

\end{prop}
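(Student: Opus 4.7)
The plan is to follow the classical Kolyvagin/Rubin finite-singular comparison computation, adapted to the present anticyclotomic setting. Since $\fl$ is totally ramified in $\K(\ell)/\K(\id)$, the generator $\sigma_\ell$ of $H(\ell) = \Delta_\fl$ coincides (via local class field theory and the reciprocity map in \eqref{eq:anticyc_rec}) with a generator of the inertia subgroup $\sigma_\fl \in I_\fl$ used to define $\alpha_\fl$ in \eqref{eq:singular}. So evaluating a cocycle on $\sigma_\ell$ is the same as evaluating on $\sigma_\fl$, and likewise the Frobenius $\Fr_\fl$ acts compatibly on both sides.

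First I would compute $\alpha_\fl(\loc_\fl^s(m\,\kappa_{\ell\fs,M}))$. By definition $\Res(\kappa_{\ell\fs,M}) \equiv \Nr_{\K(\id)/\K} D_{\ell\fs}\, z_{\ell\fs} \pmod{M^2}$, and the $\K$-invariance means $\kappa_{\ell\fs,M}$ extends this norm uniquely as a cocycle over $\K$. Evaluating the cocycle identity on $\sigma_\ell$ and using the telescoping relation
\[
(\sigma_\ell - 1)D_{\ell\fs}\, z_{\ell\fs} = D_\fs\bigl(n_\ell\, z_{\ell\fs} - \Nr_{\K(\ell\fs)/\K(\fs)} z_{\ell\fs}\bigr) \equiv -D_\fs\, P_\fl(\Fr_\fl)\, z_\fs \pmod{M^2}
\]
(invoking \eqref{eq:Pl} of Theorem \ref{thm:eu} together with $M^2 \mid n_\ell$ from $\fs\in\mathcal{R}_M$) shows that $\kappa_{\ell\fs,M}(\sigma_\ell)$ equals $-D_\fs\, P_\fl(\Fr_\fl)\, z_\fs$ modulo $M^2$, read as an element of $W_{M^2}^{\Fr_\fl = 1}$ after localizing at $\fl$ and passing through the norm to $\K$.

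Next I would compute $\beta_\fl(\loc_\fl(m\,\kappa_{\fs,M}))$, which by definition is $m\,\kappa_{\fs,M}(\Fr_\fl)$ in $W_{M^2}/(\Fr_\fl-1)W_{M^2}$. Since $\fl$ is unramified in $\K(\fs)$ (as $\fs$ is coprime to $\fl$ and to the conductor of $\psi$), $\Fr_\fl$ makes sense in $\Gal(\K(\fs)/\K)$ and the defining relation of $\kappa_{\fs,M}$ yields directly that $m\,\kappa_{\fs,M}(\Fr_\fl)$ is represented by the image of $D_\fs\, z_\fs$ localized at $\fl$. To go from the finite to the singular side one uses the decomposition $P_\fl(X) = P_\fl(1) + (1-X)Q_\fl(X)$ together with $P_\fl(1) \equiv 0 \pmod{M^2}$, which implies that applying $Q_\fl(1)$ to $m\,\kappa_{\fs,M}(\Fr_\fl)$ recovers (up to sign) exactly the expression $-m\, D_\fs\, P_\fl(\Fr_\fl)\, z_\fs \bmod M^2$ obtained in the previous step. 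Comparing with the output of $\alpha_\fl$ and appealing to the commutative square \eqref{def:finite_singular} then gives the claimed equality.

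The main obstacle will be the precise bookkeeping of local identifications: matching the canonical inertia generator $\sigma_\fl$ with the global Kolyvagin generator $\sigma_\ell$ under the reciprocity map, and checking that the cocycle identities descend cleanly to $\K$ after taking $\Nr_{\K(\id)/\K}$. One should also verify that the auxiliary factor $m$ from Proposition \ref{prop:goodm}, whose role is only to push the local classes into $H^1_f$, cancels consistently on both sides of the desired identity. Modulo these technicalities, the argument is essentially the content of \cite[Thm.\ 4.5.4]{Rubin} transposed to our anticyclotomic Iwasawa-theoretic Euler system, and no new ingredient beyond the Euler system norm relation \eqref{eq:Pl} and the congruence $\mu(\Fr_\fl) \equiv \mu(\tau)+1 \pmod{M}$ from Lemma \ref{lem:estimate} is needed.
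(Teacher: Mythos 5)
Your outline correctly identifies the strategy — the Kolyvagin finite–singular comparison following \cite[Thm.~4.5.4]{Rubin}, built on the telescoping identity $(\sigma_\ell-1)D_\ell = n_\ell - \Nr_{\K(\ell)/\K(\id)}$, the norm relation \eqref{eq:Pl}, and the divisibility $M^2 \mid n_\ell$. This is indeed the route the paper takes, with the same identification of the Kolyvagin generator $\sigma_\ell$ with the local inertia generator. However, the step you wave off as a ``technicality'' is the actual content of the proof. You write the telescoping relation
\[
(\sigma_\ell - 1)D_{\ell\fs}\, z_{\ell\fs} \equiv -D_\fs\, P_\fl(\Fr_\fl)\, z_\fs \pmod{M^2}
\]
and propose to ``evaluate the cocycle identity on $\sigma_\ell$''; but $z_{\ell\fs}$ is a cohomology \emph{class}, and at the level of classes both sides of the congruence vanish — that is precisely the content of the preceding lemma showing that $D_{\fs}z_\fs$ is $\Gal_{\K(\id)}$-invariant mod $M^2$, and so gives no information about $\kappa_{\ell\fs,M}(\sigma_\ell)$. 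The finite–singular comparison requires choosing explicit cocycle representatives for $z_\fs$ and $z_{\ell\fs}$ that are compatible with the norm relation, and then evaluating pointwise; the nontrivial part is arranging this compatibility.

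The paper handles this by lifting to the coinduced module $\tilde{T}=\mathrm{Map}^{\cts}(\Gal_\K,T)$: since $H^1(\K(\fs),\tilde{T})=0$ by Shapiro and $T^{\K(\fs)}=0$ by \eqref{cond:gen_psi}, each class $z_\fs$ corresponds to a lift $\hat{z}_\fs\in\tilde{T}$ with $\hat{z}_\fs\bmod T$ Galois-invariant, and the cocycle $\gamma\mapsto(\gamma-1)\hat{z}_\fs$ gives a canonical representative on which the telescoping and the norm relation can be manipulated as equalities of elements of $\tilde{T}$. The key claim then becomes that the lifts can be chosen so that $\Nr_{\K(\ell)/\K(\id)}\hat{z}_{\ell\fs}-P_\fl(\Fr_\fl)\hat{z}_\fs\in M^2\tilde{T}$, which requires an auxiliary argument passing through a level $V\subset U$ where $\Fr_\fl^k$ has order divisible by $\bar M^2$; this is what replaces the ``bookkeeping'' you deferred. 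Without that device (or Rubin's original representative-choosing argument), the proposed evaluation of the cocycle identity is undefined. One small slip: Lemma~\ref{lem:estimate} gives $\mu(\Fr_\fl)-1\equiv M=\mu(\tau)-1\pmod{M^2}$, i.e.\ $\mu(\Fr_\fl)\equiv\mu(\tau)\pmod{M^2}$, not $\mu(\Fr_\fl)\equiv\mu(\tau)+1\pmod M$ as stated at the end of your proposal.
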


\begin{proof}

The proposition is a modified version of 
\cite[Thm 4.5.4]{Rubin} and can be proved in the same way.
For the convenience of the reader,
we reproduce below an alternative proof,
which the author learned from Gyujin Oh's notes on 
Skinner's lecture on Euler systems in 2022.

Let $\Gal_\K$ acts on $\tilde{T}=\text{Map}^{\cts}(\Gal_\K,T)$
by $(g\cdot f)(x)=f(xg)$ for $f\in \tilde{T}$
and consider the sequence of $\Gal_\K$-modules
$0\to T\to \tilde{T}\to \tilde{T}/T\to 0$,
where $T\to \tilde{T}$ is the map
$t\mapsto [\gamma\mapsto \gamma\cdot t]$.
Since $T^{\K(\fs)}=0$ by \eqref{cond:gen_psi}
and $H^1(\K(\fs),\tilde{T})=0$ by
Shapiro's lemma, we have the exact sequence of $Gal(\K(\fs)/\K)$-modules
\[
	0\to\tilde{T}^{\K(\fs)}\to (\tilde{T}/T)^{\K(\fs)}\to 
	H^1(\K(\fs),T)\to 0
\]
Then we can pick
$\hat{z}_\fs\in \tilde{T}$ such that
$\hat{z}\bmod T\in (\tilde{T}/T)^{\K(\fs)}$ and the cocycle
\[
    \gamma\mapsto (\gamma-1)\cdot \hat{z}_\fs\in T
\]
defines the same class as $z_\fs\in H^1(\K(\fs),T)$.
We claim that for $\ell\fs, \fs\in\mathcal{R}_M$ as above,
we can pick $\hat{z}_\fs$ and $\hat{z}_{\fs\ell}$ such that 
$\Nr_{\K(\ell)/\K(\id)}\hat{z}_{\fs\ell}-P_\fl(\Fr_\fl)
\hat{z}_{\fs}\in M^2\tilde{T}$.
Then for $w=\fl$ the proposition follows from that
\begin{align*}
	D_{\ell\fs}z_{\ell\fs}(\sigma_\fl)=
	(\sigma_\fl-1)D_{\ell\fs}\hat{z}_{\ell\fs}
	\equiv-\Nr_{\K(\ell)/\K(\id)}D_\fs \hat{z}_{\ell\fs}
	&\equiv-P_\fl(\Fr_{\fl}) D_\fs \hat{z}_\fs \mod M^2\tilde{T}\\
	Q_\fl(\Fr_\fl)\cdot
	D_\fs z_\fs(\Fr_\fl)=
	Q_\fl(\Fr_{\fl})(\Fr_{\fl}-1)
	\cdot D_\fs \hat{z}_\fs
	&\equiv-P_\fl(\Fr_{\fl}) D_\fs \hat{z}_\fs \mod M^2\tilde{T}
\end{align*}

To prove the claim,
pick $k>0$ such that 
$\Fr_\fl^k$ acts trivially on $T/{M^2}T$.
Since $\Fr_\fl$ has infinite order in $\fG_{\fs}^a$,
we can pick $V\subset U$ such that $\bar{M}^2$ divides
the order $N$ of $\Fr_\fl^k$ in $Gal(\K(V)/\K(\id))$.
Pick any lift 
$\hat{z}_{\ell\fs V}$ and  $\hat{z}_{\fs V}$  for
$z_{\ell\fs V}$ and  $z_{\fs V}$.
It then follows from 
$\Cor_{\K(\fs V)}^{\K(\fs V)}(z_{\ell\fs V})=
P_\fl(\Fr_\fl)\cdot z_{\fs V}$ that
\[
	\Nr_{\K(\ell)/\K(\id)}
	\hat{z}_{\ell\fs V}-P_\fl(\Fr_\fl)
	\hat{z}_{\fs V}\in T+\tilde{T}^{\K(\fs w^a)}.
\]
Moreover, since
$\Nr_{\K(\ell)/\K(\id)}\colon\tilde{T}^{\K(\fs\ell V)}\to
\tilde{T}^{\fs V}$ is surjective,
we can modify $\hat{z}_{\ell\fs V}$ so that the difference
above lies in $T$.
Take $\hat{z}_{\ell\fs}=\Nr_{\K(V)/\K(\id)}
\hat{z}_{\ell\fs V}$ and
$\hat{z}_{\fs}=\Nr_{\K(V)/\K(\id)}\hat{z}_{\fs V}$
as lifts of $z_{\fs\ell}$ and $z_{\fs}$.
Since
\[
	\Nr_{\K(V)/\K(\id)}t=
	\sum_{Gal(\K(V)/K(\id))/\langle \Fr_\fl^k\rangle}\gamma\,
	\sum_{i=1}^{N}(\Fr_\fl^k)^i\cdot  t \equiv
	\sum_{Gal(\K(V)/\K(\id))/\langle \Fr_\fl^k\rangle}\gamma\cdot
    Nt\equiv 0
	\mod M^2T\quad
	\text{ for } t\in T
\]
we have 
$\Nr_{\K(\ell)/\K(\id)}\hat{z}_{\ell\fs}-P_\fl(\Fr_\fl)
\hat{z}_{\fs}\in \Nr_{\K(V)/\K(\id)}T\subset M^2\tilde{T}$.
\end{proof}

\begin{lem}\label{lem:vanish}
    Let $\Omega/\K$ be the infinite abelian extension
    which is the product of the fields
    $\K(\id), \K(W)$, and $\K(\mu_{p^\infty})$.
    Then under the assumption \eqref{cond:gen_psi} both 
    $H^1(\Omega/\K, W)$ and $H^1(\Omega/\K, W^*)$ 
	are trivial.
\end{lem}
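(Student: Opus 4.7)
The plan is to deduce both vanishings from the general principle that if $G$ is an abelian (topological) group acting $\eo$-linearly on a module $M$ and some $\sigma \in G$ satisfies $(\sigma - 1) \in \Aut_\eo(M)$, then $H^1(G, M) = 0$. Indeed, for a continuous $1$-cocycle $c \colon G \to M$, commutativity of $G$ gives
\[
(\sigma - 1) \cdot c(h) = (h - 1) \cdot c(\sigma)
\]
for every $h \in G$ by comparing $c(\sigma h)$ and $c(h \sigma)$; hence $c(h) = (h - 1) w$ with $w = (\sigma - 1)^{-1} c(\sigma)$, exhibiting $c$ as a coboundary.

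With this in hand, it suffices to produce elements $\sigma, \sigma^* \in G \coloneqq Gal(\Omega/\K)$ whose actions on $W$ and $W^*$ differ from the identity by units of $\eo$, i.e.\ such that $\bar{\mu}(\sigma) \ne 1$ and $\bar{\mu}^*(\sigma^*) \ne 1$ in the residue field $\fF$ of $\eo$. I first observe that $\bar{\mu} = \bar{\psi}$: writing $\mu = \psi \alpha$ with $\alpha \colon W \to \eo^\times$ a character of the free pro-$p$ group $W = Gal(\tilde{\K}/\K)$, and using that $\fF^\times$ has no $p$-torsion, the continuous character $\alpha$ lands in $1 + \fm$ and thus reduces trivially. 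Consequently $\bar{\mu}^* = \bar{\omega}\bar{\psi}^{-1}$ as well.

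The hypothesis \eqref{cond:gen_psi} states that $\bar{\psi}\vert_{D_{w_0}}$ is neither $\id$ nor $\omega^{\pm 1}$, so one may choose $\sigma \in D_{w_0}$ with $\bar{\psi}(\sigma) \ne 1$ and $\sigma^* \in D_{w_0}$ with $\bar{\psi}(\sigma^*) \ne \bar{\omega}(\sigma^*)$; the latter is equivalent to $\bar{\mu}^*(\sigma^*) \ne 1$. Since $\mu$ and $\mu^*$ both factor through $Gal(\Omega/\K)$ by construction of $\Omega$ (which by hypothesis contains $\K(W)$, $\K(\mu_{p^\infty})$, and hence the splitting fields of $W$ and $W^*$), the images of $\sigma$ and $\sigma^*$ in $G$ continue to satisfy the required inequalities, so applying the principle of the first paragraph to $(G, W, \sigma)$ and to $(G, W^*, \sigma^*)$ yields the lemma. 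The argument is essentially formal; the only step that required scrutiny is the identification $\bar{\mu} = \bar{\psi}$, which is what transfers the local genericity of $\psi$ at $w_0$ into the needed local nontriviality of $\mu$ and $\mu^*$.
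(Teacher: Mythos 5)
Your proof is correct and takes a genuinely different route. The paper's proof first reduces to the residual module $W_\varpi$ via the long exact sequence attached to $0 \to W_\varpi \to W \xrightarrow{\varpi} W \to 0$, then applies inflation--restriction across the intermediate field $L = \K(W_\varpi)$ (the key point being that $\#Gal(L/\K)$ is prime to $p$, so the $H^i(\Delta, \cdot)$ terms vanish), and finishes by identifying $\Hom(Gal(\Omega/L), W_\varpi)^\Delta$ with $\Hom(Gal(\Omega/L), W_\varpi^\Delta) = 0$; the same is then repeated for $W^*$. You instead work directly with $W$ and $W^*$ and invoke the elementary fact that $H^1(G,M) = 0$ for abelian $G$ whenever some $\sigma \in G$ makes $\sigma - 1$ invertible on $M$, which avoids both the reduction mod $\varpi$ and the intermediate field $L$ entirely. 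The two approaches rest on the same inputs --- the commutativity of $Gal(\Omega/\K)$ and the nontriviality of $\bar{\mu} = \bar{\psi}$ and $\bar{\mu}^* = \bar{\omega}\bar{\psi}^{-1}$, guaranteed by \eqref{cond:gen_psi} --- but you make explicit the auxiliary observation $\bar{\alpha} = 1$ (because $\alpha$ is a character of a pro-$p$ group landing in $\eo^\times$ and $\fF^\times$ has no $p$-torsion). The paper relies on this same identification silently when it invokes \eqref{cond:gen_psi} to conclude $W_\varpi^\K = 0$, so your explicit treatment of that step is a genuine clarification.
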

\begin{proof}
    Write $L=\K(W_{\varpi})$ and let $\Delta=Gal(L/\K)$.
    Then $\Delta$ acts on $W_{\varpi}$
    by the units in the residue field of $\eo$
    and hence $p\neq \#\Delta$.
    Thus $H^i(\Delta, W_\varpi^L)=0$ and we have
	\[
		H^1(\Omega/\K,W_{\varpi})\cong
		H^1(\Omega/L,W_{\varpi})=
		\Hom(Gal(\Omega/L),W_{\varpi})^\Delta=
		\Hom(Gal(\Omega/L),W_{\varpi}^\Delta)=0
	\]
    where the second equality follows from that
    the conjugation by $\Delta$ 
    is trivial on the abelian group $Gal(\Omega/L)$
    and the last comes from 
    $W_{\varpi}^\K=0$ by \eqref{cond:gen_psi}.
    This implies that the cohomology group 
    $H^1(\Omega,W)$
	has no nontrivial $\varpi$-torsion
    by considering the long exact sequence associated to
	$0\to W_\varpi\to W\xrightarrow{\varpi}W\to 0$,
    which implies that $H^1(\Omega,W)=0$.
	The same argument also works for $H^1(\Omega,W^*)$.
\end{proof}

The next lemma is a modification of \cite[Lem 5.2.3]{Rubin}.
Let $\fp=(\varpi)$ be the prime ideal of $\eo$
and let $\ord_\fp\colon E^\times\to \Z$ be the valuation 
such that $\ord_\fp(\varpi)=1$.
If $B$ is an $\eo$ module and $b\in B$, we write
\[
    \ord(b,B)=\inf\{n\geq0\mid \fp^nb=0\}\leq\infty.
\]

\begin{lem}\label{lem:dualC}
Fix $M$ and $\mathcal{R}_M$ as in Definition \ref{def:RM}.
Let  $C=\{\eta_1,\cdots,\eta_k\}$
be a finite subset of elements in $H^1(\K,W_{M}^*)$.
Write $\kappa_{\fs,M}'=m\cdot \kappa_{\fs,M}$
for $\fs\in \mathcal{M}$ and $m\in \eo$
as in Proposition \ref{prop:goodm}.
Then there exists a finite set of primes
$\{\fl_1,\cdots,\fl_k\}$  such that 
$\fs_i=\prod_{j=1}^i\ell_i\in \mathcal{R}_M$
for each $0\leq i\leq k$ and $\ell_i=\fl_i\fl_i^c$
(with $\fs_0=\oo_\K$) and satisfies the following conditions.
\begin{itemize}
\item For $1\leq i\leq k$,
$\textnormal{ord}( \loc_{\fl_i}(\kappa_{\fs_{i-1},M}'),
H^1_f(\K_{\fl_i}, W_{M^2}))\geq
\textnormal{ord}( M\cdot \kappa_{\fs_{i-1},M}', H^1(\Omega,W_{M^2}))$.
\item For any $\eta\in C$ and $\fl\in \{\fl_1,\cdots,\fl_k\}$,
we have $\loc_\fl(\eta)\in H^1_f(\K_{\fl},W_{M}^*)$.
\item For any $\eta\in C$,
if $\loc_{\fl_i}(\eta)=0$ for all $1\leq i\leq k$ then $\eta=0$.
\end{itemize}

\end{lem}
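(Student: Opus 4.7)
The plan is to build $\fl_1,\dots,\fl_k$ inductively, realizing all three conditions at step $j$ via a single Chebotarev argument, following the strategy of \cite[Lem.~5.2.3]{Rubin} and adapting it to respect the compatibility with the anticyclotomic family $\mathcal{R}_M$. Condition (2) will be essentially automatic: each $\eta\in C$ is ramified only at a finite set of primes, and by the same argument as in the proof of Proposition~\ref{prop:goodm} (absorbing a further constant depending only on $\psi$ into $m$) we have $\loc_\fl(\eta)\in H^1_f(\K_\fl,W_M^*)$ whenever $\fl$ is split, prime to the conductor of $\psi$, and lies outside the ramification locus of every $\eta_i$. I would therefore restrict the Chebotarev search to avoid this finite bad set.

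Suppose $\fl_1,\dots,\fl_{j-1}$ have already been chosen so that $\fs_{j-1}\in\mathcal{R}_M$. Let $L_j/\K$ be a finite Galois extension containing $\K(\fs_{j-1})\cdot\K(W_{M^2})\cdot\K(\mu_{\bar M^2})$ and enlarged enough that the cocycle $\kappa'_{\fs_{j-1},M}$ (pulled back through the restriction isomorphism used in the definition of the Kolyvagin derivative) and each $\eta_i$ restrict to equivariant homomorphisms into $W_{M^2}$ and $W_M^*$ respectively. The three requirements at step $j$ then become constraints on a single Frobenius element $\sigma\in Gal(L_j/\K)$: first, its image in $Gal(\K(W_{M^2})\K(\mu_{\bar M^2})/\K)$ must match the element produced by Lemma~\ref{lem:estimate} so that $\ell_j\fs_{j-1}\in\mathcal{R}_M$; second, via the isomorphism $\beta_{\fl_j}$ of \eqref{eq:finite}, the value $\kappa'_{\fs_{j-1},M}(\sigma)\in W_{M^2}$ must have order at least $r_j\coloneqq\ord(M\cdot\kappa'_{\fs_{j-1},M},H^1(\Omega,W_{M^2}))$ inside $W_{M^2}/(\sigma-1)W_{M^2}$; and third, $\sigma$ must avoid the kernels of a controlled collection of $\eta_i$-evaluations in $W_M^*$ so that, after reordering $C$ appropriately, the iterated localizations $\bigoplus_i \loc_{\fl_i}$ separate $C$ at step $k$. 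Chebotarev's density theorem will then produce infinitely many primes $\fl_j$ whose Frobenius is conjugate to any admissible $\sigma$, from which I may further extract one avoiding the finite bad set and such that $\fl_j\ne\fl_i^c$ for $i<j$.

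The main obstacle is demonstrating that an admissible $\sigma$ exists in $Gal(L_j/\K)$, i.e.\ that the subfields governing the three families of conditions are sufficiently independent. The crucial input is Lemma~\ref{lem:vanish}, which under \eqref{cond:gen_psi} gives $H^1(\Omega/\K,W)=H^1(\Omega/\K,W^*)=0$ and therefore forces the restrictions of $\kappa'_{\fs_{j-1},M}$ and of each nonzero $\eta_i$ to $Gal(L_j/\Omega)$ to be nontrivial $Gal(\Omega/\K)$-equivariant homomorphisms. Combined with the anticyclotomic nature of $\psi$, which makes $\K(\id)$ linearly disjoint from $\K(\mu_{\bar M^2})$ over $\K$ exactly as in the proof of Lemma~\ref{lem:estimate}, this yields enough freedom inside $Gal(L_j/\Omega)$ to realize all three constraints simultaneously. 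The third conclusion of the lemma then follows by induction: any $\eta\in C$ mapped to zero by every $\loc_{\fl_i}$ would, by the detection property built into each step, have already been forced to vanish globally.
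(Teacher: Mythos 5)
Your proposal follows essentially the same route as the paper: an inductive Chebotarev selection over the field $L = \K(\id)\K(W_{M^2})\K(\mu_{\bar M^2})$ with the element $\bar{\tau}$ from Lemma~\ref{lem:estimate}, using Lemma~\ref{lem:vanish} to force nontriviality of the restricted classes and the anticyclotomic linear disjointness to place the Frobenius constraints independently. The one place where the paper is more precise than your ``enough freedom'' step is that it invokes the argument of \cite[Lem~5.2.1]{Rubin} to show that the set of $\gamma\in\Gal_L$ violating either the $\kappa$-inequality or the $\eta$-inequality lies in a proper subgroup, and at step $j$ it simply pairs the single class $\eta_j$ with the prime $\fl_j$ (deducing condition (3) at the end from the inequality established at step $i$) rather than trying to control all $\eta_i$-evaluations simultaneously.
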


\begin{proof}
Let $L=\K(\id)\K(W_{M^2})\K(\mu_{\bar{M}^2})$
and $\bar{\tau}\in Gal(L/\K)$ be as in the proof of Lemma \ref{lem:estimate}.
Then $\Gal_L$ acts trivially on $W_{M^2}$ with
$(\bar{\tau}-1)W_{M^2}=MW_{M^2}=W_M$ and similarly for $W^*$, hence
\[
    W_{M^2}/(\bar{\tau}-1)W_{M^2}\cong
    \eo/M\eo\cong
    W_{M}^*/(\bar{\tau}-1)W_{M}^*.
\]
To find $\fl_1$,
let $\kappa_1\coloneqq \kappa_{\id,M}'\in H^1(\K,W_{M^2})$ and
$\eta_{1}\in H^1(\K,W_{M}^*)$, we claim there exists
$\gamma\in \Gal_L$ such that 
\begin{align*}
	\ord(\kappa_1(\gamma\bar{\tau}),W_{M^2}/(\bar{\tau}-1)W_{M^2})&\geq
	\ord( M\cdot (\kappa_1)_L, H^1(L,W_{M^2}) )\\
	\ord(\eta_1(\gamma\bar{\tau}),W_{M}^*/(\bar{\tau}-1)W_{M}^*)&\geq
	\ord( M\cdot (\eta_1)_L, H^1(L,W_{M}^*) )
\end{align*}
where $(*)_L$ denote the restriction of the cohomology classes to $L$.
Indeed, the argument in \cite[Lem 5.2.1]{Rubin}
shows that $\kappa_1(\Gal_L)=W_{\fp^d}$ if 
$d=\ord( (\kappa_1)_L, H^1(L,W_{M^2}) )$,
therefore there exists $\gamma'\in \Gal_L$ such that 
\[
	\ord( M\cdot (\kappa_1)_L, H^1(L,W_{M^2}) )=
    \ord(M\cdot \kappa_1(\gamma'),W_{M^2})=
    \ord(\kappa_1(\gamma'),W_{M^2}/(\bar{\tau}-1)W_{M^2})
\]
and $J=\{\gamma\in\Gal_L\mid 
\ord(\kappa_1(\gamma),W_{M^2}/(\bar{\tau}-1)W_{M^2})<
\ord(M\cdot  (\kappa_1)_L, H^1(L,W_{M^2}) )\}$
is a proper subgroup.
After a similar analysis for $\eta_1$,
the existence of $\gamma$
follows from the proof of \textit{loc.cit}.

Let $L'=\ker(\kappa_1)_L\cap \ker(\eta_1)_L$
and pick $\fl_1$ such that 
$\eta_1\in H^1_f(\K_{\fl_1}, W_{M}^*)$ and
$\Fr_{\fl_i}$ is conjugate to 
$\gamma\bar{\tau}$ in $Gal(L'/\K)$.
Then $\ell_1\coloneqq \fl_1\fl_1^c\in \mathcal{R}_M$ 
by Lemma \ref{lem:estimate} since
$\Fr_{\fl_i}$ is conjugate to  $\bar{\tau}$ in $Gal(L/\K)$.
Moreover, apply $\beta_{\fl_1}$ from \eqref{eq:finite} gives
\begin{align*}
	\ord( \loc_{\fl_1}(\kappa_1), H^1_f(\K_{\fl_1},W_{M^2}))=
	\ord(\kappa_1(\gamma\bar{\tau}),W_{M^2}/(\bar{\tau}-1)W_{M^2})&\geq
	\ord( (\kappa_1)_L, H^1(L,W_{M^2}) )\\
	\ord( \loc_{\fl_1}(\eta_1), H^1_f(\K_{\fl_1},W_{M}^*))=
	\ord(\eta_1(\gamma\bar{\tau}),W_{M}^*/(\bar{\tau}-1)W_{M}^*)&\geq
	\ord( (\eta_1)_L, H^1(L,W_{M}^*) ) 
\end{align*}
We now set $\kappa_2=\kappa_{\fs_1,M}'$
and apply the same procedure on $\kappa_2$ and $\eta_2$
to find $\fl_2$ with
$\ell_2\coloneqq \fl_2\fl_2^c\in \mathcal{R}_M$ 
such that similar inequalities as above hold
and iterate the process to find 
the set $\{\fl_1,\cdots,\fl_k\}$.

Now the first claim is clear since $L\subset \Omega$,
the second follows by construction,
and suppose $\eta=\eta_i$ satisfies
the assumption in the last claim,
then in particular $\loc_{\fl_i}(\eta_i)=0$.
By the above inequalities this implies that
$\eta\in H^1(L/\K, W_{M}^*)\subset H^1(\Omega/\K, W_{M}^*)$,
which vanishes be Lemma \ref{lem:vanish}.

\end{proof}

Now, for $w\in \Sigma_p$ 
we put $H^1_f(\K_w, V)=0$
and define $H^1_f(\K_w, W)$ $H^1_f(\K_w, T)$
as the image and pre-image of which in the sequence
$H^1_f(\K_w, T)\to H^1_f(\K_w, V)\to H^1_f(\K_w, W)$.
By \cite[Lem 1.3.8]{Rubin}
we can then define $H^1_f(\K_w, W_{M^2})$
from either $H^1_f(\K_w, T)$ or $H^1_f(\K_w, W)$.
We make similar definitions for the cohomology of the Tate duals
using $H^1_f(\K_w, V^*)=H^1(\K_w, V^*)$.
Then for any finite set $\Pi$ of finite primes of $\K$ we define
\begin{align*}
    &S^{\Sigma_p^c\cup \Pi}(\K, \square)=
    \ker\big\{
    H^1(\K, \square)\to \prod_{w\notin \Sigma_p^c\cup \Pi}H^1_s(\K_w, \square)
    \big\}\\
    &S_{\Sigma_p^c\cup \Pi}(\K, \square^*)=
    \ker\big\{
    H^1(\K, \square^*)\to \prod_{w\notin \Sigma_p^c\cup \Pi}H^1_s(\K_w, \square^*)
    \times \prod_{w\in\Sigma_p^c\cup \Pi}H^1(\K_w,\square^*)
    \big\}
\end{align*}
for $\square\in \{T,V,W,W_{M^2}\}$, where $H^1_s(\K_w,\square)$
denote the quotient of $H^1(\K_w,\square)$ by $H^1_f(\K_w,\square)$.
Note that by definition $H^1_f(K_w,T)=H^1(K_w,T)_{\textnormal{tor}}$
if $w\in\Sigma_p$, 
therefore $\loc_w(\kappa_{\fs_i,M})\in H^1_f(\K, W_{M^2})=H^1(\K_w, T)_{\textnormal{tor}}$
for $w\in\Sigma_p\setminus{w_0}$
by Theorem \ref{thm:eu} if $\mu\vert_{D_w}\neq 1$.
Then for $\fs\in \mathcal{R}_M$ and $m$ as in 
Proposition \ref{prop:goodm} we have
\[
    \kappa_{\fs,M}'=m\cdot \kappa_{\fs,M}
    \in S^{\Sigma_p^c\cup \Pi}(\K, W_{M^2})
    \text{ for }\Pi=\{w_0, \fl_1,\fl_1^c,\cdots, \fl_k,\fl_k^c\}.
\]

We give a modification of Lemma \cite[Lem 5.2.5]{Rubin},
which provide an upper bound for the cokernel of
\[
    \loc_{\Pi\setminus\{w_0\},\square}^s\colon
    S^{\Sigma_p^c\cup \Pi}(\K, \square)
    \xrightarrow{}
    \bigoplus_{w\in\Pi\setminus\{w_0\}}H^1_s(\K_w, \square).
\]

\begin{lem}
Fix a nonzero $\fm\in \eo$ with $n=\ord_\fp(\fm)$ 
and suppose $M\in \eo$ satisfies
\[
    2\ord_\fp(M)\geq 2n+\ind_\eo(z')
\]
where 
$\ind_\eo(z')=\max\{k\geq 0\mid z'\coloneqq m\cdot z\in 
\fp^k H^1(\K, T)\bmod H^1(\K,T)_{\textnormal{tor}}\}$.
Suppose further that $\{\fl_1,\cdots,\fl_k\}$ 
is a finite set of primes such that
$\fs_i=\prod_{j=1}^i\ell_i\in \mathcal{R}_M$
for each $0\leq i\leq k$ and $\ell_i=\fl_i\fl_i^c$ and satisfies 
\[
\textnormal{ord}( \loc_{\fl_i}(\kappa_{\fs_{i-1},M}'),
H^1_f(\K_{\fl_i}, W_{M^2}))\geq
\textnormal{ord}( M\cdot \kappa_{\fs_{i-1},M}', H^1(\Omega,W_{M^2}))
\]
for each $1\leq i\leq k$.
Then we have 
$\length_{\eo}(\coker(\loc_{\Sigma\setminus\{w_0\},W_{\fm}}^s))
\leq \ind_{\eo}(z)$
for $\Pi=\{w_0, \fl_1,\cdots, \fl_k\}$.
\end{lem}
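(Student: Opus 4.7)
The plan is to bound the cokernel by exhibiting classes in $S^{\Sigma_p^c\cup\Pi}(\K,W_\fm)$ whose image under $\loc^s_{\Pi\setminus\{w_0\},W_\fm}$ has cokernel of length at most $\ind_\eo(z)$. The natural candidates are the reductions of the Kolyvagin derivative classes $\kappa'_{\fs_i,M}=m\cdot\kappa_{\fs_i,M}$ for $0\leq i\leq k$. First, since $2\ord_\fp M\geq 2n+\ind_\eo(z')\geq n=\ord_\fp\fm$, we have $\fm\mid M^2$, and the natural surjection $W_{M^2}\twoheadrightarrow W_\fm$ sends each $\kappa'_{\fs_i,M}\in S^{\Sigma_p^c\cup\Pi}(\K,W_{M^2})$ (membership by Proposition~\ref{prop:goodm} and the discussion preceding the lemma) to a class $\bar\kappa_i\in S^{\Sigma_p^c\cup\Pi}(\K,W_\fm)$.

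The triangular structure comes from two observations. By Theorem~\ref{thm:eu}(1), the class $z_{\fs_i}$, and hence $\kappa'_{\fs_i,M}$, is unramified outside $S\cup\{w\colon w\mid\fs_i\}$; by \eqref{eq:f_un} this forces $\loc^s_{\fl_j}(\bar\kappa_i)=0$ for all $j>i$. For the diagonal, Proposition~\ref{prop:fscomparison} together with the diagram \eqref{def:finite_singular} yields
\[
\alpha_{\fl_i}\bigl(\loc^s_{\fl_i}(\kappa'_{\fs_i,M})\bigr)=Q_{\fl_i}(1)\cdot\beta_{\fl_i}\bigl(\loc_{\fl_i}(\kappa'_{\fs_{i-1},M})\bigr)\in W_{M^2}^{\Fr_{\fl_i}=1}.
\]
Since $Q_{\fl_i}(1)\eo=M\eo$ and the choice of $\fl_i$ in Lemma~\ref{lem:dualC} guarantees $\ord(\loc_{\fl_i}(\kappa'_{\fs_{i-1},M}),H^1_f(\K_{\fl_i},W_{M^2}))\geq\ord(M\cdot\kappa'_{\fs_{i-1},M},H^1(\Omega,W_{M^2}))$, the hypothesis $2\ord_\fp M\geq 2n+\ind_\eo(z')$ is calibrated precisely so that, after reducing modulo $\fm$, no $\fp$-adic order is lost in passing from $H^1_s(\K_{\fl_i},W_{M^2})$ to $H^1_s(\K_{\fl_i},W_\fm)$. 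Thus the length of the local defect $H^1_s(\K_{\fl_i},W_\fm)/\eo\cdot\loc^s_{\fl_i}(\bar\kappa_i)$ is governed by the failure of $\kappa'_{\fs_{i-1},M}$ to be divisible by $M$ inside $H^1(\Omega,W_{M^2})$. Elementary linear algebra on the resulting triangular matrix then bounds $\length_\eo\coker(\loc^s_{\Pi\setminus\{w_0\},W_\fm})$ by the sum of these local defects.

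Finally, a downward induction on $i$ shows the local defects telescope: the total is bounded by the $\fp$-depth of the base class $\kappa'_{\oo_\K,M}=m\cdot z$ in $H^1(\K,T)\bmod H^1(\K,T)_{\textnormal{tor}}$, which after stripping off the $\ord_\fp(m)$ contributed by the auxiliary factor $m$ equals $\ind_\eo(z)$ by definition. The main obstacle will be the precise bookkeeping of $\fp$-adic orders across the three reductions involved, namely the global-to-local restriction $H^1(\Omega,W_{M^2})\to H^1(\K_{\fl_i},W_{M^2})$, the finite-to-singular isomorphism $\phi_{\fl_i}^{\textnormal{fs}}$, and the reduction $W_{M^2}\twoheadrightarrow W_\fm$; these are precisely the computations at the heart of \cite[Lem 5.2.5]{Rubin}, which go through once cardinalities are replaced by $\eo$-lengths and one rescales by the factor $\ord_\fp(m)$ arising from our replacement of $z$ by $z'=m\cdot z$.
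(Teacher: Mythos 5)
Your proposal captures the correct architecture—the Rubin \cite[Lem 5.2.5]{Rubin} triangular chain argument with the excess contributed by the base class—and this is indeed what the paper does, with the same constituent inputs (Proposition \ref{prop:goodm} for membership in the Selmer group, the finite-to-singular comparison \eqref{def:finite_singular} and Proposition \ref{prop:fscomparison} for the off-diagonal/diagonal structure, Lemma \ref{lem:vanish} to clear the $H^1(\Omega/\K,-)$ obstructions).

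There is, however, a genuine gap in your choice of candidate classes. You take the unrescaled reductions of $\kappa'_{\fs_i,M}$ as your $\bar\kappa_i$, and defer to "precise bookkeeping" for the rest, mentioning only a rescaling by $\ord_\fp(m)$. But the paper's proof introduces the quantities $\dd_i=\ord(M\cdot\kappa'_{\fs_i,M},H^1(\K,W_{M^2}))$, proves the chain of inequalities $2n\leq\dd_0=\ord_\fp(M)-\ind_\eo(z')\leq\dd_1\leq\cdots\leq\dd_k$ (the middle inequality at each step uses exactly Proposition \ref{prop:fscomparison}, as you note), and then takes the \emph{rescaled} classes $\bar\kappa_i=\fp^{\dd_i-2n}\kappa'_{\fs_i,M}$. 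This $i$-dependent normalization is essential: the order of $\kappa'_{\fs_i,M}$ typically shrinks as $i$ grows, so the naive reduction of $\kappa'_{\fs_i,M}$ would supply progressively less of $H^1_s(\K_{\fl_i},-)$, and the telescoping you invoke would not close to the clean bound $\ind_\eo(z')$. The rescaling by $\fp^{\dd_i-2n}$ is precisely what makes each row of your triangular matrix contribute a full $2n$ minus a controlled defect. Two further bookkeeping points: the paper's proof actually runs the argument inside $W_{\fm^2}$ (so the ambient local singular spaces have length $2kn$, not $kn$), which is at odds with the $W_\fm$ appearing in the lemma's statement and in your sketch—one of the two is a typo, and your proof should be explicit about which module it works in; and the final bound the proof produces is $\ind_\eo(z')$, not $\ind_\eo(z)$, another discrepancy with the stated conclusion that you should flag rather than silently absorb.
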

\begin{proof}
Thanks to Lemma \ref{lem:vanish} and \eqref{cond:gen_psi},
the hypotheses \cite[(5.5) p.109]{Rubin} is satisfied 
and we can use the simplified argument on \cite[p.110]{Rubin}.
In particular, for $0\leq i\leq k$ we can define
\[
    \dd_i=\ord(M\cdot \kappa_{\fs_i,M}', H^1(K,W_{M^2}))=
    \ord(M\cdot \kappa_{\fs_i,M}', H^1(\Omega,W_{M^2}))=
    \ord(\kappa_{\fs_i,M}', H^1(\Omega,W_{M^2})/MH^1(\Omega,W_{M^2})),
\]
where the last equality follows from that 
$\kappa_{\fs_i,M}'(\Gal_\Omega)=W_{\fp^d}$ if 
$d=\ord(\kappa_{\fs_i,M}', H^1(\Omega,W_{M^2}))$.

By the argument in \textit{loc.cit}, we have
\[
    \dd_0=\ord_\fp(M)-\ind_\eo(z')\geq 2n.
\]
And since $\alpha_{\fl_i}\circ \loc_{\fl_i}^s$
factors through $H^1(\Omega,W_{M^2})/MH^1(\Omega,W_{M^2})$, we have
\[
    \dd_i\geq 
    \ord(\loc^s_{\fl_i}(\kappa_{\fs_i,M}'),H^1_s(\K_{\fl_i}, W_{M^2}))=
    \ord(\loc_{\fl_i}(\kappa_{\fs_{i-1},M}'),H^1_f(\K_{\fl_i}, W_{M^2}))
    \geq \dd_{i-1}
\]
where the middle equality comes from Proposition \ref{prop:fscomparison}.
We then let $A^{(i)}\subset S^{\Sigma_p^c\cup \Pi}(\K, W_{\fm^2})$
be generated by $\bar{\kappa}_i\coloneqq \fp^{\dd_i-2n}\kappa_{\fs_i,M}'$,
the same argument in \textit{loc.cit} gives
\[
    \ell_\eo(\loc^s_{\Pi\setminus\{w_0\}}
    (S^{\Sigma_p^c\cup \Pi}(\K, W_{\fm^2})))\geq 2kn-\ind_\eo(z').
\]
We can then conclude the lemma since
$\ell_\eo(\bigoplus_{w\in\Pi\setminus\{w_0\}}H^1_s(\K_w, W_{\fm^2}))=2kn$.

\end{proof}

\begin{prop}
    Assume $\mu$ has infinite order and $\mu\vert_{D_w}\neq 1$
    for $w\in\Sigma_p\setminus\{w_0\}$, then
    \[
    \length_\eo(S_{\Sigma_p^c}(\K,W^*))\leq \ind_{\eo}(m\cdot \alpha^{-1}(L_\id)).
    \]
\end{prop}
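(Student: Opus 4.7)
The plan is to combine the Kolyvagin-style bound from the previous lemma with global Poitou--Tate duality, and then to interpret the index $\ind_\eo(z')$ using the explicit formula for $\loc_{w_0}(z)$ from Theorem \ref{thm:eu}(3). The target length is finite only if the divisible part of $S_{\Sigma_p^c}(\K,W^*)$ vanishes, and the argument will produce that vanishing as a byproduct.

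First I would fix a nonzero $\fm\in \eo$ with $n=\ord_\fp(\fm)$, and use the hypotheses $\mu\vert_{D_w}\neq 1$ for $w\in\Sigma_p\setminus\{w_0\}$ together with \eqref{cond:gen_psi} at $w_0$ to check that $H^0(\K,W^*)=0$ (so $\mu^*$ is nowhere globally trivial). This forces $H^1(\K,W^*[\fm^2])\cong H^1(\K,W^*)[\fm^2]$ and similarly for each local factor appearing in the Selmer defining conditions, giving $S_{\Sigma_p^c}(\K,W_{\fm^2}^*)\cong S_{\Sigma_p^c}(\K,W^*)[\fm^2]$. It therefore suffices to bound the left side uniformly in $\fm$. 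Pick a generating set $\{\eta_1,\dots,\eta_k\}$ for $S_{\Sigma_p^c}(\K,W_{\fm^2}^*)$ over $\eo$, choose $M\in\eo$ with $2\ord_\fp(M)\geq 2n+\ind_\eo(z')$, and apply Lemma \ref{lem:dualC} to produce primes $\fl_1,\dots,\fl_k$ such that each $\fs_i=\prod_{j\le i}\ell_j$ lies in $\mathcal{R}_M$, each $\eta_j$ is unramified at every $\fl_i$, and no nonzero element of $S_{\Sigma_p^c}(\K,W_{\fm^2}^*)$ localizes to zero at all $\fl_1,\dots,\fl_k$. Set $\Pi=\{w_0,\fl_1,\fl_1^c,\dots,\fl_k,\fl_k^c\}$.

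Next I would apply the Poitou--Tate nine-term exact sequence (Rubin's Theorem 1.7.3) to the Selmer structure on $W_{\fm^2}$ consisting of the full local condition at $\Sigma_p^c$ and the finite condition elsewhere, relaxed further to the full condition at $\Pi\setminus\{w_0\}$; the dual structure on $W_{\fm^2}^*$ is exactly the one defining $S_{\Sigma_p^c}(\K,W_{\fm^2}^*)$ (with the additional trivial condition at $\Pi\setminus\{w_0\}$). This yields
\[
S^{\Sigma_p^c\cup\Pi}(\K,W_{\fm^2})\xrightarrow{\loc^s_{\Pi\setminus\{w_0\}}}\bigoplus_{w\in\Pi\setminus\{w_0\}}H^1_s(\K_w,W_{\fm^2})\longrightarrow S_{\Sigma_p^c}(\K,W_{\fm^2}^*)^\vee\longrightarrow S_{\Sigma_p^c\cup\Pi}(\K,W_{\fm^2}^*)^\vee\longrightarrow 0,
\]
and the third conclusion of Lemma \ref{lem:dualC} forces the rightmost term to vanish. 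Combining with the cokernel estimate of the previous lemma yields $\length_\eo\bigl(S_{\Sigma_p^c}(\K,W_{\fm^2}^*)\bigr)\leq \ind_\eo(z')$. Finally, Lemma \ref{lem:classp} shows $H^1(\Qp,T)$ is free of rank one, hence torsion-free; writing $z'=\varpi^k x+t$ with $t\in H^1(\K,T)_{\textnormal{tor}}$ and $k=\ind_\eo(z')$, the restriction $\loc_{w_0}(t)$ vanishes, and Theorem \ref{thm:eu}(3) combined with $z'=m\cdot z$ gives $\varpi^k\loc_{w_0}(x)=m\cdot\alpha^{-1}(L_\id)\cdot z_p$, so $\ind_\eo(z')\leq\ord_\fp(m\cdot\alpha^{-1}(L_\id))$. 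Passing to the limit $\ord_\fp(\fm)\to\infty$ and using the identification $S_{\Sigma_p^c}(\K,W_{\fm^2}^*)\cong S_{\Sigma_p^c}(\K,W^*)[\fm^2]$ completes the argument, and simultaneously shows that $S_{\Sigma_p^c}(\K,W^*)$ has no divisible part.

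The main obstacle is the careful bookkeeping in the Poitou--Tate step: one must verify that the full local condition at $\Sigma_p^c$ and its Tate-dual trivial condition on the $W^*$-side match the Selmer structure appearing in the statement, and that after relaxing at $\Pi\setminus\{w_0\}$ the residual dual Selmer group is precisely $S_{\Sigma_p^c\cup\Pi}(\K,W_{\fm^2}^*)$ which is killed by the detection property of Lemma \ref{lem:dualC}. A secondary technical point is matching the Kolyvagin derivatives $\kappa_{\fs,M}'$, which live in $H^1(\K,W_{M^2})$ for an auxiliary $M$ chosen much larger than $\fm$, with the Selmer group of $W_{\fm^2}^*$; this is standard and relies on the freedom in choosing $M$.
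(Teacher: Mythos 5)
Your Poitou--Tate step is set up incorrectly, and this is a genuine gap. The Kolyvagin derivatives $\kappa_{\fs,M}'$ lie in $S^{\Sigma_p^c\cup\Pi}(\K,W_{M^2})$ with $w_0\in\Pi$, i.e.\ with the local condition at $w_0$ \emph{relaxed} to all of $H^1(\Qp,W_{M^2})$, since nothing forces $\loc_{w_0}(\kappa_{\fs,M}')$ to be in the finite part. Under global duality, the relaxed condition at $w_0$ on the $W$-side pairs with the \emph{strict} (trivial) condition at $w_0$ on the $W^*$-side. Consequently the exact sequence you write should have $S_{\Sigma_p^c\cup\{w_0\}}(\K,W_{\fm^2}^*)^\vee$ in place of $S_{\Sigma_p^c}(\K,W_{\fm^2}^*)^\vee$: the argument yields $\length_\eo\bigl(S_{\Sigma_p^c\cup\{w_0\}}(\K,W^*)\bigr)\le \ind_\eo(z')$, a bound on a \emph{smaller} Selmer group than the one in the statement. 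Since $S_{\Sigma_p^c}(\K,W^*)\supset S_{\Sigma_p^c\cup\{w_0\}}(\K,W^*)$, combining this with your (correct) valuation computation $\ind_\eo(z')\le \ord_\fp(m\cdot\alpha^{-1}(L_\id))$ does not reach the target.

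What is missing is the second stage of the argument — the content of \cite[Thm 2.2.10]{Rubin}, which the paper invokes. One must compare the two Selmer groups via the exact sequence
\[
0\to S^{\Sigma_p^c}(\K, T)\to S^{\Sigma_p^c\cup\{w_0\}}(\K, T)\xrightarrow{\loc_{w_0}}H^1_s(\Qp,T)
\]
and the dual sequence, together with the fact (Lemma \ref{lem:classp}) that $H^1(\Qp,T)=\eo z_p$ is torsion-free, hence $H^1_s(\Qp,T)=H^1(\Qp,T)$ is free of rank one. Writing $\ind_\eo(z')=k$, the class $z'/\varpi^k$ (mod torsion) still lies in $S^{\Sigma_p^c\cup\{w_0\}}(\K,T)$ because the finite local conditions outside $\Sigma_p^c\cup\{w_0\}$ are saturated; its singular image at $w_0$ shows that the image of $S^{\Sigma_p^c\cup\{w_0\}}(\K,T)$ in $\eo z_p$ contains $\bigl(m\alpha^{-1}(L_\id)/\varpi^k\bigr)\eo$, so the relevant cokernel has length at most $\ord_\fp(m\alpha^{-1}(L_\id))-k$. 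Adding this to the strict bound $k$ recovers exactly $\ord_\fp(m\alpha^{-1}(L_\id))$; your additive estimate, by contrast, would charge $\ind_\eo(z')$ twice. So the valuation argument at $w_0$ is not a replacement for the transfer step — it is an input to it.
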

\begin{proof}
    When $\mu$ has infinite order we can pick $\tau\in \Gal_{\K(1)}$
    so that $\ord_\fp(M)$ is arbitrary large for $M=\psi(\tau)-1$.
    In particular, given $\fm=\fp^n$ we may pick $M$ so that 
    \[
         2\ord_\fp(M)\geq 2n+\ind_\eo(z')\text{ and }
         \ord_\fp(M)\geq 2n.
    \]
    Let $C\subset H^1(\K,W_M^*) $
    be the image of $S_{\Sigma_p^c\cup\{w_0\}}(\K,W_{\fm^2}^*)$
    and apply Lemma \ref{lem:dualC}.
    We can then argue as in \cite[p.114]{Rubin},
    using the previous Lemma, and obtain
    $\length_\eo(S_{\Sigma_p^c\cup \{w_0\}}(\K,W^*))\leq \ind_{\eo}(z')$.
    We then consider the exact sequence
    \[
    0\to S^{\Sigma_p^c}(\K, T)\to 
    S^{\Sigma_p^c\cup\{w_0\}}(\K, T)\xrightarrow{\loc_{w_0}}  
    H^1_s(\Qp,T)
    \]
    Observe that $z'=m\cdot z\in S^{\Sigma_p^c\cup\{w_0\}}(\K,T)$
    and $\loc_{w_0}(z')=m\alpha^{-1}(L_\id)\cdot z_p$
    by Theorem \ref{thm:eu}.
    Since Lemma \ref{lem:classp} implies that
    $H^1(\Qp,T)=\eo z_p$ is torsion free,
    we have $H^1(\Qp,T)=H^1_s(\Qp,T)$ and
    the proposition follows from the same argument in \cite[Thm 2.2.10]{Rubin}.
\end{proof}

Note that however $S_{\Sigma_p^c}(\K,W^*)$ is different
from the Selmer group $\Sel(\eo(\alpha))$ in \eqref{eq:Sel_base}.
Precisely there the local conditions for $\Sel(\eo(\alpha))$
are relaxed at $\Sigma_p$ and we have the exact sequence
\[
    0\to S_{\Sigma_p^c}(\K,W^*)\to 
    \Sel(\eo(\alpha))\to \prod_{w\in\Sigma_p}H^1_s(\K_w,W^*)
\]
where $H^1_f(\K_w,W^*)=H^1(\K_w,W^*)_{\textnormal{div}}$ by definition.
Thus $H^1_s(\K_w,W^*)$ is isomorphic to
$H^2(\K_w, T^*)_{\textnormal{tor}}$, which is isomorphic to
$W^{D_w}/W^{D_w}_{\textnormal{div}}$ by Tate duality.
In particular, $H^1_s(\K_{w_0},W^*)=0$ thanks to 
\eqref{cond:gen_psi}.

On the other hand, if the classes $\{z_{\fs V}\in H^1(\K(\fs V), T)\}$
from Theorem \ref{thm:eu}
were trivial at $w\in \Sigma_p\setminus\{w_0\}$
instead of just possibly torsion 
(which is the main reason that we require 
$\mu\vert_{D_w}\neq 1$ at $w\in \Sigma_p\setminus\{w_0\}$
in the previous proposition),
then the Kolyvagin system $\{\kappa_{\fs,M}'\}$
would lie inside the subspace of
$S^{\Sigma_p\cup\Pi}(\K, W_{M^2})$
that are strict at $w\in \Sigma_p\setminus\{w_0\}$.
The same argument then yield that 
\begin{equation}\label{eq:boundL}
    \length_\eo(\Sel(\eo(\alpha)))\leq \ind_{\eo}(m\cdot \alpha^{-1}(L_\id)).
\end{equation}
This is indeed the case if we pick an $\sigma_w\in D_w$
for each $w\in \Sigma_p\setminus\{w_0\}$
and replace the classes 
$\{\mathcal{Z}_\fs\in H^1(\K, \I_\fs(\Psi_\fs^{-1}))\}$ 
from Theorem \ref{thm:Bigeu} by
\[
    \prod_{w\in\Sigma_p\setminus\{w_0\}}
    (\Psi_\fs^{-1}(\sigma_w)-1)\cdot \mathcal{Z}_\fs.
\]
Then \eqref{eq:boundL} holds if $\mu=\psi\alpha$ has infinite order
and $L_\id$ is replaced with 
$\prod_{w\in\Sigma_p\setminus\{w_0\}}(\Psi^{-1}(\sigma_w)-1)L_\id$.

\begin{thm}

The Pontryagin dual
$X(\Psi,\Sigma_p^c)$ is a torsion $\Lambda_W$-module if 
we assume \ref{cond:K2}-\ref{cond:K3} and
that $\psi$ satisfies \eqref{cond:gen_psi}.
If we further assume that
$\psi\vert_{D_w}\not\equiv\id$  for all $w\in\Sigma_p$,
let $L$ be the image of $L_\id$ in $\Lambda_W$
and let $\iota\colon \Lambda_W\to\Lambda_W$
be the map induced by 
$\langle\gamma\rangle^\iota=\langle\gamma^{-1}\rangle$, then
\[
\car_{\Lambda_W}(X(\Psi,\Sigma_p^c))\supset(L^\iota)
\]
in $\Lambda_W\hat{\otimes}_\eo E$,
where $E$ is the field of fraction of $\eo$.
\end{thm}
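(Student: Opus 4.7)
The plan is to combine the Euler system length bounds from the preceding subsection with Ochiai's specialization principle (Proposition \ref{prop:specialize}) and the algebraic functional equation (Corollary \ref{cor:main}); the overall strategy is to convert pointwise estimates on $\length_\eo(\Sel(\eo(\alpha)))$ into divisibility in the characteristic ideal of $X(\Lambda_W)=X^{\mathrm{str}}(\Psi^D,\Sigma_p)$, then transport to $X(\Psi,\Sigma_p^c)$. For the torsion statement, I would invoke Proposition \ref{prop:tor_crit}: it suffices to check finiteness of $\Sel(\eo(\alpha))^\vee$ for all but finitely many $\alpha$. The unnumbered Proposition proved above yields $\length_\eo(S_{\Sigma_p^c}(\K,W^*))\le \ind_\eo(m\cdot\alpha^{-1}(L_\id))$ whenever $\mu=\psi\alpha$ has infinite order and $\mu\vert_{D_{w'}}\neq\id$ for every $w'\in\Sigma_p\setminus\{w_0\}$, conditions that fail only on a proper closed subvariety of $\Spec\Lambda_W$; the right-hand side is finite since $L_\id$ projects to a nonzero $p$-adic $L$-function (the non-vanishing invoked to verify \ref{cond:C3}). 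Combined with the exact sequence
\[
0\to S_{\Sigma_p^c}(\K,W^*)\to \Sel(\eo(\alpha))\to \prod_{w\in\Sigma_p}H^1_s(\K_w,W^*)
\]
and the finiteness of each $H^1_s(\K_w,W^*)$, this proves $\Sel(\eo(\alpha))$ is finite; Corollary \ref{cor:main} then propagates torsion from $X(\Lambda_W)$ to $X(\Psi,\Sigma_p^c)$.

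For the divisibility, the strengthened hypothesis $\psi\vert_{D_w}\not\equiv\id$ at every $w\in\Sigma_p$ lets me replace $\{\mathcal{Z}_\fs\}$ by the modified system $\{\prod_{w'\neq w_0}(\Psi_\fs^{-1}(\sigma_{w'})-1)\mathcal{Z}_\fs\}$ for chosen $\sigma_{w'}\in D_{w'}$. By Theorem \ref{thm:Bigeu}(2) these modified classes have vanishing localization at every $w'\in\Sigma_p\setminus\{w_0\}$, so the Kolyvagin argument from the previous subsection applies with strict local conditions at those primes and gives
\[
\length_\eo(\Sel(\eo(\alpha)))\le \ord_\fp\bigl(m\cdot\alpha^{-1}(L'_\id)\bigr),\qquad L'_\id=\prod_{w'\neq w_0}(\Psi^{-1}(\sigma_{w'})-1)L_\id.
\]
Using $\alpha^{-1}(L'_\id)=\alpha((L'_\id)^\iota)$ and the fact that $\length_\eo(\eo/\alpha((L'_\id)^\iota))-\length_\eo(\Sel(\eo(\alpha)))\ge -\ord_\fp(m)$ is uniformly bounded in $\alpha$, Proposition \ref{prop:specialize} delivers $\car_{\Lambda_W}(X(\Lambda_W))\supset \bigl(\varpi^h L^\iota\prod_{w'\neq w_0}(\Psi(\sigma_{w'})^{-1}-1)^\iota\bigr)$ for some $h\ge 0$.

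The principal obstacle is removing the spurious local factors in $\Lambda_W\hat{\otimes}_\eo E$. There $\varpi^h$ becomes a unit, but the factors $(\Psi(\sigma_{w'})^{-1}-1)^\iota$ do not. The plan is to vary the auxiliary element $\sigma_{w'}$: because $\psi\vert_{D_{w'}}\not\equiv\id$ and the image of $D_{w'}$ in $W$ is nontrivial, the ideal generated by these factors as $\sigma_{w'}$ runs over $D_{w'}$ has height at least two in the regular ring $\Lambda_W\hat{\otimes}_\eo E$ and hence lies in no height-one prime. For any height-one prime $\fp$ I would then select $\sigma_{w'}$ with $(\Psi(\sigma_{w'})^{-1}-1)^\iota\notin\fp$ and rerun the above bound to conclude $\ord_\fp(\car(X(\Lambda_W)))\le \ord_\fp(L^\iota)$; since the characteristic ideal in $\Lambda_W\hat{\otimes}_\eo E$ is determined by its height-one valuations, this yields $\car(X(\Lambda_W))\supset(L^\iota)$, and Corollary \ref{cor:main} completes the proof by transferring the inclusion to $X(\Psi,\Sigma_p^c)$.
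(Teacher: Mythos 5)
Your proposal is correct and follows essentially the same route as the paper: invoke Proposition~\ref{prop:tor_crit} for torsion, apply Ochiai's specialization (Proposition~\ref{prop:specialize}) to the Euler-system bound on $\length_\eo(\Sel(\eo(\alpha)))$ in terms of $\alpha^{-1}(L'_\id)$, then remove the spurious local factors $(\Psi^{-1}(\sigma_{w'})-1)$ by varying $\sigma_{w'}$, and transfer via Corollary~\ref{cor:main}. You are somewhat more explicit than the paper at two points---noting $\alpha^{-1}(L'_\id)=\alpha((L'_\id)^\iota)$, and replacing the paper's ``pick sets with no common divisors'' by the height-$\geq 2$ observation and a prime-by-prime selection of $\sigma_{w'}$---but these are the same idea and both are justified by the remark that the ideal $(\Psi^{-1}(\sigma)-1:\sigma\in D_{w'})$ has height at least two.
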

\begin{proof}

Let $\mu=\psi\alpha$ be as above.
We note that $\mu$ has finite order if and only if 
each $\alpha(\gamma_i)$,
for a fixed choice of topological generators
$\{\gamma_1,\cdots,\gamma_d\}$ of $W$,
is a $p$-power roots of unity.
Therefore there are only finitely many characters $\mu$
that are of finite order 
since there are only finitely many 
$p$-power roots of unity in a fixed 
finite extension over $\bar{\Q}_p^{un}$.
Therefore the conditions in Proposition \ref{prop:tor_crit} and
Proposition \ref{prop:specialize}
are verified and by Corollary \ref{cor:main} we have
\[
\car_{\Lambda_W}(X(\Psi,\Sigma_p^c))\supset
\big(\prod_{w\in\Sigma_p\setminus\{w_0\}}
(\Psi^{-1}(\sigma_w)-1)L^\iota\big).
\]
Since the choice of $\sigma_w$ is arbitrary, 
we can pick different sets of 
$\{\sigma_w\}_{w\in \Sigma_p\setminus\{w_0\}}$
so that the collection of 
$\big(\prod_{w\in\Sigma_p\setminus\{w_0\}}
(\Psi^{-1}(\sigma_w)-1)$
has no common divisors, we can then conclude
\[
\car_{\Lambda_W}(X(\Psi,\Sigma_p^c))\supset(L^\iota).
\]

\end{proof}

\begin{rem}
Compare with the interpolation formula 
from Proposition \ref{prop:L-function_at_s}
and apply the functional equation we see that
$\alpha(L^\iota)$ is the algebraic part of 
\[
L(0,\psi\alpha)\prod_{w\in\Sigma_p^c}
(1-(\psi\alpha)(\varpi_\bw)q_\bw^{-1})
(1-(\psi\alpha)(\varpi_\bw)).
\]
In other word $L^\iota$ is a multiple
of $L_p(\Psi,\Sigma_p^c)$ in the notation
of \cite{Hsieh2010}
and our result agrees with the main conjecture
stated in \textit{loc.cit}.
\end{rem}

\bibliographystyle{amsalpha}
\bibliography{biblio}
\end{document}